\documentclass[11pt]{article}
\usepackage{algorithm}
\usepackage{algpseudocode}
\usepackage[a4paper,margin=1in]{geometry}
\usepackage[T1]{fontenc}
\usepackage{lmodern}
\usepackage{microtype}
\usepackage{amsmath,amssymb,amsthm,mathtools,bm}
\usepackage{mathrsfs}
\usepackage{aliascnt}
\usepackage{enumitem}
\usepackage{xcolor}
\usepackage{booktabs}
\usepackage[round,authoryear]{natbib}
\usepackage{tabularx,array}
\usepackage{graphicx}
\usepackage{hyperref}
\definecolor{mydarkblue}{rgb}{0,0.08,0.45}
\hypersetup{
  colorlinks=true,
  linkcolor=mydarkblue ,    
  filecolor=mydarkblue ,  
  urlcolor=red ,     
  citecolor=mydarkblue    
}
\usepackage[nameinlink,capitalise,noabbrev]{cleveref}

\allowdisplaybreaks
\setlist[itemize]{topsep=3pt,itemsep=2pt,parsep=1pt,leftmargin=2em}
\setlist[enumerate]{topsep=3pt,itemsep=2pt,parsep=1pt,leftmargin=2em}

\newtheorem{theorem}{Theorem}[section]
\newaliascnt{lemma}{theorem}
\newtheorem{lemma}[lemma]{Lemma}
\aliascntresetthe{lemma}
\newaliascnt{proposition}{theorem}
\newtheorem{proposition}[proposition]{Proposition}
\aliascntresetthe{proposition}
\newaliascnt{corollary}{theorem}
\newtheorem{corollary}[corollary]{Corollary}
\aliascntresetthe{corollary}
\newaliascnt{definition}{theorem}
\newtheorem{definition}[definition]{Definition}
\aliascntresetthe{definition}
\newaliascnt{remark}{theorem}
\newtheorem{remark}[remark]{Remark}
\aliascntresetthe{remark}
\newaliascnt{claim}{theorem}

\aliascntresetthe{claim}

\newaliascnt{remark*}{theorem}
\newtheorem{remark*}[remark]{Remark}
\aliascntresetthe{remark}

\crefname{theorem}{Theorem}{Theorems}
\Crefname{theorem}{Theorem}{Theorems}
\crefname{lemma}{Lemma}{Lemmas}
\Crefname{lemma}{Lemma}{Lemmas}
\crefname{proposition}{Proposition}{Propositions}
\Crefname{proposition}{Proposition}{Propositions}
\crefname{corollary}{Corollary}{Corollaries}
\Crefname{corollary}{Corollary}{Corollaries}
\crefname{definition}{Definition}{Definitions}
\Crefname{definition}{Definition}{Definitions}
\crefname{remark}{Remark}{Remarks}
\Crefname{remark}{Remark}{Remarks}
\crefname{claim}{Claim}{Claims}
\Crefname{claim}{Claim}{Claims}

\newcommand{\R}{\mathbb{R}}
\newcommand{\E}{\mathbb{E}}
\newcommand{\Pp}{\mathbb{P}}
\newcommand{\one}{\mathbf{1}}
\newcommand{\norm}[1]{\left\lVert #1\right\rVert}
\newcommand{\abs}[1]{\left\lvert #1\right\rvert}
\newcommand{\ip}[2]{\left\langle #1,#2\right\rangle}
\newcommand{\prog}{\operatorname{prog}}
\newcommand{\Lip}{\operatorname{Lip}}

\newcommand{\spanop}{\operatorname{span}}

\newcommand{\cO}{\mathcal{O}}
\newcommand{\cA}{\mathsf{A}}
\newcommand{\cX}{\mathcal{X}}

\newcommand{\thetaalpha}{\theta_{\alpha}}
\newcommand{\betaalpha}{\beta_{\alpha}}

\title{Tight Heavy-Tailed Complexity Bounds for Optimizing Polyak--\L{}ojasiewicz Objectives}
\author{Weiming Ou\footnote{Shanghai University of Finance and Economics, email: ouweiming@163.sufe.edu.cn}\qquad Xiao Wang\footnote{MoE Key Laboratory of Interdisciplinary Research of Computation and Economics, Shanghai University of Finance and Economics, email: wangxiao@sufe.edu.cn}}
\date{}

\begin{document}
\maketitle

\begin{abstract}
We study optimizing $L$-smooth and $\mu$-Polyak–Łojasiewicz (PL) objectives with unbiased stochastic gradients under $\alpha$-heavy-tailed noise ($1<\alpha\le2$). In the unrestricted \textbf{high-dimensional} setting, we establish the noise-adaptive lower bound for attaining $\epsilon$-suboptimal function value. Under the generalized mirror-PL condition, we propose a centered-clipped mirror-descent method that achieves a high-probability upper bound matching the lower bound up to logarithmic factors. We further provide the improved tight stochastic complexity in \textbf{fixed} dimensions.
\end{abstract}

\newpage
\tableofcontents

\newpage
\section{Introduction}
\addcontentsline{toc}{section}{Introduction}

The Polyak--\L{}ojasiewicz inequality provides an intermediate model between strongly convex and fully nonconvex optimization.
For a differentiable objective $f$, it requires $\|\nabla f(x)\|^2\ge 2\mu\bigl(f(x)-f^\star\bigr)$ for some $\mu>0$,
without requiring $f$ itself to be convex. Nevertheless, when $f$ is also
$L$-smooth, this inequality is sufficient for gradient descent to converge
globally at a linear rate governed by the condition number
$\kappa=L/\mu$ \citep{Polyak1963,KarimiEtAl2016}. The PL framework therefore
captures a broad class of objectives that retain a global optimization
guarantee even though their geometry may be nonconvex \citep{hardt2016identity,li2018algorithmic,liu2022loss,allen2019convergence}. Understanding the oracle complexity of PL optimization is consequently important both
for identifying the limits of first-order algorithms and for explaining
which parts of strongly convex optimization theory continue to hold under
the weaker gradient-dominance assumption.

In stochastic optimization, however, the geometry of the objective is only
one source of difficulty. Classical analyses often assume bounded variance
or sub-Gaussian gradient noise, while empirical studies of stochastic
gradients in modern learning problems indicate that their distributions can
be strongly non-Gaussian and heavy-tailed \citep{SimsekliEtAl2019}. A natural model assumes conditional unbiasedness and a finite centered
$\alpha$-moment for each stochastic gradient $G_t$:
\[
\mathbb E[G_t\mid\mathscr F_{t-1}]=\nabla f(x_t),
\qquad
\mathbb E\!\left[
\|G_t-\nabla f(x_t)\|^\alpha
\mid\mathscr F_{t-1}
\right]\le\sigma^\alpha,
\qquad 1<\alpha\le2.
\]
For $\alpha<2$, this assumption permits infinite variance; empirical averages need
not exhibit Gaussian concentration, and rare but arbitrarily large oracle
errors cannot be neglected. Gradient clipping, normalization, and robust
mean estimation have consequently become central tools for obtaining
high-probability guarantees under heavy-tailed noise
\citep{CutkoskyMehta2021,SadievEtAl2023,PuchkinEtAl2024}. Yet clipping a
stochastic gradient around the origin can introduce a bias depending on
$\|\nabla f(x)\|$, which may be unbounded on an unbounded domain. This makes
it nontrivial to obtain a high-probability theory that is simultaneously
robust to heavy tails, compatible with mirror geometry, and free of bounded-gradient and bounded-domain assumptions.

A second challenge concerns optimality. In the deterministic high-dimensional
setting, smooth PL optimization has the lower-bound scale
$\Omega(\kappa\log(\Delta_0/\epsilon))$, where $\Delta_0$ bounds the
initial objective gap. This shows that the general PL class
does not admit the $\sqrt{\kappa}$ acceleration available under strong
convexity \citep{YueEtAl2023}. Stochastic lower bounds based on probabilistic
zero-chains and random rotations provide a powerful way to hide sequential
information from arbitrary first-order algorithms
\citep{ArjevaniEtAl2023}. For \emph{heavy-tailed} noise, however, a complete lower
bound must capture two different obstructions at once: the deterministic
optimization transient and the statistical cost of estimating a mean under
only a finite $\alpha$-moment. Moreover, it should be \emph{noise-adaptive}:
when $\sigma=0$, the stochastic term vanishes, but the lower bound should
reduce to the correct exact-gradient complexity rather than becoming
trivial.

The role of dimension creates a third difficulty. Randomly rotated
zero-chains require a dimension that grows with the number of hidden
directions, whereas low-dimensional optimization may permit geometric search
strategies unavailable in high dimension. Gradient-flow trapping, for
example, exploits separating curves or hypersurfaces to locate approximate
stationary points in fixed dimension \citep{BubeckMikulincer2020}. Extending
this idea to the present stochastic setting is delicate because the
algorithm has access only to noisy gradients, rather than exact function
values on an entire separating surface. It is therefore not clear a priori
whether the condition-number factor in the high-dimensional stochastic
complexity remains unavoidable when $d$ is fixed, or how the answer depends
jointly on the dimension and the tail exponent.

Motivated by these issues, this paper asks the following question: 
\begin{center}
    \textit{What is
the optimal first-order complexity of smooth PL optimization as a function of $\kappa$, $\sigma$, $\epsilon$, the moment order $\alpha$, and
the dimension $d$, when the guarantee must hold with high probability and
remain informative in the noiseless limit?}
\end{center}

\subsection{Our Contributions}
Our results separate the roles of geometry, noise, and dimension.
For brevity, write $\theta_\alpha:=\alpha/[2(\alpha-1)]$ and
$p_\alpha:=\alpha/(\alpha-1)$; Section~\ref{sec:prelim} gives the full
notation and query model.
\begin{itemize}[leftmargin=*,itemsep=5pt]
\item \textbf{A noise-adaptive high-dimensional lower bound.}
For arbitrary adaptive randomized gradient-only algorithms, sufficiently
large $\kappa$, and sufficiently small $\epsilon/\Delta_0$, we establish
\(
\Omega\bigl(\kappa\log(\Delta_0/\epsilon)
+\kappa(\sigma^2/(\mu\epsilon))^{\theta_\alpha}\bigr)
\)
in a sufficiently large finite dimension.
The two terms capture the deterministic transient and the heavy-tailed
estimation cost. In particular, the bound recovers
$\Omega(\kappa\log(\Delta_0/\epsilon))$ when $\sigma=0$
(Theorem~\ref{thm:hd-accuracy}).

\item \textbf{A matching high-probability upper bound.}
For a nonempty closed convex domain, possibly unbounded, we combine
centered clipping with mirror descent. Under mirror-PL, the method returns
an $\epsilon$-suboptimal point with probability at least $1-\delta$ using
\(
\widetilde O\bigl(\kappa\log(\Delta_0/\epsilon)
+\kappa(\sigma^2/(\mu\epsilon))^{\theta_\alpha}\bigr)
\)
stochastic-gradient calls (Theorem~\ref{thm:upper-accuracy}).
No bounded-gradient or sub-Gaussian assumption is needed. On the
unconstrained Euclidean specialization, this upper bound matches the
lower bound up to logarithmic factors.

\item \textbf{Improved complexity in fixed dimensions.}
For every fixed $d\ge1$ and $\kappa\ge6$, we prove the lower bound
$\Omega(\log(\Delta_0/\epsilon)+(\sigma^2/(\mu\epsilon))^{\theta_\alpha})$
in the stated accuracy range (Theorem~\ref{thm:fd-lower-main}).
Robust stochastic bisection matches this bound up to logarithmic factors
for $d=1$. Gradient-flow trapping gives upper bounds
$\widetilde O(\kappa^{2/3}\log(\Delta_0/\epsilon)
+(\sigma^2/(\mu\epsilon))^{\theta_\alpha})$ for $d=2$ and
$\widetilde O(\kappa\log(\Delta_0/\epsilon)
+(\sigma^2/(\mu\epsilon))^{\theta_\alpha})$ for $d=3$.
Thus the stochastic term is matched in both dimensions. For fixed $d>3$,
the same stochastic characterization holds when $p_\alpha\ge d-1$;
the deterministic term becomes
$\widetilde O(\kappa^{(d-1)/2}\log(\Delta_0/\epsilon))$.
Theorem~\ref{cor:fd-123} also states the remaining gap when $p_\alpha<d-1$.

\item \textbf{Ordinary PL versus constrained mirror-PL.}
We construct a smooth objective satisfying ordinary PL on an unbounded
convex domain for which exact mirror descent remains at a nonoptimal
boundary point (Proposition~\ref{prop:boundary-obstruction}).
This shows why constrained mirror descent requires a condition on feasible
descent, rather than the full gradient norm. In the unconstrained Euclidean
case, mirror-PL reduces to ordinary PL.
\end{itemize}

\begin{table}[htbp]
\centering
\renewcommand{\arraystretch}{1.55}
\small
\begin{tabular}{cccc}
\toprule
\textbf{Regime} & \textbf{Lower bound} & \textbf{High-prob. upper bound} & \textbf{Comparison}\\
\midrule
Growing $d$
&
$\Omega\bigl(\kappa A_\epsilon+\kappa Q_\epsilon\bigr)$
&
$\widetilde O\bigl(\kappa A_\epsilon+\kappa Q_\epsilon\bigr)$
&
Matched 
\\
\hline
$d=1$
&
$\Omega\bigl(A_\epsilon+Q_\epsilon\bigr)$
&
$\widetilde O\bigl(A_\epsilon+Q_\epsilon\bigr)$
&
Matched
\\
\hline
$d=2$
&
$\Omega\bigl(A_\epsilon+Q_\epsilon\bigr)$
&
$\widetilde O\bigl(\kappa^{2/3}A_\epsilon+Q_\epsilon\bigr)$
&
Stochastic term matched
\\
\hline
$d=3$
&
$\Omega\bigl(A_\epsilon+Q_\epsilon\bigr)$
&
$\widetilde O\bigl(\kappa A_\epsilon+Q_\epsilon\bigr)$
&
Stochastic term matched
\\
\hline
$\begin{array}{c}
          \text{Fixed}\ d>3,\\
          p_\alpha\ge d-1
    \end{array}$ 
&
$\Omega\bigl(A_\epsilon+Q_\epsilon\bigr)$
&
$\widetilde O\bigl(
 \kappa^{(d-1)/2}A_\epsilon+Q_\epsilon\bigr)$
&
Stochastic term matched
\\
\bottomrule
\end{tabular}
\caption{Complexity bounds in the stated condition-number and accuracy
ranges; matching is up to logarithmic factors. Upper bounds hold with
probability at least $1-\delta$, and lower bounds hold in expectation and
at constant confidence. In the last row, one may take the minimum with
$\widetilde O(\kappa[A_\epsilon+Q_\epsilon])$.
The quantities $A_\epsilon$, $Q_\epsilon$, and $p_\alpha$ are defined in
\eqref{eq:heavy-exponents}--\eqref{eq:Q-alpha}.}
\label{tab:comparison}
\end{table}

\paragraph{Comparison with \citet{MasihaKiyavashThiran2026}.}
That work studies lower bounds under a sublevel geometric PL condition
with exponent $\nu\in[1,2)$, namely
$f-f^\star\le\tau\|\nabla f\|^\nu$, while retaining global smoothness
and using bounded-variance oracles. The geometric exponent $\nu$ is distinct from the noise-moment order
$\alpha$ used throughout our paper.
Our results concern the classical global PL geometry and finite centered
$\alpha$-moments, together with high-probability algorithms and
fixed-dimensional refinements. Appendix~\ref{appendix-detailed comparison}
compares the stated exponent ranges and explains the distinction between
formal endpoint substitution and a theorem valid at that endpoint.

\subsection{Related Work}
\paragraph{Polyak--\L{}ojasiewicz (PL) condition.}
The PL condition originates in the work of \citet{polyak1963gradient} and
\citet{lojasiewicz1963topological}. It relaxes strong convexity and has
been used in several optimization settings, including minimax problems
\citep{guo2020fast,guo2023fast}. Other related conditions include error
bounds \citep{luo1993error}, restricted secant inequalities
\citep{zhang2013gradient}, quadratic growth \citep{anitescu2000degenerate},
essential strong convexity \citep{liu2014asynchronous}, and weak strong
convexity \citep{necoara2019linear}; see \citet{guille2021study} for a
comparison. Gradient descent converges linearly under PL
\citep{polyak1963gradient}, and \citet{yue2023lower} established a tight
deterministic lower bound. \citet{karimi2016linear} extended the framework
to constrained optimization through proximal PL. In the stochastic
setting, \citet{khaled2020better} and \citet{li2021second} obtained
expected oracle-complexity bounds with stochastic term
$O(L\sigma^2/(\mu^2\epsilon))$, where $\sigma^2$ bounds the noise variance.

\paragraph{Heavy-tailed stochastic optimization.}
Empirical evidence of non-Gaussian stochastic gradients motivates analysis
beyond finite-variance and sub-Gaussian models \citep{SimsekliEtAl2019}.
Gradient clipping yields high-probability guarantees for smooth convex and
strongly convex optimization, including accelerated methods, without
light-tail assumptions \citep{GorbunovDanilovaGasnikov2020}.
Plain SGD has also been analyzed under specialized state-dependent
infinite-variance models for strongly convex objectives \citep{WangEtAl2021}.
For stochastic convex optimization with a finite moment strictly larger
than one, \citet{VuralEtAl2022} established minimax-optimal mirror-descent
bounds without explicit clipping or normalization.
For smooth nonconvex objectives, \citet{CutkoskyMehta2021} combined clipping,
momentum, and normalized steps, while \citet{LiuZhangZhou2023} showed how
additional sample-function structure can improve on unstructured-oracle
rates. \citet{SadievEtAl2023} developed high-probability results under
bounded centered $\alpha$-moments for nonconvex, PL, convex, and strongly
convex optimization, as well as variational inequalities.
Further developments include general Bregman geometries
\citep{FatkhullinHe2024}, clipping stochastic gradient differences
\citep{GorbunovEtAl2024}, smoothed median-of-means estimation
\citep{PuchkinEtAl2024}, and clipped adaptive methods
\citep{ChezhegovEtAl2025}. \citet{FradinEtAl2026} established lower and
upper bounds under generalized stochastic smoothness and developed
double-clipped methods with high-probability guarantees.

\paragraph{Lower bounds.}
Classical deterministic first-order lower bounds address convex
optimization \citep{nesterov2013introductory} and nonconvex stationarity
\citep{carmon2020lower}. Stochastic lower bounds were developed for convex
objectives by \citet{agarwal2009information,raginsky2011information},
extended to heavy-tailed noise by \citet{vural2022mirror}, and established
for nonconvex stationarity by \citet{arjevani2023lower}.
\citet{Saad2025new} used divergence decomposition for structured nonconvex
classes, including objectives satisfying a restricted secant inequality.
Other settings include finite-sum optimization
\citep{fang2018spider,zhou2019lower} and zeroth-order oracles
\citep{duchi2015optimal}. For PL-type objectives,
\citet{yue2023lower} treated the deterministic case,
\citet{bai2024complexity} studied finite-sum complexity, and
\citet{MasihaKiyavashThiran2026} considered the sublevel geometric condition
discussed above. Our lower bounds instead retain classical global PL
geometry while accounting for both finite-moment noise and the noiseless
transient.

\paragraph{High-probability convergence.}
\citet{nemirovski2009robust} analyzed stochastic mirror descent under
sub-Gaussian noise. \citet{gorbunov2020stochastic,gorbunov2021near}
weakened the noise assumptions in smooth and nonsmooth settings.
Beyond convexity, \citet{li2020high,cutkosky2021high} studied
high-probability guarantees for momentum-based methods, with the latter
using clipping for heavy-tailed noise. \citet{liu2023high} developed further
stochastic mirror-descent analyses. Adaptive methods have been studied
with AdaGrad stepsizes \citep{kavis2022high,attia2023sgd} and with Adam
\citep{hong2024convergence}; \citet{liu2024high} treated clipping for
nonsmooth nonconvex optimization under heavy-tailed noise.
Our focus is the joint dependence on PL geometry, the noise moment, and
dimension, with bounds that also retain the noiseless optimization cost.
\section{Preliminaries}\label{sec:prelim}
\paragraph{Notation.}
All norms and inner products are Euclidean, and $\Lip(h)$ denotes the
Lipschitz constant of a map $h$. Throughout, $L,\mu,\Delta_0>0$,
$\sigma\ge0$, and $1<\alpha\le2$. Constants $c_\alpha,C_\alpha$ depend only
on $\alpha$; subscripts $d$ or $(d,\alpha)$ indicate analogous dependence.
Unnamed constants may change between bounds. The notation
$\widetilde O$ and $\widetilde\Theta$ suppresses logarithmic factors; their
constants may depend on $\alpha$ and, in fixed-dimensional results, on $d$.
Unless a base is specified, $\log$ is the natural logarithm. We use
\begin{equation}\label{eq:heavy-exponents}
 \theta_\alpha:=\frac{\alpha}{2(\alpha-1)},\qquad
 \beta_\alpha:=\frac{2(\alpha-1)}{\alpha}=\frac1{\theta_\alpha},\qquad
 p_\alpha:=\frac{\alpha}{\alpha-1}=2\theta_\alpha,
\end{equation}
and, for noise level $\sigma$, PL parameter $\mu$, initial-gap bound
$\Delta_0$, and target accuracy $\epsilon$, define
\begin{equation}\label{eq:Q-alpha}
 Q_\epsilon:=\left(\frac{\sigma^2}{\mu\epsilon}\right)^{\theta_\alpha},
 \qquad A_\epsilon:=\log\frac{\Delta_0}{\epsilon}.
\end{equation}

\paragraph{Problem setup.}
We consider $\min_{x\in\mathcal X}f(x)$, where
$\mathcal X\subseteq\mathbb R^d$ is nonempty and convex, but need not be
bounded, and $f$ is continuously differentiable and possibly nonconvex.
Write $f^\star:=\inf_{x\in\mathcal X}f(x)>-\infty$ and
$\mathcal X^\star:=\operatorname*{arg\,min}_{x\in\mathcal X}f(x)$.
Our smoothness and geometric assumptions are as follows.

\begin{definition}[$L$-smoothness]\label[definition]{A1}
The function $f$ is $L$-smooth if, for all $x,y\in\mathcal X$,
\begin{equation}
 \|\nabla f(x)-\nabla f(y)\|\le L\|x-y\|.
\end{equation}
\end{definition}

\begin{definition}[$\mu$-PL condition]\label[definition]{A2}
The function $f$ satisfies the $\mu$-Polyak--\L{}ojasiewicz (PL) condition if
\begin{equation}\label{eq:PL-convention}
 \|\nabla f(x)\|^2\ge2\mu\bigl(f(x)-f^\star\bigr)
 \qquad(x\in\mathcal X).
\end{equation}
\end{definition}

\begin{remark}
When $\mathcal X=\mathbb R^d$, $L$-smoothness also gives
$\|\nabla f(x)\|^2\le2L(f(x)-f^\star)$ by the descent inequality.
For an $L$-smooth, $\mu$-PL objective, write $\kappa:=L/\mu$.
\end{remark}

\paragraph{Oracle model and adaptive queries.}
At physical oracle call $t\ge1$, an adaptive randomized algorithm chooses
$x_t$ and receives $G_t=G(x_t,\xi_t)$. The pre-call history
$\mathscr F_{t-1}$ includes previous queries and responses, the algorithm's
randomness used so far, and any randomness used to choose $x_t$.
Thus $x_t$ is $\mathscr F_{t-1}$-measurable before the current oracle noise
is drawn. Write $\mathbb E_t[\cdot]:=\mathbb E[\cdot\mid\mathscr F_{t-1}]$.

\begin{definition}[Heavy-tailed stochastic-gradient oracle]
\label[definition]{def:heavy-tailed oracles}
For $1<\alpha\le2$ and $\sigma\ge0$, the oracle satisfies, almost surely,
\begin{equation}\label{eq:lower-oracle-assumption}
 \mathbb E_t[G_t]=\nabla f(x_t),\qquad
 \mathbb E_t\!\left[\|G_t-\nabla f(x_t)\|^\alpha\right]\le\sigma^\alpha.
\end{equation}
We denote the class of such oracles by $\mathcal O_{\alpha,\sigma}(f)$,
with the batch convention below.
\end{definition}

\begin{remark}
Samples requested together at a fixed, adaptively chosen point are
conditionally independent given the history before that request, and each
satisfies \eqref{eq:lower-oracle-assumption}. No independence across
adaptively chosen query locations is assumed. Each sample counts as one
physical oracle call.
\end{remark}

\paragraph{Function classes and performance criteria.}
Let $\mathcal F_d(L,\mu,\Delta_0)$ be the class of globally $L$-smooth,
globally $\mu$-PL functions on $\mathbb R^d$ with $f(0)-f^\star\le\Delta_0$.
Translation allows us to use $x_0=0$ for lower bounds; the algorithms may
start at any specified $x_0$ satisfying the same gap bound.
An algorithm $\mathsf A$ using at most $T$ physical oracle calls may return
any measurable output $\widehat x_T$, not necessarily a queried point.
The minimax expected risk is
\begin{equation}\label{eq:risk-definition}
 \mathsf R_T^{(d)}:=
 \inf_{\mathsf A}\sup_{f\in\mathcal F_d(L,\mu,\Delta_0)}
 \sup_{G\in\mathcal O_{\alpha,\sigma}(f)}
 \mathbb E_{\mathsf A,G}[f(\widehat x_T)-f^\star].
\end{equation}
For high-probability accuracy, define separately
\begin{equation}\label{eq:confidence-definition}
 \mathsf T_{\epsilon,\delta}^{(d)}:=
 \inf\!\left\{T\in\mathbb N_0:\ \exists\mathsf A,\quad
 \sup_{f\in\mathcal F_d(L,\mu,\Delta_0)}
 \sup_{G\in\mathcal O_{\alpha,\sigma}(f)}
 \mathbb P_{\mathsf A,G}\bigl(f(\widehat x_T)-f^\star>\epsilon\bigr)
 \le\delta\right\}.
\end{equation}
Both criteria include algorithmic and oracle randomness; dependence on the
problem parameters is suppressed. The dimension-growing criterion takes
$\sup_{d\ge1}\mathsf R_T^{(d)}$. Only oracle calls are counted, not
arithmetic, geometric-median computation, or exact mirror subproblems.
In the algorithms, $t$ indexes optimization updates, each of which may
require several physical calls; $N$ denotes their total number.

\section{Lower Bound in the High-Dimensional Setting}\label{sec:lower}
We first establish a lower bound that separates the cost of deterministic
optimization from the cost imposed by heavy-tailed noise. The hard
dimension may grow with the target accuracy and noise level.

\begin{theorem}
\label{thm:hd-accuracy}
Fix $1<\alpha\le2$. There are constants $c_{0,\alpha}>0$ and
$\kappa_{\mathrm{hd}}\ge1$ such that the following holds whenever
$\kappa\ge\kappa_{\mathrm{hd}}$ and $0<\epsilon\le c_{0,\alpha}\Delta_0$.
For every $d\ge d_{\mathrm{hd}}$, where the threshold in
\eqref{eq:hd-dimension-maximum} satisfies
$d_{\mathrm{hd}}=\widetilde O_\alpha(\kappa^2[\Delta_0/\epsilon+Q_\epsilon])$,
any adaptive randomized gradient-only algorithm guaranteeing
$\mathbb E[f(\widehat x_T)-f^\star]\le\epsilon$ for all
$f\in\mathcal F_d(L,\mu,\Delta_0)$ and
$G\in\mathcal O_{\alpha,\sigma}(f)$ requires at least
\begin{equation}\label{eq:hd-main-lower}
\Omega\left(\kappa
 \log\frac{\Delta_0}{\epsilon}
 +\kappa\left(\frac{\sigma^2}{\mu\epsilon}\right)^{\thetaalpha}
 \right).
\end{equation}
\end{theorem}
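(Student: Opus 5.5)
The bound in \eqref{eq:hd-main-lower} is the maximum of two terms, up to a factor of two. I will therefore prove two separate lower bounds: a deterministic bound $\Omega(\kappa\log(\Delta_0/\epsilon))$ that holds even at $\sigma=0$, and a stochastic bound $\Omega(\kappa Q_\epsilon)$. Each comes from its own hard instance in $\mathcal F_d(L,\mu,\Delta_0)$. Any algorithm that succeeds on the whole class must succeed on both instances, so it needs at least the larger of the two counts. The dimension threshold $d_{\mathrm{hd}}$ is the larger of the two dimensions the constructions require.

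\textbf{Deterministic term.} I will adapt the PL zero-chain of \citet{YueEtAl2023}. Take $f(x)=\mu$-scaled copies of a chain $\sum_i \Psi(x_{i-1})\Phi(x_i)$-type function plus a quadratic-growth term. The construction must guarantee three things. First, the gradient-span progress increases by at most one coordinate per query. Second, $f$ is $L$-smooth and $\mu$-PL globally. Third, the suboptimality stays at least $\Delta_0 e^{-c\,k/\kappa}$ until coordinate $k$ is discovered. The point of the third property is that each hidden coordinate reduces the gap by a factor of only $1-O(1/\kappa)$.

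To cover randomized adaptive algorithms and arbitrary outputs $\widehat x_T$, I will apply a random orthogonal rotation in dimension $d\gtrsim \kappa^2\log(\cdot)$, following the argument of \citet{ArjevaniEtAl2023}. With constant probability no query makes nontrivial progress on undiscovered directions. Then the expected gap exceeds $\epsilon$ unless $T\gtrsim \kappa\log(\Delta_0/\epsilon)$, using $\epsilon\le c_{0,\alpha}\Delta_0$.

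\textbf{Stochastic term.} I will use the same chain but equip it with a probabilistic zero-chain oracle. At each query, the oracle reveals the gradient component in the next undiscovered direction only with probability $p$, and otherwise returns zero. The revealed component is rescaled by $1/p$ so that the oracle stays unbiased. If the component has size $g$, the centered $\alpha$-moment of this oracle is about $p\,(g/p)^\alpha = g^\alpha p^{1-\alpha}$. Requiring this to be at most $\sigma^\alpha$ gives $p\asymp (g/\sigma)^{\alpha/(\alpha-1)}$.

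Next I choose the scale of the chain so that the gap is comparable to $\epsilon$. Under PL with curvature $\mu$, the gradient size on the relevant directions is $g\asymp\sqrt{\mu\epsilon}$. This yields $1/p\asymp(\sigma^2/(\mu\epsilon))^{\theta_\alpha}=Q_\epsilon$. The chain must also have about $\kappa$ links, enough that skipping any constant fraction of them leaves a gap above $\epsilon$. The expected time to reveal $\Theta(\kappa)$ links is then $\kappa/p\asymp\kappa Q_\epsilon$, and a Chernoff bound on the geometric waiting times turns this into a constant-probability statement.

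\textbf{Main obstacle.} I expect the hardest step to be this last calibration, that is, making the deterministic PL chain rescalable to gap $\Theta(\epsilon)$ while preserving three properties at once: global $\mu$-PL with $\kappa$ fixed, the requirement that $\Theta(\kappa)$ links are needed to beat $\epsilon$, and the bounded gradient magnitude $g\lesssim\sqrt{\mu\epsilon}$ on each revealed coordinate. The gradient bound matters because it is exactly what the $\alpha$-moment calculation uses. My first attempt will be to take Yue et al.'s construction with $T\asymp\kappa$ links and scale the function values by $\epsilon$ and the coordinates by $\sqrt{\epsilon/\mu}$, and then check that the PL constant and the smoothness constant are both scale invariant under this transformation. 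The rotation argument must additionally control the oracle noise for outputs that lie off the chain. For that I will use a smooth truncation or projection as in \citet{ArjevaniEtAl2023}, which is what forces the dimension $\widetilde O(\kappa^2 Q_\epsilon)$.
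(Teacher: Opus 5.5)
Your plan follows the same route as the paper's proof. It uses two separate hard families combined through a maximum, and the exact-gradient family is legal for every $\sigma\ge0$. The stochastic family is a Bernoulli-thinned zero-chain of width $m=\Theta(\kappa)$ whose single hidden nonzero coordinate is inflated by $1/q$, so the $q^{1-\alpha}$ moment bound together with a next-coordinate gradient of order $\sqrt{\mu\epsilon}$ gives $1/q\asymp Q_\epsilon$. A martingale Chernoff bound on revelations and compactification followed by a Haar-random frame complete the argument. Your calibration matches the paper's scaling $f_U(x)=\lambda\widehat F_{m,U}(sx)$ with $\lambda=2\epsilon/m$ and $s^2=L/(\lambda L_b)$, under which the oracle noise is multiplied by $\lambda s=\sqrt{\lambda L/L_b}\asymp\sqrt{\mu\epsilon}$.

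Two points need correcting. First, the rotation argument does not give the deterministic dimension $d\gtrsim\kappa^2\log(\cdot)$ you claim. That argument must keep $\abs{\ip{u_j}{y}}$ below the \emph{smallest} progress threshold for every effective query of norm up to the clipping radius $R$, so it needs $d\gtrsim R^2t\log(\cdot)/a_{\min}^2$. The thresholds cannot stay uniform. Gradient descent contracts the gap by $1-1/\kappa$ per step without touching the final layer, and PL implies quadratic growth. Hence the final-layer amplitudes must be polynomially small in $\epsilon/\Delta_0$ (up to powers of $\kappa$) relative to the minimizer's norm, and hence relative to $R$. In the paper $a_{\min}=\gamma^{K-1}\asymp\sqrt{\epsilon/\Delta_0}$ against $R=256\sqrt m$ (Lemma~\ref{lem:nonuniform-rotation}). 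This gives $d_{\rm d}=\widetilde O(\kappa^2\Delta_0/\epsilon)$ in \eqref{eq:hd-dd}, which is where the $\Delta_0/\epsilon$ term in $d_{\rm hd}$ comes from. The lower bound itself survives, since the theorem allows this dimension, but your stated threshold does not.

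Second, the step you single out as the main obstacle is routine. The map $x\mapsto\lambda F(sx)$ multiplies both the smoothness and the PL constants by $\lambda s^2$, so $\kappa$ is invariant. The bound $g\lesssim\sqrt{\mu\epsilon}$ is then automatic once the normalized PL coefficient is $O(m)$ and $m\asymp\kappa$. The substantive work lies in the two pieces you defer:
\begin{enumerate}
\item A chain with $\Theta(\kappa\log(\Delta_0/\epsilon))$ coordinates whose PL coefficient is $O(\kappa)$ \emph{independently of the number of layers}. The paper gets this from amplitudes $a_i=\gamma^{\lfloor(i-1)/m\rfloor}$ and the tail-mass bound $S_j\le5ma_j^2$, played against the first-bad-coordinate gradient $\gtrsim a_j$ (Lemmas~\ref{lem:geometric-tail-mass} and~\ref{lem:weighted-chain}).
\item A proof that \emph{global} PL survives compactification, which the stationarity argument of Arjevani et al.\ never required. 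This takes the radial/tangential analysis outside the clipping ball in Lemmas~\ref{lem:Fhat-PL} and~\ref{lem:weighted-compactification}. For the exact chain it also requires a guard vanishing on a ball containing the minimizer. A global quadratic guard would put order-$m$ value at the weighted minimizer, far above the final layer's $2m\gamma^{2(K-1)}$.
\end{enumerate}
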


% PROOF_IDEA_BEGIN hd
\noindent\textbf{Hard Instance: Idea.}
Two different obstacles produce the two terms. For the stochastic term, a
chain hides one new direction at a time; revealing that direction rarely,
but amplifying its value when revealed, preserves unbiasedness while
exhausting the available moment budget (Lemma~\ref{lem:thinned-oracle}).
The waiting time is paid across a chain of width proportional to $\kappa$.
For the noiseless term, geometrically decreasing layer amplitudes keep the
unresolved tail small enough that adding layers does not worsen the PL
constant (Lemmas~\ref{lem:geometric-tail-mass}--\ref{lem:weighted-chain}).
Compactification and random rotation prevent arbitrarily large or adaptive
queries from bypassing either chain. The maximum of these two lower bounds
controls their sum up to a constant; the exact-gradient family remains legal
even when a positive noise level is announced. The complete proof is in
Appendix~\ref{subsec:hd-completion}, using the constructions in
Appendices~\ref{subsec:hd-stochastic} and \ref{subsec:hd-noiseless}.
% PROOF_IDEA_END hd

\section{High-Probability Upper Bound}\label{sec:upper}
We next combine centered clipping with mirror descent to obtain a
high-probability guarantee on a closed convex domain. The geometric
assumption must control feasible descent, not merely the full gradient.

\subsection{Mirror Descent and the Mirror-PL Condition}\label{subsec:main-mirror}

Let $\mathcal X\subseteq\mathbb R^d$ be nonempty, closed, and convex,
and let $f$ be $L$-smooth on $\mathcal X$. The mirror map $\psi$ is
differentiable on an open convex neighborhood of $\mathcal X$ and
$1$-strongly convex in the Euclidean norm. Its Bregman divergence satisfies
\begin{equation}\label{eq:bregman-definition}
 D_\psi(z,x):=\psi(z)-\psi(x)-\langle\nabla\psi(x),z-x\rangle
 \ge\tfrac12\|z-x\|^2\qquad(x,z\in\mathcal X).
\end{equation}
For $v\in\R^d$ define
\begin{equation}\label{eq:mirror-model}
 Q_x(z;v):=\ip{v}{z-x}+L D_\psi(z,x),\qquad z\in\cX.
\end{equation}
As a function of $z$, this model is coercive and $L$-strongly convex,
so its minimizer exists and is unique even when $\mathcal X$ is unbounded.

\begin{definition}[Mirror-PL Condition]
\label[definition]{def:mirror-pl}
Define
\begin{equation}\label{eq:mirror-gradient-decrement}
 \mathcal G_{\psi,L}^2(x):=
 -2L\min_{z\in\cX}Q_x(z;\nabla f(x)).
\end{equation}
The function satisfies the $\mu$-mirror-PL condition if
\begin{equation}\label{eq:mirror-PL}
 \frac12\mathcal G_{\psi,L}^2(x)
 \ge\mu\bigl(f(x)-f^\star\bigr),\qquad x\in\cX.
\end{equation}
\end{definition}

If $\cX=\R^d$ and $\psi(x)=\norm{x}^2/2$, then
$\mathcal G_{\psi,L}^2(x)=\norm{\nabla f(x)}^2$, so
\eqref{eq:mirror-PL} is exactly \eqref{eq:PL-convention}.  For Euclidean
projected gradient descent, the projected-PL condition based on
$G_L(x)=L(x-\Pi_\cX(x-\nabla f(x)/L))$ implies mirror-PL, because projection
optimality yields
$\mathcal G_{\psi,L}^2(x)\ge\norm{G_L(x)}^2$.

In the unconstrained case, the PL inequality controls an available descent
direction. Under constraints, the negative gradient can instead point
outside the feasible region. The next proposition shows why ordinary PL
alone does not guarantee convergence of projected or mirror descent.

\begin{proposition}
\label[proposition]{prop:boundary-obstruction}
There exist a closed convex unbounded set $\cX\subset\R^2$, a globally
$3$-smooth function $f:\R^2\to\R$, and a constant $\mu=8/9$ such that
\begin{equation}\label{eq:ordinary-pl-counterexample}
 \frac12\norm{\nabla f(x)}^2
 \ge \mu\bigl(f(x)-\inf_{z\in\cX}f(z)\bigr)
 \qquad\text{for every }x\in\cX,
\end{equation}
but exact projected or mirror descent can remain forever at a nonoptimal
point of $\cX$.
\end{proposition}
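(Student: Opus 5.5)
The plan is to build an explicit example. The feasible set is a horizontal strip, and at a chosen boundary point the full gradient is a pure outward normal. There the gradient is bounded away from zero, so ordinary PL holds, while every feasible descent direction vanishes. I would take
\[
\cX:=\{x=(x_1,x_2)\in\R^2:\ 0\le x_2\le w\},\qquad f(x):=a\,x_2+c\cos(x_1),
\]
with parameters $a,c,w>0$ fixed below. The set $\cX$ is closed, convex and unbounded because $x_1$ ranges over all of $\R$. The function is nonconvex along the boundary, since its restriction to $\{x_2=0\}$ is $c\cos(x_1)$.

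\textbf{Smoothness.} The Hessian of $f$ is $\mathrm{diag}(-c\cos x_1,0)$, so $f$ is globally $c$-smooth on $\R^2$. Any choice with $c\le3$ therefore makes $f$ globally $3$-smooth.

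\textbf{Ordinary PL on $\cX$.} We have $\inf_{\cX}f=-c$, attained at $(\pi,0)$. For $x\in\cX$ the gap satisfies $f(x)+c\le aw+2c$. The gradient satisfies $\|\nabla f(x)\|^2=a^2+c^2\sin^2x_1\ge a^2$. Hence \eqref{eq:ordinary-pl-counterexample} with $\mu=8/9$ follows once
\[
\tfrac12a^2\ \ge\ \tfrac89\,(aw+2c).
\]
Taking $a=1$ and $w=c=1/8$ gives a right-hand side of $\tfrac89\cdot\tfrac38=\tfrac13$, which is at most $\tfrac12$. These constants would be tuned if one wants the inequality to be tight.

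\textbf{The stuck point.} Take $\bar x=(0,0)$. Then $\nabla f(\bar x)=(0,a)$ and $f(\bar x)=c>-c=\inf_\cX f$, so $\bar x$ is not optimal.
\begin{itemize}
\item \emph{Projected gradient.} For any stepsize $\eta>0$, the step gives $\bar x-\eta\nabla f(\bar x)=(0,-\eta a)$. Its Euclidean projection onto the strip is $(0,0)$, so the iterate never moves.
\item \emph{Mirror descent.} With $\psi=\|\cdot\|^2/2$, the update $\arg\min_{z\in\cX}Q_{\bar x}(z;\nabla f(\bar x))$ minimizes $a z_2+\tfrac L2\|z\|^2$ over the strip. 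Since $z_2\ge0$, the minimizer is $z=0=\bar x$. This also shows $\mathcal G_{\psi,L}(\bar x)=0$, so the mirror-PL condition \eqref{eq:mirror-PL} fails at $\bar x$, which explains the obstruction.
\end{itemize}

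The only delicate step is fitting the explicit constants (smoothness $3$, $\mu=8/9$) simultaneously. The gap must stay bounded on an unbounded set, which is why the strip has finite height $w$ and the $x_1$-dependence is bounded. The lower bound on the gradient norm comes entirely from the normal component $a$. I would first check the crude bound above and only then adjust $a,c,w$ to match the stated constants exactly.
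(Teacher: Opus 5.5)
Your construction is correct. With $a=1$ and $w=c=\tfrac18$, $f$ is $\tfrac18$-smooth, hence $3$-smooth. Also $\tfrac12\norm{\nabla f}^2\ge\tfrac12\ge\tfrac89\cdot\tfrac38$, so the stated constants hold as written and no further tuning is needed. If you want the gradient's Lipschitz constant to be exactly $3$, take $a=4$, $c=3$, $w=3/4$; the PL inequality then holds with equality at $(0,w)$.

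You use the same mechanism as the paper: a nonoptimal boundary point where the full gradient is a pure outward normal. The instance differs. The paper takes the half-plane $\R\times[0,\infty)$ with $f(s,y)=q(s)+(y+1)^2$, where $q=u-u^2$ and $u=1-\cos s$. There the normal direction is unbounded, and PL survives because both the gap and $\tfrac12\norm{\nabla f}^2$ grow like $(y+1)^2$. The price is a small polynomial estimate ($-2\le q\le\tfrac14$, $\abs{q''}\le3$), and it makes the constants $3$ and $8/9$ tight for that example. You instead truncate the normal direction to a strip and use a linear normal term. The gap then stays bounded and the gradient norm is bounded below by the constant normal component. This is easier to verify, at the cost of a second boundary line.

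You check mirror descent only for $\psi=\norm{\cdot}^2/2$, whereas the paper covers every differentiable strongly convex $\psi$ and every step size. Your example gives this too, by the same one-line argument: on $\cX$ one has $\ip{\nabla f(\bar x)}{z-\bar x}=az_2\ge0$, and $D_\psi(z,\bar x)>0$ for $z\ne\bar x$, so $\bar x$ is the unique minimizer of every mirror step.
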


The construction and proof are given in Appendix~\ref{app:upper-proofs}.

\subsection{Centered Gradient Clipping}
For $v\in\mathbb R^d$ and radius $r>0$, define the radial clipping operator
\begin{equation}\label{eq:clip-definition}
 \operatorname{clip}_r(v):=
 \begin{cases}
 v,&\|v\|\le r,\\[1mm]
 r\,v/\|v\|,&\|v\|>r.
 \end{cases}
\end{equation}

Clipping a raw stochastic gradient around the origin can introduce a bias
that depends on $\|\nabla f(x)\|$, which can be arbitrarily large on an unbounded
domain. We avoid this dependence by first estimating the gradient with a
geometric-median pilot and then clipping a second collection of block means
around that pilot, using samples independent of the pilot batch.

At a fixed query point $x$, divide $2Kb$ fresh samples into $2K$ blocks
and form
\begin{equation}\label{eq:block-means}
 Z_j:=\frac1b\sum_{\ell=1}^b G(x,\xi_{j,\ell}),
 \qquad j=1,\ldots,2K.
\end{equation}
Choose a geometric median of the first $K$ block means as the pilot,
\begin{equation}\label{eq:geometric-median}
 a\in\arg\min_{u\in\mathbb R^d}\sum_{j=1}^K\|u-Z_j\|,
\end{equation}
and average the remaining block means after clipping around $a$:
\begin{equation}\label{eq:centered-clipped-blocks}
\widehat g(x):=\frac1K\sum_{j=1}^K Y_j, \quad \text{where}\quad Y_j:=a+\operatorname{clip}_r(Z_{K+j}-a).
\end{equation}
Using $\widehat g(x)$ in the mirror update gives Centered-Clipped Mirror
Descent (\textbf{CC-MD}), presented in
Algorithm~\ref{alg:stagewise-centered-clipped-mirror-descent}.
Appendix~\ref{subsec:upper-clipping} gives the estimator parameters and
its conditional accuracy guarantee.

\begin{algorithm}[t]
\caption{Centered-Clipped Mirror Descent (CC-MD)}
\label{alg:stagewise-centered-clipped-mirror-descent}
\begin{algorithmic}[1]
\Require  Initial point $x_0\in\mathcal X$, constants $L,\mu>0$, mirror map $\psi$, number of stages $S$, stage length $J$, number of blocks per half $K$, block sizes $\{b_s\}_{s=0}^{S-1}$, clipping thresholds $\{r_s\}_{s=0}^{S-1}$.

\State \textbf{if} $\mu>L$ \textbf{then return} $x_0$.
\For{$s=0,\ldots,S-1$}

    \For{$t=sJ,\ldots,(s+1)J-1$}
        \State Query $2Kb_s$ fresh samples
            $G_{t,k,\ell}=G(x_t,\xi_{t,k,\ell})$,
            $k=1,\ldots,2K$, $\ell=1,\ldots,b_s$

        \State Form the block means
            $\displaystyle
                Z_{t,k}\gets
                \frac{1}{b_s}\sum_{\ell=1}^{b_s}G_{t,k,\ell},
                \quad k=1,\ldots,2K.
            $

        \State Compute a geometric-median pilot from the first $K$ blocks:
        \Statex \hspace{\algorithmicindent}\hspace{\algorithmicindent}
            $\displaystyle
                a_t\in
                \operatorname*{arg\,min}_{u\in\mathbb R^d}
                \sum_{k=1}^{K}\|u-Z_{t,k}\|.
            $

        \State Clip the remaining $K$ block means around the pilot:
        \Statex \hspace{\algorithmicindent}\hspace{\algorithmicindent}
            $\displaystyle
                Y_{t,k}\gets
                a_t+\operatorname{clip}_{r_s}(Z_{t,K+k}-a_t),
                \quad k=1,\ldots,K.
            $

        \State $\displaystyle
            \widehat g_t\gets\frac{1}{K}\sum_{k=1}^{K}Y_{t,k}.
        $

        \State Perform the mirror-descent update:
        \Statex \hspace{\algorithmicindent}\hspace{\algorithmicindent}
            $\displaystyle
                x_{t+1}\gets
                \operatorname*{arg\,min}_{z\in\mathcal X}
                \left\{
                    \langle\widehat g_t,z-x_t\rangle
                    +L D_\psi(z,x_t)
                \right\}.
            $
    \EndFor
\EndFor

\State \Return $x_{\mathrm{out}}\gets x_{JS}$.
\end{algorithmic}
\end{algorithm}

\subsection{Convergence Result}
The next theorem combines the robust estimator with the mirror-PL
contraction. The parameter choices and oracle counts are explicit in the
appendix.

\begin{theorem}
\label{thm:upper-accuracy}
Under the domain and mirror-map assumptions of Section~\ref{subsec:main-mirror},
let $f$ be $L$-smooth and $\mu$-mirror-PL, with
$f(x_0)-f^\star\le\Delta_0$. Suppose the oracle satisfies
Definition~\ref{def:heavy-tailed oracles}. For every
$\epsilon\in(0,\Delta_0)$ and $\delta\in(0,1/2)$, return $x_{\mathrm{out}}=x_0$ with zero queries if $\mu>L$; otherwise run CC-MD with the parameter choices in Appendix~\ref{subsec:upper-md}. The output satisfies $\Pp\bigl(f(x_{\mathrm{out}})-f^\star\le\epsilon\bigr)\ge1-\delta$. For $\mu\le L$, the number of stochastic-gradient calls is at most
\[
\widetilde O\left(\left(\kappa\left\lceil\log_2\frac{\Delta_0}{\epsilon}\right\rceil
 +\kappa\left(\frac{\sigma^2}{\mu\epsilon}\right)^{\thetaalpha}\right)\log\left(
 \frac{16\kappa}{\delta}
 \left\lceil\log_2\frac{\Delta_0}{\epsilon}\right\rceil
 \right)\right)
\]
\end{theorem}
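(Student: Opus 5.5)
The argument has three parts: a conditional accuracy guarantee for the centered-clipped estimator $\widehat g(x)$, a one-step descent inequality for the inexact mirror update under mirror-PL, and a stagewise schedule with a union bound.

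\emph{Estimator guarantee.} Fix $x$ and the history, and let $g=\nabla f(x)$. Each block mean $Z_j$ has centered $\alpha$-moment $\E\|Z_j-g\|^\alpha\le C_\alpha\sigma^\alpha b^{1-\alpha}$; this is the von Bahr--Esseen type inequality for martingale differences, whose dimension-free Euclidean form holds with a constant depending only on $\alpha$. By Markov, each block satisfies $\|Z_j-g\|\le\rho:=C\sigma b^{-(\alpha-1)/\alpha}$ with probability at least $3/4$. The Minsker-type geometric-median lemma then gives $\|a-g\|\le C\rho$ with probability $1-e^{-cK}$.

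On that event, condition on $a$; the second half of the samples is independent of it. Take $r\asymp\rho$, larger than $2\|a-g\|$. Standard clipping bounds then give
\begin{itemize}
\item bias $\|\E Y_j-g\|\lesssim\E\|Z-g\|^\alpha/r^{\alpha-1}\lesssim\rho$;
\item boundedness $\|Y_j-\E Y_j\|\le 2r$.
\end{itemize}
Vector Bernstein or Pinelis then yields $\|\widehat g-g\|\lesssim\rho$ with probability $1-e^{-cK}$. The key point is that the bias does not involve $\|g\|$, because we clip around $a\approx g$. Choosing $K\asymp\log(1/\delta')$, we get $\|\widehat g_t-\nabla f(x_t)\|\le\eta_s:=C\sigma b_s^{-(\alpha-1)/\alpha}$ with probability $1-\delta'$ per step.

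\emph{One-step inequality.} Write $e=\widehat g-\nabla f(x)$ and $x^+=\arg\min_z Q_x(z;\widehat g)$. $L$-smoothness gives $f(x^+)\le f(x)+\langle\nabla f(x),x^+-x\rangle+\tfrac L2\|x^+-x\|^2$. Combining this with $D_\psi\ge\tfrac12\|\cdot\|^2$ gives
\[
f(x^+)\le f(x)+Q_x(x^+;\widehat g)-\langle e,x^+-x\rangle .
\]
For the model term, $Q_x(x^+;\widehat g)\le\min_z Q_x(z;\nabla f(x))+\sup_z|\langle e,z-x\rangle|$, where $z$ ranges over the relevant minimizers. Both minimizers lie within distance $O(\|\nabla f\|/L)$ of $x$, but to avoid that dependence I would work with the half-step model $Q$ at $2L$. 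The cross term is handled by Young's inequality:
\[
\langle e,z-x\rangle\le\tfrac{1}{2L}\|e\|^2+\tfrac L2\|z-x\|^2 ,
\]
with the quadratic part absorbed into $LD_\psi$ by strong convexity of the model. This should give
\[
f(x^+)-f^\star\le\Bigl(1-\tfrac{c}{\kappa}\Bigr)(f(x)-f^\star)+\tfrac{C}{L}\|e\|^2 ,
\]
using $-\tfrac1{2L}\mathcal G^2_{\psi,L}\le-\tfrac{\mu}{L}(f-f^\star)$. I expect this step to be the main obstacle. Under constraints the two model minimizers differ, so I must compare $\min Q$ for $\widehat g$ against $\min Q$ for $\nabla f$ uniformly, using the $L$-strong convexity of $Q$ in $z$: the minimal values are $\tfrac1L$-Lipschitz-in-$v$ up to a quadratic term. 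I would prove this via conjugate duality of $z\mapsto LD_\psi(z,x)+\iota_\cX$.

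\emph{Stages.} Stage $s$ targets gap $\Delta_s=\Delta_0 2^{-s}$, with $S=\lceil\log_2(\Delta_0/\epsilon)\rceil$ and $J\asymp\kappa$, so the contraction halves the deterministic part within each stage. Require $\tfrac{C}{L}\eta_s^2\cdot\kappa\lesssim\Delta_{s+1}$, that is $\eta_s^2\lesssim\mu\Delta_{s+1}$. This gives $b_s\asymp\max\{1,(\sigma^2/(\mu\Delta_{s+1}))^{\theta_\alpha}\}$. The number of calls is
\[
2JK\sum_s b_s\lesssim\kappa K\Bigl(S+\sum_s(\sigma^2/(\mu\Delta_s))^{\theta_\alpha}\Bigr),
\]
and the geometric sum is dominated by its last term $Q_\epsilon$. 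A union bound over $JS\approx\kappa\lceil\log_2(\Delta_0/\epsilon)\rceil$ steps with $\delta'=\delta/(JS)$ gives $K\asymp\log(16\kappa S/\delta)$, which is the stated logarithmic factor. Finally, the case $\mu>L$ is trivial: mirror-PL combined with smoothness forces $f$ to be constant, so $x_0$ is optimal.
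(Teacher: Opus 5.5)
Your estimator analysis and your stage accounting are sound: block moments, Minsker's geometric-median lemma, and then bias plus vector Bernstein for the clipped second half. This estimator route differs from the paper's, which avoids bias and concentration altogether. The paper works on the event that at least $3K/4$ blocks in each half lie within $r/4$ of $\nabla f(x)$, and then counts deterministically: the pilot is within $3r/4$, good clipped blocks are unchanged, bad ones are within $7r/4$, and the average is within $5r/8$.

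The genuine gap is the one-step inequality, which you leave at ``this should give''. As sketched it does not close, for four reasons.
\begin{itemize}
\item Bounding $\sup_z|\langle e,z-x\rangle|$ and $-\langle e,x^+-x\rangle$ separately throws away the cancellation between them. Each is of size $\|e\|\,\|\nabla f(x)\|/L$.
\item Young's inequality with weight $\tfrac L2$ cannot repair this at the algorithm's own $L$. Let $z_+$ be the exact step $\arg\min_z Q_x(z;\nabla f(x))$. Already for $\cX=\R^d$ and $\psi=\tfrac12\|\cdot\|^2$ one has $\tfrac L2\|z_+-x\|^2=\tfrac1{2L}\|\nabla f(x)\|^2=-\min_z Q_x(z;\nabla f(x))$. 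So the absorbed quadratic consumes the entire decrease, and the term $-\langle e,x^+-x\rangle$ is still left over.
\item Working with a half-step ($2L$) model analyzes a different update from CC-MD's $LD_\psi$ step. It produces the decrement of a larger-parameter model, whereas mirror-PL controls only $\mathcal G_{\psi,L}$. Your final display uses $\mathcal G_{\psi,L}$ anyway, so the sketch is internally inconsistent.
\item The duality remark names the wrong property. Write $m(v):=\min_{z\in\cX}Q_x(z;v)$ and $z_v$ for its minimizer. Then $m$ is Lipschitz only with constant $\sup\|z_v-x\|$, which reintroduces $\|\nabla f(x)\|$. What you need is that its gradient $v\mapsto z_v-x$ is $\tfrac1L$-Lipschitz.
\end{itemize}

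The missing idea is to keep the two cross terms together. Start from $f(x^+)\le f(x)+Q_x(x^+;\nabla f(x))$ and use optimality of $x^+$ for $\widehat g$:
\begin{align*}
Q_x(x^+;\nabla f(x))&=Q_x(x^+;\widehat g)-\langle e,x^+-x\rangle
\le Q_x(z_+;\widehat g)-\langle e,x^+-x\rangle\\
&=Q_x(z_+;\nabla f(x))+\langle e,z_+-x^+\rangle .
\end{align*}
Only the difference between the exact and inexact steps now appears. Add the variational inequalities of the two steps and use $\langle\nabla\psi(x^+)-\nabla\psi(z_+),x^+-z_+\rangle\ge\|x^+-z_+\|^2$; this gives $\|x^+-z_+\|\le\|e\|/L$. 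Hence
\[
f(x^+)-f^\star\le(1-\mu/L)(f(x)-f^\star)+\|e\|^2/L,
\]
with the algorithm's $L$ and the assumed $\mathcal G_{\psi,L}$. This is the paper's lemma. Equivalently, $m(\widehat g)-\langle\nabla m(\widehat g),e\rangle\le m(\nabla f(x))+\tfrac1{2L}\|e\|^2$, by concavity and $\tfrac1L$-smoothness of $m$; that is what your duality idea should deliver.

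If you prefer Young's inequality, it does work once made precise. Apply it with weight $\tfrac{\beta L}{2}$, $\beta<1$, to both cross terms. Absorb each quadratic using $\tfrac L2\|z_v-x\|^2\le -m(v)$, which follows from $L$-strong convexity of $Q_x(\cdot;v)$ and $Q_x(x;v)=0$. With $\beta=\tfrac12$ this gives contraction factor $1-\tfrac{\mu}{4L}$ and error $\tfrac{3}{2L}\|e\|^2$, and no $2L$ model is needed. With either version of the lemma, the rest of your argument goes through.
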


% PROOF_IDEA_BEGIN upper
\noindent\textbf{Upper Bound: Idea.}
The estimator must suppress outliers without truncating a possibly large
true gradient. Block averages provide weak concentration, and a
geometric-median pilot supplies a reliable center; clipping fresh blocks
around this center yields accuracy independent of the gradient magnitude
(Lemma~\ref{lem:centered-clipping}). The deterministic mirror step then
converts squared estimation error into an additive objective-gap error
(Lemma~\ref{lem:inexact-mirror-contraction}). At gap scale $\Delta$, an
error of order $\sqrt{\mu\Delta}$ permits a constant-factor improvement
after $O(\kappa)$ steps. Successive stages halve the gap: their deterministic
costs add, whereas their increasing sampling costs are dominated by the
finest scale. A conditional union bound makes every stage valid simultaneously.
The full proof and parameter choices are in Appendix~\ref{subsec:upper-md}.
% PROOF_IDEA_END upper

\begin{remark}
The related guarantees discussed above cover constrained mirror descent
under sub-Gaussian noise \citep{FatkhullinHe2024} and unconstrained PL optimization under finite
centered $\alpha$-moments \citep{SadievEtAl2023}.
Here centered clipping yields a high-probability guarantee under the $\alpha$-heavy tailed model on closed convex, possibly unbounded domains.
\end{remark}

\section{Fixed-Dimensional Refinements}\label{sec:fixed}
We now fix the dimension and work with the unconstrained class
$\mathcal F_d(L,\mu,\Delta_0)$. The high-dimensional chains no longer
apply directly; different information-hiding and geometric-search
mechanisms yield sharper bounds.

\begin{theorem}
\label{thm:fd-lower-main}
Fix $d\ge1$, $1<\alpha\le2$, $\sigma\ge0$, and $\kappa\ge6$.
There is a constant $c_0>0$ such that, for $0<\epsilon\le c_0\Delta_0$,
any adaptive randomized gradient-only algorithm guaranteeing
$\mathbb E[f(\widehat x_T)-f^\star]\le\epsilon$ for all
$f\in\mathcal F_d(L,\mu,\Delta_0)$ and
$G\in\mathcal O_{\alpha,\sigma}(f)$ requires at least
\begin{equation}\label{eq:fd-lower-sum}
\Omega\left(
 \log\frac{\Delta_0}{\epsilon}
 +\left(\frac{\sigma^2}{\mu\epsilon}\right)^{\thetaalpha}
 \right).
\end{equation}
\end{theorem}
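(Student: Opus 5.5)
Since the bound is a sum of two nonnegative terms, it suffices to prove each term separately as a lower bound; the maximum then dominates half the sum. Both constructions will live in a single fixed coordinate (embedding a one-dimensional instance as $f(x)=g(x_1)+\tfrac{\mu'}{2}\|x_{2:d}\|^2$ with $\mu\le\mu'\le L$), so the argument is uniform in $d$. The hypothesis $\kappa\ge6$ is only used to leave room between $\mu$ and $L$ for the hard one-dimensional profiles.

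\textbf{Stochastic term.} I will use a two-point testing argument in the spirit of Le Cam. Take quadratics $f_\pm(x)=\tfrac{\mu}{2}(x_1\mp\rho)^2$, with a common minimal value and $f_\pm(0)\le\Delta_0$ once $\epsilon\le c_0\Delta_0$. Choose $\rho$ with $\mu\rho^2\asymp\epsilon$; then any point that is $\epsilon/4$-optimal for one function fails to be so for the other, so the expected error forces a test between $\pm$ with constant success probability. The oracle adds to the exact gradient a thinned heavy-tailed perturbation of the form used in Lemma~\ref{lem:thinned-oracle}. With probability $p$ the response is amplified to reveal a $\pm$-dependent value of magnitude about $\mu\rho/p$; otherwise it is a common value, chosen so that $f_+$ and $f_-$ produce identical responses off the rare event. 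Unbiasedness holds by construction. The $\alpha$-moment constraint reads $p(\mu\rho/p)^\alpha\lesssim\sigma^\alpha$, which gives $p\asymp(\mu\rho/\sigma)^{\alpha/(\alpha-1)}=(\mu\epsilon/\sigma^2)^{\theta_\alpha}$. Since $\nabla f_+$ and $\nabla f_-$ differ by the constant $2\mu\rho$ on the first coordinate, a single query-independent mixture works at every query point, so adaptivity is harmless. Until a revealing event occurs, the transcript has the same law under both hypotheses. Therefore $T\lesssim 1/p$ gives total variation at most $1-e^{-O(1)}$, which bounds the success probability and yields $T=\Omega(Q_\epsilon)$.

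\textbf{Deterministic term.} Here I set $\sigma=0$; exact oracles are legal for any announced $\sigma$. I need an information-theoretic $\Omega(\log(\Delta_0/\epsilon))$ bound for exact gradient queries even in $d=1$. Bisection alone suggests this scale, but gradient queries can reveal much more information, such as the location of the minimizer via $x-f'(x)/\mu$ on quadratics. So I will use a family of $N\asymp(\Delta_0/\epsilon)^{c}$ one-dimensional PL functions with minimizers $m_i$ spread over an interval. The functions agree outside a neighbourhood of $m_i$ whose width is comparable to $\sqrt{\epsilon/\mu}$ times a constant factor. This can be arranged with a flat-then-dipping profile: a common smooth function with a tiny-slope plateau, plus a localized smooth PL-compatible well of depth $\gtrsim\epsilon$ near $m_i$. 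Each exact-gradient query then distinguishes only $O(1)$ candidates. An adversary argument over $T$ adaptive queries shows that at least $N-O(T)$ candidates remain consistent, so the output cannot be $\epsilon$-optimal for most of them unless $T\gtrsim\log N$. Pushing this to a genuine $\log(\Delta_0/\epsilon)$ bound, rather than something linear in $N$, requires nested scales: multiscale wells with geometrically decreasing depths $\Delta_0 2^{-k}$, where each query reveals only $O(1)$ bits per level. Then the Yao minimax principle handles randomized algorithms.

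\textbf{Main obstacle.} The hardest step is the noiseless construction. Plateaus violate PL unless the slope there is at least $\sqrt{2\mu(f-f^\star)}$, which conflicts with hiding the minimizer. I would first try to resolve this with a sign-only hiding scheme: functions whose derivatives agree in magnitude but whose wells are arranged dyadically, so that each query answers one bisection bit. I would use $\kappa\ge6$ to interpolate smoothly between slopes $\mu|x-m|$ and $L|x-m|$ so that the derivative profile is shared across candidates away from the current dyadic cell.
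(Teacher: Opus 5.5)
Your stochastic part is essentially the paper's Lemma~\ref{lem:fd-rare-event}: two shifted quadratics, an oracle whose common response is $\mu x$ and which reveals $\pm\mu\rho/p$ with probability $p$, and the calibration $p\asymp(\mu\epsilon/\sigma^2)^{\theta_\alpha}$. Your Le Cam reduction from expected error is a valid alternative to the paper's direct averaging $\bigl((X_1-r)^2+(X_1+r)^2\bigr)/2\ge r^2$. Markov gives the threshold $4\epsilon$, not $\epsilon/4$, so adjust the constant in $\mu\rho^2\asymp\epsilon$. You should also say that the construction needs $p\le1/2$, i.e.\ $\sigma^2\gtrsim\mu\epsilon$. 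In the complementary regime $Q_\epsilon=O_\alpha(1)$, and it is absorbed by the logarithmic term because $\epsilon\le c_0\Delta_0$.

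The genuine gap is the deterministic $\Omega(\log(\Delta_0/\epsilon))$ term, for which you have no construction.

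Your first family cannot exist. By Lemma~\ref{lem:fd-1d-geometry}, a one-dimensional PL function has $f'<0$ left of its minimizer set and $f'>0$ right of it. Two candidates with separated minimizers therefore have gradients of opposite sign everywhere between them, so they cannot agree there. The plateau also violates PL. Even granting the family, ``$N-O(T)$ candidates remain'' yields $T\gtrsim N$, not $\log N$.

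The fallbacks do not repair this. Derivatives agreeing in magnitude everywhere would make each candidate's minimizer a critical point, hence a minimizer, of every other candidate. And ``$O(1)$ bits per level'' is the wrong invariant: you must bound the total number of bits a single query reveals. Otherwise a query deep inside the true nested chain resolves many levels at once.

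The missing idea is the nested-interval family of Lemma~\ref{lem:fd-tree-info}.
\begin{itemize}
\item On a node $[a,a+s]$ the derivative has boundary values $\mp A(s)$, with $A(s)=\frac67\mu s$ depending only on $s$. The children $[a+s/8,a+3s/8]$ and $[a+5s/8,a+7s/8]$ are disjoint and separated, and the derivative is affine on the connectors.
\item Consequently, outside the selected child the derivative depends only on the revealed prefix and the next bit.
\item Every slope lies in $\{\mu,\frac{36}{35}\mu,\frac{12}{7}\mu,\frac{36}{7}\mu\}$, so each $f_\omega$ is $\mu$-strongly convex and $6\mu$-smooth. This is exactly where $\kappa\ge6$ enters.
\item For a uniform word, a query reveals the next bit. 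Each further bit is revealed only if the query lies in the uniformly selected child, which has conditional probability at most $1/2$. Hence at most $2$ bits are revealed per query in expectation.
\item Take $K=8T+2$ levels. By Markov's inequality, with probability greater than $1/4$ an unresolved node of length $s\ge\ell4^{-K/2}$ remains and the output lies at distance at least $s/8$ from the true child.
\item Strong convexity then gives a gap of at least $\mu\ell^24^{-K}/128$. With $\ell^2=4\Delta_0/(3\mu)$ this forces $T\gtrsim\log(\Delta_0/\epsilon)$ in expectation.
\item Conditioning on the algorithm's seed (Yao's principle) handles randomized algorithms.
\end{itemize}
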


% PROOF_IDEA_BEGIN fd_lower
\noindent\textbf{Hard Instance: Idea.}
Fixed dimension requires information hiding without a long chain of
orthogonal directions. For the stochastic term, two shifted quadratics
produce identical oracle answers until a rare informative sample appears;
finite-moment calibration makes the waiting time equal to the claimed
statistical scale (Lemma~\ref{lem:fd-rare-event}). Conditional on no
revelation, the average squared distance to the two minimizers stays large,
which directly gives an expected-error lower bound. For the noiseless term,
a recursively nested interval hides the minimizer, and each exact-gradient
query reveals only a constant expected number of location bits
(Lemma~\ref{lem:fd-tree-info}). Both constructions occupy one coordinate
and embed into any fixed dimension. Taking their maximum combines the two
obstacles without imposing a condition-number multiplier on the noise term.
The complete proof is in Appendix~\ref{subsec:fd-lower-bisection}.
% PROOF_IDEA_END fd_lower

\begin{remark*}[Origin of the $\kappa$ multiplier]
The stochastic barrier $\kappa Q_\epsilon$ in
Theorem~\ref{thm:hd-accuracy} couples mean estimation with sequential
exploration of $m=\Theta(\kappa)$ hidden directions. In the genuinely noisy
regime, the next direction is revealed with probability at most
$q=\Theta_\alpha(Q_\epsilon^{-1})$, forcing $\Omega(m/q)$ queries;
random rotation prevents shortcuts (Appendix~\ref{subsec:hd-stochastic}).
The exact chain likewise uses $\Theta(\kappa)$ directions per accuracy
layer (Appendix~\ref{subsec:hd-noiseless}). Thus these multipliers reflect
the high-dimensional geometry of the constructions, not mean estimation
alone. Indeed, Theorem~\ref{cor:fd-123} removes the polynomial $\kappa$
multiplier from the stochastic term when $p_\alpha\ge d-1$, including
$d=1,2,3$. The deterministic gaps for $d\ge2$ and the complementary
stochastic regime remain unresolved.
\end{remark*}

For the upper bounds, we use geometric search tailored to the dimension.
Both algorithms access only stochastic gradients and are evaluated using
$\mathsf T_{\epsilon,\delta}^{(d)}$ from \eqref{eq:confidence-definition}.

\noindent\textbf{Robust stochastic bisection ($d=1$).}
The PL condition makes the minimizer set an interval, with the derivative
negative to its left and positive to its right. Starting from a bracket
obtained from $\Delta_0$, we estimate the derivative at its midpoint.
A reliable sign selects the half containing a minimizer; a small estimate
certifies accuracy through PL. Algorithm~\ref{alg:robust-stochastic-bisection}
gives the pseudocode, and Appendix~\ref{subsec:fd-lower-bisection}
proves the guarantee.

\noindent\textbf{Gradient-flow trapping ($d\ge2$).}
Maintain a compact box $R$ and an interior pivot $x$ with the boundary
certificate $f(y)>f(x)-c\|y-x\|$ for all $y\in\partial R$.
If every gradient in $R$ exceeded $c$ in norm, the normalized descent flow
would reach the boundary with a larger decrease, a contradiction.
Hence $R$ contains a point with gradient norm at most $c$
(Lemma~\ref{lem:fd-trap}). Given $f(x)-f^\star\le\Delta$, set
$\eta=\sqrt{\mu\Delta}$ and center a cube of side $32\Delta/\eta$
at $x$, with $c=\eta/8$. The gap bound certifies its boundary without any queries.
Two parallel cuts along a longest side either retain the pivot or replace
it by a certified lower-value grid point. Each refinement
reduces volume by at least $1/6$ while controlling the increase in $c$
(Lemma~\ref{lem:fd-box-refinement}). Reconstruction tolerances and mesh
widths decrease with the cut scale, keeping the accumulated slope increase
below $\eta/8$; a strict interpolation margin extends the certificate from
grid vertices to the entire new boundary.

Relative values on the cuts are reconstructed by stochastic line integrals.
A dyadic tree shares coarse-segment estimates among grid points, rather
than independently estimating each full path from the pivot
(Lemma~\ref{lem:fd-hyperplane-profile}). After $O_d(\log(2+\kappa))$
refinements, $c\le\eta/4$ and $\operatorname{diam}(R)\le\eta/(4L)$;
smoothness makes the pivot $\eta/2$-stationary, and PL halves the gap.
The flow is only a proof device; no function-value oracle is queried.
See Algorithm~\ref{alg:gradient-flow-trapping} and
Appendices~\ref{subsec:fd-reconstruction}--\ref{subsec:fd-trapping}
for reconstruction, error budgets, and the complete stagewise method.
Combining these guarantees with the lower bounds gives the following phase diagram.

Below, we provide the upper bound for any fixed $d\ge 1$.
\begin{theorem}
\label{cor:fd-123}
Fix $d\ge1$, $1<\alpha\le2$, $\sigma\ge0$, and $\kappa\ge6$.
For a sufficiently small constant $c_0>0$, let
$0<\epsilon\le c_0\Delta_0$ and $0<\delta\le1/4$.
Suppressing logarithmic factors in $\kappa$, $\Delta_0/\epsilon$, and
$1/\delta$, we have
\begin{align}
 &d=1:\
 \mathsf T^{(1)}_{\epsilon,\delta}
 =\widetilde\Theta\left[
 \log\frac{\Delta_0}{\epsilon}+Q_\epsilon
 \right],
 \label{eq:fd-d1-summary}\\
 &d=2:\
 \Omega\left[
 \log\frac{\Delta_0}{\epsilon}+Q_\epsilon
 \right]
 \le \mathsf T^{(2)}_{\epsilon,1/4}
 \le\widetilde O\left[
 \kappa^{2/3}\log\frac{\Delta_0}{\epsilon}
 +Q_\epsilon
 \right],
 \label{eq:fd-d2-summary}\\
 &d=3:\
\Omega\left[
 \log\frac{\Delta_0}{\epsilon}+Q_\epsilon
 \right]
 \le \mathsf T^{(3)}_{\epsilon,1/4}
 \le\widetilde O\left[
 \kappa\log\frac{\Delta_0}{\epsilon}
 +Q_\epsilon
 \right],
 \label{eq:fd-d3-summary}\\
&d>3, p_\alpha\ge d-1:\
 \Omega\left[
 \log\frac{\Delta_0}{\epsilon}+Q_\epsilon
 \right]
 \le \mathsf T^{(d)}_{\epsilon,1/4}
 \le \widetilde O\left[
 \kappa^{(d-1)/2}\log\frac{\Delta_0}{\epsilon}
 +Q_\epsilon
 \right],
 \label{eq:fd-d-summary}\\
 &d>3, p_\alpha<d-1:
 \ \Omega\left[\log\frac{\Delta_0}{\epsilon}+Q_\epsilon
 \right]
 \le \mathsf T^{(d)}_{\epsilon,1/4}
 \le \widetilde O\left[
 \kappa^{\frac{d-1}{2}}\log\frac{\Delta_0}{\epsilon}
 +\kappa^{\frac{d-1-p_\alpha}{2}}Q_\epsilon
 \right].\label{eq:fd-dgreater3-summary}
\end{align}
\end{theorem}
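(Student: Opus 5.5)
The statement combines a lower half, common to all five regimes, with dimension-specific upper halves. I will treat the two halves separately and then join them.

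\emph{Lower bounds.} These follow from Theorem~\ref{thm:fd-lower-main}, but that result is stated for expected error, while $\mathsf T^{(d)}_{\epsilon,1/4}$ is a confidence criterion. I will convert between the two. Suppose an algorithm achieves $\Pp(f(\widehat x_T)-f^\star>\epsilon)\le 1/4$ on the class. I reuse the two hard families behind Theorem~\ref{thm:fd-lower-main}: the two shifted quadratics for the stochastic term and the nested-interval family for the deterministic term. On these families a constant-probability failure already forces error of order the separation scale, so the lower-bound arguments give the same count at constant confidence. Concretely:
\begin{itemize}
\item For the rare-event construction (Lemma~\ref{lem:fd-rare-event}), before the informative sample appears the algorithm cannot distinguish the two minimizers. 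With probability at least $1/2$ it therefore lands at distance comparable to the half-separation from one of them. Calibrating that separation so its gap exceeds $\epsilon$ gives failure probability $>1/4$ unless $T\gtrsim Q_\epsilon$.
\item For the information argument (Lemma~\ref{lem:fd-tree-info}), Fano's inequality directly gives a constant-confidence statement. Succeeding with probability $3/4$ requires $\Omega(\log(\Delta_0/\epsilon))$ bits, and each query yields only $O(1)$ bits.
\end{itemize}
Taking the maximum of the two bounds gives their sum up to a factor of $2$. Embedding into $\R^d$ along one coordinate, with a quadratic $\mu$-term on the others, preserves smoothness, PL, and the gap.

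\emph{Upper bound for $d=1$.} I will use robust stochastic bisection, Algorithm~\ref{alg:robust-stochastic-bisection}.
\begin{enumerate}
\item From $f(x_0)-f^\star\le\Delta$ and PL, quadratic growth gives $\operatorname{dist}(x_0,\mathcal X^\star)\le\sqrt{2\Delta/\mu}$. This yields the initial bracket.
\item At each midpoint I estimate $f'$ with the median-of-means or centered-clipping estimator (Lemma~\ref{lem:centered-clipping}) to accuracy $\tau=c\sqrt{\mu\epsilon}$. This costs $O(Q_\epsilon\log(1/\delta'))$ samples, since the heavy-tailed mean-estimation rate is $\sigma(\log(1/\delta')/n)^{\beta_\alpha/2}$.
\item If $|\widehat g|\le 2\tau$, then $|f'|\le3\tau$, and PL certifies $f-f^\star\le 9\tau^2/(2\mu)\le\epsilon$.
\item Otherwise the sign of $\widehat g$ is correct, and I halve the bracket.
\item Once the bracket length falls below $\sqrt{\epsilon/L}$, smoothness gives the bound at any endpoint.
\end{enumerate}
Reaching that length takes $O(\log(\kappa\Delta_0/\epsilon))$ halvings.

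The naive total, $\log\cdot Q_\epsilon$, multiplies the two terms and must be reduced to a sum. To do this I will make the estimation tolerance scale-dependent. At bracket length $\ell$ the midpoint sits at distance about $\ell$ from the minimizer, so only accuracy $\tau_\ell\asymp\mu\ell$ is needed to get the sign. The cost is then $(\sigma/(\mu\ell))^{p_\alpha}$, which is geometric in $\ell$ and dominated by the finest scale $\ell\asymp\sqrt{\epsilon/\mu}$, where it equals $Q_\epsilon$. Each step also pays $O(\log(\cdot/\delta))$ samples. The per-step failure probability is $\delta/\log$, and a union bound over steps finishes.

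\emph{Upper bounds for $d\ge2$.} I will use the stagewise gradient-flow trapping of Algorithm~\ref{alg:gradient-flow-trapping}. Each stage halves the gap $\Delta$ and uses $O_d(\log(2+\kappa))$ box refinements (Lemma~\ref{lem:fd-box-refinement}). Each refinement cuts at scale $w$ and reconstructs function values on a $(d-1)$-dimensional grid through dyadic stochastic line integrals (Lemma~\ref{lem:fd-hyperplane-profile}).

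The deterministic cost per stage is the number of grid points. With mesh $\asymp\eta/L$ on cuts of side $\Delta/\eta$, this gives $\kappa^{(d-1)/2}$ after normalization. For $d=2$ the paper's finer accounting gives $\kappa^{2/3}$, and for $d=3$ the minimum with the high-dimensional CC-MD bound of Theorem~\ref{thm:upper-accuracy}, namely $\kappa$, is taken.

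The stochastic cost comes from reconstructing values to tolerance proportional to $\eta w$ along segments of length $w$. The mean-estimation rate then costs about $(\sigma w/(\eta w\cdot\text{mesh}))^{p_\alpha}$ per grid node, and there are $(w/\text{mesh})^{d-1}$ nodes. Summing over dyadic levels gives a series controlled by the exponent $d-1-p_\alpha$:
\begin{itemize}
\item If $p_\alpha\ge d-1$, the finest level dominates, so each stage costs $O(Q_\Delta)$ up to logarithmic factors.
\item Otherwise the coarse levels dominate, and the cost carries the extra factor $\kappa^{(d-1-p_\alpha)/2}$.
\end{itemize}
Summing over stages with $\Delta=\Delta_0 2^{-s}$, the stochastic terms are geometric and dominated by $Q_\epsilon$, while the deterministic terms add to $\log(\Delta_0/\epsilon)$ times the per-stage count.

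The main obstacle is this dyadic error-budget summation. The accumulated slope loss must stay below $\eta/8$ while the sampling cost remains a single $Q_\epsilon$ rather than $\kappa\cdot Q_\epsilon$. I would first try tolerances decaying like $2^{-j/2}$ across refinement levels, together with a union bound at confidence $\delta/\mathrm{poly}(\kappa,\log(\Delta_0/\epsilon))$.
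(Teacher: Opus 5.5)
Your lower bounds and your basic $d=1$ bisection follow the paper. You use the same rare-event and nested-interval families embedded along one coordinate, and Theorem~\ref{thm:app-fixed-lower} already records their constant-confidence forms. Fano can replace the Markov bound on the revealed prefix, but only if you route it through the bit-revealing oracle of Lemma~\ref{lem:fd-tree-info}, since an $O(1)$-bit bound per query is not automatic for a real-valued response.

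The genuine gap is the $d\ge2$ bookkeeping, which would not produce the stated exponents.
\begin{itemize}
\item \emph{Grid count.} Mesh $\eta/L$ on a cut of side $\Delta/\eta$ gives $(L\Delta/\eta^2)^{d-1}=\kappa^{d-1}$ grid points, not $\kappa^{(d-1)/2}$. The missing ingredient is the second-order interpolation margin of Lemma~\ref{lem:fd-box-refinement}. Simplicial interpolation plus smoothness means vertex errors $\tau$ and mesh $h$ raise the trap slope only by $O_d(\tau/s+Lh^2/s+ch^2/s^2)$. So one can take $\tau_t=as_t/\mathfrak c_d$ and $h_t^2=as_t/(\mathfrak c_dL)$ with $a=\eta/(\mathfrak c_dJ)$, giving $n_t^2\asymp\chi_t=Ls_t/a\le C_d\kappa J$.
\item \emph{Deterministic cost is not the grid count.} Every tree edge is a stratified line integral needing $(Lh^2/u)^{2/3}$ calls for quadrature accuracy $u$. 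After allocation a cut costs $n^k+(Lr^2/\tau)^{2/3}\mathcal A_k(n)$, i.e.\ order $\chi^{2/3}$ for $k=1$ and $\chi^{k/2}$ for $k\ge2$. This is where $\kappa^{2/3}$ at $d=2$ comes from; it is \emph{larger} than the $\kappa^{1/2}$ grid count, not a finer version of it.
\item \emph{The case $d=3$.} Here you get $\mathcal D_2(\kappa J)=\kappa J$ directly. Appealing to CC-MD would not help anyway: its bound carries $\kappa Q_\epsilon$, and a minimum of two sums is not the sum of termwise minima.
\end{itemize}

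Your stochastic summation is reversed, and the obstacle you leave open has a simple resolution. Level $\ell$ of a cut tree has $O_k(2^{k\ell})$ edges of length $O_k(r2^{-\ell})$, so tolerance $v_\ell$ costs $(\sigma r)^{p_\alpha}2^{(k-p_\alpha)\ell}v_\ell^{-p_\alpha}$. Lemma~\ref{lem:fd-allocation} turns the level sum into $(\sigma r/\tau)^{p_\alpha}\mathcal C_{k,p_\alpha}(n)$. For $k<p_\alpha$ the \emph{coarse} levels dominate, and a whole cut costs about one line integral. The factor $n^{k-p_\alpha}$ arises for $k>p_\alpha$ exactly because the \emph{finest} level dominates.

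No decaying budget across refinements is needed:
\begin{itemize}
\item With the uniform slope increment $a$, the ratio $\sigma s_t/\tau_t=\mathfrak c_d\sigma/a$ is scale-free. So the stochastic part of refinement $t$ costs, up to logarithms, $C_{d,\alpha}J^{p_\alpha}(\sigma^2/(\mu\Delta))^{\theta_\alpha}\mathcal C_{k,p_\alpha}(n_t)$.
\item The total slope increase is $Ja=\eta/\mathfrak c_d$, and there are only $J=O_d(\log(2+\kappa))$ refinements.
\item For $k>p_\alpha$, the geometric decay of $s_t$ gives $\sum_t n_t^{k-p_\alpha}\le C_{d,\alpha}(\kappa J)^{(k-p_\alpha)/2}$.
\end{itemize}
Summing over stages gives Theorem~\ref{thm:fd-upper-main}(ii). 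The paper's proof of this statement just reads the four $d\ge2$ rows off $\mathcal D_k$ and $\mathcal H_{k,p_\alpha}$ there.

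Finally, your $d=1$ refinement is unnecessary: $K_\epsilon Q_\epsilon=\widetilde O(Q_\epsilon)$ once logarithms in $\kappa$ and $\Delta_0/\epsilon$ are suppressed. Its justification is also false. The midpoint can be arbitrarily close to the minimizer set $\mathcal S$, so with tolerance $\asymp\mu\ell$ a small estimate certifies nothing at coarse scales. You would have to recenter using $|f'(x)|\ge\mu\operatorname{dist}(x,\mathcal S)$.
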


% PROOF_IDEA_BEGIN fd_phase
\noindent\textbf{Upper Bound: Idea.}
In one dimension, the derivative's sign identifies which side of a query
contains the minimizer interval, so a robust sign estimate supports bisection
(Lemma~\ref{lem:fd-1d-geometry}). In higher dimensions, boundary values
certify that a box traps a descending gradient trajectory; shrinking such a
box eventually makes its pivot nearly stationary
(Lemmas~\ref{lem:fd-trap}--\ref{lem:fd-box-refinement}). Function values
are unavailable, so shared stochastic line integrals reconstruct relative
values on each cutting surface. Across dyadic levels, the growth in the
number of surface points competes with the decay in integration length.
The resulting noise sum is bounded up to logarithms precisely when
$p_\alpha\ge d-1$ (Lemma~\ref{lem:fd-hyperplane-profile}); otherwise its
finest levels produce the extra factor. PL converts stationarity to accuracy,
and Theorem~\ref{thm:app-fixed-lower} supplies the lower terms.
The full derivation is in Appendix~\ref{subsec:fd-phase}, in the proof of
Corollary~\ref{cor:app-fixed-phase}.
% PROOF_IDEA_END fd_phase

Thus the full complexity is matched up to logarithmic factors in dimension
one. In dimensions two and three, and in fixed dimension $d>3$ when
$p_\alpha\ge d-1$, the stochastic term $Q_\epsilon$ is matched; the
condition-number dependence of the deterministic term remains unresolved.

\section{Conclusion and Future Directions}\label{sec:conclusion}
We establish noise-adaptive complexity bounds for smooth PL optimization
with conditionally unbiased stochastic gradients satisfying only a finite
centered $\alpha$-moment. In growing dimension, the lower bound separates
the exact-gradient transient from the heavy-tailed estimation cost and
remains informative when $\sigma=0$. Under mirror-PL, centered-clipped
mirror descent matches this bound up to logarithmic factors with high
probability, including on closed convex, unbounded domains. Fixed dimension
permits sharper geometric search: robust stochastic bisection characterizes
the one-dimensional complexity up to logarithmic factors, while gradient-flow trapping matches the
stochastic term for $d=2,3$ and for $d>3$ whenever $p_\alpha\ge d-1$.

The remaining questions concern the deterministic condition-number
dependence in dimensions $d\ge2$ and the stochastic complexity when
$p_\alpha<d-1$. In particular, it remains to determine whether the extra
surface-reconstruction factor is necessary or can be removed by a different
search strategy. Developing methods that do not require prior knowledge
of the noise and PL parameters is another natural direction.

\newpage
\bibliographystyle{plainnat}
\bibliography{reference}

\appendix
\newpage
\begin{center}
    \large APPENDIX
\end{center}

\makeatletter
\providecommand*{\theHALG@line}{}
\renewcommand*{\theHALG@line}{\thealgorithm.\arabic{ALG@line}}
\makeatother
\noindent\textbf{Notation and scope of the proofs.}
We use the conventions of Section~\ref{sec:prelim} throughout:
$f^\star$ and $\mathcal X^\star$ denote the optimal value and minimizer
set, and $\theta_\alpha$, $\beta_\alpha$, $p_\alpha$, $Q_\epsilon$, and
$A_\epsilon$ are defined in
\eqref{eq:heavy-exponents}--\eqref{eq:Q-alpha}.
The filtration $\mathscr F_{t-1}$ records the full pre-call history;
$\mathcal F_d(L,\mu,\Delta_0)$ denotes a function class, not a filtration.
The expected risk $\mathsf R_T^{(d)}$ and the high-probability query
complexity $\mathsf T_{\epsilon,\delta}^{(d)}$ are distinct criteria,
defined in \eqref{eq:risk-definition} and
\eqref{eq:confidence-definition}. All conditional oracle statements use
this full history, with conditional independence only for samples requested
together at a fixed location.

The confidence parameter is $\delta$; spatial mesh widths are $h$ or $h_t$.
A prescribed-budget output is denoted by $\widehat x_T$, and an algorithmic
output by $x_{\mathrm{out}}$. Construction-specific indices and parameters
are introduced locally. In particular, the layer count in Appendix~\ref{app:lower-proofs}
and the block count in Appendix~\ref{app:upper-proofs} are different uses of
$K$. Constants such as $C_d,C_\alpha,C_{d,\alpha}$ may change between
bounds; named construction constants and algorithm parameters do not.
Only physical oracle calls are counted, under the convention in
Section~\ref{sec:prelim}.

\section{Comparison with \texorpdfstring{\citet{MasihaKiyavashThiran2026}}{Masiha et al. (2026)}}\label{appendix-detailed comparison}
We compare with the statements in version~2 of
\citet{MasihaKiyavashThiran2026}. Write $\nu\in[1,2)$ for their geometric
exponent, to distinguish it from the noise-moment order $\alpha$ and the
geometric decay factor $\gamma$ used below. Their model imposes
$f-f^\star\le\tau\|\nabla f\|^\nu$ on the initial sublevel set while retaining
global smoothness. Their Theorems~2.1, 3.1, and 4.1 respectively establish
triviality under a global inequality with $\nu<2$, a deterministic lower
bound of order $L\tau^{2/\nu}\epsilon^{-(2-\nu)/\nu}$, and a bounded-variance
lower bound adding $L\sigma^2\tau^{4/\nu}\epsilon^{-(4-\nu)/\nu}$.
Those theorems do not state a separate globally PL endpoint result at $\nu=2$.

Our geometric assumption is instead the global quadratic PL inequality,
while the centered noise has only a finite $\alpha$-moment.
The finite-variance specialization of our stochastic term is
$\kappa\sigma^2/(\mu\epsilon)$. Substituting $\nu=2$ and
$\tau=(2\mu)^{-1}$ formally into the polynomial noise expression above gives
$L\sigma^2/(4\mu^2\epsilon)$, the same algebraic scale.
This algebraic comparison does not extend their theorems beyond the
stated exponent range. Our proofs separately establish the global-PL
heavy-tail bound, its exact-gradient transient, the mirror-PL upper bound,
and the fixed-dimensional refinements.

\section{Proofs for Section~\ref{sec:lower}}\label{app:lower-proofs}

The lower bound separates two obstructions: learning the next hidden direction
under heavy-tailed noise, and traversing many accuracy scales even with exact
gradients. We first develop each normalized construction and then choose its
parameters when matching the prescribed $L$, $\mu$, $\sigma$, and $\epsilon$.
The two resulting ambient dimensions are compared only when the constructions
are combined in the final subsection.

\subsection{Main ideas and proof sketch}\label{subsec:hd-overview}

The lower bound has two logically independent hard families. The first is
the Bernoulli-thinned heavy-tailed chain developed in the next sections. 
The second is an exact-gradient geometrically weighted chain. 
Because an exact oracle has centered $\alpha$-moment zero, it is legal for
every announced noise level $\sigma\ge0$. The noise-adaptive lower bound is therefore at least the maximum of the two component risks, which gives the additive noise-adaptive statement.

For the stochastic obstruction, take a chain whose width is proportional to
$\kappa$. Its next hidden gradient coordinate is returned rarely and enlarged
on revelation so that the oracle remains unbiased. The moment constraint
sets the waiting time per coordinate, while radial compactification makes the
objective globally smooth and globally PL. A random rotation prevents an
adaptive method from bypassing the chain by guessing hidden directions.

The exact-gradient transient needs a different construction. Concatenating
unweighted chains would worsen the PL constant with the number of layers.
Instead, we decrease the coordinate amplitudes geometrically: the squared
mass of the unresolved tail stays proportional to the layer width times the
first unresolved squared amplitude. This permits logarithmically many layers
without changing the order of the condition number. A guard that vanishes
near the minimizer and a rotation argument with nonuniform thresholds preserve
the last layer's small objective gap.

\subsection{Bernoulli-thinned robust zero-chain and global PL compactification}\label{subsec:hd-chain}
We first construct the scalar activation, coupling, and barrier functions. Their local bounds will control both the information structure and the global geometry of the chain.

Define
\begin{equation}\label{eq:Theta-def}
\hat{\Theta}(t)=
\begin{cases}
0,&t\le\frac12,\\[1mm]
3(2t-1)^2-2(2t-1)^3,&\frac12<t<1,\\[1mm]
1,&t\ge1,
\end{cases}
\end{equation}
\begin{equation}\label{eq:phi-def}
  \phi(t)=1-\exp\!\left(-\frac{(t-1)^2}{2}\right),
\end{equation}
and
\begin{equation}\label{eq:B-def}
B(t)=
\begin{cases}
2+\frac12(t+\frac12)^2,&t\le-\frac12,\\[1mm]
2,&-\frac12<t\le\frac12,\\[1mm]
2(1-\hat{\Theta}(t)),&\frac12<t<1,\\[1mm]
\frac12(t-1)^2,&t\ge1.
\end{cases}
\end{equation}
All three functions are continuously differentiable.  Their derivatives are
Lipschitz, although their second derivatives need not agree at the break
points.

\begin{lemma}\label[lemma]{lem:scalar}
For every $t\in\R$,
\begin{align}
&0\le\hat{\Theta}(t)\le1,
&&\norm{\hat{\Theta}'}_\infty\le3,
&&\Lip(\hat{\Theta}')\le24,\label{eq:theta-bounds}\\
&0\le\phi(t)\le1,
&&\norm{\phi'}_\infty=e^{-1/2}<1,
&&\Lip(\phi')\le1,\label{eq:phi-bounds}\\
&&&\Lip(B')\le48.\label{eq:B-smooth}
\end{align}
In addition,
\begin{equation}\label{eq:B-aux-bounds}
 B(t)\le\frac12t^2+2,
 \qquad
 \abs{B'(t)-t}\le7,
 \qquad
 tB'(t)\ge\frac12t^2-\frac{13}{2}.
\end{equation}
\end{lemma}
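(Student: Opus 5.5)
The plan is a direct piecewise verification. Each of $\hat\Theta$, $\phi$, $B$ is an explicit elementary function on finitely many intervals. So I will (i) check that values and first derivatives agree at every break point, which gives $C^1$ regularity; (ii) bound the second derivative on each open piece; and (iii) use the fact that a continuous, piecewise-$C^1$ function whose derivative is bounded by $M$ on each piece is $M$-Lipschitz globally. Applying (iii) to $\hat\Theta'$, $\phi'$ and $B'$ gives the Lipschitz constants. The auxiliary inequalities \eqref{eq:B-aux-bounds} are then checked case by case on the four pieces of $B$.

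\emph{Step 1: $\hat\Theta$ and $\phi$.} On $(\tfrac12,1)$ substitute $s=2t-1\in(0,1)$. Then $\hat\Theta=3s^2-2s^3\in[0,1]$, since it increases from $0$ to $1$. Its derivatives are $\hat\Theta'(t)=12s(1-s)\le 3$ and $\hat\Theta''(t)=24(1-2s)$, so $|\hat\Theta''|\le 24$. At $t=\tfrac12$ and $t=1$ the values are $0,1$ and the derivatives are $0$, which match the constant pieces. This gives \eqref{eq:theta-bounds}.

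For $\phi$, put $u=t-1$. Then $\phi'=u e^{-u^2/2}$, which is maximized in absolute value at $|u|=1$ with value $e^{-1/2}$. Also $\phi''=(1-u^2)e^{-u^2/2}$; its absolute value is $\le1$, with equality at $u=0$, and on $u^2>1$ it is at most $2e^{-3/2}$. Together with $0\le\phi\le1$, this gives \eqref{eq:phi-bounds}.

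\emph{Step 2: smoothness of $B$.} The pieces of $B'$ are:
\begin{itemize}
\item $t+\tfrac12$ on $t\le-\tfrac12$;
\item $0$ on $(-\tfrac12,\tfrac12]$;
\item $-2\hat\Theta'(t)$ on $(\tfrac12,1)$;
\item $t-1$ on $t\ge1$.
\end{itemize}
At the break points $-\tfrac12,\tfrac12,1$ the values of $B$ are $2,2,0$ from both sides, and $B'=0$ from both sides. The second derivatives on the pieces are bounded by $1$, $0$, $2\cdot24=48$ and $1$ respectively. Hence $\Lip(B')\le48$, which is \eqref{eq:B-smooth}.

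\emph{Step 3: the auxiliary bounds \eqref{eq:B-aux-bounds}.} Each is checked on the four pieces.
\begin{itemize}
\item $B(t)\le \tfrac12t^2+2$:
  \begin{itemize}
  \item for $t\le-\tfrac12$, use $|t+\tfrac12|\le|t|$;
  \item in the middle, $B=2$;
  \item on $(\tfrac12,1)$, $B\le2$;
  \item for $t\ge1$, $(t-1)^2\le t^2$.
  \end{itemize}
\item $|B'(t)-t|\le7$: the deviation is $\tfrac12$, at most $\tfrac12$, at most $2\cdot3+1=7$, and $1$ on the four pieces respectively.
\item $tB'(t)\ge\tfrac12t^2-\tfrac{13}2$:
  \begin{itemize}
  \item for $t\le-\tfrac12$, $t^2+\tfrac t2\ge\tfrac12t^2-\tfrac18$;
  \item in the middle, $0\ge \tfrac18-\tfrac{13}2$;
  \item on $(\tfrac12,1)$, $tB'=-2t\hat\Theta'\ge-6=\tfrac12-\tfrac{13}2\ge\tfrac12t^2-\tfrac{13}2$;
  \item for $t\ge1$, $t(t-1)-\tfrac12t^2+\tfrac{13}2=\tfrac12(t-1)^2+6>0$.
  \end{itemize}
\end{itemize}

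The only delicate point is the third inequality on $(\tfrac12,1)$. There the constant $\tfrac{13}2$ is exactly tight at the endpoint, so the crude estimate $t^2+t(B'-t)\ge t^2-7|t|$ is too weak. One must instead use $|\hat\Theta'|\le3$ directly, as above.
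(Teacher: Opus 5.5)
Your proposal is correct and follows the paper's proof essentially step for step: the substitution $u=2t-1$ with explicit first and second derivatives, $C^1$ matching at the break points with piecewise second-derivative bounds $1,0,48,1$ for $B'$, and a four-interval check of the auxiliary inequalities. On $(\tfrac12,1)$ the paper likewise uses $-6\le B'\le 0$ and $t\le1$. One harmless inaccuracy is in your closing remark: the crude bound $tB'\ge t^2-7|t|$ would in fact suffice on $(\tfrac12,1)$, since $t^2-14t+13=(t-1)(t-13)>0$ there; it fails only for $1<|t|<13$, where you use the exact formulas anyway.
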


\begin{proof}
On $(1/2,1)$ put $u=2t-1\in(0,1)$.  Then
\[
 \hat{\Theta}(t)=3u^2-2u^3,
 \qquad
 \hat{\Theta}'(t)=12u(1-u),
 \qquad
 \hat{\Theta}''(t)=24(1-2u).
\]
This proves \eqref{eq:theta-bounds}, including the endpoint matching.
Furthermore,
\[
 \phi'(t)=(t-1)e^{-(t-1)^2/2},
 \qquad
 \phi''(t)=\bigl(1-(t-1)^2\bigr)e^{-(t-1)^2/2}.
\]
The first expression has maximum absolute value $e^{-1/2}$, and the second
has absolute value at most $1$, proving \eqref{eq:phi-bounds}.

For $B$, the derivative equals
\[
 B'(t)=
 \begin{cases}
 t+\frac12,&t<-\frac12,\\
 0,&-\frac12<t<\frac12,\\
 -2\hat{\Theta}'(t),&\frac12<t<1,\\
 t-1,&t>1.
 \end{cases}
\]
The largest absolute slope of these pieces is $48$, and the derivative is
continuous at the break points, giving \eqref{eq:B-smooth}.

The three inequalities in \eqref{eq:B-aux-bounds} follow by checking the
same four intervals.  For completeness, on $t\le-1/2$ one has
$B(t)=2+\frac12(t+1/2)^2\le2+t^2/2$ and $B'(t)-t=1/2$; on
$[-1/2,1/2]$ they are immediate; on $(1/2,1)$, one uses
$0\le B\le2$, $-6\le B'\le0$, and $t\le1$; and on $[1,\infty)$ one uses
$B(t)=\frac12(t-1)^2$ and $B'(t)=t-1$.  In particular, on $(1/2,1)$,
$tB'(t)\ge-6\ge t^2/2-13/2$.  This proves all claims.
\end{proof}

We now combine the scalar functions into a chain. Its support property is the information constraint that will survive Bernoulli thinning.

For $N\ge1$, define
\begin{equation}\label{eq:chain-def}
 F_N(x)
 :=\phi(x_1)
   +\sum_{i=1}^{N-1}\hat{\Theta}(x_i)\phi(x_{i+1})
   +\sum_{i=1}^N B(x_i),
 \qquad x\in\R^N.
\end{equation}
For $a\ge0$, define the progress index
\begin{equation}\label{eq:progress-def}
 \prog_a(x)
 :=\max\bigl(\{i:\abs{x_i}>a\}\cup\{0\}\bigr).
\end{equation}

\begin{lemma}\label[lemma]{lem:base-chain}
For every $N\ge1$:
\begin{enumerate}[label=(\roman*)]
\item $F_N(x)\ge0$ for all $x$, $F_N(\one_N)=0$, and
$F_N(0)<\frac52N$.
\item $F_N$ is globally $79$-smooth.
\item If $k=\prog_{1/4}(x)$, then
\begin{equation}\label{eq:zero-chain-support}
  \nabla_jF_N(x)=0\qquad\text{for every }j\ge k+2.
\end{equation}
If $k<N$, then
\begin{equation}\label{eq:next-coordinate-bound}
  \abs{\nabla_{k+1}F_N(x)}\le1.
\end{equation}
\item If $N=2m$ and $\prog_{1/4}(x)<m$, then
\begin{equation}\label{eq:tail-barrier}
  F_{2m}(x)\ge2m.
\end{equation}
\end{enumerate}
\end{lemma}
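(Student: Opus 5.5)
The plan is to verify the four items directly from the scalar bounds of Lemma~\ref{lem:scalar}, together with the explicit coordinate formula
\[
 \nabla_jF_N(x)=\one\{j=1\}\phi'(x_1)+\hat\Theta'(x_j)\phi(x_{j+1})+\hat\Theta(x_{j-1})\phi'(x_j)+B'(x_j),
\]
with the conventions $\phi(x_{N+1}):=0$ and $\hat\Theta(x_0):=0$.

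\textbf{Item (i).} Each of $\hat\Theta,\phi,B$ is nonnegative; for $B$ this is read off the four pieces of \eqref{eq:B-def}. Hence $F_N\ge0$. At $\one_N$ we have $\phi(1)=0$ and $B(1)=0$, so every term vanishes. At $0$ we have $\hat\Theta(0)=0$, so the coupling terms vanish, and $B(0)=2$. Therefore $F_N(0)=1-e^{-1/2}+2N<\tfrac12+2N\le\tfrac52N$ for $N\ge1$.

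\textbf{Item (iii).} Let $k=\prog_{1/4}(x)$, so $\abs{x_i}\le1/4$ for every $i\ge k+1$. On $[-1/4,1/4]$ we have $\hat\Theta=\hat\Theta'=0$ and $B'=0$, since $B$ is constant on $(-1/2,1/2]$.
\begin{itemize}
\item For $j\ge k+2$ (so $j\ge2$), the terms $\hat\Theta'(x_j)$, $\hat\Theta(x_{j-1})$ and $B'(x_j)$ all vanish, because $j-1\ge k+1$. This gives \eqref{eq:zero-chain-support}.
\item For $j=k+1$, the only surviving term is $\hat\Theta(x_k)\phi'(x_{k+1})$, or $\phi'(x_1)$ when $k=0$.
\end{itemize}
In the second case $\abs{\hat\Theta}\le1$ and $\norm{\phi'}_\infty=e^{-1/2}<1$ give \eqref{eq:next-coordinate-bound}.

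\textbf{Item (iv).} If $\prog_{1/4}(x)<m$, then $\abs{x_i}\le1/4$ for all $i=m,\dots,2m$. Each of these $m+1$ coordinates contributes $B(x_i)=2$. Since all other terms are nonnegative, $F_{2m}(x)\ge2(m+1)\ge2m$.

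\textbf{Item (ii).} This is the only step needing care, and I expect the constant bookkeeping to be the main obstacle. I will avoid second derivatives, which are discontinuous at the break points, and bound $\abs{\nabla_jF(x)-\nabla_jF(y)}$ directly. The tool is the product rule for Lipschitz differences: for $\abs{u}\le U$, $\abs{v}\le V$, $\Lip(u)\le a$, $\Lip(v)\le b$,
\[
 \abs{u(s)v(t)-u(s')v(t')}\le aV\abs{s-s'}+Ub\abs{t-t'}.
\]
The individual terms are bounded as follows.
\begin{itemize}
\item $\hat\Theta'(x_j)\phi(x_{j+1})$ contributes at most $24\abs{\Delta_j}+3e^{-1/2}\abs{\Delta_{j+1}}$.
\item $\hat\Theta(x_{j-1})\phi'(x_j)$ contributes at most $3e^{-1/2}\abs{\Delta_{j-1}}+1\cdot\abs{\Delta_j}$.
\item $B'(x_j)$ contributes at most $48\abs{\Delta_j}$.
\item $\phi'(x_1)$ contributes at most $\abs{\Delta_1}$.
\end{itemize}
Here $\Delta=x-y$. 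Together these give $\abs{\nabla_jF(x)-\nabla_jF(y)}\le\sum_iM_{ji}\abs{\Delta_i}$ for a nonnegative symmetric tridiagonal matrix $M$. Its diagonal entries are at most $1+24+1+48=74$ and its off-diagonal entries are at most $3e^{-1/2}<1.82$.

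The Schur test (maximum row sum, which equals the maximum column sum by symmetry) then bounds $\norm{M}_{2\to2}$ by $74+2\cdot1.82<78$. Hence $\norm{\nabla F_N(x)-\nabla F_N(y)}\le\norm{M}\norm{\Delta}\le79\norm{x-y}$, uniformly in $N$.
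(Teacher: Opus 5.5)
Your proposal is correct and follows essentially the same route as the paper: items (i), (iii), and (iv) are read off directly from the scalar facts in Lemma~\ref{lem:scalar}, and smoothness is obtained by bounding a nonnegative tridiagonal Lipschitz-coefficient matrix through its row and column sums. The differences are only cosmetic and none affects validity: you use the sharper cross coefficient $3e^{-1/2}$ where the paper uses $3$, you overcount the diagonal as $74$ (the true count is $73$, since the $\phi'(x_1)$ term and the incoming coupling term never occur in the same row), and in (iv) you use $m+1$ barrier coordinates where the paper uses $m$.
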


\begin{proof}
Nonnegativity follows from the nonnegativity of $\hat{\Theta},\phi,B$.  At
$x=\one_N$, one has $\phi(1)=B(1)=0$, hence $F_N(\one_N)=0$.  At the origin,
all coupling terms vanish because $\hat{\Theta}(0)=0$, so
\[
 F_N(0)=\phi(0)+2N<\frac52N.
\]

For an interior coordinate $2\le i\le N-1$,
\begin{equation}\label{eq:chain-gradient-coordinate}
 \nabla_iF_N(x)
 =\hat{\Theta}(x_{i-1})\phi'(x_i)
  +\hat{\Theta}'(x_i)\phi(x_{i+1})
  +B'(x_i).
\end{equation}
At $i=1$, the incoming term is $\phi'(x_1)$; at $i=N$, omit the outgoing
term. Omit nonexistent neighbors throughout, including when $N=1$. By
\cref{lem:scalar}, for arbitrary $x,y$,
\begin{align*}
 \abs{\nabla_iF_N(x)-\nabla_iF_N(y)}
 &\le 3\abs{x_{i-1}-y_{i-1}}
      +73\abs{x_i-y_i}
      +3\abs{x_{i+1}-y_{i+1}}.
\end{align*}
Indeed, the first product contributes at most
$3\abs{x_{i-1}-y_{i-1}}+\abs{x_i-y_i}$, the second contributes at most
$24\abs{x_i-y_i}+3\abs{x_{i+1}-y_{i+1}}$, and $B'$ contributes at most
$48\abs{x_i-y_i}$.  The corresponding nonnegative tridiagonal coefficient
matrix has maximum row sum and maximum column sum at most $79$, hence its
Euclidean operator norm is at most
$\sqrt{\norm{A}_1\norm{A}_\infty}\le79$.  Thus $F_N$ is $79$-smooth.

Let $k=\prog_{1/4}(x)$.  If $j\ge k+2$, then
$\abs{x_{j-1}},\abs{x_j},\abs{x_{j+1}}\le1/4$.  Consequently
$\hat{\Theta}(x_{j-1})=0$, $\hat{\Theta}'(x_j)=0$, and $B'(x_j)=0$, which proves
\eqref{eq:zero-chain-support}.  For $j=k+1$ with $k\ge1$, all terms in
\eqref{eq:chain-gradient-coordinate} vanish except possibly
$\hat{\Theta}(x_k)\phi'(x_{k+1})$.  When $k=0$, the only possibly nonzero term
is instead the leading term $\phi'(x_1)$.  In both cases,
$\abs{\nabla_{k+1}F_N(x)}\le1$, proving
\eqref{eq:next-coordinate-bound}.

Finally, if $N=2m$ and $\prog_{1/4}(x)<m$, then each coordinate
$i=m+1,\ldots,2m$ lies in $[-1/4,1/4]$.  Hence $B(x_i)=2$, and all other
terms are nonnegative.  Thus
\[
 F_{2m}(x)\ge\sum_{i=m+1}^{2m}B(x_i)=2m.
\]
\end{proof}

Fix $m\ge1$ and $q\in(0,1]$.  Let $z\sim\mathrm{Bernoulli}(q)$.  For a
query $x\in\mathbb R^{2m}$, put $k=\operatorname{prog}_{1/4}(x)$ and define
\begin{equation}\label{eq:thinned-oracle}
 [g_{m,q}(x,z)]_i
 :=\nabla_iF_{2m}(x)
 \left[1+\mathbf 1_{\{i>k\}}\left(\frac zq-1\right)\right].
\end{equation}
Already exposed coordinates are exact, whereas every hidden coordinate is
multiplied by $z/q$.  By the zero-chain property, at most the single
coordinate $k+1$ can be both hidden and nonzero.

\begin{lemma}
\label[lemma]{lem:thinned-oracle}
For every $x\in\mathbb R^{2m}$,
\begin{align}
 \mathbb E_z[g_{m,q}(x,z)]&=\nabla F_{2m}(x),
 \label{eq:thin-unbiased}\\
 \mathbb E_z\|g_{m,q}(x,z)-\nabla F_{2m}(x)\|^\alpha
 &\le2q^{1-\alpha}.
 \label{eq:thin-alpha-moment}
\end{align}
When $q=1$, the left-hand side of \eqref{eq:thin-alpha-moment} is in fact
zero.  Moreover,
\begin{equation}\label{eq:prob-zero-chain}
 \operatorname{prog}_0(g_{m,q}(x,z))
 \le \operatorname{prog}_{1/4}(x)+1
\end{equation}
always, and
\begin{equation}\label{eq:reveal-probability}
 \mathbb P_z\!\left(
 \operatorname{prog}_0(g_{m,q}(x,z))>
 \operatorname{prog}_{1/4}(x)\right)\le q.
\end{equation}
\end{lemma}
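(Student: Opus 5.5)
The plan is to reduce everything to the single coordinate $k+1$, where $k=\prog_{1/4}(x)$, using the zero-chain support property \eqref{eq:zero-chain-support} and the next-coordinate bound \eqref{eq:next-coordinate-bound} of \cref{lem:base-chain}.

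\textbf{Structure of the perturbation.} For $i\le k$ the multiplier in \eqref{eq:thinned-oracle} is $1$, so $[g_{m,q}(x,z)]_i=\nabla_iF_{2m}(x)$. For $i\ge k+2$ the gradient coordinate vanishes by \eqref{eq:zero-chain-support}, so $[g_{m,q}]_i=0$ regardless of $z$. Hence
\[
g_{m,q}(x,z)-\nabla F_{2m}(x)=\Bigl(\frac zq-1\Bigr)\nabla_{k+1}F_{2m}(x)\,e_{k+1}
\]
when $k<2m$, and the difference is $0$ when $k=2m$.

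\textbf{Unbiasedness and moment bound.} Unbiasedness \eqref{eq:thin-unbiased} follows at once from $\mathbb E_z[z/q]=1$. For \eqref{eq:thin-alpha-moment}, use $|\nabla_{k+1}F_{2m}(x)|\le1$ to get
\[
\mathbb E_z\|g-\nabla F\|^\alpha\le \mathbb E|z/q-1|^\alpha=q(1/q-1)^\alpha+(1-q).
\]
The first term is at most $q^{1-\alpha}$. The second is at most $1\le q^{1-\alpha}$, since $q\le1$ and $\alpha>1$. Together they give $2q^{1-\alpha}$. When $q=1$ we have $z=1$ almost surely, so the difference vanishes identically.

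\textbf{Progress statements.} Every coordinate $i\ge k+2$ of $g$ is zero, so $\prog_0(g)\le k+1$, which is \eqref{eq:prob-zero-chain}. For \eqref{eq:reveal-probability}, the event $\prog_0(g)>k$ forces $[g]_{k+1}\ne0$. When $k<2m$, that coordinate equals $(z/q)\nabla_{k+1}F_{2m}(x)$, so the event requires $z=1$, which has probability $q$. When $k=2m$, the event $\prog_0(g)>2m$ is empty.

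\textbf{Remaining subtlety.} The only real care needed is the edge case $k=0$, where the next coordinate is governed by the leading term $\phi'(x_1)$; \cref{lem:base-chain}(iii) already covers this case.
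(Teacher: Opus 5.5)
Your proposal is correct and follows essentially the same route as the paper. You reduce the fluctuation to coordinate $k+1$ via the zero-chain support property and $|\nabla_{k+1}F_{2m}(x)|\le1$, bound $\mathbb E|z/q-1|^\alpha=q(1/q-1)^\alpha+(1-q)\le2q^{1-\alpha}$, and note that any new progress requires $z=1$; your explicit handling of the boundary case $k=2m$ is a small detail the paper leaves implicit.
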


\begin{proof}
For $i\le k$, the multiplier in \eqref{eq:thinned-oracle} is one; for
$i>k$, it is $z/q$, whose expectation is one.  This proves unbiasedness.
Only coordinate $k+1$ can fluctuate, and its true magnitude is at most one.
Therefore
\begin{align*}
 &\mathbb E_z\|g_{m,q}(x,z)-\nabla F_{2m}(x)\|^\alpha\\
 &\quad\le
 q\left(\frac1q-1\right)^\alpha+(1-q)
 =(1-q)^\alpha q^{1-\alpha}+(1-q)\\
 &\quad\le q^{1-\alpha}+1\le2q^{1-\alpha},
\end{align*}
where the last inequality uses $q^{1-\alpha}\ge1$.  For $q=1$, the
multiplier is identically one and the error is zero.  The support statement
follows from the deterministic zero-chain property.  Progress beyond the
current coordinate can occur only when $z=1$, which has probability $q$.
\end{proof}

The zero-chain support rule alone does not prevent unbounded queries from
bypassing the hidden-coordinate sequence. We therefore compactify the
effective queries and add a quadratic guard. The construction below
preserves the information constraint and verifies global smoothness and PL;
its Jacobian estimates also control the oracle's centered moment.

Fix
\begin{equation}\label{eq:eta-R}
 \eta:=\frac1{20},
 \qquad
 R_m:=16\sqrt m.
\end{equation}
Define $h_m:[0,\infty)\to[0,2R_m)$ by
\begin{equation}\label{eq:h-def}
 h_m(r)=
 \begin{cases}
 r,&0\le r\le R_m,\\[1mm]
 R_m+R_m\left(1-e^{-(r-R_m)/R_m}\right),&r>R_m,
 \end{cases}
\end{equation}
and define $\rho_m:\R^d\to\R^d$ by
\begin{equation}\label{eq:rho-def}
 \rho_m(x)=
 \begin{cases}
 \dfrac{h_m(\norm{x})}{\norm{x}}x,&x\ne0,\\[2mm]
 0,&x=0.
 \end{cases}
\end{equation}

\begin{lemma}\label[lemma]{lem:radial}
The map $\rho_m$ is continuously differentiable and satisfies
\begin{align}
 &\rho_m(x)=x\quad\text{if }\norm{x}\le R_m,
 &&\norm{\rho_m(x)}<2R_m,\label{eq:rho-range}\\
 &\norm{J\rho_m(x)}_{\mathrm{op}}\le1,
 &&\Lip(J\rho_m)\le\frac{16}{R_m}.\label{eq:rho-jacobian}
\end{align}
In addition, if $\norm{x}\ge16R_m$, then
\begin{equation}\label{eq:rho-far-jacobian}
 \norm{J\rho_m(x)}_{\mathrm{op}}\le\frac{2R_m}{\norm{x}}.
\end{equation}
\end{lemma}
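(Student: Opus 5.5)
We first record the scalar profile $h_m$. On $[0,R_m]$ it is the identity. On $(R_m,\infty)$ we have $h_m'(r)=e^{-(r-R_m)/R_m}\in(0,1]$ and $h_m''(r)=-R_m^{-1}e^{-(r-R_m)/R_m}$. Both one-sided values match at $r=R_m$ for $h_m'$ (value $1$), so $h_m$ is $C^1$ with $|h_m''|\le 1/R_m$ almost everywhere. Also $h_m(r)<2R_m$, and $h_m(r)\le r$ because $h_m'\le1$ and $h_m(0)=0$. These facts give the first two claims in \eqref{eq:rho-range} immediately, since $\norm{\rho_m(x)}=h_m(\norm{x})$.

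Next, for $x\neq0$ write $r=\norm{x}$, $u=x/r$, and $g(r)=h_m(r)/r$. The Jacobian is
\[
J\rho_m(x)=h_m'(r)\,uu^\top+g(r)\,(I-uu^\top).
\]
Its eigenvalues are $h_m'(r)$ (radial direction) and $g(r)$ (tangential directions), and both lie in $[0,1]$. This gives $\norm{J\rho_m(x)}_{\mathrm{op}}\le1$. Near the origin $\rho_m$ is the identity, so $\rho_m$ is $C^1$ everywhere, and $J\rho_m$ is continuous across the sphere $\norm{x}=R_m$ because $h_m'$ is continuous there and $g\equiv1$ on $[0,R_m]$.

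For the far bound \eqref{eq:rho-far-jacobian}, we use $g(r)<2R_m/r$. For the radial eigenvalue we need $h_m'(r)=e^{-(r-R_m)/R_m}\le 2R_m/r$ once $r\ge16R_m$. Setting $s=r/R_m\ge16$, this reads $e^{-(s-1)}\le 2/s$, which holds comfortably since $s e^{1-s}\le 16e^{-15}$.

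The main work is the Lipschitz bound $\Lip(J\rho_m)\le16/R_m$. We plan to bound $\norm{D(J\rho_m)(x)}$ wherever it exists (everywhere except the sphere $r=R_m$) and then integrate along segments, using continuity of $J\rho_m$ across that sphere. For $r\le R_m$ the derivative is zero. For $r>R_m$, differentiate the decomposition above along a direction $v$. This produces three kinds of terms.
\begin{itemize}
\item The radial derivative $h_m''(r)\,(u^\top v)\,uu^\top$, of size at most $1/R_m$.
\item The term $g'(r)\,(u^\top v)(I-uu^\top)$, where $g'(r)=(h_m'(r)-g(r))/r$, so $|g'|\le 1/r\le1/R_m$.
\item The term $(h_m'(r)-g(r))\,D(uu^\top)[v]$, with $\norm{D(uu^\top)[v]}\le 2\norm{v}/r$. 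This contributes at most $2/r\le 2/R_m$.
\end{itemize}
Summing crudely gives about $4/R_m$, well within $16/R_m$.

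The delicate point is segments that pass near the origin or cross the sphere $r=R_m$. Near the origin $J\rho_m=I$ is constant, so nothing is lost there. Across the sphere, the mean value inequality still applies to a continuous, piecewise $C^1$ function along a line, since the line meets the sphere in at most two points. So the pointwise bound extends to a global Lipschitz bound.
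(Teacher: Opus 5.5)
Your proposal is correct and follows essentially the same route as the paper: the same radial/tangential eigen-decomposition of $J\rho_m$, a pointwise bound on $D(J\rho_m)$ for $r>R_m$ integrated along segments split at the sphere $r=R_m$, and the same comparison of both eigenvalues with $2R_m/r$ in the far region. Your grouping of the derivative terms (using $|g'|=|h_m'-g|/r\le 1/r$) gives the sharper pointwise constant $4/R_m$ instead of the paper's $13/R_m$, which makes no difference for the stated bound $16/R_m$.
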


\begin{proof}
For $x\ne0$, put $r=\norm{x}$, $e=x/r$, and $a(r)=h_m(r)/r$.
A direct differentiation gives
\begin{equation}\label{eq:radial-jacobian-formula}
 J\rho_m(x)=a(r)I+\bigl(h_m'(r)-a(r)\bigr)ee^\top.
\end{equation}
Thus the radial eigenvalue is $h_m'(r)$ and every tangential eigenvalue is
$a(r)$.  For $r\le R_m$, both equal $1$.  For $r>R_m$,
$0<h_m'(r)=e^{-(r-R_m)/R_m}\le1$.  Also $h_m(R_m)=R_m$ and
$h_m'\le1$, so $h_m(r)\le r$ and hence $0<a(r)\le1$.  This proves the first
part of \eqref{eq:rho-jacobian}; \eqref{eq:rho-range} is immediate from
\eqref{eq:h-def}.

For $r>R_m$, one has $h_m(r)\le2R_m$,
$\abs{h_m''(r)}\le R_m^{-1}$,
\[
 \abs{a'(r)}
 =\abs{\frac{h_m'(r)r-h_m(r)}{r^2}}
 \le\frac3{R_m},
 \qquad
 \abs{(h_m'-a)'(r)}\le\frac4{R_m}.
\]
Moreover, the differential of $ee^\top$ has operator norm at most $2/r$.
Differentiating \eqref{eq:radial-jacobian-formula} therefore gives, on
$r>R_m$,
\[
 \norm{D(J\rho_m)(x)}_{\mathrm{op}}
 \le\frac3{R_m}+\frac4{R_m}
     +\frac{2\abs{h_m'(r)-a(r)}}r
 \le\frac{13}{R_m}.
\]
The Jacobian is continuous at $r=R_m$.  Integrating this derivative along
line segments, splitting a segment at its intersections with the sphere
$r=R_m$ if necessary, yields the stated safe bound $16/R_m$.

Finally, for $r\ge16R_m$, $a(r)<2R_m/r$.  Also
$h_m'(r)=e^{-(r-R_m)/R_m}\le2R_m/r$.  Equation
\eqref{eq:radial-jacobian-formula} then proves
\eqref{eq:rho-far-jacobian}.
\end{proof}

We can now define the compactified family and its oracle. The following estimates verify smoothness and admissibility before we turn to global PL.

Let $U\in\R^{d\times 2m}$ have orthonormal columns.  Define
\begin{equation}\label{eq:Fhat-def}
 \widehat F_{m,U}(x)
 :=F_{2m}\bigl(U^\top\rho_m(x)\bigr)
   +\frac\eta2\norm{x}^2,
 \qquad x\in\R^d,
\end{equation}
and the associated stochastic oracle
\begin{equation}\label{eq:Ghat-def}
 \widehat G_{m,q,U}(x,z)
 :=J\rho_m(x)^\top U
     g_{m,q}\bigl(U^\top\rho_m(x),z\bigr)
   +\eta x.
\end{equation}

\begin{lemma}
\label[lemma]{lem:Fhat-basic}
For all $m\ge1$, all orthonormal $U$, and every $q\in(0,1]$,
\begin{align}
 \operatorname{Lip}(\nabla\widehat F_{m,U})&\le L_b:=128,
 \label{eq:Fhat-smooth}\\
 \widehat F_{m,U}(0)-\widehat F_{m,U}^\star&<5m,
 \label{eq:Fhat-initial}\\
 \mathbb E_z[\widehat G_{m,q,U}(x,z)]&=\nabla\widehat F_{m,U}(x),
 \label{eq:Fhat-unbiased}\\
 \mathbb E_z\|\widehat G_{m,q,U}(x,z)
             -\nabla\widehat F_{m,U}(x)\|^\alpha
 &\le2q^{1-\alpha}.
 \label{eq:Fhat-alpha-moment}
\end{align}
For $q=1$, the stochastic error in the last display is identically zero.
\end{lemma}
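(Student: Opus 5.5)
The four claims of Lemma~\ref{lem:Fhat-basic} are handled separately. Smoothness is the only one that needs real computation. The other three come from Lemma~\ref{lem:base-chain}, Lemma~\ref{lem:thinned-oracle}, and the bound $\|J\rho_m\|_{\mathrm{op}}\le1$.

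\emph{Smoothness.} Write $H(x):=F_{2m}(U^\top\rho_m(x))$, so that
\[
\nabla H(x)=J\rho_m(x)^\top U\nabla F_{2m}\bigl(U^\top\rho_m(x)\bigr).
\]
For two points $x,y$ we split the difference as
\[
\nabla H(x)-\nabla H(y)=\bigl(J\rho_m(x)-J\rho_m(y)\bigr)^\top U v_x+J\rho_m(y)^\top U\bigl(v_x-v_y\bigr),
\qquad v_x:=\nabla F_{2m}\bigl(U^\top\rho_m(x)\bigr).
\]
The second term is at most $79\,\|\rho_m(x)-\rho_m(y)\|\le79\|x-y\|$. This uses Lemma~\ref{lem:base-chain}(ii), $\|U\|_{\mathrm{op}}\le1$, and the facts that $\rho_m$ is $1$-Lipschitz and $\|J\rho_m\|_{\mathrm{op}}\le1$ (Lemma~\ref{lem:radial}).

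The first term is at most $(16/R_m)\,\|x-y\|\,\sup\|v\|$, where the supremum runs over arguments $w=U^\top\rho_m(x)$ with $\|w\|<2R_m$. To bound it, compare with the minimizer $\one$: since $\nabla F_{2m}(\one)=0$, smoothness gives
\[
\|\nabla F_{2m}(w)\|\le79\|w-\one\|\le79\bigl(2R_m+\sqrt{2m}\bigr).
\]
With $R_m=16\sqrt m$ this product is a constant of order $16\cdot79\cdot(2+\sqrt2/16)$, which is far too big to fit under $128$.

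This is the main obstacle. The crude bound must be replaced by a sharper gradient estimate on the ball of radius $2R_m$. From the coordinate formula \eqref{eq:chain-gradient-coordinate} together with $|B'(t)-t|\le7$ and $|\phi'|,|\hat\Theta'|\le3$, we get
\[
|\nabla_iF_{2m}(w)|\le|w_i|+C
\quad\Longrightarrow\quad
\|\nabla F_{2m}(w)\|\le\|w\|+C\sqrt{2m}\le2R_m+C'\sqrt m.
\]
Multiplying by $16/R_m$ then gives about $32+16C'/16$. One adds the $79$ from the second term and $\eta\le1$ from the quadratic guard. I would redo the constants carefully; it may be necessary to use the explicit Jacobian structure \eqref{eq:radial-jacobian-formula} rather than the Lipschitz constant $16/R_m$ to land at $128$. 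The mechanism is in any case clear: a gradient of order $R_m$ times a Jacobian variation of order $1/R_m$ gives an $O(1)$ contribution.

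\emph{Initial gap.} We have $\widehat F_{m,U}(0)=F_{2m}(0)<5m$ by Lemma~\ref{lem:base-chain}(i), and $\widehat F_{m,U}\ge0$ since $F_{2m}\ge0$ and the guard is nonnegative. Hence $\widehat F_{m,U}^\star\ge0$, which gives \eqref{eq:Fhat-initial}.

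\emph{Oracle properties.} The quantities $J\rho_m(x)$ and $U^\top\rho_m(x)$ are deterministic given $x$. Taking the expectation over $z$ and using \eqref{eq:thin-unbiased} therefore gives \eqref{eq:Fhat-unbiased} by linearity. For the moment bound, the error of the oracle equals
\[
J\rho_m(x)^\top U\bigl(g_{m,q}-\nabla F_{2m}\bigr).
\]
Because $J\rho_m(x)^\top U$ is a contraction, the norm of this error is at most $\|g_{m,q}-\nabla F_{2m}\|$. Raising to the power $\alpha$ and applying \eqref{eq:thin-alpha-moment} yields \eqref{eq:Fhat-alpha-moment}. When $q=1$ the error vanishes identically, again by Lemma~\ref{lem:thinned-oracle}.
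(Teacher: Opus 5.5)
Your proof follows the paper's route: the same two-term splitting of $\nabla H(x)-\nabla H(y)$, the $79\norm{x-y}$ term from chain smoothness, a linear gradient-growth bound on the clipped range multiplied by $\Lip(J\rho_m)\le16/R_m$, and the same treatment of the initial gap, unbiasedness, and the $\alpha$-moment via $\norm{J\rho_m(x)^\top U}_{\mathrm{op}}\le1$. The one thing you leave open is the constant $128$. It closes without the Jacobian structure \eqref{eq:radial-jacobian-formula}, but not with the scalar bounds you list, and the margin is thin.

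\emph{Why your bounds overshoot.} With $\abs{\phi'}\le3$ as you wrote it, each coordinate of the coupling part can be as large as $3+3$. That gives $C=13$, and the total becomes $79+32+13\sqrt2+\eta\approx129.4>128$. In fact, with $\eta=1/20$, any $C\ge12$ fails.

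\emph{The fix.} Use the sharper bound $\abs{\phi'}\le e^{-1/2}<1$ from Lemma~\ref{lem:scalar}, together with $0\le\hat\Theta,\phi\le1$ and $\abs{\hat\Theta'}\le3$. Then every coordinate of $\nabla H_{2m}$ has magnitude at most $4$. Adding $\abs{B'(t)-t}\le7$ gives $\norm{\nabla F_{2m}(w)}\le\norm{w}+11\sqrt{2m}$.

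\emph{The final arithmetic.} Since $11\sqrt2<16$, we have $11\sqrt{2m}<R_m$. Hence $\norm{\nabla F_{2m}(w)}\le3R_m$ whenever $\norm{w}<2R_m$. The first term therefore contributes at most $(16/R_m)(3R_m)=48$, and the total is $79+48+\eta\le128$, exactly as in the paper.
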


\begin{proof}
Write $N=2m$ and separate
\[
 H_N(y):=\phi(y_1)+\sum_{i=1}^{N-1}\hat{\Theta}(y_i)\phi(y_{i+1}),
 \qquad
 F_N(y)=H_N(y)+\sum_{i=1}^N B(y_i).
\]
Each coordinate of $\nabla H_N$ has absolute value at most $4$, so
\begin{equation}\label{eq:H-gradient}
 \|\nabla H_N(y)\|\le4\sqrt N.
\end{equation}
By the scalar estimates proved above,
\begin{equation}\label{eq:F-gradient-growth}
 \|\nabla F_N(y)\|\le\|y\|+11\sqrt N.
\end{equation}
If $y=U^\top\rho_m(x)$, then $\|y\|<2R_m$.  Since
$R_m=8\sqrt2\sqrt N>11\sqrt N$,
\begin{equation}\label{eq:F-gradient-on-clipped-range}
 \|\nabla F_N(y)\|\le3R_m.
\end{equation}
Using the chain smoothness and the radial clipping estimates, for arbitrary
$x,x'$ the gradient of the composed chain obeys
\begin{align*}
 &\|J\rho_m(x)^\top U\nabla F_N(U^\top\rho_m(x))
       -J\rho_m(x')^\top U\nabla F_N(U^\top\rho_m(x'))\|\\
 &\quad\le79\|x-x'\|
       +\frac{16}{R_m}(3R_m)\|x-x'\|
 \le127\|x-x'\|.
\end{align*}
Adding the gradient $\eta x$ proves \eqref{eq:Fhat-smooth}.  At the origin,
$\widehat F_{m,U}(0)=F_{2m}(0)<5m$, whereas
$\widehat F_{m,U}^\star\ge0$, proving \eqref{eq:Fhat-initial}.
Unbiasedness follows from Lemma~\ref{lem:thinned-oracle} and the chain rule.
Finally, $U$ is an isometry on $\mathbb R^{2m}$ and
$\|J\rho_m(x)\|_{\rm op}\le1$, so
\begin{align*}
 &\mathbb E_z\|\widehat G_{m,q,U}(x,z)
             -\nabla\widehat F_{m,U}(x)\|^\alpha\\
 &\quad\le
 \mathbb E_z\|g_{m,q}(U^\top\rho_m(x),z)
             -\nabla F_{2m}(U^\top\rho_m(x))\|^\alpha
 \le2q^{1-\alpha}.
\end{align*}
The $q=1$ assertion is immediate.
\end{proof}
It remains to establish global PL, which is not implied by the zero-chain property. We first analyze a regularized chain without radial composition.

Define, for general $N$,
\begin{equation}\label{eq:Gamma-def}
 \Gamma_N(y):=F_N(y)+\frac\eta2\norm{y}^2.
\end{equation}

\begin{lemma}\label[lemma]{lem:Gamma-PL}
For every $N\ge1$ and every $y\in\R^N$,
\begin{equation}\label{eq:Gamma-PL}
 \Gamma_N(y)-\Gamma_N^\star
 \le C_\Gamma N\norm{\nabla\Gamma_N(y)}^2,
 \qquad C_\Gamma:=20000.
\end{equation}
\end{lemma}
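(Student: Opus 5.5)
Since $\Gamma_N\ge0$ and $F_N(\one_N)=0$, we have $\Gamma_N^\star\le\frac\eta2 N$. It therefore suffices to lower-bound $\norm{\nabla\Gamma_N(y)}^2$ by a constant multiple of $\Gamma_N(y)/N$ whenever $\Gamma_N(y)$ exceeds a suitable multiple of $N$, and to handle the remaining low-value region separately. Write
\[
\nabla_i\Gamma_N(y)=B'(y_i)+\eta y_i+c_i(y),
\]
where $c_i$ collects the coupling terms from \eqref{eq:chain-gradient-coordinate}; each satisfies $\abs{c_i}\le 4$ by \cref{lem:scalar}.

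\textbf{Large-norm regime.} Use the inner product with $y$:
\[
\norm{y}\,\norm{\nabla\Gamma_N(y)}\ge\ip{y}{\nabla\Gamma_N(y)}=\sum_i\bigl(y_iB'(y_i)+\eta y_i^2+y_ic_i\bigr).
\]
By \eqref{eq:B-aux-bounds} this is at least $\sum_i\bigl(\tfrac12 y_i^2-\tfrac{13}{2}-4\abs{y_i}\bigr)\ge\tfrac14\norm{y}^2-CN$. Hence if $\norm{y}^2\ge C'N$, then $\norm{\nabla\Gamma_N}\ge\norm{y}/8$. Combined with $\Gamma_N(y)\le\frac{N}{2}+\sum_i\bigl(\tfrac12y_i^2+2\bigr)+\frac\eta2\norm{y}^2+\sum\hat\Theta\phi\le\norm{y}^2+4N\lesssim\norm{y}^2$, this gives $\Gamma_N(y)\le C\norm{\nabla\Gamma_N}^2$, which is even better than the claimed bound by a factor of $N$.

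\textbf{Bounded regime $\norm{y}^2\le C'N$.} Here $\Gamma_N(y)\le C''N$, so it suffices to show $\norm{\nabla\Gamma_N(y)}^2\ge c$ for an absolute constant $c$ whenever $\Gamma_N(y)-\Gamma_N^\star$ is not already small; then $\Gamma_N-\Gamma_N^\star\le C''N\le (C''/c)N\norm{\nabla\Gamma_N}^2$. I would run a coordinatewise case analysis following the chain. Let $i$ be the first index at which $y_i$ fails to be close to the ``near-optimal'' value. If $y_i$ lies in the flat plateau $[-1/2,1/2]$ of $B$ while its predecessor is near $1$ (so $\hat\Theta(y_{i-1})\approx1$), then $\nabla_i\Gamma_N\approx\phi'(y_i)+\eta y_i$. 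Since $\phi'(t)=(t-1)e^{-(t-1)^2/2}\le -\tfrac12 e^{-1/8}$ on that interval and $\eta=1/20$, this gradient is bounded away from zero. This is the standard Carmon-type ``large gradient on the first unfinished coordinate'' argument. Coordinates in the transition zones $(1/2,1)$, or beyond $1$ or below $-1/2$, are handled using the quadratic or convex pieces of $B$, which have slope at least a constant away from $1$. Near $t=1$, the perturbation $\eta y$ shifts the minimizer slightly, and I would treat that region with a local strong-convexity argument.

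\textbf{Main obstacle.} The hard part is the near-optimal region. There the gap $\Gamma_N-\Gamma_N^\star$ can be small, so a constant gradient lower bound is useless, and one needs a genuine local PL inequality with constant $O(N)$. The plan is to show that when all coordinates satisfy $y_i\ge 1-\tfrac14$ (say), the function is a sum of locally strongly convex terms $\tfrac12(y_i-1)^2+\frac\eta2y_i^2$ plus couplings whose gradients are $\hat\Theta(y_{i-1})\phi'(y_i)$ with $\hat\Theta\equiv1$. This makes $\Gamma_N$ strongly convex on that box with modulus at least $1-1=0$ plus $\eta$; it is only weakly convex, so instead I would use $\phi''\ge -1$ together with the $B''=1$ terms and $\eta$, obtaining modulus $\eta$. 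Strong convexity then yields PL with constant $1/(2\eta)$ on that region, relative to the local minimum. The local minimum equals $\Gamma_N^\star$ because all other regions have value at least a constant by the previous case analysis. Finally I would check that the three regimes overlap consistently. Chaining them with explicit constants should fit within $C_\Gamma=20000$.
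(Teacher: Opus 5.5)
Your skeleton matches the paper's proof: a radial regime for $\norm{y}\gtrsim\sqrt N$, a first-bad-coordinate gradient floor outside a neighborhood of the minimizer, and local strong convexity inside it. The large-norm step is fine. The gap is the near-optimal step, which you rightly call the crux but argue with the wrong local form of $\Gamma_N$.

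The identities $\hat{\Theta}\equiv1$ and $B(t)=\frac12(t-1)^2$ (so $B''=1$) hold only for $t\ge1$. But every critical point of $\Gamma_N$ has all coordinates strictly below $1$: if $y_i\ge1$, then $\hat{\Theta}'(y_i)=0$, $\phi'(y_i)\ge0$ and $B'(y_i)\ge0$, so $\nabla_i\Gamma_N(y)\ge\eta y_i>0$. The minimizer therefore lies exactly in the cubic transition zone you discarded. There the outgoing coupling contributes $\hat{\Theta}''(y_i)\phi(y_{i+1})$ to the diagonal and $\hat{\Theta}'(y_i)\phi'(y_{i+1})$ off it, and $B''=-2\hat{\Theta}''$ vanishes at $t=3/4$. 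In fact your region $\{y_i\ge3/4\}$ is not even convex. For $N=2$ at $(3/4,3/4)$, the Hessian of $\Gamma_2$ has diagonal entries $\approx0.959$ and $\approx0.504$ and off-diagonal entry $\hat{\Theta}'(3/4)\phi'(3/4)=-\frac34e^{-1/32}\approx-0.727$, so its determinant is negative. A diagonal count such as ``$B''+\phi''+\eta\ge\eta$'' cannot repair this, because it ignores the mixed partials.

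The missing ingredient is a thin box on which the transition piece supplies the curvature. Take $\mathcal B_N=[7/8,9/8]^N$:
\begin{itemize}
\item For $y_i\in[7/8,1)$, $B''(y_i)+\hat{\Theta}''(y_i)\phi(y_{i+1})=(-\hat{\Theta}''(y_i))(2-\phi(y_{i+1}))>23$, since $-\hat{\Theta}''\ge12$ there and $\phi\le1/128$ on the box.
\item For $y_i>1$ the same sum equals $1$.
\item The incoming term $\hat{\Theta}(y_{i-1})\phi''(y_i)$ is positive.
\item Every off-diagonal entry has absolute value at most $\frac94\cdot\frac18$, because $\abs{\phi'}\le1/8$ near $1$.
\end{itemize}
Gershgorin then gives $\nabla^2\Gamma_N\succeq\frac12 I$ a.e.\ on $\mathcal B_N$.

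Two further points need correcting. First, your first-bad-coordinate floor must be proved relative to this same box; it yields $\norm{\nabla\Gamma_N}\ge9/160$ off the box. Second, the fact that $\Gamma_N^\star$ is attained in the box should come from coercivity plus that floor (there are no critical points outside the box). It does not follow from a lower bound on values outside, which your case analysis does not provide.

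Finally, the constant $20000$ is not automatic. With the crude bounds you wrote, the bounded regime extends to $\norm{y}^2\approx180N$, and the ratio $C''/c$ comes out near $184\cdot(160/9)^2\approx5.8\cdot10^4$. To fit within $20000$ you need the sharper $\Gamma_N\le\frac{21}{40}\norm{y}^2+3N$ together with the cutoff $\norm{y}\le9\sqrt N$.
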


\begin{proof}
Let $r=\norm{y}$.  From \eqref{eq:H-gradient} and
\eqref{eq:B-aux-bounds},
\begin{align}
 \Gamma_N(y)
 &\le \frac{21}{40}r^2+3N,
 \label{eq:Gamma-upper}\\
 \ip{\nabla\Gamma_N(y)}{y}
 &\ge \frac{11}{20}r^2-4\sqrt N\,r-\frac{13}{2}N.
 \label{eq:Gamma-radial-lower}
\end{align}
Indeed, $0\le H_N\le N$, $\sum_iB(y_i)\le r^2/2+2N$,
$\ip{\nabla H_N(y)}{y}\ge-4\sqrt N\,r$, and
$\sum_i y_iB'(y_i)\ge r^2/2-13N/2$.

We next isolate a box containing every critical point.  Put
\[
 \mathcal B_N:=[7/8,9/8]^N.
\]
On this box,
\begin{equation}\label{eq:box-scalar-bounds}
 \hat{\Theta}\ge\frac{27}{32},\quad
 \abs{\hat{\Theta}'}\le\frac94,\quad
 \abs{\hat{\Theta}''}\le24,\quad
 \phi''\ge\frac12,\quad
 \phi\le\frac1{128},\quad
 \abs{\phi'}\le\frac18.
\end{equation}
At points of twice differentiability, every diagonal entry of
$\nabla^2F_N$ is at least $79/64$, while every off-diagonal entry has
absolute value at most $9/32$.  To verify the diagonal bound, note that a
typical diagonal entry is
\[
 \hat{\Theta}(y_{i-1})\phi''(y_i)
 +\hat{\Theta}''(y_i)\phi(y_{i+1})+B''(y_i).
\]
At $i=1$, the incoming term is $\phi''(y_1)$; at $i=N$, the outgoing
term is omitted. If $y_i\ge1$, the last two terms sum to
$1$; if $y_i<1$, then $\hat{\Theta}''(y_i)\le-12$ and
$B''(y_i)=-2\hat{\Theta}''(y_i)$, so their sum is
$(-\hat{\Theta}''(y_i))(2-\phi(y_{i+1}))>23$.  The off-diagonal term is
$\hat{\Theta}'(y_i)\phi'(y_{i+1})$.  Gershgorin's theorem therefore gives
\begin{equation}\label{eq:box-strong-convexity}
 \nabla^2F_N\succeq\frac12 I
 \quad\text{almost everywhere on }\mathcal B_N.
\end{equation}
For line segments contained in a breakpoint hyperplane, approximate them
by generic segments and pass to the limit using gradient continuity.
Thus the Lipschitz continuity of $\nabla F_N$ and integration along
line segments imply
that $F_N$, and hence $\Gamma_N$, is $1/2$-strongly convex on
$\mathcal B_N$.

We now prove that the gradient is uniformly nonzero outside the box.  Let
$y\notin\mathcal B_N$, and let $j$ be the first index for which
$y_j\notin[7/8,9/8]$.  Set $a=1$ if $j=1$ and
$a=\hat{\Theta}(y_{j-1})$ otherwise.  Then $a\ge27/32$.  With
$s=\phi(y_{j+1})\in[0,1]$ when $j<N$ and $s=0$ when $j=N$, one has
\begin{equation}\label{eq:first-bad-coordinate}
 \nabla_j\Gamma_N(y)
 =a\phi'(y_j)+s\hat{\Theta}'(y_j)+B'(y_j)+\eta y_j.
\end{equation}
Direct substitution of the scalar formulas gives
\begin{equation}\label{eq:first-bad-signs}
 \nabla_j\Gamma_N(y)
 \begin{cases}
 \ge 1/8,&y_j>9/8,\\
 \le-9/160,&1/2<y_j<7/8,\\
 \le-3/40,&-1/2\le y_j\le1/2,\\
 \le-1/8,&y_j<-1/2.
 \end{cases}
\end{equation}
For $1/2<y_j<7/8$, put $u=1-y_j\in(1/8,1/2)$.  Since
$s\hat{\Theta}'+B'=(s-2)\hat{\Theta}'\le-\hat{\Theta}'\le0$ and
$u e^{-u^2/2}\ge(1/8)e^{-1/128}$,
\[
 a\phi'(y_j)+\eta y_j
 \le-\frac{27}{256}e^{-1/128}+\frac7{160}
 <-\frac9{160}.
\]
On $[-1/2,1/2]$, the last two chain terms vanish.  Here
$(1-y_j)e^{-(1-y_j)^2/2}\ge(1/2)e^{-1/8}$, and therefore
$a\phi'(y_j)+\eta y_j\le-(27/64)e^{-1/8}+1/40<-3/40$.  On $(-\infty,-1/2)$, split at $-1$:
for $y_j\le-1$, $(1+\eta)y_j+1/2\le-11/20$, and for
$-1<y_j<-1/2$, $a\phi'(y_j)\le-(27/32)2e^{-2}$ while
$(1+\eta)y_j+1/2\le-1/40$.  Thus all constants in
\eqref{eq:first-bad-signs} are valid.  In particular,
\begin{equation}\label{eq:gradient-away-box}
 y\notin\mathcal B_N
 \quad\Longrightarrow\quad
 \norm{\nabla\Gamma_N(y)}\ge\frac9{160}.
\end{equation}

The function $\Gamma_N$ is coercive, so it has a minimizer.  By
\eqref{eq:gradient-away-box}, every minimizer lies in $\mathcal B_N$.
Fix one minimizer $y^\star$.

If $y\in\mathcal B_N$, strong convexity on the box gives
\[
 \Gamma_N(y)-\Gamma_N(y^\star)
 \le \ip{\nabla\Gamma_N(y)}{y-y^\star}
      -\frac14\norm{y-y^\star}^2
 \le\norm{\nabla\Gamma_N(y)}^2.
\]

Suppose next that $y\notin\mathcal B_N$ and $r\le9\sqrt N$.  Since
$\Gamma_N^\star\ge0$, \eqref{eq:Gamma-upper} and
\eqref{eq:gradient-away-box} imply
\[
 \Gamma_N(y)-\Gamma_N^\star
 \le\frac{1821}{40}N
 \le20000N\norm{\nabla\Gamma_N(y)}^2.
\]

Finally, suppose $r>9\sqrt N$.  The right-hand side of
\eqref{eq:Gamma-radial-lower} is at least $r^2/40$; this follows by writing
$r=s\sqrt N$, observing that
$\frac{21}{40}s^2-4s-13/2\ge0$ for $s\ge9$.  Hence
$\norm{\nabla\Gamma_N(y)}\ge r/40$.  Also
\eqref{eq:Gamma-upper} and $N<r^2/81$ give
\[
 \Gamma_N(y)-\Gamma_N^\star
 \le\left(\frac{21}{40}+\frac3{81}\right)r^2
 \le900\norm{\nabla\Gamma_N(y)}^2.
\]
Combining the three regions proves \eqref{eq:Gamma-PL}.
\end{proof}

The auxiliary PL estimate can now be transferred through radial clipping. Inside the clipping ball the transfer is exact; outside it a radial or tangential gradient component prevents spurious near-stationarity.

\begin{lemma}
\label[lemma]{lem:Fhat-PL}
For every $m\ge1$, every $d\ge2m$, every orthonormal
$U\in\R^{d\times2m}$, and every $x\in\R^d$,
\begin{equation}\label{eq:Fhat-PL}
 \widehat F_{m,U}(x)-\widehat F_{m,U}^\star
 \le C_{\rm PL}m\norm{\nabla\widehat F_{m,U}(x)}^2,
 \qquad C_{\rm PL}:=40000.
\end{equation}
\end{lemma}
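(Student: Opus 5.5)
The plan is to split according to whether $\rho_m$ acts as the identity at $x$, and to reduce to Lemma~\ref{lem:Gamma-PL} inside the clipping ball.

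First I identify the optimal value. Write $N=2m$. Since $\norm{\rho_m(x)}\le\norm{x}$ and $U^\top$ is a contraction,
\[
 \widehat F_{m,U}(x)\ge F_N(U^\top\rho_m(x))+\tfrac\eta2\norm{U^\top\rho_m(x)}^2=\Gamma_N(U^\top\rho_m(x))\ge\Gamma_N^\star .
\]
The proof of Lemma~\ref{lem:Gamma-PL} shows that every minimizer $y^\star$ lies in $\mathcal B_N$. Hence $\norm{Uy^\star}\le\tfrac98\sqrt{2m}<R_m$, and $\widehat F_{m,U}(Uy^\star)=\Gamma_N(y^\star)$. This gives $\widehat F_{m,U}^\star=\Gamma_N^\star$.

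Inside the ball, $\norm{x}\le R_m$, I use the orthogonal decomposition $x=Uy+w$ with $w\perp\operatorname{range}U$. Then $\widehat F_{m,U}(x)=\Gamma_N(y)+\tfrac\eta2\norm{w}^2$ and $\nabla\widehat F_{m,U}(x)=U\nabla\Gamma_N(y)+\eta w$, and the two summands are orthogonal. Lemma~\ref{lem:Gamma-PL} bounds the first part of the gap by $C_\Gamma\cdot2m\,\norm{\nabla\Gamma_N(y)}^2$. The guard part equals $\tfrac1{2\eta}\norm{\eta w}^2=10\norm{\eta w}^2$. Adding the two bounds gives constant $40000m$ in this region.

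Outside the ball, $r:=\norm{x}>R_m$. Put $e=x/r$ and $y=U^\top\rho_m(x)$, so $\norm{y}<2R_m$. For the gap, I use the scalar bounds \eqref{eq:Gamma-upper}-type estimates ($H_N\le N$, $B(t)\le t^2/2+2$) to get
\[
 \widehat F_{m,U}(x)-\widehat F_{m,U}^\star\le 3N+\tfrac12\norm{y}^2+\tfrac\eta2r^2\le C(m+r^2),
\]
and since $R_m^2=256m$, this is $O(r^2)$. It then suffices to show $\norm{\nabla\widehat F_{m,U}(x)}\ge c\,r$ for an absolute $c$.

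For this I look at the radial component. By \eqref{eq:radial-jacobian-formula}, $J\rho_m(x)^\top e=h_m'(r)e$, so
\[
 \ip{\nabla\widehat F_{m,U}(x)}{e}=\frac{h_m'(r)}{h_m(r)}\ip{\nabla F_N(y)}{y}+\eta r .
\]
Whenever $\ip{\nabla F_N(y)}{y}\ge0$, this is at least $\eta r$ and we are done.

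Otherwise, by the radial estimate behind \eqref{eq:Gamma-radial-lower}, namely $\ip{\nabla F_N(y)}{y}\ge\frac12\norm{y}^2-4\sqrt N\norm{y}-\frac{13}2N$, the vector $y$ must be short, $\norm{y}\le 9\sqrt N$. The negative part is then at most $15N$ in magnitude.

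I expect the main obstacle to be that this crude estimate does not close directly. The bound $\frac{h_m'}{h_m}\cdot15N\le 15N/R_m\approx1.9\sqrt m$ exceeds $\eta R_m=0.8\sqrt m$ near $r=R_m$. I would handle it in three steps.

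(a) For $r\ge cR_m$ with $c$ a modest constant, I use $h_m'(r)=e^{-(r-R_m)/R_m}$ together with $h_m\ge R_m$ and $\eta r\ge\eta cR_m$. These make the radial component at least $\eta r/2$.

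(b) For $R_m<r<cR_m$ with $\norm{y}$ small, the point $\rho_m(x)$ has norm $h_m(r)>R_m=16\sqrt m$. Short $y$ then forces a large component of $\rho_m(x)$ orthogonal to $\operatorname{range}U$, so $\norm{(I-UU^\top)e}$ is close to one. The tangential eigenvalue is $a(r)\le1$, but the tangential gradient involves only $U\nabla F_N(y)$. So I would take the projection onto $(I-UU^\top)x$ instead: there the chain contribution $J\rho_m^\top U\nabla F_N$ enters only through the radial $ee^\top$ term, with coefficient $h_m'\ip{e}{U\nabla F_N}$ times a small factor $\norm{U^\top e}\lesssim\norm{y}/h_m(r)$. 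The guard then gives a component of size $\gtrsim\eta r$.

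(c) If this projection bookkeeping is too lossy, I would instead tighten constants by using the exact bound $\norm{\nabla F_N(y)}\le\norm{y}+11\sqrt N$ in the two-dimensional plane spanned by $e$ and $U^\top e$.

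Combining the inside-ball bound with these outside-ball cases yields \eqref{eq:Fhat-PL} with $C_{\rm PL}=40000$.
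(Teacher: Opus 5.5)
Most of your argument goes through. The identification $\widehat F_{m,U}^\star=\Gamma_N^\star$, the inside-ball case, the exterior gap bound $3N+\tfrac12 r^2+\tfrac\eta2 r^2$, the radial case $\ip{\nabla F_N(y)}{y}\ge0$, and step (a) (with $c=2$, say) are all correct. The gap is step (b), which is exactly where the difficulty you identified sits: $R_m<r\lesssim 2R_m$ with $\beta:=\ip{g}{U^\top e}<0$, where $g:=\nabla F_N(y)$.

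The bookkeeping you give there is wrong. Put $a:=h_m(r)/r$ and $P:=I-UU^\top$. By \eqref{eq:radial-jacobian-formula}, $J\rho_m(x)^\top Ug=aUg+(h_m'(r)-a)\beta e$, and $\ip{Ug}{Pe}=0$. So the component of $\nabla\widehat F_{m,U}(x)$ along $Pe/\norm{Pe}$ is exactly $\norm{Pe}\,[(h_m'(r)-a)\beta+\eta r]$. This differs from your description in two ways:
\begin{itemize}
\item The coefficient of the $ee^\top$ part is $h_m'-a$, not $h_m'$.
\item The projection factor is $\norm{Pe}$, which is bounded below rather than small.
\end{itemize}
Take your version instead: chain term $h_m'\beta$ times $\norm{U^\top e}$, bounded in absolute value via the only available estimate $|\beta|\le 14.5N/h_m(r)$. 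Near $r=R_m$ the chain term is then only controlled by about $1.8\sqrt m\cdot\norm{U^\top e}$. The guard contributes only $0.8\sqrt m\cdot\norm{Pe}$. Since $\norm{U^\top e}^2$ may be as large as $0.69$, this does not close. Your fallback (c) is too vague to rescue it.

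The missing observation is a sign. Since $h_m$ is concave with $h_m(0)=0$, we have $h_m'(r)\le h_m(r)/r=a$. Explicitly, $h_m(r)-rh_m'(r)=R_m[2-(2+u)e^{-u}]\ge0$ with $u=(r-R_m)/R_m$. Hence for $\beta<0$ the chain term $(h_m'-a)\beta$ is nonnegative and reinforces the guard, giving $\norm{\nabla\widehat F_{m,U}(x)}\ge\eta r\norm{Pe}$. Here $\norm{Pe}^2=1-\norm{y}^2/h_m(r)^2>1-(4+\sqrt{29})^2/128>0.31$. Note the root of $\tfrac12t^2-4t-\tfrac{13}2$ is $4+\sqrt{29}\approx9.39$, not $9$. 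Together with the radial case, this gives $\norm{\nabla\widehat F_{m,U}(x)}\ge0.55\,\eta r$ for every $r>R_m$. The resulting constant is below $10^3\le C_{\rm PL}m$, and step (a) becomes unnecessary.

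Once repaired, your route differs from the paper's and is somewhat shorter. The paper splits at $\beta=-\eta r/2$ and, in the bad case, uses the tangential component $a(Ug-\beta e)$. There $\norm{U^\top e}^2<7/10$ gives $\norm{g}^2-\beta^2\ge\tfrac37\beta^2$. That yields only the $r$-independent floor $3\eta^2R_m^2/28$, so the paper must treat $r>16R_m$ separately using the far-field Jacobian bound \eqref{eq:rho-far-jacobian}. Projecting onto the part of $x$ orthogonal to $\operatorname{range}U$ isolates the guard directly and gives a bound linear in $r$ on the whole exterior.
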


\begin{proof}
Write $N=2m$.  We first identify the minimum value.  For any $x$, let
$y=U^\top\rho_m(x)$.  Since
$\norm{y}\le\norm{\rho_m(x)}\le\norm{x}$,
\begin{equation}\label{eq:min-lower-transfer}
 \widehat F_{m,U}(x)
 \ge F_N(y)+\frac\eta2\norm{y}^2
 =\Gamma_N(y)\ge\Gamma_N^\star.
\end{equation}
Conversely, a minimizer $y^\star$ of $\Gamma_N$ satisfies
\[
 \frac\eta2\norm{y^\star}^2
 \le\Gamma_N(y^\star)\le\Gamma_N(0)=F_N(0)<5m.
\]
Thus $\norm{y^\star}^2<200m<R_m^2$.  Consequently
$\rho_m(Uy^\star)=Uy^\star$, and
$\widehat F_{m,U}(Uy^\star)=\Gamma_N(y^\star)$.  Therefore
\begin{equation}\label{eq:min-value-equality}
 \widehat F_{m,U}^\star=\Gamma_N^\star.
\end{equation}

First consider $\norm{x}\le R_m$.  Decompose
\[
 y=U^\top x,
 \qquad
 w=x-Uy,
 \qquad U^\top w=0.
\]
Since $\rho_m(x)=x$,
\begin{align}
 \widehat F_{m,U}(x)-\widehat F_{m,U}^\star
 &=\Gamma_N(y)-\Gamma_N^\star+\frac\eta2\norm{w}^2,
 \label{eq:inside-gap}\\
 \norm{\nabla\widehat F_{m,U}(x)}^2
 &=\norm{\nabla\Gamma_N(y)}^2+\eta^2\norm{w}^2.
 \label{eq:inside-gradient}
\end{align}
By \cref{lem:Gamma-PL}, $N=2m$, and $1/(2\eta)=10$, these identities imply
\eqref{eq:Fhat-PL} inside the ball.

It remains to consider $r:=\norm{x}>R_m$.  Put
\[
 e=x/r,\qquad h=h_m(r),\qquad a=h/r,
 \qquad z=U^\top e,\qquad y=hz,
\]
\[
 g=\nabla F_N(y),
 \qquad \beta=\ip{g}{z}.
\]
Using \eqref{eq:radial-jacobian-formula}, the radial and tangential parts of
the gradient are orthogonal and satisfy
\begin{equation}\label{eq:radial-tangential-gradient}
 \nabla\widehat F_{m,U}(x)
 =\bigl(h_m'(r)\beta+\eta r\bigr)e
  +a\bigl(Ug-\beta e\bigr),
\end{equation}
so
\begin{equation}\label{eq:radial-tangential-norm}
 \norm{\nabla\widehat F_{m,U}(x)}^2
 =\bigl(h_m'(r)\beta+\eta r\bigr)^2
  +a^2\bigl(\norm{g}^2-\beta^2\bigr).
\end{equation}

If $\beta\ge-\eta r/2$, then, because $0\le h_m'(r)\le1$, the radial term
in \eqref{eq:radial-tangential-gradient} has magnitude at least
$\eta r/2$.  Suppose instead that $\beta<-\eta r/2$.  Then
$\ip{g}{y}=h\beta<0$.  The analogue of
\eqref{eq:Gamma-radial-lower} without the regularizing term gives
\begin{equation}\label{eq:F-radial-lower}
 \ip{\nabla F_N(y)}{y}
 \ge\frac12\norm{y}^2-4\sqrt N\norm{y}-\frac{13}{2}N.
\end{equation}
Therefore
\[
 \norm{y}<(4+\sqrt{29})\sqrt N.
\]
Since $h\ge R_m=8\sqrt2\sqrt N$, it follows that
$\norm{z}^2<7/10$.  Hence
\[
 \beta^2\le\norm{g}^2\norm{z}^2
 \le\frac7{10}\norm{g}^2,
 \qquad
 \norm{g}^2-\beta^2\ge\frac37\beta^2.
\]
Also $a\abs{\beta}>\eta h/2\ge\eta R_m/2$.  In either case,
\begin{equation}\label{eq:outside-gradient-floor}
 \norm{\nabla\widehat F_{m,U}(x)}^2
 \ge\frac{3\eta^2R_m^2}{28}.
\end{equation}

If $R_m<r\le16R_m$, then \eqref{eq:B-aux-bounds} and $0\le H_N\le N$
give
\[
 F_N(y)\le\frac12\norm{y}^2+3N\le2R_m^2+3N.
\]
Because $\widehat F_{m,U}^\star\ge0$,
\begin{align*}
 \widehat F_{m,U}(x)-\widehat F_{m,U}^\star
 &\le2R_m^2+3N+\frac\eta2(16R_m)^2\\
 &\le31448\norm{\nabla\widehat F_{m,U}(x)}^2
 \le C_{\rm PL}m\norm{\nabla\widehat F_{m,U}(x)}^2.
\end{align*}
The numerical inequality uses $R_m^2=128N$ and
\eqref{eq:outside-gradient-floor}.

Finally, suppose $r>16R_m$.  By \eqref{eq:F-gradient-on-clipped-range},
$\norm{g}\le3R_m$, and by \eqref{eq:rho-far-jacobian},
\[
 \norm{J\rho_m(x)^\top Ug}
 \le\frac{6R_m^2}{r}
 \le\frac{\eta r}{2},
\]
where the last inequality follows from $r^2>256R_m^2$ and
$\eta=1/20$.  Thus
$\norm{\nabla\widehat F_{m,U}(x)}\ge\eta r/2=r/40$.  Moreover,
\[
 \widehat F_{m,U}(x)-\widehat F_{m,U}^\star
 \le2R_m^2+3N+\frac{r^2}{40}
 =259N+\frac{r^2}{40}
 \le53\norm{\nabla\widehat F_{m,U}(x)}^2.
\]
This completes all regions and proves \eqref{eq:Fhat-PL}.
\end{proof}

\subsection{Random rotations, scaling, and the stochastic component}\label{subsec:hd-stochastic}
This section proves the information-theoretic part of the construction.
The argument is a self-contained form of the probabilistic zero-chain and
random-rotation technique used in stochastic first-order lower bounds; see,
for example, \citet{arjevani2023lower}.

The only concentration input for the rotation argument is the following elementary spherical tail estimate.

\begin{lemma}\label[lemma]{lem:spherical-tail}
Let $v$ be uniform on the unit sphere in an $r$-dimensional Euclidean space,
where $r\ge4$, and let $a$ be a fixed unit vector.  For every $s\in(0,1)$,
\begin{equation}\label{eq:spherical-tail}
 \Pp\bigl(\abs{\ip{v}{a}}\ge s\bigr)
 \le4\exp\left(-\frac{rs^2}{16}\right).
\end{equation}
\end{lemma}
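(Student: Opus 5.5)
The statement is the spherical tail bound of Lemma~\ref{lem:spherical-tail}. I will reduce it to Gaussian concentration by representing the uniform vector through a normalized Gaussian. Since the law of $v$ is rotation invariant, we may take $a=e_1$. Write $v=g/\norm{g}$ with $g\sim N(0,I_r)$. Then $\abs{\ip{v}{a}}=\abs{g_1}/\norm{g}$.

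For any threshold $\tau>0$, the event $\abs{g_1}\ge s\norm{g}$ is contained in
\[
\{\abs{g_1}\ge s\tau\sqrt r\}\cup\{\norm{g}<\tau\sqrt r\}.
\]
I will take $\tau=1/2$ and bound the two events separately.

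\textbf{First event.} The standard Gaussian tail gives
\[
\Pp(\abs{g_1}\ge s\sqrt r/2)\le2\exp(-rs^2/8),
\]
and this is at most $2\exp(-rs^2/16)$.

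\textbf{Second event.} This is a lower-tail bound for the chi-square variable $\norm{g}^2$. The Chernoff bound with $\lambda>0$ gives
\[
\Pp(\norm{g}^2<r/4)\le e^{\lambda r/4}(1+2\lambda)^{-r/2}.
\]
Choosing $\lambda=3/2$ yields $\exp\!\big(-r(\tfrac12\log4-\tfrac38)\big)$. Since $\tfrac12\log4-\tfrac38\approx0.318$, this is at most $e^{-0.318r}$.

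\textbf{Combining.} Because $s<1$, we have $0.318r\ge r/16\ge rs^2/16$. Hence the second event also has probability at most $2\exp(-rs^2/16)$, and the two bounds sum to the claimed $4\exp(-rs^2/16)$.

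The only real work is the chi-square lower-tail constant, which has ample slack. The hypothesis $r\ge4$ is not actually needed by this argument; it would only enter if one instead used the exact Beta density of $\ip{v}{a}^2$, which I will avoid. An alternative, if one prefers to avoid Gaussians entirely, is to bound the density $\propto(1-t^2)^{(r-3)/2}\le e^{-(r-3)t^2/2}$ directly. That route needs $r\ge4$ to control the normalizing constant, so the Gaussian route is cleaner and is the one I will use.
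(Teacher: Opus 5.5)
Your proof is correct and follows the paper's argument: both set $a=e_1$, write $v=g/\norm{g}$ with $g$ standard Gaussian, and split the event into a Gaussian tail for $g_1$ and a lower tail for $\norm{g}^2$, handled by Markov's inequality on $e^{-\lambda\norm{g}^2}$. The only differences are constants: the paper uses the threshold $\norm{g}^2\le r/2$ and $\lambda=1/2$, giving $e^{r/4}2^{-r/2}\le e^{-r/16}$, whereas you use $r/4$ and $\lambda=3/2$. You are also right that $r\ge4$ plays no role in this route, for either proof.
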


\begin{proof}
By rotational invariance, take $a=e_1$ and write
$v=Z/\norm{Z}$ with $Z\sim N(0,I_r)$.  If
$\abs{v_1}\ge s$, then either
$\abs{Z_1}\ge s\sqrt{r/2}$ or $\norm{Z}^2\le r/2$.  The Gaussian tail gives
\[
 \Pp\bigl(\abs{Z_1}\ge s\sqrt{r/2}\bigr)
 \le2e^{-rs^2/4}.
\]
For the second event, Markov's inequality applied to $e^{-\norm{Z}^2/2}$ gives
\[
 \Pp(\norm{Z}^2\le r/2)
 \le e^{r/4}\E e^{-\norm{Z}^2/2}
 =e^{r/4}2^{-r/2}
 \le e^{-r/16}.
\]
Since $s<1$, the sum is at most
$4e^{-rs^2/16}$.
\end{proof}

We next combine this concentration bound with the one-coordinate revelation rule. The lemma permits coefficients to depend on all current projections, so its proof keeps track of the enlarged projection history.

We state the lemma only in the form needed here.

\begin{lemma}
\label[lemma]{lem:rotation}
Let $D\ge M\ge5$, $q\in(0,1]$, $\delta\in(0,1/2)$, and $R\ge1$.
Suppose a measurable stochastic oracle $g:\R^D\times\mathcal Z\to\R^D$ satisfies, for
all $x$ and every history,
\begin{align}
 &\prog_0(g(x,z))\le\prog_{1/4}(x)+1\quad\text{almost surely},
 \label{eq:general-zero-chain}\\
 &\Pp\bigl(\prog_0(g(x,z))>\prog_{1/4}(x)\mid x,\text{history}\bigr)
 \le q.
 \label{eq:general-reveal}
\end{align}
Let $U\in\R^{d\times D}$ be Haar-uniform on the Stiefel manifold of
orthonormal $D$-frames, independently of the algorithm and the fresh,
independent oracle seeds $z_1,z_2,\ldots$.  An arbitrary adaptive randomized
algorithm makes queries $y_t\in\R^d$ with $\norm{y_t}\le R$ and receives
$Ug(U^\top y_t,z_t)$.

For the universal constant $C_{\rm rot,s}=4096$, if
\begin{equation}\label{eq:rotation-dimension}
 d\ge 2D+C_{\rm rot,s}R^2\frac Mq
       \log\left(\frac{16DM}{q\delta}\right),
\end{equation}
then, for every positive integer
\begin{equation}\label{eq:rotation-horizon}
 t\le\frac{M-\log(2/\delta)}{2q},
\end{equation}
\begin{equation}\label{eq:rotation-conclusion}
 \Pp\left(
   \prog_{1/4}(U^\top y_s)<M
   \text{ for every }s\le t
 \right)\ge1-\delta.
\end{equation}
The probability includes $U$, the oracle randomness, and the internal
randomness of the algorithm.
\end{lemma}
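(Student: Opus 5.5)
The plan is to couple two events: a \emph{counting event}, on which few chain coordinates are revealed within the horizon, and a \emph{rotation event}, on which no query has a large inner product with a column of $U$ that has not yet been revealed. For $s\ge1$ write $u_1,\dots,u_D$ for the columns of $U$ and set $K_s:=\max_{r\le s}\prog_0(g(U^\top y_r,z_r))$, with $K_0:=0$; this is the number of chain coordinates exposed after $s$ calls. Let $\mathcal E_s$ be the event that $\abs{\ip{u_j}{y_s}}<1/4$ for every $j>K_{s-1}$. On $\mathcal E_s$ we have $\prog_{1/4}(U^\top y_s)\le K_{s-1}$. Then \eqref{eq:general-zero-chain} gives $K_s\le K_{s-1}+1$, and \eqref{eq:general-reveal} says the increment occurs with conditional probability at most $q$. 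The conclusion \eqref{eq:rotation-conclusion} holds on $\bigcap_{s\le t}\mathcal E_s\cap\{K_{t-1}<M\}$, so it suffices to show that each complementary event has probability at most $\delta/2$.

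\textbf{Counting step.} Let $\tau$ be the first $s$ at which $\mathcal E_s$ fails. Before $\tau$, $K_s-K_{s-1}$ is a $\{0,1\}$-valued increment whose conditional mean is at most $q$. Hence $\exp\bigl(K_{s\wedge\tau}-(e-1)q(s\wedge\tau)\bigr)$ is a supermartingale. Markov's inequality then gives
\[
\Pp\bigl(K_{t\wedge\tau}\ge M\bigr)\le e^{-M+(e-1)qt}.
\]
Using \eqref{eq:rotation-horizon} and $(e-1)/2<1$, the exponent is at most $-M+(M-\log(2/\delta))=-\log(2/\delta)$, so this probability is at most $\delta/2$.

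\textbf{Rotation step.} I would introduce a surrogate process: a modified oracle that answers using only the already-revealed columns $u_1,\dots,u_{K_{s-1}+1}$, for instance by evaluating $g$ at $\Pi(U^\top y_s)$, where $\Pi$ zeroes all coordinates beyond $K_{s-1}$. On $\bigcap_{r\le s}\mathcal E_r$ the surrogate and true transcripts coincide, so it suffices to bound the failure probability for the surrogate. In the surrogate, the transcript up to time $s$ is a measurable function of the algorithm's randomness, the seeds, and the revealed columns only. By Haar invariance of the Stiefel manifold, conditionally on this transcript each unrevealed column $u_j$ is uniform on the unit sphere of the orthogonal complement of the revealed columns. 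That complement has dimension $r\ge d-D$.

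Write $P$ for the orthogonal projection onto this complement. Since $\norm{Py_s}\le R$, \cref{lem:spherical-tail} with $s=1/(4R)$ gives
\[
\Pp\bigl(\abs{\ip{u_j}{y_s}}\ge1/4\bigr)\le4\exp\bigl(-(d-D)/(256R^2)\bigr).
\]
A union bound over $j\le D$ and $s\le t\le M/q$ bounds the total failure probability by
\[
\frac{4DM}{q}\exp\Bigl(-\frac{d-D}{256R^2}\Bigr).
\]
Condition \eqref{eq:rotation-dimension} gives $(d-D)/(256R^2)\ge 16(M/q)\log(16DM/(q\delta))$, which exceeds $\log(8DM/(q\delta))$ since $M/q\ge 5$. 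So this probability is at most $\delta/2$, and adding the two steps proves the lemma.

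\textbf{Main obstacle.} The delicate step is the conditional-uniformity claim in the rotation step. Adaptive queries depend on past responses, and $K_s$ is itself random. The surrogate oracle is designed to make the transcript depend on $U$ only through the revealed columns, so that conditioning on the transcript is conditioning on a rotation-equivariant quantity. I would verify this by induction on $s$, using invariance of the Haar measure under rotations that fix the revealed columns. The statement's factor $2D$ in place of $D$ in \eqref{eq:rotation-dimension} suggests the authors also project out past query directions; I would keep that slack in reserve in case the simpler conditioning argument needs it.
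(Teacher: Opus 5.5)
\textbf{Counting step.} This is correct and is essentially the paper's stopped exponential-moment (Chernoff) argument.

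\textbf{Rotation step: the gap.} The problem is your claim that on $\bigcap_{r\le s}\mathcal E_r$ the surrogate and true transcripts coincide, and hence that unrevealed columns are conditionally uniform on the complement of the revealed ones. Coincidence would require that $g(x,z)$ be determined by $x_1,\dots,x_{K_{s-1}}$ and $z$ whenever $\prog_{1/4}(x)\le K_{s-1}$. The hypotheses \eqref{eq:general-zero-chain}--\eqref{eq:general-reveal} restrict only the \emph{support} of the output. The values of its nonzero coordinates may depend on every input coordinate.

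This is not a technicality even for the paper's thinned chain. With $k=\prog_{1/4}(x)=K_{s-1}$, the exact coordinate $[g_{m,q}(x,z)]_k=\nabla_kF_{2m}(x)$ contains $\hat{\Theta}'(x_k)\phi(x_{k+1})$. So even when $z=0$, the response reveals $\phi(\ip{u_{k+1}}{y_s})$ for a column not yet revealed. The transcript therefore depends on projections of unrevealed columns onto past queries. An adaptive algorithm can accumulate the projection of $u_j$ onto $\spanop\{y_1,\dots,y_{s-1}\}$ and align its next query with it. Your bound $\Pp(\abs{\ip{u_j}{y_s}}\ge1/4)\le4\exp(-(d-D)/(256R^2))$ is therefore unjustified.

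\textbf{The weaker lemma your argument would give is false.} Your argument never uses the factor $M/q$ in \eqref{eq:rotation-dimension}; it would prove the lemma with only $d-D\gtrsim R^2\log(DM/(q\delta))$. Here is a counterexample to that weaker statement.
\begin{itemize}
\item Take $R=1$, $D=M=5$, $\delta=1/4$.
\item If $\prog_{1/4}(x)=0$, let $g(x,z)=(z,0,\dots,0)$ with $z\sim\mathrm{Bernoulli}(q)$.
\item Otherwise let $g(x,z)=(\Phi(x_2,\dots,x_D),0,\dots,0)$, with $\Phi$ a Borel injection into $(1,2)$.
\end{itemize}
Both hypotheses hold. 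After the first reveal exposes $u_1$, the algorithm queries $u_1/2+v_s/2$, where the $v_s$ form an orthonormal basis of $u_1^\perp$. These responses reveal $\ip{u_M}{v_s}$, hence $u_M$, after $d-1$ calls; the query $y=u_M$ then has progress $M$. As $q\to0$, the admissible horizon $t\approx(M-\log8)/(2q)$ is far larger than a dimension logarithmic in $1/q$. The failure probability then tends to $1-e^{-(M-\log8)/2}>1/4$.

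\textbf{The repair, as in the paper.}
\begin{itemize}
\item Also condition on the full past projections $U^\top y_1,\dots,U^\top y_{i-1}$. These let you reconstruct every past response supported below $j$.
\item Gram--Schmidt $y_i$ against $u_1,\dots,u_{j-1},y_1,\dots,y_{i-1}$ to obtain orthonormal directions $w_{j,i}$.
\item Use that the normalized residual of $u_j$ off $\spanop\{u_1,\dots,u_{j-1},y_1,\dots,y_{i-1}\}$ is uniform on a sphere of dimension at least $d-i-j$.
\item Require $\abs{\ip{u_j}{w_{j,i}}}<1/(4R\sqrt t)$ for every $i$. Cauchy--Schwarz then gives $\abs{\ip{u_j}{y_s}}<\sqrt{s/t}\,\norm{y_s}/(4R)\le1/4$.
\end{itemize}
The $1/\sqrt t$ threshold makes the exponent $(d-i-j)/(256R^2t)$. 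This is where the factor $M/q$ in the dimension is genuinely needed, not the extra slack $D$ you planned to keep in reserve.
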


\begin{proof}
We use deferred decisions for the columns $u_1,\ldots,u_D$ of $U$.  Put
\[
 \pi_s:=\max_{i\le s}\prog_{1/4}(U^\top y_i),
 \qquad
 \gamma_s:=\max_{i\le s}\prog_0(g(U^\top y_i,z_i)),
 \qquad \gamma_0:=0.
\]
Let $\mathcal V_s$ be the event
$\prog_{1/4}(U^\top y_s)\le\gamma_{s-1}$.  Define the two stopping times
\[
 \tau_V:=\inf\{s\ge1:\mathcal V_s^c\},
 \qquad
 \tau_\gamma:=\inf\{s\ge1:\gamma_s\ge M\},
\]
with the convention $\inf\varnothing=\infty$.  If $\pi_t\ge M$, then either
$\tau_\gamma\le t$ occurs before $\tau_V$, or $\tau_V\le t$ occurs no later
than $\tau_\gamma$.  Indeed, consider the first query whose progress reaches
$M$: if no leakage has occurred, its progress is bounded by the previously
revealed support, while otherwise the leakage stopping time has already
occurred.  Hence
\begin{equation}\label{eq:rotation-race}
 \Pp(\pi_t\ge M)
 \le \Pp(\tau_\gamma\le t,\tau_\gamma<\tau_V)
    +\Pp(\tau_V\le t,\tau_V\le\tau_\gamma).
\end{equation}

We first control the progress-before-leakage event.  Let $r_{\cA}$ denote the
algorithm's random seed and use the filtration
\[
 \mathscr G_i:=\sigma(r_{\cA},U,z_1,\ldots,z_i).
\]
The query $y_i$, the event $\mathcal V_i$, and $\gamma_{i-1}$ are
$\mathscr G_{i-1}$-measurable.  Before either stopping time, the support rule
\eqref{eq:general-zero-chain} implies that
$\gamma_i-\gamma_{i-1}\in\{0,1\}$, and
\eqref{eq:general-reveal} bounds the conditional probability of the value
$1$ by $q$.  Define the stopped increments
\[
 \iota_i:=(\gamma_i-\gamma_{i-1})
 \one_{\{i<\tau_V,\,\gamma_{i-1}<M\}}.
\]
Then
\[
 \E[e^{\iota_i}\mid\mathscr G_{i-1}]
 \le1-q+qe\le e^{2q}.
\]
If $\tau_\gamma\le t$ and $\tau_\gamma<\tau_V$, then
$\sum_{i=1}^t\iota_i\ge M$.  Markov's inequality and iterated conditional
expectations therefore give
\begin{equation}\label{eq:progress-chernoff}
 \Pp(\tau_\gamma\le t,\tau_\gamma<\tau_V)
 \le \Pp\left(\sum_{i=1}^t\iota_i\ge M\right)
 \le e^{2qt-M}.
\end{equation}
Under \eqref{eq:rotation-horizon}, this is at most $\delta/2$.

It remains to control leakage before the support reaches $M$, namely the
second event in \eqref{eq:rotation-race}.  Fix a horizon $t$ satisfying
\eqref{eq:rotation-horizon}.  For each column index $j$ and query index $i$,
apply Gram--Schmidt to $y_i$ after the vectors
$u_1,\ldots,u_{j-1},y_1,\ldots,y_{i-1}$.  If the residual is nonzero, denote
its normalized direction by $w_{j,i}$; otherwise put $w_{j,i}=0$.  Thus the
nonzero vectors $w_{j,1},\ldots,w_{j,i}$ are orthonormal and orthogonal to
$u_1,\ldots,u_{j-1}$.

We record the deferred-decisions step precisely.  Let $r_{\cA}$ denote the
algorithm's random seed and define
\begin{equation}\label{eq:rotation-sigma-field}
 \mathscr U_{j-1}^{i-1}
 :=\sigma\bigl(r_{\cA},z_1,\ldots,z_{i-1},
        u_1,\ldots,u_{j-1},
        U^\top y_1,\ldots,U^\top y_{i-1}\bigr).
\end{equation}
The event $\{j>\gamma_{i-1}\}$ is measurable with respect to this
$\sigma$-field.  On that event, every past oracle response can be reconstructed
from the information in \eqref{eq:rotation-sigma-field}: the vector
$g(U^\top y_s,z_s)$ is known and has support below $j$, so
\[
 Ug(U^\top y_s,z_s)
 =\sum_{\ell<j}[g(U^\top y_s,z_s)]_\ell u_\ell.
\]
It follows recursively that the next query $y_i$, and hence $w_{j,i}$, is
fixed conditional on $\mathscr U_{j-1}^{i-1}$ and $\{j>\gamma_{i-1}\}$.

Put
\[
 \mathcal S_{j,i-1}
 :=\spanop\{u_1,\ldots,u_{j-1},y_1,\ldots,y_{i-1}\},
 \qquad P_{j,i-1}:=P_{\mathcal S_{j,i-1}^{\perp}}.
\]
Conditional on the same information, the normalized residual
\begin{equation}\label{eq:residual-column}
 \widehat u_j:=\frac{P_{j,i-1}u_j}{\norm{P_{j,i-1}u_j}}
\end{equation}
is uniform on the unit sphere of $\mathcal S_{j,i-1}^{\perp}$ whenever the
residual is nonzero. If the residual is zero, then
$\langle u_j,w_{j,i}\rangle=0$ and the tail estimate below is automatic,
so no normalized residual is needed. Here is a
complete invariance argument.  Let $Q$ be any orthogonal transformation that
fixes $\mathcal S_{j,i-1}$ pointwise.  Left multiplication $U\mapsto QU$
preserves Haar measure.  It fixes $u_1,\ldots,u_{j-1}$ and every previous
query, and it also preserves every conditioned projection because
$(QU)^\top y_s=U^\top Q^\top y_s=U^\top y_s$.  It therefore preserves both
the $\sigma$-field in \eqref{eq:rotation-sigma-field} and the event
$\{j>\gamma_{i-1}\}$.  The corresponding regular conditional law of
$P_{j,i-1}u_j$ is consequently invariant under every orthogonal map of
$\mathcal S_{j,i-1}^{\perp}$.  Its norm is fixed by the conditioned
projections and the identity $\norm{u_j}=1$; hence its direction has the
unique rotation-invariant probability law on the sphere, namely the uniform
law.  Since
$\dim(\mathcal S_{j,i-1}^{\perp})\ge d-i-j+2$, the safe lower bound
$d-i-j$ may be used below.

Consequently, whenever $j>\gamma_{i-1}$ and no previous leakage has
occurred, $w_{j,i}\in\mathcal S_{j,i-1}^{\perp}$ and
$\abs{\ip{u_j}{w_{j,i}}}\le\abs{\ip{\widehat u_j}{w_{j,i}}}$.
Thus \cref{lem:spherical-tail} gives
\begin{equation}\label{eq:one-leak-bound}
 \Pp\left(
   \abs{\ip{u_j}{w_{j,i}}}\ge\frac1{4R\sqrt t}
   \ \middle|\ \text{past}
 \right)
 \le4\exp\left(
   -\frac{d-i-j}{256R^2t}
 \right).
\end{equation}
Here conditioning on more information than the algorithm observes is
legitimate and only strengthens the deferred-decisions argument.

Suppose all the inner products in \eqref{eq:one-leak-bound} are below their
threshold for a fixed $j$ and all $i\le s$.  Since $u_j$ is orthogonal to
$u_1,\ldots,u_{j-1}$, the Gram--Schmidt expansion of $y_s$ and
Cauchy--Schwarz give
\begin{align}
 \abs{\ip{u_j}{y_s}}
 &\le
 \left(\sum_{i=1}^s\abs{\ip{u_j}{w_{j,i}}}^2\right)^{1/2}
 \left(\sum_{i=1}^s\abs{\ip{w_{j,i}}{y_s}}^2\right)^{1/2}
 \notag\\
 &<\frac{\sqrt s}{4R\sqrt t}\norm{y_s}
 \le\frac14.
 \label{eq:no-leak-cs}
\end{align}
At the stopping time $\tau_V$ on the event
$\{\tau_V\le t,\tau_V\le\tau_\gamma\}$, one has
$\gamma_{\tau_V-1}<M$, and a failure of one of the events in
\eqref{eq:one-leak-bound} must occur for some $i\le\tau_V$ and some
$j\le D$.  A union bound therefore yields
\begin{equation}\label{eq:leak-union-bound}
 \Pp(\tau_V\le t,\tau_V\le\tau_\gamma)
 \le4tD\exp\left(
   -\frac{d-t-D}{256R^2t}
 \right).
\end{equation}

Let $A=\log(16DM/(q\delta))$.  Since $t\le M/(2q)$, the
dimension requirement \eqref{eq:rotation-dimension} implies
$d-t-D\ge \tfrac12 C_{\rm rot,s}R^2(M/q)A$ and hence
\[
 \frac{d-t-D}{256R^2t}\ge16A.
\]
Substituting this into \eqref{eq:leak-union-bound}, and using
$4tD\le2DM/q$, shows that the leakage probability is at most $\delta/2$.
Together with \eqref{eq:rotation-race} and
\eqref{eq:progress-chernoff}, this proves
\eqref{eq:rotation-conclusion}.
\end{proof}

We now combine the geometry and information arguments.

\begin{proposition}\label[proposition]{prop:normalized}
There is a universal constant $c_{\rm n}>0$ such that the following holds.
Let $m\ge5$, $q\in(0,1]$, and define
\begin{equation}\label{eq:normalized-dimension}
 d_{m,q}:=
 \left\lceil
 4m+2^{22}\frac{m^2}{q}
  \log\left(\frac{128m^2}{q}\right)
 \right\rceil.
\end{equation}
Let $U$ be Haar-uniform on the Stiefel manifold of
$d_{m,q}\times2m$ orthonormal matrices.  For every adaptive randomized
algorithm using the oracle \eqref{eq:Ghat-def}, whenever
$T\le c_{\rm n}m/q$,
\begin{align}
 \Pp_{U,z,\cA}\left(
  \widehat F_{m,U}(\widehat x_T)-\widehat F_{m,U}^\star
  \ge(2-\eta)m
 \right)&\ge\frac34,
 \label{eq:normalized-hardness-probability}\\
 \E_{U,z,\cA}\bigl[
   \widehat F_{m,U}(\widehat x_T)-\widehat F_{m,U}^\star
 \bigr]&\ge m.
 \label{eq:normalized-hardness}
\end{align}
One may take $c_{\rm n}=1/16$.
\end{proposition}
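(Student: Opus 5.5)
The proof reduces the statement to \cref{lem:rotation} applied to the compactified chain, and then converts ``no progress'' into a large objective gap through the tail barrier \eqref{eq:tail-barrier}.

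\textbf{Step 1: bounded effective queries.} Since $\widehat G_{m,q,U}(x,z)=J\rho_m(x)^\top Ug_{m,q}(U^\top\rho_m(x),z)+\eta x$, every oracle answer is a function of $x$ (which the algorithm knows), $\rho_m(x)$, and the rotated chain oracle evaluated at $y:=\rho_m(x)$. These are computable from $Ug(U^\top y,z)$. We therefore simulate the given algorithm by one that queries the bounded points $y_t=\rho_m(x_t)$, with $\norm{y_t}<2R_m=32\sqrt m$, and receives $Ug_{m,q}(U^\top y_t,z_t)$. The output $\widehat x_T$ is handled by appending one extra query $y_{T+1}=\rho_m(\widehat x_T)$, so that $T+1$ queries are analyzed.

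\textbf{Step 2: apply \cref{lem:rotation}.} Take $D=2m$, $M=m$, $R=32\sqrt m$ and $\delta=1/4$. Hypotheses \eqref{eq:general-zero-chain}--\eqref{eq:general-reveal} hold by \eqref{eq:prob-zero-chain}--\eqref{eq:reveal-probability}. For the dimension requirement \eqref{eq:rotation-dimension}, note that $C_{\rm rot,s}R^2M/q=4096\cdot1024\,m^2/q=2^{22}m^2/q$ and $\log(16DM/(q\delta))=\log(128m^2/q)$. Together with $2D=4m$, this is exactly $d_{m,q}$ in \eqref{eq:normalized-dimension}. For the horizon \eqref{eq:rotation-horizon}, we need $T+1\le(m-\log 8)/(2q)$. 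With $T\le m/(16q)$, $q\le1$ and $m\ge5$, we have $T+1\le m/(16q)+1/q$. It then suffices to check $m/16+1\le(m-\log 8)/2$, which holds for $m\ge5$ after a short arithmetic check. (If this check is tight for small $m$, I would adjust slightly using $m-\log8\ge 0.58m$.) We conclude that, with probability at least $3/4$, $\prog_{1/4}(U^\top\rho_m(\widehat x_T))<m$.

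\textbf{Step 3: gap on this event.} Put $y=U^\top\rho_m(\widehat x_T)$. By \eqref{eq:tail-barrier}, $F_{2m}(y)\ge 2m$, so $\widehat F_{m,U}(\widehat x_T)\ge 2m$. By \eqref{eq:min-value-equality}, $\widehat F^\star_{m,U}=\Gamma_{2m}^\star\le\Gamma_{2m}(\one_{2m})=\tfrac\eta2\cdot 2m=\eta m$. Hence the gap is at least $(2-\eta)m$, which proves \eqref{eq:normalized-hardness-probability}. The expectation bound follows because the gap is nonnegative: its expectation is at least $\tfrac34(2-\eta)m\ge m$.

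\textbf{Main obstacle.} The delicate step is Step 1. The reduction must be legitimate for the adaptive filtration used in \cref{lem:rotation}: the simulated algorithm must use only $Ug$, the map $x\mapsto y$ must be fixed, and the output must be counted as a query. I expect the remaining verifications to be routine numerics.
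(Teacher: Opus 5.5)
Your proposal is correct and follows the paper's proof essentially step for step: the same $\rho_m$-simulation of the compactified oracle by the rotated chain oracle, the same application of \cref{lem:rotation} with $D=2m$, $M=m$, $R=32\sqrt m$, $\delta=1/4$ and the output appended as query $T+1$, and the same conversion to a gap via \eqref{eq:tail-barrier} and $\widehat F_{m,U}^\star\le\eta m$. Your horizon check needs no adjustment: $m/16+1\le(m-\log 8)/2$ reduces to $7m/16\ge 1+\tfrac12\log 8\approx 2.04$, which already holds at $m=5$.
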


\begin{proof}
Given an algorithm querying the clipped oracle at $x$, simulate it using the
strictly stronger rotated-chain oracle as follows.  Query
$y=\rho_m(x)$, which satisfies $\norm{y}<2R_m$, receive
$Ug_{m,q}(U^\top y,z)$, and return to the original algorithm
\[
 J\rho_m(x)^\top Ug_{m,q}(U^\top y,z)+\eta x.
\]
This is exactly \eqref{eq:Ghat-def}.  Thus it suffices to lower-bound the
stronger experiment.

Apply \cref{lem:rotation} with
\[
 D=2m,
 \qquad M=m,
 \qquad R=2R_m=32\sqrt m,
 \qquad \delta=\frac14.
\]
The dimension in \eqref{eq:normalized-dimension} is larger than the
requirement \eqref{eq:rotation-dimension}.  The output point may be arbitrary
and need not have been queried.  Append
$y_{T+1}=\rho_m(\widehat x_T)$ as one additional query and ignore its
response.  If $T\le m/(16q)$ and $m\ge5$, then
\[
 T+1\le\frac{m-\log8}{2q},
\]
so \cref{lem:rotation} implies, with probability at least $3/4$,
\begin{equation}\label{eq:output-low-progress}
 \prog_{1/4}\bigl(U^\top\rho_m(\widehat x_T)\bigr)<m.
\end{equation}
On this event, \eqref{eq:tail-barrier} gives
\[
 F_{2m}\bigl(U^\top\rho_m(\widehat x_T)\bigr)\ge2m.
\]
On the other hand, because $\sqrt{2m}<R_m$,
\[
 \widehat F_{m,U}^\star
 \le\widehat F_{m,U}(U\one_{2m})
 =\frac\eta2\norm{U\one_{2m}}^2
 =\eta m.
\]
The quadratic term at the output is nonnegative, so on
\eqref{eq:output-low-progress},
\[
 \widehat F_{m,U}(\widehat x_T)-\widehat F_{m,U}^\star
 \ge(2-\eta)m.
\]
The event \eqref{eq:output-low-progress} has probability at least $3/4$,
so the preceding inequality proves
\eqref{eq:normalized-hardness-probability}.  The gap is always nonnegative;
hence its expectation is at least
$\frac34(2-\eta)m>m$, proving \eqref{eq:normalized-hardness}.
\end{proof}

\begin{remark}[A common fixed hard rotation]\label[remark]{rem:fixed-U}
Averaging over $U$ is only an existence argument. For a fixed randomized
algorithm define
\[
 p_{\cA}(U)=\mathbb P_{z,\cA}\!\left(
 \widehat F_{m,U}(\widehat x_T)-\widehat F_{m,U}^\star
 \ge(2-\eta)m\right).
\]
Proposition~\ref{prop:normalized} gives
$\mathbb E_Up_{\cA}(U)\ge3/4$. Hence there is a fixed $U$ with
$p_{\cA}(U)\ge3/4$. For this same $U$, nonnegativity of the gap implies
\[
 \mathbb E_{z,\cA}[\widehat F_{m,U}(\widehat x_T)-\widehat F_{m,U}^\star]
 \ge\tfrac34(2-\eta)m>m.
\]
Thus one rotation is simultaneously hard in probability and expectation.
Selecting separate maximizers of the two criteria would not justify this
conclusion. The instance and oracle are fixed before the algorithm is run.
\end{remark}

The normalized instance is now ready to be calibrated. Its smoothness and
PL constants determine the chain width through
\begin{equation}\label{eq:A-constant}
 A_0:=C_{\rm PL}L_b=40000\cdot128,
 \qquad
 \kappa_0:=12A_0.
\end{equation}
Assume that $\kappa=L/\mu\ge\kappa_0$ and set
\begin{equation}\label{eq:m-choice}
 m_{\rm s}:=\left\lfloor\frac{\kappa}{2A_0}\right\rfloor,\qquad m:=m_{\rm s}.
\end{equation}
Then $m\ge5$ and
\begin{equation}\label{eq:m-comparable-kappa}
 \frac{\kappa}{3A_0}\le m\le\frac{\kappa}{2A_0}.
\end{equation}
For $0<\epsilon\le\Delta_0/10$, choose the objective scale to make the
unrevealed tail exceed the target accuracy, and the spatial scale to match $L$:

\begin{equation}\label{eq:lambda-s-choice}
 \lambda:=\frac{2\epsilon}{m},
 \qquad
 s^2:=\frac{L}{\lambda L_b}.
\end{equation}
For a fixed rotation $U$, define
\begin{equation}\label{eq:scaled-function}
 f_U(x):=\lambda\widehat F_{m,U}(s x),
\end{equation}
and scale the oracle consistently:
\begin{equation}\label{eq:scaled-oracle}
 G_U(x,z):=\lambda s\,\widehat G_{m,q,U}(s x,z).
\end{equation}

First we verify the geometric requirements after scaling.

By \eqref{eq:Fhat-smooth},
\begin{equation}\label{eq:scaled-smoothness}
 \operatorname{Lip}(\nabla f_U)\le\lambda s^2L_b=L.
\end{equation}
By the global PL estimate for $\widehat F_{m,U}$,
\begin{align*}
 f_U(x)-f_U^\star
 &\le\lambda C_{\rm PL}m
       \|\nabla\widehat F_{m,U}(sx)\|^2\\
 &=\frac{C_{\rm PL}m}{\lambda s^2}\|\nabla f_U(x)\|^2
 =\frac{C_{\rm PL}mL_b}{L}\|\nabla f_U(x)\|^2
 \le\frac1{2\mu}\|\nabla f_U(x)\|^2.
\end{align*}
Thus $f_U$ is globally $\mu$-PL.  Moreover,
\begin{equation}\label{eq:scaled-initial-gap}
 f_U(0)-f_U^\star<5\lambda m=10\epsilon.
\end{equation}

Next we calibrate the revelation probability to the announced noise moment.

For $q<1$, Lemma~\ref{lem:Fhat-basic} gives
\begin{align}
 &\mathbb E\|G_U(x,z)-\nabla f_U(x)\|^\alpha\notag\\
 &\quad\le2(\lambda s)^\alpha q^{1-\alpha}
 =2\left(\frac{\lambda L}{L_b}\right)^{\alpha/2}
 q^{1-\alpha}.
 \label{eq:scaled-alpha-moment}
\end{align}
Define
\begin{equation}\label{eq:q-choice}
 q_\star:=
 \left[\frac{2(\lambda L/L_b)^{\alpha/2}}{\sigma^\alpha}
 \right]^{1/(\alpha-1)}
 =2^{1/(\alpha-1)}
 \left(\frac{\lambda L}{L_b\sigma^2}\right)^{\thetaalpha},
 \qquad
 q:=\min\{1,q_\star\}=q_{\rm s}\quad(\sigma>0).
\end{equation}
For $\sigma=0$, set $q_{\rm s}=q=1$ directly and omit $q_\star$; the oracle is exact.
For $\sigma>0$, if $q_\star\le1$, \eqref{eq:scaled-alpha-moment} is at most
$\sigma^\alpha$.  If $q_\star>1$, then $q=1$ and the oracle is exact, so
its centered moment is zero.

The normalized lower bound transfers exactly through scaling: the objective
gap is multiplied by $\lambda$, and a scaled algorithm can be simulated on
the normalized instance.  Thus, for $T\le c_{\rm n}m/q$, there is one fixed
rotation $U$ such that
\begin{align}
 \mathbb P\bigl(f_U(\widehat x_T)-f_U^\star
   \ge2(2-\eta)\epsilon\bigr)&\ge\frac34,
 \label{eq:scaled-heavy-probability}\\
 \mathbb E[f_U(\widehat x_T)-f_U^\star]&\ge2\epsilon.
 \label{eq:scaled-heavy-expectation}
\end{align}

Suppose first that $\sigma>0$ and $q_\star\le1$.

Using \eqref{eq:lambda-s-choice},
\begin{align}
 \frac mq
 &=2^{-1/(\alpha-1)}m
   \left(\frac{L_b\sigma^2}{\lambda L}\right)^{\thetaalpha}\notag\\
 &=2^{-1/(\alpha-1)}m
   \left(\frac{mL_b\sigma^2}{2\epsilon L}\right)^{\thetaalpha}.
 \label{eq:mq-heavy-exact}
\end{align}
Since $mL_b/L\ge1/(3C_{\rm PL}\mu)$ and
$m\ge\kappa/(3A_0)$,
\begin{equation}\label{eq:mq-heavy-lower}
 \frac mq\ge
 \frac{2^{-1/(\alpha-1)}}{3A_0}
 \left(\frac1{6C_{\rm PL}}\right)^{\thetaalpha}
 \kappa\left(\frac{\sigma^2}{\mu\epsilon}\right)^{\thetaalpha}.
\end{equation}

For the remaining positive-noise case $q_\star>1$, the calibrated oracle is exact but still gives the required barrier.

The inequality $q_\star>1$ implies
\[
 \sigma^2<2^{2/\alpha}\frac{\lambda L}{L_b}
 =2^{1+2/\alpha}\frac{\epsilon L}{mL_b}.
\]
Consequently,
\begin{equation}\label{eq:exact-case-heavy-ratio}
 \left(\frac{\sigma^2}{\mu\epsilon}\right)^{\thetaalpha}
 <\left(2^{1+2/\alpha}\frac{\kappa}{mL_b}
\right)^{\thetaalpha}.
\end{equation}
Because $m=\lfloor\kappa/(2A_0)\rfloor$ and $m\ge5$,
$\kappa<(12/5)A_0m$.  Hence the right-hand side of
\eqref{eq:exact-case-heavy-ratio} is bounded by a constant depending only on
$\alpha$.  Also $\kappa\le(12/5)A_0m$.  Therefore
\begin{equation}\label{eq:exact-chain-dominates-heavy}
 \kappa\left(\frac{\sigma^2}{\mu\epsilon}\right)^{\thetaalpha}
 \le C_\alpha m.
\end{equation}
The order-$m$ exact-gradient barrier thus dominates the desired lower bound.

Combining the two cases, there is $c_{{\rm large},\alpha}>0$ such that, for
$\kappa\ge\kappa_0$, every
\begin{equation}\label{eq:large-kappa-threshold}
 T\le c_{{\rm large},\alpha}\kappa
 \left(\frac{\sigma^2}{\mu\epsilon}\right)^{\thetaalpha}
\end{equation}
leaves expected error at least $2\epsilon$ and gap at least $2\epsilon$ with
probability at least $3/4$ on some fixed valid instance. The initial-gap
condition follows from \eqref{eq:scaled-initial-gap}, since
$10\epsilon\le\Delta_0$ in the stated accuracy range.

With the width and revelation probability now fixed, the rotation argument
requires the ambient dimension
\begin{equation}\label{eq:hd-ds}
 d_{\rm s}:=d_{m_{\rm s},q_{\rm s}}
 =\left\lceil4m_{\rm s}+2^{22}\frac{m_{\rm s}^2}{q_{\rm s}}
       \log\left(\frac{128m_{\rm s}^2}{q_{\rm s}}\right)\right\rceil.
\end{equation}
This is precisely \eqref{eq:normalized-dimension} at the calibrated parameters.
It completes the stochastic construction, including its finite hard dimension.

\subsection{Geometrically weighted exact chain and the noiseless component}\label{subsec:hd-noiseless}
The stochastic construction is deliberately tuned to the noise scale.
Setting $\sigma=0$ in its final formula only produces the trivial value zero;
it does not by itself prove the exact-gradient transient
$\Omega(\kappa\log(\Delta_0/\epsilon))$. We now build a separate exact-gradient
hard family. Since an exact oracle belongs to
$\cO_{\alpha,\sigma}(f)$ for every $\sigma\ge0$, this family can be
combined with the stochastic family by taking the maximum of the two minimax
obstructions.

The new ingredient is a geometrically weighted robust zero-chain. It has
$K$ layers, each of width $m=\Theta(\kappa)$. The layer amplitudes decay
geometrically, so the last unresolved layer has value comparable to the
prescribed accuracy. Crucially, the total squared weight after any first bad
coordinate is only $O(m)$ times that coordinate's squared weight. This keeps
the global PL coefficient at $O(m)$ although the total chain length is $mK$.

We now specify the amplitudes and the weighted chain explicitly.

Fix
\begin{equation}\label{eq:gamma-weighted}
 \gamma:=\frac78,
 \qquad r_\gamma:=\gamma^2=\frac{49}{64}.
\end{equation}
Let $m,K\ge1$ be integers, put $N:=mK$, and define
\begin{equation}\label{eq:weighted-amplitudes}
 a_i:=\gamma^{\lfloor(i-1)/m\rfloor},
 \qquad u_i:=\frac{x_i}{a_i},
 \qquad i=1,\ldots,N.
\end{equation}
Thus each layer has $m$ equal amplitudes, and successive layers differ by the
factor $\gamma$. Using the scalar functions from
\eqref{eq:Theta-def}--\eqref{eq:B-def}, define
\begin{equation}\label{eq:weighted-chain-def}
 \mathsf W_{m,K}(x)
 :=a_1^2\phi(u_1)
 +\sum_{i=1}^{N-1}a_{i+1}^2\hat{\Theta}(u_i)\phi(u_{i+1})
 +\sum_{i=1}^N a_i^2B(u_i).
\end{equation}
For the nonuniform threshold vector
$\boldsymbol a=(a_1,\ldots,a_N)$, put
\begin{equation}\label{eq:weighted-progress}
 \prog_{\boldsymbol a}(x)
 :=\max\left(\left\{i:\abs{x_i}>\frac{a_i}{4}\right\}\cup\{0\}\right).
\end{equation}
Finally, define the tail squared-weight mass
\begin{equation}\label{eq:tail-weight-mass}
 S_j:=\sum_{i=j}^N a_i^2,
 \qquad j=1,\ldots,N.
\end{equation}

\begin{lemma}[Geometric tail mass]\label[lemma]{lem:geometric-tail-mass}
For every $j$,
\begin{equation}\label{eq:tail-mass-bound}
 S_j\le \frac{m}{1-\gamma^2}a_j^2
 =\frac{64}{15}ma_j^2<5ma_j^2.
\end{equation}
In particular, $S_1<5m$.
\end{lemma}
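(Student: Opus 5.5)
The plan is to bound the tail sum layer by layer and compare it with a geometric series. Write $\ell(i):=\lfloor (i-1)/m\rfloor$ for the layer index of coordinate $i$. Then $a_i^2=r_\gamma^{\ell(i)}$, where $r_\gamma=\gamma^2=49/64$.

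Fix $j$ and set $\ell_0:=\ell(j)$. The coordinates $i\ge j$ lie in the layers $\ell_0,\ell_0+1,\ldots,K-1$. Each layer contributes at most $m$ coordinates, and layer $\ell_0$ contributes only those indices of that layer that are at least $j$, which is still at most $m$. Within layer $\ell$ every coordinate has squared amplitude $r_\gamma^{\ell}$. This gives
\[
 S_j\le\sum_{\ell=\ell_0}^{K-1} m\,r_\gamma^{\ell}
 \le m\,r_\gamma^{\ell_0}\sum_{k=0}^{\infty}r_\gamma^{k}
 =\frac{m}{1-\gamma^2}\,a_j^2,
\]
where we used $a_j^2=r_\gamma^{\ell_0}$ and extended the finite sum to the full geometric series, whose terms are nonnegative and whose ratio $r_\gamma$ is less than one.

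Next we evaluate the constant. Since $1-49/64=15/64$, we have $1/(1-\gamma^2)=64/15<5$, which gives both stated forms of \eqref{eq:tail-mass-bound}. For the final claim, take $j=1$. Then $\ell(1)=0$ and $a_1=1$, so $S_1<5m$.

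No step here is a real obstacle. The only point needing care is that the first, possibly partial, layer is bounded by $m$ terms of the same amplitude $a_j^2$. This is why the bound is stated relative to $a_j^2$ and not to the amplitude of the first complete layer after $j$.
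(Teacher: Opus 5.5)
Your proposal is correct and matches the paper's proof: you bound the possibly partial layer containing $j$ and each later layer by $m$ terms of squared amplitude $\gamma^{2\ell}$, dominate the sum by the infinite geometric series, and evaluate $1/(1-\gamma^2)=64/15<5$. Setting $j=1$ with $a_1=1$ then gives $S_1<5m$, exactly as the paper states.
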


\begin{proof}
Let $j$ belong to layer $\ell$, so $a_j=\gamma^\ell$. There are at most
$m$ remaining entries in that layer and exactly $m$ entries in each later
layer. Therefore
\[
 S_j\le m\gamma^{2\ell}\sum_{q=0}^{\infty}\gamma^{2q}
 =\frac{m}{1-\gamma^2}a_j^2.
\]
Since $1-\gamma^2=15/64$, the numerical statements follow.
\end{proof}

The tail-mass estimate is what prevents the PL coefficient from growing
with the number of layers. We next verify all properties of the weighted
chain together, so that the information barrier and its geometry refer to
the same function.

\begin{lemma}
\label[lemma]{lem:weighted-chain}
There are universal constants
\begin{equation}\label{eq:weighted-chain-constants}
 C_{\rm init}:=11,
 \qquad L_{\rm W}:=79,
 \qquad C_{\rm W}:=10^5,
\end{equation}
such that, for every $m,K\ge1$:
\begin{enumerate}[label=(\roman*)]
\item $\mathsf W_{m,K}\ge0$, $\mathsf W_{m,K}(\boldsymbol a)=0$, and
\begin{equation}\label{eq:weighted-initial}
 \mathsf W_{m,K}(0)\le C_{\rm init}m.
\end{equation}
\item $\mathsf W_{m,K}$ is globally $L_{\rm W}$-smooth.
\item If $k=\prog_{\boldsymbol a}(x)$, then
\begin{equation}\label{eq:weighted-zero-chain}
 \nabla_j\mathsf W_{m,K}(x)=0\qquad(j\ge k+2),
\end{equation}
and, when $k<N$,
\begin{equation}\label{eq:weighted-next-gradient}
 \abs{\nabla_{k+1}\mathsf W_{m,K}(x)}\le a_{k+1}.
\end{equation}
\item If $\prog_{\boldsymbol a}(x)\le N-m$, then
\begin{equation}\label{eq:weighted-last-layer-barrier}
 \mathsf W_{m,K}(x)\ge2m\gamma^{2(K-1)}.
\end{equation}
\item The global gradient-dominance inequality
\begin{equation}\label{eq:weighted-global-PL}
 \mathsf W_{m,K}(x)
 \le C_{\rm W}m\norm{\nabla\mathsf W_{m,K}(x)}^2
 \qquad\text{for every }x\in\mathbb R^N
\end{equation}
holds. In particular, its coefficient depends on the layer width $m$ but
not on the number of layers $K$.
\end{enumerate}
\end{lemma}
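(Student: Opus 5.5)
The plan is to work in the rescaled coordinates $u_i=x_i/a_i$ and reduce each claim to the unweighted analysis of \cref{lem:base-chain} and \cref{lem:Gamma-PL}, treating the weights as a slowly varying diagonal rescaling. Differentiating \eqref{eq:weighted-chain-def} term by term gives
\[
 \nabla_i\mathsf W_{m,K}(x)=a_i\Bigl[\tfrac{a_i^2}{a_i^2}\hat{\Theta}(u_{i-1})\phi'(u_i)+\tfrac{a_{i+1}^2}{a_i^2}\hat{\Theta}'(u_i)\phi(u_{i+1})+B'(u_i)\Bigr],
\]
where the incoming term at $i=1$ is $\phi'(u_1)$. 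The ratios $a_{i+1}^2/a_i^2$ lie in $\{1,\gamma^2\}\subseteq(0,1]$.

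\textbf{Step 1: items (i)--(iv).} These follow the proofs of \cref{lem:base-chain} with the rescaling in place.
\begin{itemize}
\item Nonnegativity, and the zero at $u=\one$ (i.e.\ $x=\boldsymbol a$), are immediate.
\item At $x=0$ the value is $a_1^2\phi(0)+2S_1<1+10m\le11m$, by \cref{lem:geometric-tail-mass}.
\item For smoothness, the second derivatives in $x$ pick up factors $a_{i\pm1}^2/(a_ia_j)$. Each such factor is at most $1$ for all couplings ($j=i$ or $j=i\pm1$), since amplitudes are nonincreasing and $a_{i+1}^2/(a_ia_{i+1})\le1$. So the same tridiagonal row/column-sum bound gives $79$.
\item The support property (iii) is the same coordinate check, since $\abs{x_i}\le a_i/4$ iff $\abs{u_i}\le1/4$. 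The bound $\abs{\nabla_{k+1}}\le a_{k+1}$ comes from the displayed formula together with $\abs{\phi'}\le1$.
\item For (iv), progress at most $N-m$ leaves the whole last layer in $[-1/4,1/4]$ in $u$. Each of its $m$ coordinates then contributes $a_i^2B=2\gamma^{2(K-1)}$.
\end{itemize}

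\textbf{Step 2: item (v), via a first-bad-coordinate split.} Note first that $\mathsf W^\star=0$. Let $j$ be the first index with $u_j\notin[7/8,9/8]$.

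Case $j$ exists. In $u$-coordinates, the $j$th bracket above has exactly the form \eqref{eq:first-bad-coordinate} with $\eta=0$, with $s$ replaced by $s'=(a_{j+1}^2/a_j^2)\phi(u_{j+1})\in[0,1]$. The sign analysis \eqref{eq:first-bad-signs} used only $s\in[0,1]$, and its margins survive $\eta=0$ (I would recheck the constants). In fact the bracket grows like $u_j$ for large $\abs{u_j}$, so it is bounded below by $c\max(1,\abs{u_j})$, and hence
\[
 \norm{\nabla\mathsf W}\ge\abs{\nabla_j\mathsf W}\ge c\,a_j\max(1,\abs{u_j}).
\]
This controls only coordinate $j$. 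The obstacle is the tail $i>j$, whose coordinates may be huge; the prefix $i<j$ is harmless.

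\textbf{Step 3: handling the tail in item (v).} I split the function value into three parts and bound each.
\begin{itemize}
\item \emph{Prefix.} The terms with indices $i<j$ all have $u_i\in[7/8,9/8]$. There the prefix subchain is strongly convex in $u$ (the Gershgorin argument of \cref{lem:Gamma-PL}), so its excess is bounded by its own gradient squared. Boundary coupling terms into index $j$ are absorbed using the bound on $\abs{\nabla_j}$.
\item \emph{Bounded part of the tail.} I bound the tail value by $\sum_{i\ge j}a_i^2(B(u_i)+1)\le\sum_{i\ge j}a_i^2(u_i^2/2+3)$. The constant part is $3S_j\le15ma_j^2\le C m\norm{\nabla\mathsf W}^2$ by \cref{lem:geometric-tail-mass}. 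This is exactly where the factor $m$, and not $N$, arises.
\item \emph{Quadratic part of the tail.} For the pieces $a_i^2u_i^2$, I use the radial identity analogous to \eqref{eq:Gamma-radial-lower}, applied to the tail vector $x_{\ge j}$:
\[
 \langle\nabla\mathsf W,x_{\ge j}\rangle\ge\tfrac12\textstyle\sum_{i\ge j}a_i^2u_i^2-C\sqrt{S_j}\,(\cdot)-CS_j,
\]
with the coupling terms weighted by $a_{i+1}^2\le a_i^2$. This gives
\[
 \textstyle\sum_{i\ge j}a_i^2u_i^2\le C\bigl(S_j+\norm{\nabla\mathsf W}^2\bigr).
\]
Combined with $S_j\le5ma_j^2\le Cm\norm{\nabla\mathsf W}^2$, this closes the estimate.
\end{itemize}
If instead no bad $j$ exists, the whole vector lies in the box and the strong-convexity argument applies directly.

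I expect Step 3 to be the main difficulty, specifically the radial estimate restricted to the tail: the cross term $a_j^2\hat\Theta(u_{j-1})\phi(u_j)$ and the weighted coupling must be controlled by $\sqrt{S_j}$ rather than $\sqrt N$. I would first attempt Cauchy--Schwarz, with per-coordinate coupling gradients bounded by $4a_i$.
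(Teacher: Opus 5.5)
Your proposal is correct and follows essentially the same route as the paper. Items (i)--(iv) come from the rescaled coordinate computation. Item (v) combines strong convexity on the box $\prod_i[7a_i/8,9a_i/8]$, the first bad coordinate $j$ with $\abs{\nabla_j\mathsf W_{m,K}(x)}\ge\frac{3}{32}a_j$, a prefix bounded via strong convexity by its value at $\boldsymbol a_{<j}$ (at most $a_j^2$) plus $\frac12\norm{\nabla_{<j}\mathsf W_{m,K}(x)}^2$, and a radial tail estimate controlled by $S_j\le 5ma_j^2$.

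One point to tighten: apply the prefix strong convexity in the original $x$-coordinates, where the Gershgorin bound gives modulus $1$ and the gradient is exactly $\nabla_{<j}\mathsf W_{m,K}$. Taken literally in $u$, the modulus is only $\min_i a_i^2$, which would reintroduce a $K$-dependent factor.
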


\begin{proof}
Nonnegativity and $\mathsf W_{m,K}(\boldsymbol a)=0$ follow exactly as for
the unweighted chain. At the origin all coupling terms vanish, so
\[
 \mathsf W_{m,K}(0)=\phi(0)+2S_1<1+10m\le11m
\]
for $m\ge1$, proving (i).

For $i=1,\ldots,N$, define
\[
 A_i:=\begin{cases}1,&i=1,\\ \hat{\Theta}(u_{i-1}),&i\ge2,\end{cases}
 \qquad
 r_i:=\frac{a_{i+1}}{a_i}\in\{1,\gamma\}\quad(i<N).
\]
A direct differentiation gives
\begin{equation}\label{eq:weighted-gradient-coordinate}
 \nabla_i\mathsf W_{m,K}(x)
 =a_i\left[A_i\phi'(u_i)+B'(u_i)
 +\mathbf 1_{\{i<N\}}r_i^2\hat{\Theta}'(u_i)\phi(u_{i+1})\right].
\end{equation}
The dependence of this coordinate on $x_{i-1},x_i,x_{i+1}$ has respective
Lipschitz coefficients at most $3,73,3$. Omit nonexistent neighbors and,
when $i=N$, the entire outgoing term. All adjacent amplitude ratios are
at most one, and the scalar estimates are those of
Lemma~\ref{lem:scalar}. The same tridiagonal row/column-sum argument used in
Lemma~\ref{lem:base-chain} therefore proves global $79$-smoothness.

Let $k=\prog_{\boldsymbol a}(x)$. If $j\ge k+2$, then
$\abs{u_{j-1}},\abs{u_j},\abs{u_{j+1}}\le1/4$; hence every term in
\eqref{eq:weighted-gradient-coordinate} vanishes. For $j=k+1$, only the
incoming term can remain, and its magnitude is at most $a_{k+1}$. This
proves (iii). If the progress is at most $N-m$, the last $m$ normalized
coordinates lie in $[-1/4,1/4]$, where $B=2$, and their common amplitude is
$\gamma^{K-1}$. This proves (iv).

It remains to establish the global PL inequality. We divide the proof into
four steps.

\emph{Step 1: local strong convexity.}

Let
\begin{equation}\label{eq:weighted-box}
 \mathcal B_{\boldsymbol a}:=
 \prod_{i=1}^N\left[\frac78a_i,\frac98a_i\right].
\end{equation}
Using the explicit formulas for the scalar derivatives (and
$e^{-x}\ge1-x$), on $[7/8,9/8]$,
\begin{equation}\label{eq:weighted-box-scalar}
 \hat{\Theta}\ge\frac{27}{32},\quad
 \abs{\hat{\Theta}'}\le\frac94,\quad
 \phi\le\frac1{128},\quad
 \abs{\phi'}\le\frac18,\quad
 \phi''\ge\frac{31}{32}.
\end{equation}
At points of twice differentiability, the diagonal Hessian entry is
\[
 A_i\phi''(u_i)+B''(u_i)
 +\mathbf 1_{\{i<N\}}r_i^2\hat{\Theta}''(u_i)\phi(u_{i+1}),
\]
and the $(i,i+1)$ entry is
$r_i\hat{\Theta}'(u_i)\phi'(u_{i+1})$. If $u_i\ge1$, the diagonal is at least
$1+(27/32)(31/32)>7/4$. If $u_i<1$, then
$\hat{\Theta}''(u_i)\le-12$ and $B''(u_i)=-2\hat{\Theta}''(u_i)$, so the last two terms
are
\[
 (-\hat{\Theta}''(u_i))\bigl(2-r_i^2\phi(u_{i+1})\bigr)>23.
\]
Every off-diagonal entry has magnitude at most $9/32$, and each row has at
most two such entries. Gershgorin's theorem gives
\begin{equation}\label{eq:weighted-box-strong-convex}
 \nabla^2\mathsf W_{m,K}\succeq I
 \quad\text{a.e. on }\mathcal B_{\boldsymbol a}.
\end{equation}
Since the gradient is Lipschitz, integration along line segments shows that
$\mathsf W_{m,K}$ is $1$-strongly convex on this box. Its zero
$\boldsymbol a$ lies in the box. Hence, for $x\in\mathcal B_{\boldsymbol a}$,
\begin{equation}\label{eq:weighted-PL-in-box}
 \mathsf W_{m,K}(x)
 \le\frac12\norm{\nabla\mathsf W_{m,K}(x)}^2.
\end{equation}

\emph{Step 2: the first bad coordinate.}

Let $x\notin\mathcal B_{\boldsymbol a}$ and let $j$ be the first coordinate
with $u_j\notin[7/8,9/8]$. Then $A_j\ge27/32$. Put
$c_j=r_j^2\phi(u_{j+1})\in[0,1]$ if $j<N$, and $c_j=0$ if $j=N$.
Dividing \eqref{eq:weighted-gradient-coordinate} by $a_j$ gives
\begin{equation}\label{eq:bad-coordinate-normalized}
 D_j:=\frac{\nabla_j\mathsf W_{m,K}(x)}{a_j}
 =A_j\phi'(u_j)+B'(u_j)+c_j\hat{\Theta}'(u_j).
\end{equation}
We claim
\begin{equation}\label{eq:bad-coordinate-lower}
 \abs{D_j}\ge\frac3{32}.
\end{equation}
If $u_j>9/8$, then $\hat{\Theta}'(u_j)=0$, $B'(u_j)=u_j-1\ge1/8$,
and $\phi'(u_j)\ge0$. If $1/2<u_j<7/8$, then
$B'+c_j\hat{\Theta}'=(c_j-2)\hat{\Theta}'\le0$, while
\[
 -A_j\phi'(u_j)
 \ge\frac{27}{32}\frac18e^{-1/128}>\frac3{32}.
\]
If $-1/2\le u_j\le1/2$, then $B'=\hat{\Theta}'=0$ and
$-\phi'(u_j)=(1-u_j)e^{-(1-u_j)^2/2}\ge3/8$, so the same conclusion holds.
Finally, if $u_j\le-1/2$, then either $u_j\le-1$, in which case
$B'(u_j)\le-1/2$, or $-1<u_j\le-1/2$, in which case
$-\phi'(u_j)\ge2e^{-2}>1/4$; all remaining terms are nonpositive. This
proves \eqref{eq:bad-coordinate-lower}, and therefore
\begin{equation}\label{eq:bad-coordinate-scaled}
 \abs{\nabla_j\mathsf W_{m,K}(x)}\ge\frac3{32}a_j.
\end{equation}

\emph{Step 3: prefix and tail control.}

When $j\ge2$, let $P_j(\cdot;x_j)$ consist of all terms in
\eqref{eq:weighted-chain-def} involving coordinates $1,\ldots,j-1$, including
the boundary coupling
$a_j^2\hat{\Theta}(u_{j-1})\phi(u_j)$. For $j=1$, put $P_1=0$.
For fixed $x_j$, this prefix is $1$-strongly convex on
$\prod_{i<j}[7a_i/8,9a_i/8]$. The preceding Hessian proof still applies:
for the final prefix coordinate, $\phi(u_j)\le1$ gives
$B''+r^2\hat{\Theta}''\phi(u_j)\ge-\hat{\Theta}''\ge12$ whenever that coordinate is
below one. Moreover, on every lower face the corresponding partial
derivative is negative, and on every upper face it is positive, by the same
calculation as in \eqref{eq:bad-coordinate-normalized}. Thus its constrained
minimizer is interior. Strong convexity then gives
\begin{equation}\label{eq:prefix-bound}
 P_j(x_{<j};x_j)
 \le P_j(\boldsymbol a_{<j};x_j)
 +\frac12\norm{\nabla_{<j}\mathsf W_{m,K}(x)}^2
 \le a_j^2+\frac12\norm{\nabla_{<j}\mathsf W_{m,K}(x)}^2.
\end{equation}

Define the tail function, with $A_j=1$ for $j=1$ and
$A_j=\hat{\Theta}(u_{j-1})$ otherwise, by
\begin{align}\label{eq:tail-function}
 Q_j(x_{j:N})
 :={}&A_ja_j^2\phi(u_j)
 +\sum_{i=j}^{N-1}a_{i+1}^2\hat{\Theta}(u_i)\phi(u_{i+1})
 +\sum_{i=j}^N a_i^2B(u_i).
\end{align}
Then
\begin{equation}\label{eq:tail-gradient-equality}
 \nabla Q_j(x_{j:N})=\nabla_{j:N}\mathsf W_{m,K}(x).
\end{equation}
Let $r=\norm{x_{j:N}}$. The non-$B$ part of $Q_j$ is at most $S_j$ and
its gradient norm is at most $4\sqrt{S_j}$. Lemma~\ref{lem:scalar} therefore
gives
\begin{align}
 Q_j(x_{j:N})&\le\frac12r^2+3S_j,
 \label{eq:tail-value-radial}\\
 \ip{\nabla Q_j(x_{j:N})}{x_{j:N}}
 &\ge\frac12r^2-4\sqrt{S_j}\,r-\frac{13}{2}S_j.
 \label{eq:tail-gradient-radial}
\end{align}
If $r\ge18\sqrt{S_j}$, the right-hand side of
\eqref{eq:tail-gradient-radial} is at least $r^2/4$, so
$\norm{\nabla Q_j}\ge r/4$, and \eqref{eq:tail-value-radial} gives
$Q_j\le9\norm{\nabla Q_j}^2$. If $r<18\sqrt{S_j}$, then
$Q_j<165S_j$. Hence, in all cases,
\begin{equation}\label{eq:tail-combined-bound}
 Q_j(x_{j:N})
 \le9\norm{\nabla_{j:N}\mathsf W_{m,K}(x)}^2+165S_j.
\end{equation}

\emph{Step 4: global assembly.}

For $j\ge2$, the boundary coupling into $j$ is counted in both $P_j$ and
$Q_j$; it is nonnegative. Thus
$\mathsf W_{m,K}\le P_j+Q_j$ (with equality when $j=1$). Combining
\eqref{eq:prefix-bound}, \eqref{eq:tail-combined-bound},
Lemma~\ref{lem:geometric-tail-mass}, and
\eqref{eq:bad-coordinate-scaled},
\begin{align*}
 \mathsf W_{m,K}(x)
 &\le\frac12\norm{\nabla_{<j}\mathsf W_{m,K}(x)}^2
 +9\norm{\nabla_{j:N}\mathsf W_{m,K}(x)}^2
 +a_j^2+165S_j\\
 &\le\frac12\norm{\nabla_{<j}\mathsf W_{m,K}(x)}^2
 +9\norm{\nabla_{j:N}\mathsf W_{m,K}(x)}^2
 +(1+825m)\frac{1024}{9}\abs{\nabla_j\mathsf W_{m,K}(x)}^2\\
 &\le10^5m\norm{\nabla\mathsf W_{m,K}(x)}^2.
\end{align*}
Together with \eqref{eq:weighted-PL-in-box}, this proves (v) and the lemma.
\end{proof}

The weighted chain is already globally PL. The next construction bounds all effective queries without lifting its zero minimum.

The random-rotation argument requires all effective queries to have bounded
norm. The quadratic guard used in the stochastic construction is unsuitable
here because it would assign order-$m$ value to the geometrically weighted
minimizer and overwhelm the final layer. We instead use a radial guard that
vanishes on a ball containing the entire minimizer.

Set
\begin{equation}\label{eq:weighted-R-eta}
 R:=256\sqrt m,
 \qquad \eta_{\rm g}:=16.
\end{equation}
Define $h_R$ and $\rho_R$ by
\begin{equation}\label{eq:weighted-radial-map}
 h_R(r):=\begin{cases}
 r,&0\le r\le R,\\
 R+R(1-e^{-(r-R)/R}),&r>R,
 \end{cases}
 \qquad
 \rho_R(x):=\begin{cases}
 h_R(\norm{x})x/\norm{x},&x\ne0,\\
 0,&x=0.
 \end{cases}
\end{equation}
The proof of Lemma~\ref{lem:radial}, with $R_m$ replaced by $R$, gives
\begin{equation}\label{eq:weighted-radial-properties}
 \norm{\rho_R(x)}<2R,
 \qquad \norm{J\rho_R(x)}_{\rm op}\le1,
 \qquad \Lip(J\rho_R)\le\frac{16}{R}.
\end{equation}
Let
\begin{equation}\label{eq:radial-guard}
 Q_R(x):=\frac{\eta_{\rm g}}2\left(\norm{x}-\frac R2\right)_+^2.
\end{equation}
This is $\eta_{\rm g}$-smooth because it equals
$\eta_{\rm g}\operatorname{dist}(x,\overline B(0,R/2))^2/2$ and the gradient
of half the squared distance to a closed convex set is $1$-Lipschitz.

For $d\ge N$ and an orthonormal frame $U\in\mathbb R^{d\times N}$, define
\begin{equation}\label{eq:weighted-compactified-function}
 \widehat{\mathsf W}_{m,K,U}(x)
 :=\mathsf W_{m,K}(U^\top\rho_R(x))+Q_R(x).
\end{equation}

\begin{lemma}
\label[lemma]{lem:weighted-compactification}
Let
\begin{equation}\label{eq:weighted-compact-constants}
 L_{\rm det}:=144,
 \qquad C_{\rm det}:=2C_{\rm W}=2\cdot10^5.
\end{equation}
Then, for every $m,K,d,U$ as above,
\begin{align}
 \Lip(\nabla\widehat{\mathsf W}_{m,K,U})&\le L_{\rm det},
 \label{eq:weighted-compact-smooth}\\
 \widehat{\mathsf W}_{m,K,U}(x)
 &\le C_{\rm det}m
 \norm{\nabla\widehat{\mathsf W}_{m,K,U}(x)}^2
 \qquad\text{for every }x\in\mathbb R^d.
 \label{eq:weighted-compact-PL}
\end{align}
Moreover,
\begin{equation}\label{eq:weighted-compact-minimum}
 \widehat{\mathsf W}_{m,K,U}^\star=0,
 \qquad
 \widehat{\mathsf W}_{m,K,U}(0)\le C_{\rm init}m.
\end{equation}
\end{lemma}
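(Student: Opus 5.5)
We prove the four claims in order: minimum value and initial value, then smoothness, then the gradient-dominance inequality \eqref{eq:weighted-compact-PL}, which is the only part needing real work.

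\emph{Minimum and initial value.} Both terms of $\widehat{\mathsf W}_{m,K,U}$ are nonnegative, so $\widehat{\mathsf W}_{m,K,U}^\star\ge0$. By Lemma~\ref{lem:geometric-tail-mass}, $\norm{\boldsymbol a}^2=S_1<5m$, so $\norm{U\boldsymbol a}<\sqrt5\sqrt m<R/2$. Hence $\rho_R(U\boldsymbol a)=U\boldsymbol a$ and $Q_R(U\boldsymbol a)=0$. Lemma~\ref{lem:weighted-chain}(i) then gives $\widehat{\mathsf W}_{m,K,U}(U\boldsymbol a)=0$. At the origin, $\rho_R(0)=0$ and $Q_R(0)=0$, so \eqref{eq:weighted-initial} yields the bound $C_{\rm init}m$.

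\emph{Smoothness.} We copy the proof of Lemma~\ref{lem:Fhat-basic}. From \eqref{eq:weighted-gradient-coordinate} and \eqref{eq:B-aux-bounds}, each coordinate satisfies $\abs{\nabla_i\mathsf W_{m,K}(y)}\le\abs{y_i}+11a_i$. Therefore
\[
\norm{\nabla\mathsf W_{m,K}(y)}\le\norm y+11\sqrt{S_1}<\norm y+25\sqrt m .
\]
For $y=U^\top\rho_R(x)$ we have $\norm y<2R$, so this gradient norm is at most $3R$. The composition then has gradient Lipschitz constant at most
\[
L_{\rm W}+\frac{16}{R}\cdot3R=79+48=127 .
\]
Adding the guard's constant $\eta_{\rm g}=16$ gives $143\le L_{\rm det}$.

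\emph{Gradient dominance.} Write $r=\norm x$. We split into three regions.

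\emph{Region $r\le R/2$.} Here $\rho_R(x)=x$ and $Q_R$ vanishes to first order. The gradient is $U\nabla\mathsf W_{m,K}(U^\top x)$, whose norm equals $\norm{\nabla\mathsf W_{m,K}(y)}$. Lemma~\ref{lem:weighted-chain}(v) gives the bound with constant $C_{\rm W}$.

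\emph{Region $R/2<r\le R$.} Still $\rho_R(x)=x$. Write $y=U^\top x$, $w=x-Uy$, $g=\nabla\mathsf W_{m,K}(y)$ and $c=\eta_{\rm g}(r-R/2)/r$. Orthogonality gives
\[
\norm{\nabla\widehat{\mathsf W}}^2=\norm{g+cy}^2+c^2\norm w^2 .
\]
\begin{itemize}
\item If $\norm y\le\norm w$, then
\[
\norm g^2\le2\norm{g+cy}^2+2c^2\norm w^2\le2\norm{\nabla\widehat{\mathsf W}}^2 ,
\]
so $\mathsf W_{m,K}(y)\le2C_{\rm W}m\norm{\nabla\widehat{\mathsf W}}^2$. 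This is exactly the source of $C_{\rm det}=2C_{\rm W}$. The guard value is
\[
\frac{\eta_{\rm g}}2(r-R/2)^2=\frac{c^2r^2}{2\eta_{\rm g}}\le\frac{c^2\norm w^2}{16},
\]
using $r^2\le2\norm w^2$. It is therefore absorbed as well; the constants need a careful check here.
\item If $\norm y>\norm w$, then $\norm y\ge r/\sqrt2\ge R/(2\sqrt2)\gg\sqrt{S_1}$. The radial estimate \eqref{eq:tail-gradient-radial} with $j=1$ gives $\ip{g}{y}\ge\norm y^2/4\ge r^2/8$. Hence
\[
\ip{\nabla\widehat{\mathsf W}}{x}\ge r^2/8+cr^2,
\qquad\text{so}\qquad
\norm{\nabla\widehat{\mathsf W}}\ge r/8+cr .
\]
The value is at most $r^2/2+15m+c^2r^2/32$ by \eqref{eq:tail-value-radial}, which is $O(\norm{\nabla\widehat{\mathsf W}}^2)$.
\end{itemize}

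\emph{Region $r>R$.} We use the radial/tangential decomposition \eqref{eq:radial-tangential-gradient}, with the guard replacing $\eta x$. The radial component is $h_R'(r)\beta+\eta_{\rm g}(r-R/2)$, where $\abs\beta\le\norm g\le3R$. Since $\eta_{\rm g}(r-R/2)\ge8R$, this component is at least $\tfrac58\eta_{\rm g}(r-R/2)$. The value is at most $2R^2+15m+8(r-R/2)^2$, and $(r-R/2)\ge R/2$ makes this $O(\norm{\nabla\widehat{\mathsf W}}^2)$ with a universal constant, far below $C_{\rm det}m$.

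\emph{Main obstacle.} I expect the intermediate shell $R/2<r\le R$ to be the main obstacle. There the guard's gradient can be arbitrarily small while the chain's gradient may partially cancel it. The orthogonal splitting by whether $\norm y\le\norm w$ is the device I would try first, tuning constants so that the first case yields exactly the factor $2$.
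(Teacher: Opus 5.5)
Outside the shell $R/2<r\le R$, your proposal matches the paper's proof. It uses the same minimizer $U\boldsymbol a$ and origin argument, the same $79+48+16$ count for smoothness, and essentially the same treatment of $\norm{x}\le R/2$ and $\norm{x}>R$. The gap is in the shell, exactly where you flagged the constants, and the guard is \emph{not} absorbed with your split. Splitting at $\norm{y}\le\norm{w}$ gives only $\norm{g}^2\le2\norm{\nabla\widehat{\mathsf W}}^2$. That already spends the whole factor $2C_{\rm W}m$ on the chain term $\mathsf W_{m,K}(y)$. Adding the guard value $\le\norm{\nabla\widehat{\mathsf W}}^2/16$ then gives the coefficient $2C_{\rm W}m+\tfrac1{16}$, which exceeds $C_{\rm det}m$. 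Reweighting Young's inequality does not help at $\norm{y}=\norm{w}$, as long as you only use $\mathsf W_{m,K}(y)\le C_{\rm W}m\norm{g}^2$. Keeping the factor on $\norm{g+cy}^2$ at most $2$ forces the factor on $c^2\norm{y}^2$ to be at least $2$, leaving no room for the guard.

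The fix is to move the threshold so that the small-$\norm{y}$ case gives a \emph{joint} bound on $\norm{g}^2$ and $b^2$, where $b:=cr=\eta_{\rm g}(r-R/2)$.

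\emph{Case $\norm{y}<R/4$.} Then $\norm{y}<r/2$, so $3\norm{y}^2<\norm{w}^2$. Hence $\norm{g}^2+b^2\le2\norm{g+cy}^2+3c^2\norm{y}^2+c^2\norm{w}^2\le2\norm{\nabla\widehat{\mathsf W}}^2$. This gives $\widehat{\mathsf W}(x)\le C_{\rm W}m\norm{g}^2+b^2/32\le C_{\rm W}m(\norm{g}^2+b^2)\le2C_{\rm W}m\norm{\nabla\widehat{\mathsf W}(x)}^2$.

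\emph{Case $\norm{y}\ge R/4$.} Since $R/4\ge18\sqrt{S_1}$, the radial estimate gives $\ip{g}{y}\ge0$. Therefore $\norm{\nabla\widehat{\mathsf W}}^2=\norm{g}^2+2c\ip{g}{y}+c^2r^2\ge\norm{g}^2+b^2$, and the chain's PL inequality finishes with coefficient $C_{\rm W}m$. Your quantitative radial argument also works here, with a smaller universal constant, since now only $\norm{y}\ge r/4$ is guaranteed.

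This is the paper's split: $\norm{y}\ge R/4$ versus $\norm{U^\top e}<1/2$. With it, the rest of your argument goes through unchanged.
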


\begin{proof}
By Lemma~\ref{lem:scalar} and the coordinate formula
\eqref{eq:weighted-gradient-coordinate},
\begin{equation}\label{eq:weighted-gradient-growth}
 \norm{\nabla\mathsf W_{m,K}(y)}
 \le\norm{y}+11\sqrt{S_1}.
\end{equation}
Indeed, the non-$B$ part has gradient norm at most $4\sqrt{S_1}$, while
$\abs{B'(u_i)-u_i}\le7$. On the range $\norm{y}<2R$ this is at most $3R$.
The chain-rule estimate, Lemma~\ref{lem:weighted-chain}, and
\eqref{eq:weighted-radial-properties} therefore give
\[
 \Lip\bigl(\nabla[\mathsf W_{m,K}(U^\top\rho_R(\cdot))]\bigr)
 \le79+\frac{16}{R}(3R)=127.
\]
Adding the $16$-smooth guard proves \eqref{eq:weighted-compact-smooth}.

Since $\norm{\boldsymbol a}^2=S_1<5m<R^2/4$, the point
$U\boldsymbol a$ lies inside the zero region of the guard, where $\rho_R$ is
the identity. Both summands in \eqref{eq:weighted-compactified-function}
are nonnegative and vanish there. This proves
\eqref{eq:weighted-compact-minimum}; the initial bound follows from
Lemma~\ref{lem:weighted-chain}.

We prove \eqref{eq:weighted-compact-PL} in three radial regions. If
$\norm{x}\le R/2$, then $\rho_R(x)=x$ and $Q_R(x)=0$. Hence
\[
 \widehat{\mathsf W}(x)=\mathsf W(U^\top x),
 \qquad
 \norm{\nabla\widehat{\mathsf W}(x)}
 =\norm{\nabla\mathsf W(U^\top x)},
\]
and Lemma~\ref{lem:weighted-chain} applies.

Suppose $R/2<r:=\norm{x}\le R$. Put
\[
 y=U^\top x,
 \qquad g=\nabla\mathsf W_{m,K}(y),
 \qquad e=x/r,
 \qquad b=\eta_{\rm g}(r-R/2).
\]
Then
\begin{equation}\label{eq:annulus-gradient}
 \nabla\widehat{\mathsf W}(x)=Ug+be.
\end{equation}
The weighted analogue of \eqref{eq:F-radial-lower}, obtained from
\eqref{eq:B-aux-bounds}, is
\begin{equation}\label{eq:weighted-radial-inner}
 \ip{\nabla\mathsf W(y)}{y}
 \ge\frac12\norm{y}^2-4\sqrt{S_1}\norm{y}-\frac{13}{2}S_1.
\end{equation}
Because $R/4\ge18\sqrt{S_1}$, if $\norm{y}\ge R/4$ then
$\ip{g}{y}\ge0$, and \eqref{eq:annulus-gradient} gives
$\norm{\nabla\widehat{\mathsf W}}^2\ge\norm{g}^2+b^2$. If
$\norm{y}<R/4$, set $z=U^\top e$. Since $r>R/2$, $\norm{z}<1/2$, and the
orthogonal decomposition of $e$ gives
\begin{align*}
 \norm{Ug+be}^2
 &=\norm{g+bz}^2+b^2(1-\norm{z}^2)\\
 &\ge\frac12\norm{g}^2+b^2(1-2\norm{z}^2)
 \ge\frac12(\norm{g}^2+b^2).
\end{align*}
In both cases,
\begin{equation}\label{eq:annulus-gradient-lower}
 \norm{\nabla\widehat{\mathsf W}(x)}^2
 \ge\frac12(\norm{g}^2+b^2).
\end{equation}
Now $\mathsf W(y)\le C_{\rm W}m\norm{g}^2$ and
$Q_R(x)=b^2/(2\eta_{\rm g})$, so
\eqref{eq:weighted-compact-PL} follows in the annulus with coefficient
$2C_{\rm W}m$.

Finally, let $r>R$. On the clipped range,
\eqref{eq:weighted-gradient-growth} gives
$\norm{\nabla\mathsf W(U^\top\rho_R(x))}\le3R$, and
$\norm{J\rho_R}\le1$. The guard gradient has magnitude
$b=\eta_{\rm g}(r-R/2)\ge8R$. Therefore
\begin{equation}\label{eq:outer-guard-gradient}
 \norm{\nabla\widehat{\mathsf W}(x)}\ge b-3R
 \ge5R,
 \qquad
 \norm{\nabla\widehat{\mathsf W}(x)}\ge\frac58b.
\end{equation}
Also, by $B(t)\le t^2/2+2$ and nonnegativity of the coupling terms,
\begin{equation}\label{eq:weighted-value-clipped-range}
 \mathsf W(U^\top\rho_R(x))
 \le\frac12\norm{U^\top\rho_R(x)}^2+3S_1<3R^2.
\end{equation}
Equations \eqref{eq:outer-guard-gradient} and
\eqref{eq:weighted-value-clipped-range} control both this term and
$Q_R=b^2/(2\eta_{\rm g})$ by a universal multiple of the squared gradient,
which is smaller than $C_{\rm det}m$ for $m\ge1$. This completes the proof.
\end{proof}

The decreasing amplitudes require a rotation lemma with nonuniform activation thresholds. The proof below adapts the earlier projection-history argument to the smallest threshold.

\begin{lemma}
\label[lemma]{lem:nonuniform-rotation}
Let $D\ge M\ge2$, $R_0\ge1$, $\delta\in(0,1/2)$, and
$0<\underline a\le a_j\le1$ for $j=1,\ldots,D$, and define
$\prog_{\boldsymbol a}$ as in \eqref{eq:weighted-progress}. Suppose a
measurable deterministic map $h:\mathbb R^D\to\mathbb R^D$ satisfies
\begin{equation}\label{eq:nonuniform-support-rule}
 \prog_0(h(z))\le\prog_{\boldsymbol a}(z)+1
 \qquad\text{for every }z.
\end{equation}
Let $U\in\mathbb R^{d\times D}$ be Haar-uniform on the Stiefel manifold and
independent of an arbitrary randomized adaptive algorithm. The algorithm
makes an integer number $1\le t<M$ of queries $y_i$ satisfying $\norm{y_i}\le R_0$ and receives
$Uh(U^\top y_i)$. For the universal constant $C_{\rm rot,d}=1024$, if
\begin{equation}\label{eq:nonuniform-rotation-dimension}
 d\ge2D+C_{\rm rot,d}\frac{R_0^2t}{\underline a^2}
 \log\left(\frac{8Dt}{\delta}\right),
\end{equation}
then
\begin{equation}\label{eq:nonuniform-rotation-conclusion}
 \mathbb P\left(
 \prog_{\boldsymbol a}(U^\top y_i)<M
 \text{ for all }i\le t\right)\ge1-\delta.
\end{equation}
The probability includes the random frame and the algorithm's internal
randomness.
\end{lemma}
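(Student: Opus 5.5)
The plan is to adapt the deferred-decisions argument of Lemma~\ref{lem:rotation} to this deterministic oracle. Two simplifications make this version easier. First, the oracle is deterministic, so there is no Bernoulli revelation race. Second, the horizon satisfies $t<M$.

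Let $\gamma_i:=\max_{s\le i}\prog_0(h(U^\top y_s))$ with $\gamma_0=0$, and let $\mathcal V_i$ be the event $\prog_{\boldsymbol a}(U^\top y_i)\le\gamma_{i-1}$. If every $\mathcal V_i$ holds for $i\le t$, then by the support rule \eqref{eq:nonuniform-support-rule} we get inductively $\gamma_i\le\gamma_{i-1}+1$. This gives $\gamma_i\le i$, and hence $\prog_{\boldsymbol a}(U^\top y_i)\le i-1<M$ for all $i\le t$. It therefore suffices to show $\Pp(\tau_V\le t)\le\delta$, where $\tau_V$ is the first failure of $\mathcal V_i$.

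For the leakage bound, I would define the same Gram--Schmidt directions $w_{j,i}$ and the $\sigma$-fields $\mathscr U_{j-1}^{i-1}$ as in \eqref{eq:rotation-sigma-field}, now without oracle seeds. On $\{j>\gamma_{i-1}\}$ and before any leakage, past responses $Uh(U^\top y_s)$ are determined by $u_1,\dots,u_{j-1}$ and the conditioned projections. Hence $y_i$ and $w_{j,i}$ are fixed by the conditioning. The invariance argument (left multiplication by orthogonal $Q$ fixing $\mathcal S_{j,i-1}$) carries over verbatim. It shows that the normalized residual of $u_j$ is uniform on a sphere of dimension at least $d-i-j\ge d-t-D$.

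Lemma~\ref{lem:spherical-tail} with threshold $s=\underline a/(4R_0\sqrt t)$ then gives
\[
\Pp\bigl(\abs{\ip{u_j}{w_{j,i}}}\ge s\mid\text{past}\bigr)\le4\exp\Bigl(-\frac{(d-t-D)\underline a^2}{256R_0^2t}\Bigr).
\]
If all these inner products are below threshold for fixed $j$ and all $i\le s'$, then the Cauchy--Schwarz computation \eqref{eq:no-leak-cs} gives $\abs{\ip{u_j}{y_{s'}}}<\underline a/4\le a_j/4$. Thus coordinate $j$ is not activated under the nonuniform threshold. This is the only place where the thresholds enter, and using the smallest threshold $\underline a$ uniformly handles it.

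A union bound over $i\le t$ and $j\le D$ then gives leakage probability at most $4tD\exp(-(d-t-D)\underline a^2/(256R_0^2t))$. By \eqref{eq:nonuniform-rotation-dimension} and $t<M\le D$, we have $d-t-D\ge\tfrac12 C_{\rm rot,d}R_0^2t\underline a^{-2}\log(8Dt/\delta)$. With $C_{\rm rot,d}=1024$, the exponent is at least $2\log(8Dt/\delta)$, so the bound is at most $4tD(\delta/8Dt)^2\le\delta$.

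The main obstacle I expect is making the leakage event precise at $\tau_V$. On the event of a violation at step $i$, there must be a coordinate $j>\gamma_{i-1}$ with $\abs{\ip{u_j}{y_i}}>a_j/4$. One must check that the deferred-decision conditioning is legitimate on $\{j>\gamma_{i-1}\}$ for exactly that $j$. This is handled as in Lemma~\ref{lem:rotation}, by conditioning on the enlarged projection history, which only strengthens the argument.
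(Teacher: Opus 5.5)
Your proposal is correct and takes essentially the same route as the paper. Both arguments use the nonleaking induction $\gamma_i\le\gamma_{i-1}+1$, deferred decisions with Gram--Schmidt directions $w_{j,i}$, the spherical tail at threshold $\underline a/(4R_0\sqrt t)$, Cauchy--Schwarz against the smallest threshold $\underline a\le a_j$, and a union bound over $i\le t$, $j\le D$; the only difference is that you keep half the dimension margin and get exponent $2\log(8Dt/\delta)$, whereas the paper uses the full margin for $4\log(8Dt/\delta)$, and either bounds the leakage probability by $\delta$.
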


\begin{proof}
Condition first on the algorithm's random seed. Let
\[
 \gamma_i:=\max_{s\le i}\prog_0(h(U^\top y_s)),
 \qquad \gamma_0:=0.
\]
Call query $i$ nonleaking if
$\prog_{\boldsymbol a}(U^\top y_i)\le\gamma_{i-1}$. On a nonleaking
history, \eqref{eq:nonuniform-support-rule} implies
$\gamma_i\le\gamma_{i-1}+1$, and hence $\gamma_i\le i$.

We bound the probability of the first leakage by deferred decisions. For
column $u_j$ of $U$ and query $y_i$, apply Gram--Schmidt to $y_i$ after
$u_1,\ldots,u_{j-1},y_1,\ldots,y_{i-1}$; call the normalized residual
$w_{j,i}$, or zero if the residual vanishes. On the event
$j>\gamma_{i-1}$, all previous responses use only columns below $j$ and can
be reconstructed from the previous projections and those columns. Thus,
conditional on this enlarged history, $y_i$ and $w_{j,i}$ are fixed, while
the normalized residual of $u_j$ orthogonal to
\[
 \spanop\{u_1,\ldots,u_{j-1},y_1,\ldots,y_{i-1}\}
\]
is uniform on the sphere in that orthogonal complement. This is exactly the
Haar-invariance argument given in the proof of Lemma~\ref{lem:rotation}; the
remaining dimension is at least $d-i-j$.

Lemma~\ref{lem:spherical-tail} therefore yields, conditionally on every
nonleaking past,
\begin{equation}\label{eq:nonuniform-one-leak}
 \mathbb P\left(
 \abs{\ip{u_j}{w_{j,i}}}\ge
 \frac{\underline a}{4R_0\sqrt t}
 \ \middle|\ \text{past}\right)
 \le4\exp\left[-
 \frac{(d-i-j)\underline a^2}{256R_0^2t}\right].
\end{equation}
If none of these events occurs for a fixed $j$ up to query $s$, orthogonality
and Cauchy--Schwarz give
\[
 \abs{\ip{u_j}{y_s}}
 \le\left(\sum_{i=1}^s\abs{\ip{u_j}{w_{j,i}}}^2\right)^{1/2}
 \norm{y_s}
 <\frac{\underline a}{4}\le\frac{a_j}{4}.
\]
With $C_{\rm rot,d}=1024$, the exponent in
\eqref{eq:nonuniform-one-leak} is at least
$4\log(8Dt/\delta)$. A union bound over $i\le t$ and $j\le D$ therefore
makes the probability of any leakage at most
$4Dt(\delta/(8Dt))^4\le\delta$. On the complementary event, $\gamma_i\le i<M$
and every query has progress at most $\gamma_{i-1}<M$. This proves
\eqref{eq:nonuniform-rotation-conclusion}. Averaging back over the
algorithm's random seed completes the proof.
\end{proof}

We now combine the weighted barrier, compactification, and rotation estimate, and scale them to the prescribed smoothness and PL constants.

\begin{theorem}
\label[theorem]{thm:noiseless-accuracy}
There are universal constants $\kappa_{\rm det}\ge1$, $c_{\rm det}>0$, and
$c_{0,\rm det}>0$ such that the following holds. Let
$\kappa=L/\mu\ge\kappa_{\rm det}$ and
$0<\epsilon\le c_{0,\rm det}\Delta_0$. For every adaptive randomized
gradient-only method using fewer than
\begin{equation}\label{eq:noiseless-accuracy-lower}
 c_{\rm det}\kappa\log\frac{\Delta_0}{\epsilon}
\end{equation}
exact-gradient calls, there are a finite dimension $d$ and a fixed globally
$L$-smooth, globally $\mu$-PL function with initial gap at most $\Delta_0$
such that
\begin{equation}\label{eq:noiseless-probability-gap}
 \mathbb P\bigl(f(\widehat x_T)-f^\star\ge4\epsilon\bigr)
 \ge\frac34.
\end{equation}
In particular, its expected error is larger than $\epsilon$. The proof gives
an explicit admissible dimension in \eqref{eq:hd-dd}.
\end{theorem}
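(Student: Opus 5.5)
The plan is to instantiate the compactified weighted chain $\widehat{\mathsf W}_{m,K,U}$ of Lemma~\ref{lem:weighted-compactification} with layer width $m=\Theta(\kappa)$ and $K=\Theta(\log(\Delta_0/\epsilon))$ layers. We then rescale it to the prescribed $L,\mu,\Delta_0,\epsilon$, and invoke Lemma~\ref{lem:nonuniform-rotation} with $M=N-m+1$, so that any method with too few queries leaves the last layer unresolved.

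\textbf{Parameter choices.} Set $A_1:=C_{\rm det}L_{\rm det}$, require $\kappa\ge\kappa_{\rm det}:=12A_1$, and take $m:=\lfloor\kappa/(2A_1)\rfloor$, so that $m\asymp\kappa$. For a scale $\lambda>0$ and spatial factor $s$ with $\lambda s^2L_{\rm det}=L$, define $f(x):=\lambda\widehat{\mathsf W}_{m,K,U}(sx)$. Then $f$ is $L$-smooth, $f^\star=0$, and
\[
 f(x)\le\frac{C_{\rm det}mL_{\rm det}}{L}\|\nabla f(x)\|^2\le\frac1{2\mu}\|\nabla f(x)\|^2,
\]
which is the $\mu$-PL condition. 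The initial gap is at most $\lambda C_{\rm init}m$, so we take $\lambda:=\Delta_0/(C_{\rm init}m)$. By Lemma~\ref{lem:weighted-chain}(iv), whenever $\prog_{\boldsymbol a}\le N-m$ the gap is at least $2\lambda m\gamma^{2(K-1)}=(2/C_{\rm init})\Delta_0\gamma^{2(K-1)}$. We choose $K$ as the largest integer with $(2/C_{\rm init})\gamma^{2(K-1)}\Delta_0\ge4\epsilon$, which gives $K\ge c\log(\Delta_0/\epsilon)$ once $\epsilon\le c_{0,\rm det}\Delta_0$. One point needs checking here: the guard term and $\rho_R$ are nonnegative and only add value, and the barrier must hold at $U^\top\rho_R(s\widehat x_T)$ for the output point. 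This is fine because $\widehat{\mathsf W}\ge\mathsf W(U^\top\rho_R(\cdot))$.

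\textbf{Information step.} As in Proposition~\ref{prop:normalized}, we simulate the method on the normalized instance. Its effective queries are $y=\rho_R(sx)$ with $\|y\|<2R=:R_0$, and the oracle response $J\rho_R^\top U\nabla\mathsf W(U^\top y)+\nabla Q_R$ is computable from $Uh(U^\top y)$ with $h=\nabla\mathsf W_{m,K}$. By Lemma~\ref{lem:weighted-chain}(iii), $h$ satisfies the support rule \eqref{eq:nonuniform-support-rule}. We append the output as one extra query, apply Lemma~\ref{lem:nonuniform-rotation} with $D=N=mK$, $M=N-m+1$, $\delta=1/4$, and minimum threshold $\underline a=\gamma^{K-1}$, and take $t=T+1<M$. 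This holds whenever $T<c_{\rm det}\kappa\log(\Delta_0/\epsilon)\le mK-m$, after adjusting constants and assuming $K\ge2$. With probability at least $3/4$ the output then has $\prog_{\boldsymbol a}\le N-m$, which yields \eqref{eq:noiseless-probability-gap}. The expected error is then at least $3\epsilon>\epsilon$. A single fixed hard $U$ exists by the averaging argument of Remark~\ref{rem:fixed-U}.

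\textbf{Main obstacle.} The delicate step is the dimension requirement \eqref{eq:nonuniform-rotation-dimension}. The smallest threshold satisfies $\underline a^2=\gamma^{2(K-1)}\asymp\epsilon/\Delta_0$, so the required $d$ scales like $R_0^2t/\underline a^2\cdot\log\asymp m\cdot\kappa\log(\Delta_0/\epsilon)\cdot(\Delta_0/\epsilon)$. This is finite, consistent with $d_{\rm hd}=\widetilde O(\kappa^2\Delta_0/\epsilon)$, and we record it as the admissible $d$ in \eqref{eq:hd-dd}. The rescaling does not change the threshold structure, since progress is measured at $U^\top\rho_R(sx)$ in normalized units. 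It remains to check that $\gamma^{K-1}\le1$ and that $R_0\ge1$.
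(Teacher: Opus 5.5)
Your proposal is correct and follows essentially the same route as the paper. You rescale the compactified weighted chain $\widehat{\mathsf W}_{m,K,U}$ with $m\asymp\kappa$, $\lambda=\Delta_0/(C_{\rm init}m)$ and $K-1\asymp\log(\Delta_0/\epsilon)$, simulate through $\rho_R(sx)$, apply Lemma~\ref{lem:nonuniform-rotation} with $D=N$, $M=N-m+1$, $\underline a=\gamma^{K-1}$, $R_0=2R$, $\delta=1/4$ after appending the output as a query, and fix one rotation by averaging over $U$. The only differences are in constants (your $m=\lfloor\kappa/(2C_{\rm det}L_{\rm det})\rfloor$, your $K$ taken as the largest integer making the barrier at least $4\epsilon$ where the paper keeps the margin $32r_\gamma\epsilon>4\epsilon$, and your budget $T+1<M$ instead of the paper's more conservative $T\le (N-m)/4$), and none of these affects validity.
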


\begin{proof}
Let
\begin{equation}\label{eq:noiseless-m-choice}
 m_{\rm d}:=\left\lfloor\frac{\kappa}{4C_{\rm det}L_{\rm det}}\right\rfloor,\qquad m:=m_{\rm d}.
\end{equation}
Take $\kappa_{\rm det}:=24C_{\rm det}L_{\rm det}$. Then $m\ge5$ and
$m\ge\kappa/(8C_{\rm det}L_{\rm det})$. Put
\begin{equation}\label{eq:noiseless-lambda-s}
 \lambda:=\frac{\Delta_0}{C_{\rm init}m},
 \qquad
 s^2:=\frac{L}{\lambda L_{\rm det}}.
\end{equation}
Let
\begin{equation}\label{eq:noiseless-K-choice}
 K:=2+\left\lfloor
 \frac{\log(\Delta_0/(16C_{\rm init}\epsilon))}
 {\log(1/r_\gamma)}\right\rfloor.
\end{equation}
Take $c_{0,\rm det}:=(16C_{\rm init})^{-2}$. Then the logarithm is
nonnegative and, writing $B=\log(16C_{\rm init})$,
$\log(\Delta_0/\epsilon)\ge2B$. Since $1+\lfloor z\rfloor\ge z$,
\begin{equation}\label{eq:K-log-lower}
 K-1\ge\frac{1}{2\log(1/r_\gamma)}
 \log\frac{\Delta_0}{\epsilon}
 =:c_K\log\frac{\Delta_0}{\epsilon}.
\end{equation}

For an orthonormal $U$, define
\begin{equation}\label{eq:noiseless-scaled-function}
 f_U(x):=\lambda\widehat{\mathsf W}_{m,K,U}(sx).
\end{equation}
By Lemma~\ref{lem:weighted-compactification}, this function is $L$-smooth.
Furthermore,
\begin{align*}
 f_U(x)-f_U^\star
 &\le\lambda C_{\rm det}m
 \norm{\nabla\widehat{\mathsf W}_{m,K,U}(sx)}^2\\
 &=\frac{C_{\rm det}mL_{\rm det}}{L}\norm{\nabla f_U(x)}^2
 \le\frac1{4\mu}\norm{\nabla f_U(x)}^2
 \le\frac1{2\mu}\norm{\nabla f_U(x)}^2.
\end{align*}
Thus it is globally $\mu$-PL, and
$f_U(0)-f_U^\star\le\lambda C_{\rm init}m=\Delta_0$.

The calibrated chain has $N=mK$ coordinates; set $H:=N-m=m(K-1)$
for the number that must be traversed before its final layer. Recall that
$R=256\sqrt m$ is the clipping radius. The scaled exact gradient can be simulated from the
rotated-chain response. At an external query $x$, send
$y=\rho_R(sx)$ to the oracle returning
$U\nabla\mathsf W_{m,K}(U^\top y)$, and compute
\[
 \lambda s\left[J\rho_R(sx)^\top
 U\nabla\mathsf W_{m,K}(U^\top\rho_R(sx))
 +\nabla Q_R(sx)\right]=\nabla f_U(x).
\]
Thus every effective query has norm below $2R$. Append the algorithm's output
as one final effective query whose response is ignored. Apply
Lemma~\ref{lem:nonuniform-rotation} with
\[
 D=N,
 \quad M=H+1,
 \quad R_0=2R,
 \quad \underline a=a_N=\gamma^{K-1},
 \quad \delta=\frac14.
\]
For $T\le H/4$, the appended output gives at most
$t_{\rm d}:=\lfloor H/4\rfloor+1$ effective queries. In particular,
$t_{\rm d}<H+1=M$. This determines an admissible ambient dimension:
\begin{equation}\label{eq:hd-dd}
 d_{\rm d}:=\left\lceil2D+C_{\rm rot,d}
       \frac{(2R)^2t_{\rm d}}{\underline a^2}
       \log(32Dt_{\rm d})\right\rceil.
\end{equation}
It satisfies \eqref{eq:nonuniform-rotation-dimension}, so with probability
at least $3/4$,
\[
 \prog_{\boldsymbol a}
 \bigl(U^\top\rho_R(s\widehat x_T)\bigr)\le N-m.
\]
The compactified guard is nonnegative, so
Lemma~\ref{lem:weighted-chain} gives on this event
\begin{align}
 f_U(\widehat x_T)-f_U^\star
 &\ge2\lambda m\gamma^{2(K-1)}.
 \label{eq:noiseless-scaled-barrier}
\end{align}
If
$x_\epsilon=\log(\Delta_0/(16C_{\rm init}\epsilon))/\log(1/r_\gamma)$,
then $K-1=1+\lfloor x_\epsilon\rfloor\le1+x_\epsilon$ and, because
$r_\gamma<1$,
\[
 r_\gamma^{K-1}\ge r_\gamma^{1+x_\epsilon}
 =16C_{\rm init}r_\gamma\frac{\epsilon}{\Delta_0}.
\]
Consequently,
\begin{equation}\label{eq:noiseless-final-barrier}
 f_U(\widehat x_T)-f_U^\star
 \ge32r_\gamma\epsilon>4\epsilon.
\end{equation}
Averaging the probability over Haar $U$ and then using
$\sup_U p(U)\ge\mathbb E_Up(U)$ selects one fixed rotation for the given
randomized algorithm.

Finally, \eqref{eq:noiseless-m-choice} and \eqref{eq:K-log-lower} imply
$H\ge c\kappa\log(\Delta_0/\epsilon)$. Choosing $c_{\rm det}$ below one
quarter of this universal constant proves the theorem.
\end{proof}

\subsection{Completion of the high-dimensional lower bound}\label{subsec:hd-completion}
All ingredients are now available. We combine the two families at the minimax
level rather than adding their objectives, then verify the finite hard
dimension and its extension to larger prescribed dimensions.

\begin{proof}[Proof of Theorem~\ref{thm:hd-accuracy}]
Choose $\kappa_{\rm hd}:=\max\{\kappa_0,\kappa_{\rm det}\}$ and
$c_{0,\alpha}:=\min\{1/10,c_{0,\rm det}\}$. These choices enforce the
parameter ranges of both constructions. To place either hard instance in
one prescribed dimension, set
\begin{equation}\label{eq:hd-dimension-maximum}
 d_{\rm hd}:=\max\{d_{\rm s},d_{\rm d}\}.
\end{equation}
Let
\[
 A:=\kappa\log\frac{\Delta_0}{\epsilon},
 \qquad
 B:=\kappa\left(\frac{\sigma^2}{\mu\epsilon}\right)^{\thetaalpha}.
\]
The stochastic construction above provides a constant
$c_{{\rm s},\alpha}>0$ such that every
$T<c_{{\rm s},\alpha}B$ fails on a fixed legal instance in dimension
$d_{\rm s}$.  The exact-gradient construction provides a universal
$c_{\rm d}>0$ such that every $T<c_{\rm d}A$ fails on a fixed legal
instance in dimension $d_{\rm d}$.  The latter oracle has centered
$\alpha$-moment zero and is therefore legal for every announced
$\sigma\ge0$.

Choose
\[
 c_{1,\alpha}:=\frac14\min\{c_{{\rm s},\alpha},c_{\rm d}\}.
\]
If
$T<c_{1,\alpha}(A+B)$, then
$T<\frac12\min\{c_{{\rm s},\alpha},c_{\rm d}\}\max\{A,B\}$.
If $A\ge B$, the exact-gradient family applies; if $B>A$, the stochastic
family applies.  In either case the selected instance has dimension at most
$d_{\rm hd}$, expected gap larger than $\epsilon$, and the stated
constant-probability error.  This proves
\eqref{eq:hd-main-lower}.

It remains to simplify the dimension.  Since
$m_{\rm s}=\Theta(\kappa)$ and, when $\sigma>0$,
\[
 q_{\rm s}^{-1}
 \le C_\alpha\max\left\{1,
 \left(\frac{\sigma^2}{\mu\epsilon}\right)^{\thetaalpha}\right\},
\]
equation \eqref{eq:hd-ds} gives
\begin{equation}\label{eq:hd-ds-simple}
 d_{\rm s}
 =\widetilde O_\alpha\left[
 \kappa^2\left(
 1+\left(\frac{\sigma^2}{\mu\epsilon}\right)^{\thetaalpha}
 \right)\right].
\end{equation}
For the exact chain, $m_{\rm d}=\Theta(\kappa)$,
$K=\Theta(\log(\Delta_0/\epsilon))$, and
$t_{\rm d}=O(m_{\rm d}K)$.  The choice of $K$ gives
\[
 r_\gamma^{K-1}
 \ge16C_{\rm init}r_\gamma\frac{\epsilon}{\Delta_0},
 \qquad
 \underline a^{-2}
 \le\frac{\Delta_0}
 {16C_{\rm init}r_\gamma\epsilon}.
\]
Using $R^2=2^{16}m_{\rm d}$ in \eqref{eq:hd-dd},
\begin{equation}\label{eq:hd-dd-simple}
 d_{\rm d}
 =\widetilde O\left(
 \kappa^2\frac{\Delta_0}{\epsilon}\right).
\end{equation}
Because $\Delta_0/\epsilon$ is bounded below by a universal constant in the
theorem's range, combining these dimension estimates gives
\begin{equation}\label{eq:hd-dimension-simplified}
 d_{\rm hd}=\widetilde O_\alpha\!\left(
 \kappa^2\left[\frac{\Delta_0}{\epsilon}+Q_\epsilon\right]\right).
\end{equation}
To obtain the prescribed-dimensional formulation in
Theorem~\ref{thm:hd-accuracy}, let $d\ge d_{\rm hd}$ and let a selected hard
instance have dimension $d_0\le d_{\rm hd}$. Extend it by
\[
 \widetilde f(y,z)=f(y)+\frac\mu2\|z\|^2,\qquad
 \widetilde G(y,z)=(G(y),\mu z),\qquad
 (y,z)\in\mathbb R^{d_0}\times\mathbb R^{d-d_0}.
\]
This extension is $L$-smooth and $\mu$-PL, has the same initial gap and
centered moment bound, and reveals no new information about the hard part.
Any algorithm on the extension can be simulated with one original oracle
call per query. Its gap is at least the original-part gap. Thus the lower
bound holds in every dimension at or above the explicit threshold.
\end{proof}

The delicate points are the following.  Bernoulli thinning has centered
moment $O(q^{1-\alpha})$, not $O(q^{-\alpha})$.  Under the scaling
$f(x)=\lambda\widehat F(sx)$, the oracle noise is multiplied by
$\lambda s=\sqrt{\lambda L/L_b}$, which is why the exponent is
$\theta_\alpha$.  The case $q_{\rm s}=1$ is not discarded: the exact
order-$m_{\rm s}$ chain already dominates the desired stochastic expression
there.  The weighted chain's PL coefficient is independent of its number of
layers because $S_j\le5m_{\rm d}a_j^2$ while
$|\nabla_j\mathsf W|\gtrsim a_j$.  Finally, Haar randomization is only an
existence device: averaging over $U$ and then taking a supremum selects one
fixed hard rotation for the given randomized algorithm.

\section{Proofs for Section~\ref{sec:upper}}\label{app:upper-proofs}
We first resolve the distinction between ordinary PL and feasible descent,
then analyze the robust gradient estimator, and finally combine the two
through a deterministic contraction. Throughout this section $\mathcal X$
is nonempty, closed, and convex. The mirror map is differentiable on an open
convex neighborhood of $\mathcal X$ and is $1$-strongly convex. Its Bregman divergence therefore satisfies
\eqref{eq:bregman-definition}.
The model $Q_x(z;v)=\langle v,z-x\rangle+LD_\psi(z,x)$ is continuous,
coercive, and $L$-strongly convex as a function of $z\in\mathcal X$.
Consequently its minimizer exists and is unique, even on an unbounded domain.
The mirror-PL assumption is the feasible-decrement inequality
\eqref{eq:mirror-PL}, not ordinary gradient-norm PL on a constrained set.

\begin{proof}[Proof of Proposition \ref{prop:boundary-obstruction}]
Let
\[
 \cX:=\R\times[0,\infty),
 \qquad
 u(s):=1-\cos s,
 \qquad
 q(s):=u(s)-u(s)^2,
\]
and define
\begin{equation}\label{eq:constraint-counterexample-f}
 f(s,y):=q(s)+(y+1)^2.
\end{equation}
Because $u(s)\in[0,2]$, one has $-2\le q(s)\le1/4$.  Moreover,
\[
 q'(s)=\sin s\,(2\cos s-1),
 \qquad
 q''(s)=4\cos^2s-\cos s-2.
\]
The quadratic polynomial $4c^2-c-2$ lies in $[-33/16,3]$ for
$c\in[-1,1]$.  Hence $\abs{q''(s)}\le3$, and the Hessian of $f$ is the
diagonal matrix $\operatorname{diag}(q''(s),2)$.  Thus $f$ is globally
$3$-smooth.

The minimum over $\cX$ is attained when $y=0$ and $q(s)=-2$, so
$f^\star:=\inf_{\cX}f=-1$.  Put $a=(y+1)^2\ge1$.  Then
\[
 f(s,y)-f^\star=q(s)+a+1\le a+\frac54,
\]
whereas
\[
 \frac12\norm{\nabla f(s,y)}^2
 =\frac12\abs{q'(s)}^2+2(y+1)^2\ge2a.
\]
Since $2a\ge\frac89(a+5/4)$ for every $a\ge1$, inequality
\eqref{eq:ordinary-pl-counterexample} holds with $\mu=8/9$.

Now take $x_0=(0,0)$.  Then $\nabla f(x_0)=(0,2)$ and
$f(x_0)-f^\star=2$.  Let $\psi$ be any differentiable strongly convex
mirror map, and let $\eta_{\rm md}>0$ be arbitrary.  For every $z=(z_1,z_2)\in\cX$,
\[
 \ip{\nabla f(x_0)}{z-x_0}+\frac1{\eta_{\rm md}} D_\psi(z,x_0)
 =2z_2+\frac1{\eta_{\rm md}} D_\psi(z,x_0)\ge0,
\]
with equality at $z=x_0$.  Thus $x_0$ is the unique mirror-descent update.
The same argument repeats at every iteration, so the method never reaches a
minimizer.  This remains true for an exact oracle, completing the proof.
\end{proof}   

\begin{remark}
    The obstruction is not a technical artifact: a constrained upper bound must
control a feasible descent measure rather than the full gradient norm.  We
therefore use Definition~\ref{def:mirror-pl}.  In the unconstrained
Euclidean setting it is exactly the ordinary PL condition used in the lower
bound, so the two parts of the manuscript match on their common domain.
\end{remark}

\subsection{Centered clipping under a finite \texorpdfstring{$\alpha$}{alpha}-moment}
\label{subsec:upper-clipping}
This subsection separates three tasks: controlling the moment of a block
mean, bounding the number of unreliable blocks, and converting those bounds
into a translation-invariant gradient estimate. Clipping is centered at an
estimated mean, so none of the following bounds requires a bounded true
gradient. The geometric-median principle is classical
\citep{Minsker2015,LugosiMendelson2019}; the elementary conditional argument
below makes the needed constants explicit.

\begin{lemma}
\label[lemma]{lem:block-alpha-moment}
Let $\zeta_1,\ldots,\zeta_b$ be independent random vectors in a Euclidean
space with $\mathbb E\zeta_i=0$ and
$\mathbb E\|\zeta_i\|^\alpha\le\sigma^\alpha$.  Then
\begin{equation}\label{eq:block-alpha-moment}
 \mathbb E\left\|\frac1b\sum_{i=1}^b\zeta_i\right\|^\alpha
 \le\frac{2^\alpha\sigma^\alpha}{b^{\alpha-1}}.
\end{equation}
The same statement holds conditionally on any sigma-field with respect to
which the vectors are conditionally independent and centered.
\end{lemma}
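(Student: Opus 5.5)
The plan is to prove \eqref{eq:block-alpha-moment} by a symmetrization step followed by the von Bahr--Esseen-type inequality for $\alpha\in(1,2]$. Set $S:=\sum_{i=1}^b\zeta_i$. Once we show $\mathbb E\|S\|^\alpha\le 2^\alpha\sum_i\mathbb E\|\zeta_i\|^\alpha\le 2^\alpha b\sigma^\alpha$, dividing by $b^\alpha$ gives exactly $2^\alpha\sigma^\alpha/b^{\alpha-1}$. Hence the whole task is the sum bound with constant $2^\alpha$.

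\emph{Step 1 (symmetrization).} Let $\zeta_1',\ldots,\zeta_b'$ be an independent copy of the $\zeta_i$, and let $\varepsilon_i$ be independent Rademacher signs. Since $\mathbb E\zeta_i'=0$, Jensen's inequality conditional on the $\zeta$'s gives $\mathbb E\|S\|^\alpha\le\mathbb E\|\sum_i(\zeta_i-\zeta_i')\|^\alpha$. Each difference $\zeta_i-\zeta_i'$ is symmetric, so the sum has the same law as $\sum_i\varepsilon_i(\zeta_i-\zeta_i')$.

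\emph{Step 2 (Rademacher sums in Hilbert space).} Condition on the vectors $v_i:=\zeta_i-\zeta_i'$. Jensen applied to the concave map $t\mapsto t^{\alpha/2}$ gives
\[
\mathbb E_\varepsilon\Bigl\|\sum_i\varepsilon_iv_i\Bigr\|^\alpha\le\Bigl(\sum_i\|v_i\|^2\Bigr)^{\alpha/2}.
\]
Because $\alpha/2\le1$, subadditivity yields $(\sum_i\|v_i\|^2)^{\alpha/2}\le\sum_i\|v_i\|^\alpha$. This is exactly where the Euclidean structure, i.e.\ type $2$, is used.

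\emph{Step 3.} Finally, $\|v_i\|^\alpha\le 2^{\alpha-1}(\|\zeta_i\|^\alpha+\|\zeta_i'\|^\alpha)$, so $\mathbb E\|v_i\|^\alpha\le 2^\alpha\sigma^\alpha$. Combining Steps 1--3 gives $\mathbb E\|S\|^\alpha\le 2^\alpha b\sigma^\alpha$.

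\emph{Conditional version.} Run the same argument under a regular conditional distribution given the sigma-field. Conditional independence and conditional centering are all that Steps 1--3 use, and the independent copy and signs can be constructed on a product extension.

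The only delicate point is keeping the constant at $2^\alpha$ rather than the $2^{\alpha+1}$ a cruder route would give. Symmetrization loses nothing because $\mathbb E\|S\|^\alpha\le\mathbb E\|S-S'\|^\alpha$ holds with constant one, and the single factor $2^\alpha$ comes only from the moment of $\zeta_i-\zeta_i'$ in Step 3. If that check failed, the fallback is to absorb the extra factor of $2$ into the block-size choice, but the route above should give $2^\alpha$ exactly.
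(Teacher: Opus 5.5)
Your proposal is correct and follows the paper's argument: symmetrize with an independent copy and Rademacher signs, then bound $\mathbb E_\varepsilon\|\sum_i\varepsilon_i v_i\|^\alpha\le(\sum_i\|v_i\|^2)^{\alpha/2}\le\sum_i\|v_i\|^\alpha$. The only difference is the order of the splitting step: the paper first splits $\sum_i\varepsilon_i(\zeta_i-\zeta_i')$ into two Rademacher sums by convexity of $\|\cdot\|^\alpha$ and then applies the Hilbert-space bound to each, whereas you apply that bound to the differences and split each $\|\zeta_i-\zeta_i'\|^\alpha$ afterward; both give the same constant $2^\alpha$.
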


\begin{proof}
Introduce an independent copy $\zeta_1',\ldots,\zeta_b'$ and independent
Rademacher signs $\varepsilon_i$.  Jensen's inequality and conditional
symmetry give
\begin{align*}
 \mathbb E\left\|\sum_i\zeta_i\right\|^\alpha
 &\le\mathbb E\left\|\sum_i(\zeta_i-\zeta_i')\right\|^\alpha\\
 &=\mathbb E\left\|\sum_i\varepsilon_i
   (\zeta_i-\zeta_i')\right\|^\alpha\\
 &\le2^{\alpha-1}\mathbb E\left\|\sum_i\varepsilon_i\zeta_i\right\|^\alpha
   +2^{\alpha-1}\mathbb E\left\|\sum_i\varepsilon_i\zeta_i'\right\|^\alpha\\
 &=2^\alpha\mathbb E\left\|\sum_i\varepsilon_i\zeta_i\right\|^\alpha.
\end{align*}
Conditioning on $\zeta_1,\ldots,\zeta_b$ and using
$\alpha/2\le1$,
\begin{align*}
 \mathbb E_\varepsilon\left\|\sum_i\varepsilon_i\zeta_i\right\|^\alpha
 &\le\left(
 \mathbb E_\varepsilon\left\|\sum_i\varepsilon_i\zeta_i\right\|^2
 \right)^{\alpha/2}\\
 &=\left(\sum_i\|\zeta_i\|^2\right)^{\alpha/2}
 \le\sum_i\|\zeta_i\|^\alpha.
\end{align*}
Taking expectations yields
$\mathbb E\|\sum_i\zeta_i\|^\alpha\le2^\alpha b\sigma^\alpha$.
Division by $b^\alpha$ proves the result.  Every step is valid after
conditioning, proving the conditional version.
\end{proof}

Fix a query $x$, target accuracy $r>0$, and failure probability
$\rho\in(0,1/2)$.  Define
\begin{equation}\label{eq:heavy-estimator-constants}
 B_\alpha:=(16\cdot8^\alpha)^{1/(\alpha-1)},
 \qquad
 b:=\max\left\{1,
 \left\lceil B_\alpha
 \left(\frac\sigma r\right)^{\alpha/(\alpha-1)}
 \right\rceil\right\},
 \qquad
 K:=\left\lceil16\log\frac4\rho\right\rceil.
\end{equation}
Draw $2Kb$ fresh samples at the fixed point $x$, divide them into $2K$
blocks, and write
\begin{equation}\label{eq:app-block-means}
 Z_j:=\frac1b\sum_{\ell=1}^b G(x,\xi_{j,\ell}),
 \qquad j=1,\ldots,2K.
\end{equation}
Choose a geometric median of the first half,
\begin{equation}\label{eq:app-geometric-median}
 a\in\arg\min_{u\in\mathbb R^d}\sum_{j=1}^K\|u-Z_j\|,
\end{equation}
and define
\begin{equation}\label{eq:app-centered-clipped-blocks}
 Y_j:=a+\operatorname{clip}_r(Z_{K+j}-a),
 \qquad
 \widehat g(x):=\frac1K\sum_{j=1}^K Y_j.
\end{equation}

The block-moment estimate gives only a constant success probability for
one block. We therefore need a concentration statement that remains valid
when its indicators are merely adapted rather than independent.

\begin{lemma}
\label[lemma]{lem:bad-block-concentration}
Let $I_1,\ldots,I_K\in\{0,1\}$ be adapted to a filtration and suppose
$\mathbb E[I_j\mid\mathscr F_{j-1}]\le p$.  Then, for $a>p$,
\begin{equation}\label{eq:conditional-hoeffding}
 \mathbb P\left(\sum_{j=1}^K I_j\ge aK\right)
 \le\exp[-2K(a-p)^2].
\end{equation}
In particular, for $p=1/16$ and $a=1/4$, the right-hand side is
$\exp(-9K/128)$.
\end{lemma}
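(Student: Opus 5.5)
This is the conditional (Azuma--Hoeffding) Chernoff bound for adapted indicators, and I will prove it by an exponential supermartingale argument.

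Fix $\lambda>0$. The first step is to bound the conditional moment generating function of each indicator. Since $I_j\in\{0,1\}$ and $\mathbb E[I_j\mid\mathscr F_{j-1}]=:p_j\le p$, we have
\[
\mathbb E[e^{\lambda I_j}\mid\mathscr F_{j-1}]=1-p_j+p_je^{\lambda}\le1-p+pe^{\lambda},
\]
because the middle expression is increasing in $p_j$. Hoeffding's lemma applied to a Bernoulli($p$) variable gives $\log(1-p+pe^\lambda)\le\lambda p+\lambda^2/8$. Equivalently, this is the statement that a $[0,1]$-valued variable with mean $p$ has centered log-MGF at most $\lambda^2/8$. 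Hence
\[
\mathbb E\bigl[e^{\lambda(I_j-p)}\mid\mathscr F_{j-1}\bigr]\le e^{\lambda^2/8}.
\]

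The second step is to chain these bounds. By iterated conditioning from $j=K$ down to $j=1$, using that $I_1,\ldots,I_{j-1}$ are $\mathscr F_{j-1}$-measurable, I get
\[
\mathbb E\exp\Bigl(\lambda\sum_{j=1}^K(I_j-p)\Bigr)\le e^{K\lambda^2/8}.
\]
Markov's inequality then yields
\[
\mathbb P\Bigl(\sum_j I_j\ge aK\Bigr)\le\exp\bigl(-\lambda K(a-p)+K\lambda^2/8\bigr).
\]
Choosing $\lambda=4(a-p)$ gives the bound $\exp[-2K(a-p)^2]$. For the particular case, substituting $p=1/16$ and $a=1/4$ gives $a-p=3/16$ and $2(3/16)^2=9/128$, which is the stated exponent.

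The only point needing care is the adaptedness step. We must make sure the conditional MGF bound uses only $p_j\le p$ and not independence. This is handled by the monotonicity in $p_j$ noted in the first step, together with Hoeffding's lemma, whose bound depends only on the range $[0,1]$ and not on the exact mean. I do not expect any real obstacle beyond this.
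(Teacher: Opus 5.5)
Your proof is correct and is essentially the paper's argument: conditional Hoeffding's lemma, iterated conditioning, and Markov's inequality with $\lambda=4(a-p)$. The only cosmetic difference is where you center. You center each indicator at the constant $p$, using monotonicity of $1-p_j+p_je^{\lambda}$ in $p_j$. The paper instead centers at the conditional mean $\mathbb E[I_j\mid\mathscr F_{j-1}]$, then uses $\sum_j\mathbb E[I_j\mid\mathscr F_{j-1}]\le pK$ to pass from $\sum_j I_j\ge aK$ to $\sum_j D_j\ge(a-p)K$.
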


\begin{proof}
Put $D_j=I_j-\mathbb E[I_j\mid\mathscr F_{j-1}]$.  Conditional Hoeffding's
lemma gives
$\mathbb E[e^{\lambda D_j}\mid\mathscr F_{j-1}]\le e^{\lambda^2/8}$.
Iteration and Markov's inequality imply
\[
 \mathbb P\left(\sum_{j=1}^K D_j\ge u\right)
 \le\exp(-2u^2/K).
\]
If $\sum I_j\ge aK$, then
$\sum D_j\ge(a-p)K$, proving the claim.
\end{proof}

We can now combine the block and counting estimates. The pilot controls
the clipping center, and the second collection limits the contribution of
all remaining outliers.

\begin{lemma}
\label[lemma]{lem:centered-clipping}
Under \eqref{eq:lower-oracle-assumption}, for every adaptively selected $x$, every
$r>0$, and $\rho\in(0,1/2)$, the estimator above satisfies
\begin{equation}\label{eq:centered-clipping-prob}
 \mathbb P\left(
 \|\widehat g(x)-\nabla f(x)\|\le r
 \ \middle|\ \text{history before the estimator}\right)\ge1-\rho.
\end{equation}
It uses at most
\begin{equation}\label{eq:centered-clipping-cost}
 C_{{\rm est},\alpha}
 \left[1+\left(\frac\sigma r\right)^{\alpha/(\alpha-1)}\right]
 \log\frac4\rho
\end{equation}
stochastic-gradient calls, where one may take
$C_{{\rm est},\alpha}=34(1+B_\alpha)$.
\end{lemma}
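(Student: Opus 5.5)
Throughout, fix the history before the estimator, write $g:=\nabla f(x)$, and work conditionally on it. The point $x$ is then fixed, and the $2Kb$ samples are conditionally independent with centered $\alpha$-moment at most $\sigma^\alpha$. The plan has four stages: a pilot stage, a bias estimate, a concentration step, and a union bound with oracle count.

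\emph{Step 1: pilot.} By Lemma~\ref{lem:block-alpha-moment} (conditional form), every block mean satisfies $\mathbb E\|Z_j-g\|^\alpha\le2^\alpha\sigma^\alpha/b^{\alpha-1}$. The choice of $b$ in \eqref{eq:heavy-estimator-constants} gives $b^{\alpha-1}\ge16\cdot8^\alpha(\sigma/r)^\alpha$. Markov's inequality then makes the event $\{\|Z_j-g\|>r/4\}$ have probability at most $1/16$; I will adjust the threshold $r/4$ to whatever constant the geometric-median step requires.

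Let $I_j$ be the indicator of this event for the first $K$ blocks. These indicators are independent, so Lemma~\ref{lem:bad-block-concentration} with $p=1/16$, $a=1/4$ shows the following: with probability at least $1-e^{-9K/128}\ge1-\rho/2$ (using $K\ge16\log(4/\rho)$), fewer than $K/4$ blocks are bad. On that event I will invoke the standard geometric-median robustness inequality (Minsker's lemma). If a fraction greater than $3/4$ of the points lies in a ball of radius $\varepsilon$ around $g$, then $\|a-g\|\le C\varepsilon$ with an explicit $C\le3$. I will re-derive this by the usual first-order argument: the subgradient of $u\mapsto\sum_j\|u-Z_j\|$ cannot vanish far from $g$. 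This yields $\|a-g\|\le r/4$ after tuning constants.

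\emph{Step 2: bias of the clipped second half.} The second-half blocks are independent of $a$. Conditional on $a$ with $\|a-g\|\le r/4$, each $Y_j$ is i.i.d. Since $\mathbb EZ_{K+j}=g$, we have
\[
\mathbb E[Y_j\mid a]-g=-\mathbb E\bigl[(Z-a)-\operatorname{clip}_r(Z-a)\bigr].
\]
The integrand has norm at most $\|Z-a\|\one_{\{\|Z-a\|>r\}}$. On that event $\|Z-g\|\ge3r/4$ and $\|Z-a\|\le\tfrac43\|Z-g\|$. Hence the bias is at most a constant times $\mathbb E\|Z-g\|^\alpha/r^{\alpha-1}$, which is at most $r/(8\cdot\text{const})$ by the block moment bound and the size of $b$. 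The same computation bounds the conditional second moment:
\[
\mathbb E\|Y_j-\mathbb EY_j\|^2\le\mathbb E\|\operatorname{clip}_r(Z-a)-(g-a)\|^2\lesssim r^{2-\alpha}\,\mathbb E\|Z-g\|^\alpha+\|a-g\|^2,
\]
using $\|Y_j-g\|\le r+\|a-g\|\le 2r$.

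\emph{Step 3: concentration.} The centered vectors $Y_j-\mathbb E[Y_j\mid a]$ are independent and bounded by $4r$, with variance proxy $v$ from Step~2. I will apply Pinelis' vector Bernstein inequality in Hilbert space. It gives, with conditional probability at least $1-\rho/2$,
\[
\Bigl\|\frac1K\sum_j\bigl(Y_j-\mathbb EY_j\bigr)\Bigr\|\le\sqrt{\frac{2v\log(4/\rho)}{K}}+\frac{4r\log(4/\rho)}{3K}.
\]
With $K\ge16\log(4/\rho)$, the second term is at most $r/12$, and the first is a small multiple of $r$.

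This step is the main obstacle. The variance term includes $\|a-g\|^2$, which is only $O(r^2)$ rather than small. My fix is to center at $Y$'s own mean, so that only fluctuations of $\operatorname{clip}_r(Z-a)$ enter. Those fluctuations have variance $\le\mathbb E\|\operatorname{clip}_r(Z-a)-(g-a)\|^2$. I will bound this by $\mathbb E\min(\|Z-g\|,2r)^2\cdot C\le C r^{2-\alpha}\mathbb E\|Z-g\|^\alpha$, which is small. If the constants still do not close, I will shrink the pilot threshold or enlarge $B_\alpha$ accordingly.

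\emph{Step 4: union bound and cost.} Adding the bias and deviation bounds gives $\|\widehat g(x)-g\|\le r$ outside an event of probability at most $\rho$. This proves \eqref{eq:centered-clipping-prob}. The oracle count is
\[
2Kb\le2\bigl(16\log(4/\rho)+1\bigr)\bigl(1+B_\alpha(\sigma/r)^{\alpha/(\alpha-1)}\bigr).
\]
Using $\log(4/\rho)\ge\log8>1$, this is at most $34(1+B_\alpha)\bigl[1+(\sigma/r)^{\alpha/(\alpha-1)}\bigr]\log(4/\rho)$, which is \eqref{eq:centered-clipping-cost}.
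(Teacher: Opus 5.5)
Your proof is correct in outline but handles the second half of the blocks differently from the paper. The paper computes no bias or variance. It applies the same good/bad classification (threshold $r/4$, Markov probability at most $1/16$) and the same counting bound of Lemma~\ref{lem:bad-block-concentration} to \emph{both} collections. The elementary geometric-median inequality then gives $\|a-\nabla f(x)\|\le 3r/4$, and the rest is deterministic:
\begin{itemize}
\item a good second-half block has $\|Z_{K+j}-a\|\le r/4+3r/4=r$, so clipping leaves it unchanged and $\|Y_j-\nabla f(x)\|\le r/4$;
\item a bad block still satisfies $\|Y_j-\nabla f(x)\|\le r+3r/4$;
\item with at most $K/4$ bad blocks, the average error is at most $5r/8$.
\end{itemize}
You replace this counting step with a bias--variance decomposition and Pinelis' Hilbert-space Bernstein inequality. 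That gives a more statistical picture: a bias of order $\mathbb E\|Z-g\|^\alpha/r^{\alpha-1}$ plus a $K^{-1/2}$ fluctuation. But since $K\asymp\log(1/\rho)$ here, it gains nothing over the paper's route, and it imports an external vector concentration inequality and a more delicate constant check that the paper avoids.

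Some remarks on closing your version.

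\emph{Your hedges are not available.} Adjusting the threshold, enlarging $B_\alpha$, or ``tuning constants'' to reach $\|a-g\|\le r/4$ is not allowed: $b$, $K$ and $B_\alpha$ are fixed by \eqref{eq:heavy-estimator-constants}, and the lemma concerns that estimator. Moreover, your own constant $C\le 3$ at threshold $r/4$ gives only $\|a-g\|\le 3r/4$, not $r/4$.

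\emph{You do not need $r/4$.} The map $v\mapsto a+\operatorname{clip}_r(v-a)$ is the Euclidean projection onto the ball $\overline{B}(a,r)$. That ball contains $g$ whenever $\|a-g\|\le 3r/4$. Nonexpansiveness then gives $\|Y_j-g\|\le\min\{W_j,7r/4\}$ with $W_j:=\|Z_{K+j}-g\|$. This also removes the $\|a-g\|^2$ term you worried about in Step~3.

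\emph{The constants close with the stated parameters.}
\begin{itemize}
\item The fixed $b$ gives $\mathbb E W_j^\alpha\le r^\alpha/(16\cdot4^\alpha)$.
\item The conditional bias is at most $\mathbb E(W_j-r/4)_+\le r/64$.
\item The second moment is at most $(7r/4)^{2-\alpha}\mathbb E W_j^\alpha<r^2/36$.
\item The centered increments are bounded by $7r/2$.
\item Pinelis' inequality with $K\ge16\log(4/\rho)$ bounds the deviation by $r/4$ with conditional probability at least $1-\rho/2$.
\end{itemize}
The total error is below $r$, and the pilot failure probability $e^{-9K/128}\le\rho/4$ keeps the overall failure probability at most $\rho$.

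\emph{A minor point.} Under the oracle model, the second-half blocks are conditionally independent given the history and $a$, but not necessarily identically distributed. Bernstein's inequality does not need identical distribution, so only the ``i.i.d.'' wording should be corrected.
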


\begin{proof}
Condition on the full history before the estimator and put
$m_x=\nabla f(x)$.  Lemma~\ref{lem:block-alpha-moment} and the definition of
$b$ give, conditionally at the beginning of every block,
\[
 \mathbb E\|Z_j-m_x\|^\alpha
 \le\frac{2^\alpha\sigma^\alpha}{b^{\alpha-1}}.
\]
Consequently,
\begin{align}
 \mathbb P\left(\|Z_j-m_x\|>\frac r4
 \ \middle|\ \text{past blocks}\right)
 &\le\frac{2^\alpha4^\alpha\sigma^\alpha}
 {b^{\alpha-1}r^\alpha}
 =\frac{8^\alpha\sigma^\alpha}{b^{\alpha-1}r^\alpha}
 \le\frac1{16}.
 \label{eq:block-good-prob}
\end{align}
The last inequality also holds when $b=1$: in that case
$B_\alpha(\sigma/r)^{\alpha/(\alpha-1)}\le1$, which is equivalent to the
required bound.

Call a block good if $\|Z_j-m_x\|\le r/4$.  Applying
Lemma~\ref{lem:bad-block-concentration} separately to the pilot and clipping
collections shows that, with conditional probability at least $1-\rho/2$,
at least $3K/4$ blocks are good in each collection.  We work on this event.

Let $G,B$ be the good and bad pilot indices and put
$d_0=\|a-m_x\|$.  For $j\in G$,
\[
 \|a-Z_j\|-\|m_x-Z_j\|\ge d_0-r/2,
\]
whereas for $j\in B$ the same difference is at least $-d_0$.  Optimality of
the geometric median gives
\[
 0\ge(|G|-|B|)d_0-|G|r/2.
\]
Since $|G|\ge3K/4$ and $|B|\le K/4$,
\begin{equation}\label{eq:pilot-center-error}
 \|a-m_x\|\le\frac{3r}{4}.
\end{equation}
For a good block in the second collection,
$\|Z_{K+j}-a\|\le r$, so clipping leaves it unchanged and
$\|Y_j-m_x\|\le r/4$.  For a bad block,
$\|Y_j-m_x\|\le r+3r/4=7r/4$.  At most $K/4$ blocks are bad; hence
\[
 \|\widehat g(x)-m_x\|
 \le\frac34\frac r4+\frac14\frac{7r}{4}
 =\frac{5r}{8}<r.
\]
This proves \eqref{eq:centered-clipping-prob}.  Finally,
$b\le1+B_\alpha(\sigma/r)^{\alpha/(\alpha-1)}$ and
$K\le17\log(4/\rho)$, which gives \eqref{eq:centered-clipping-cost}.
\end{proof}

\subsection{Inexact mirror contraction and the stagewise method}
\label{subsec:upper-md}
The preceding estimator supplies a deterministic error tolerance on a
simultaneous high-probability event. We now quantify how that error perturbs
the exact mirror step; no probabilistic argument is needed in this part.
For $L$-smooth $f$, \eqref{eq:bregman-definition} implies
$f(z)\le f(x)+Q_x(z;\nabla f(x))$. Hence
$\mathcal G_{\psi,L}^2(x)\le2L(f(x)-f^\star)$.
Together with mirror-PL, this shows $\mu\le L$ whenever the objective has a
nonoptimal feasible point. If $\mu>L$, every feasible point is optimal and
returning $x_0$ without any oracle call suffices. This is the first branch
of Algorithm~\ref{alg:stagewise-centered-clipped-mirror-descent} and
Theorem~\ref{thm:upper-accuracy}. The positive-query bound below concerns
only the remaining case $\kappa=L/\mu\ge1$.

Given $x\in\cX$ and an approximate gradient
$\widehat g=\nabla f(x)+e$, define
\begin{equation}\label{eq:inexact-mirror-step}
 x_+:=\arg\min_{z\in\cX}Q_x(z;\widehat g).
\end{equation}
Let
\begin{equation}\label{eq:exact-mirror-step}
 z_+:=\arg\min_{z\in\cX}Q_x(z;\nabla f(x))
\end{equation}
be the corresponding exact step.

\begin{lemma}
\label[lemma]{lem:inexact-mirror-contraction}
Under $1$-strong convexity of $\psi$,
$L$-smoothness of $f(x)$, and \eqref{eq:mirror-PL},
\begin{equation}\label{eq:inexact-contraction}
 f(x_+)-f^\star
 \le\left(1-\frac\mu L\right)(f(x)-f^\star)
     +\frac1L\norm{e}^2.
\end{equation}
\end{lemma}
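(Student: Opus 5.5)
The plan is to compare the inexact model value at $x_+$ with the exact model value at $z_+$, and then invoke mirror-PL. First, smoothness together with \eqref{eq:bregman-definition} gives the descent bound
\[
 f(x_+)\le f(x)+\ip{\nabla f(x)}{x_+-x}+\tfrac L2\norm{x_+-x}^2\le f(x)+Q_x(x_+;\nabla f(x)).
\]
We then rewrite the true model as the inexact model minus an error term:
\[
 Q_x(x_+;\nabla f(x))=Q_x(x_+;\widehat g)-\ip{e}{x_+-x}.
\]

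The key step uses the $L$-strong convexity of $z\mapsto Q_x(z;\widehat g)$ relative to $D_\psi$, and hence in the Euclidean norm. Because $x_+$ minimizes this model over the convex set $\cX$, the first-order optimality condition yields the three-point inequality
\[
 Q_x(z;\widehat g)\ge Q_x(x_+;\widehat g)+L D_\psi(z,x_+)
\]
for every $z\in\cX$. Applying it at $z=z_+$ gives
\[
 Q_x(x_+;\widehat g)\le Q_x(z_+;\nabla f(x))+\ip{e}{z_+-x}-\tfrac L2\norm{z_+-x_+}^2 .
\]
Combining this with the previous display, the linear error terms merge into $\ip{e}{z_+-x_+}$. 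This is the main point to get right: a naive bound on $\ip{e}{x_+-x}$ alone would leave an uncontrolled displacement. Instead we absorb the merged term with Young's inequality,
\[
 \ip{e}{z_+-x_+}-\tfrac L2\norm{z_+-x_+}^2\le \tfrac{1}{2L}\norm{e}^2 ,
\]
which gives
\[
 f(x_+)\le f(x)+\min_{z\in\cX}Q_x(z;\nabla f(x))+\tfrac1{2L}\norm e^2 .
\]
This is even better than the claimed constant $1/L$. If the three-point inequality turns out to carry only the weaker constant $\tfrac L2\norm{\cdot}^2$ (for example if $\psi$ fails to be differentiable at some boundary point), we would fall back on a cruder Young split, and that split still yields the stated $1/L$.

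Finally, by definition \eqref{eq:mirror-gradient-decrement} we have $\min_z Q_x(z;\nabla f(x))=-\mathcal G_{\psi,L}^2(x)/(2L)$. Mirror-PL \eqref{eq:mirror-PL} bounds this by $-\frac{\mu}{L}(f(x)-f^\star)$. Subtracting $f^\star$ from both sides then gives \eqref{eq:inexact-contraction}. The only delicate ingredient is the three-point inequality on a possibly unbounded, constrained domain, which rests on the existence and uniqueness of minimizers already noted for $Q_x$.
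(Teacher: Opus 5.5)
Your argument is correct, and it proves the lemma with the sharper constant $\frac1{2L}\norm{e}^2$. The skeleton is the same as the paper's: the descent bound $f(x_+)\le f(x)+Q_x(x_+;\nabla f(x))$, rewriting the exact model at $x_+$ through the inexact one, and comparing with $z_+$ so that the linear errors merge into $\ip{e}{z_+-x_+}$, followed by mirror-PL. The two proofs differ in how that cross term is controlled.

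The paper writes the variational inequalities for both $x_+$ and $z_+$, adds them, and uses strong monotonicity of $\nabla\psi$ to get the stability estimate $\norm{x_+-z_+}\le\norm{e}/L$. It then uses only the plain optimality $Q_x(x_+;\widehat g)\le Q_x(z_+;\widehat g)$ together with Cauchy--Schwarz, which costs $\norm{e}^2/L$. You instead use only the optimality condition at $x_+$, in its three-point form. This keeps the negative term $-LD_\psi(z_+,x_+)\le-\frac L2\norm{z_+-x_+}^2$, so Young's inequality absorbs the cross term at cost $\frac1{2L}\norm{e}^2$.

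Your route needs one variational inequality instead of two, and it works for any comparison point $z\in\cX$. The optimality of $z_+$ enters only to identify $Q_x(z_+;\nabla f(x))$ with $-\mathcal G_{\psi,L}^2(x)/(2L)$. The paper's route yields the proximal-stability bound as a by-product, although that bound is not used elsewhere.

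Your closing hedge is unnecessary. The paper assumes $\psi$ is differentiable on an open neighborhood of $\cX$, so the variational inequality at $x_+$ holds even when $x_+$ lies on the boundary; the paper's own proof relies on the same inequality. Also, your Young step already uses only the weaker $\frac L2\norm{z_+-x_+}^2$, so there is no weaker case left to fall back to.
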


\begin{proof}
The variational inequalities for \eqref{eq:inexact-mirror-step} and
\eqref{eq:exact-mirror-step} are
\begin{align}
 \ip{\widehat g+L(\nabla\psi(x_+)-\nabla\psi(x))}{z_+-x_+}&\ge0,
 \label{eq:VI-inexact}\\
 \ip{\nabla f(x)+L(\nabla\psi(z_+)-\nabla\psi(x))}{x_+-z_+}&\ge0.
 \label{eq:VI-exact}
\end{align}
Adding them and using
\[
 \ip{\nabla\psi(x_+)-\nabla\psi(z_+)}{x_+-z_+}
 \ge\norm{x_+-z_+}^2
\]
yields
\begin{equation}\label{eq:mirror-point-stability}
 L\norm{x_+-z_+}^2
 \le\ip{e}{z_+-x_+}
 \le\norm{e}\,\norm{x_+-z_+}.
\end{equation}
Consequently,
\begin{equation}\label{eq:mirror-distance-error}
 \norm{x_+-z_+}\le\frac{\norm{e}}L.
\end{equation}

Optimality of $x_+$ for the perturbed model gives
$Q_x(x_+;\widehat g)\le Q_x(z_+;\widehat g)$.  Therefore
\begin{align}
 Q_x(x_+;\nabla f(x))
 &=Q_x(x_+;\widehat g)-\ip{e}{x_+-x}\notag\\
 &\le Q_x(z_+;\widehat g)-\ip{e}{x_+-x}\notag\\
 &=Q_x(z_+;\nabla f(x))+\ip{e}{z_+-x_+}\notag\\
 &\le Q_x(z_+;\nabla f(x))+\frac1L\norm{e}^2,
 \label{eq:model-perturbation-bound}
\end{align}
where the last step uses \eqref{eq:mirror-distance-error}.
Smoothness and $1$-strong convexity of $\psi$ imply
\begin{align*}
 f(x_+)
 &\le f(x)+\ip{\nabla f(x)}{x_+-x}
       +\frac L2\norm{x_+-x}^2\\
 &\le f(x)+Q_x(x_+;\nabla f(x)).
\end{align*}
Combining this with \eqref{eq:model-perturbation-bound} and the definition
\eqref{eq:mirror-gradient-decrement},
\begin{align*}
 f(x_+)-f^\star
 &\le f(x)-f^\star
   -\frac1{2L}\mathcal G_{\psi,L}^2(x)
   +\frac1L\norm{e}^2\\
 &\le\left(1-\frac\mu L\right)(f(x)-f^\star)
   +\frac1L\norm{e}^2,
\end{align*}
where the last inequality is \eqref{eq:mirror-PL}.
\end{proof}

If $\mu>L$, Algorithm~\ref{alg:stagewise-centered-clipped-mirror-descent}
returns $x_0$ without querying the oracle. Otherwise $\kappa\ge1$; for
$\epsilon\in(0,\Delta_0)$ and $\delta\in(0,1/2)$, define
\begin{equation}\label{eq:stage-parameters}
 S:=\left\lceil\log_2\frac{\Delta_0}{\epsilon}\right\rceil,
 \qquad
 J:=\lceil\kappa\log4\rceil,
 \qquad
 \Delta_s:=\frac{\Delta_0}{2^s},
 \qquad
 r_s:=\frac12\sqrt{\mu\Delta_s}.
\end{equation}
At every one of the $J$ iterations of stage $s$, call the estimator of
Subsection~\ref{subsec:upper-clipping} at the current point with accuracy $r_s$
and conditional failure probability
\begin{equation}\label{eq:per-estimate-failure}
 \rho:=\frac{\delta}{JS},
\end{equation}
then update
\begin{equation}\label{eq:stagewise-mirror-update}
 x_{t+1}:=\arg\min_{z\in\mathcal X}
 \left\{\langle\widehat g_t,z-x_t\rangle+L D_\psi(z,x_t)\right\}.
\end{equation}
Return the last iterate after stage $S-1$. In
\eqref{eq:heavy-estimator-constants}, use $r=r_s$ to obtain $b_s$ and
$\rho=\delta/(JS)$ to obtain the common $K$. Together with
\eqref{eq:stage-parameters}, these are all parameters of
Algorithm~\ref{alg:stagewise-centered-clipped-mirror-descent}.
For $\mu\le L$, we establish the following explicit version of the
sample bound in Theorem~\ref{thm:upper-accuracy}; for $\mu>L$, $N=0$:
\begin{equation}\label{eq:upper-sample-complexity}
 N\le C_{0,\alpha}\kappa\left[
 \left\lceil\log_2\frac{\Delta_0}{\epsilon}\right\rceil
       +Q_\epsilon\right]
 \log\left(\frac{16\kappa}{\delta}
       \left\lceil\log_2\frac{\Delta_0}{\epsilon}\right\rceil\right).
\end{equation}

\begin{proof}[Proof of Theorem~\ref{thm:upper-accuracy}]
The case $\mu>L$ follows from the feasible-decrement argument at the start
of this subsection: every feasible point is optimal, so the returned $x_0$
is exact and costs no queries. Suppose henceforth that $\mu\le L$.
There are $JS$ robust-estimator calls.  The conditional guarantee in
Lemma~\ref{lem:centered-clipping} and a union bound imply that, with
probability at least $1-\delta$,
\begin{equation}\label{eq:all-gradient-estimates-good}
 \|\widehat g_t-\nabla f(x_t)\|\le r_s
\end{equation}
at every update of every stage. Work pathwise on this event; all
probabilistic estimates have already been established.

We prove inductively that the objective gap at the beginning of stage $s$ is
at most $\Delta_s$.  Write $\Delta_{s,j}:=f(x_{sJ+j})-f^\star$ for the gap after $j$ updates in stage $s$.
Lemma~\ref{lem:inexact-mirror-contraction} gives
\begin{equation}\label{eq:stage-recurrence}
 \Delta_{s,j+1}\le\left(1-\frac1\kappa\right)\Delta_{s,j}+\frac{r_s^2}{L}.
\end{equation}
After $J$ iterations,
\begin{align}
 \Delta_{s,J}
 &\le e^{-J/\kappa}\Delta_s+\frac{r_s^2}{\mu}
 \le\frac{\Delta_s}{4}+\frac{\Delta_s}{4}
 =\Delta_{s+1}.
 \label{eq:stage-halving}
\end{align}
Thus $f(x_{\mathrm{out}})-f^\star\le\Delta_S\le\epsilon$.

It remains to count oracle calls.  By
Lemma~\ref{lem:centered-clipping}, one estimator call in stage $s$ costs at
most
\begin{align*}
 C_{{\rm est},\alpha}
 \left[1+\left(\frac{\sigma}{r_s}\right)^{\alpha/(\alpha-1)}\right]
 \log\left(\frac{4JS}{\delta}\right)
 &=C_{{\rm est},\alpha}
 \left[1+2^{2\thetaalpha}
 \left(\frac{\sigma^2}{\mu\Delta_s}\right)^{\thetaalpha}\right]
 \log\left(\frac{4JS}{\delta}\right).
\end{align*}
Hence
\begin{align}
 N
 &\le C_{{\rm est},\alpha}J
 \log\left(\frac{4JS}{\delta}\right)
 \left[S+2^{2\thetaalpha}
 \left(\frac{\sigma^2}{\mu}\right)^{\thetaalpha}
 \sum_{s=0}^{S-1}\Delta_s^{-\thetaalpha}\right].
 \label{eq:upper-count-before-sum}
\end{align}
The geometric sum satisfies
\begin{align}
 \sum_{s=0}^{S-1}\Delta_s^{-\thetaalpha}
 &=\frac{\Delta_S^{-\thetaalpha}-\Delta_0^{-\thetaalpha}}
 {2^{\thetaalpha}-1}
 <\frac{\Delta_S^{-\thetaalpha}}{2^{\thetaalpha}-1}
 <\frac{1}{1-2^{-\thetaalpha}}\epsilon^{-\thetaalpha}.
 \label{eq:geometric-heavy-sum}
\end{align}
Because $J\le3\kappa$ and $4JS\le12\kappa S$, all factors depending only
on $\alpha$ can be absorbed into $C_{0,\alpha}$, yielding
\eqref{eq:upper-sample-complexity}.
\end{proof}

\section{Proofs for Section~\ref{sec:fixed}}\label{app:fixed-dimensional proof}
We work with $f\in\mathcal F_d(L,\mu,\Delta_0)$, the oracle model
\eqref{eq:lower-oracle-assumption}, and a fixed dimension $d$.
Theorem~\ref{thm:fd-upper-main} states the explicit bounds underlying
Theorem~\ref{cor:fd-123}; Theorem~\ref{thm:app-fixed-lower} supplies the
expected-risk and confidence lower bounds.
The algorithms below are specified before their analysis. Bisection uses the
one-dimensional ordering of minimizers, whereas gradient-flow trapping
uses cutting surfaces and reconstructs relative values from gradients only.
Their subroutines share the inputs $G,L,\sigma,\alpha$ and the exponent
$p_\alpha$ from \eqref{eq:heavy-exponents}. In bisection, $M$ is the
number of blocks per half of the estimator; it plays the role of $K$ in
Appendix~\ref{subsec:upper-clipping}.
For reproducible parameter choices fix
\begin{equation}\label{eq:app-algorithm-constants}
 \mathfrak c_d=2^{16}d^2\quad(d\ge2),\qquad
 C_{{\rm rep},\alpha}=\max\{3,32^{1/(\alpha-1)}\}.
\end{equation}
These constants are deliberately conservative; the complexity statements
absorb dimension- and moment-dependent factors. Finite minimizations use a
fixed lexicographic tie-breaking rule. A geometric median may be chosen as
the minimum-norm element of its nonempty compact convex minimizer set.
The upper-bound algorithms handle the trivial constant-objective case by
returning $x_0$ when the supplied parameters satisfy $\mu>L$.

\begin{algorithm}[H]
\caption{Robust Stochastic Bisection}
\label{alg:robust-stochastic-bisection}
\small
\begin{algorithmic}[1]
\Require A globally $L$-smooth, $\mu$-PL function
    $f:\mathbb R\to\mathbb R$ and a conditionally unbiased
    stochastic-gradient oracle $G$ with centered $\alpha$-moment
    bounded by $\sigma^\alpha$, where $\alpha\in(1,2]$.
\Require $x_0\in\mathbb R$ with $f(x_0)-f^\star\leq\Delta_0$;
    $\epsilon\in(0,\Delta_0)$ and $\delta\in(0,1/2)$.
\Ensure $x_{\mathrm{out}}$ such that
    $\mathbb P(f(x_{\mathrm{out}})-f^\star\leq\epsilon)\geq1-\delta$.

\Statex $\displaystyle
    \operatorname{clip}_r(v)
    :=\operatorname{sgn}(v)\min\{|v|,r\},
    \qquad
    \log_2^+(z):=\max\{0,\log_2 z\}.
$
\State \textbf{if} $\mu>L$ \textbf{then return} $x_0$.
\State $\displaystyle
    \kappa\gets\frac{L}{\mu},\qquad
    D_0\gets\sqrt{\frac{2\Delta_0}{\mu}},\qquad
    r_\epsilon\gets\frac14\sqrt{2\mu\epsilon}.
$
\State $\displaystyle
    K_\epsilon\gets
    1+\left\lceil
        \frac12\log_2^+\!\left(\frac{\kappa\Delta_0}{\epsilon}\right)
    \right\rceil,
    \qquad
    \rho\gets\frac{\delta}{K_\epsilon}.
$
\State $\displaystyle
    B_\alpha\gets(16\cdot8^\alpha)^{1/(\alpha-1)},
    \qquad
    M\gets\left\lceil16\log\frac4\rho\right\rceil.
$
\State $\displaystyle
    b\gets\max\left\{
        1,\left\lceil
            B_\alpha
            \left(\frac{\sigma}{r_\epsilon}\right)^{\alpha/(\alpha-1)}
        \right\rceil
    \right\}.
$
\State $(\ell_0,u_0)\gets(x_0-D_0,x_0+D_0)$.

\For{$k=0,\ldots,K_\epsilon-1$}
    \State $c_k\gets(\ell_k+u_k)/2$.
    \State Draw $2Mb$ fresh, conditionally independent oracle samples
        $G_{j,q}=G(c_k,\xi_{j,q})$,
        $j=1,\ldots,2M$, $q=1,\ldots,b$.
    \State $\displaystyle
        Z_j\gets\frac1b\sum_{q=1}^{b}G_{j,q},
        \qquad j=1,\ldots,2M.
    $
    \State $\displaystyle
        a_k\in\operatorname*{arg\,min}_{a\in\mathbb R}
        \sum_{j=1}^{M}|a-Z_j|.
    $
    \State $\displaystyle
        \widehat g_k\gets
        a_k+\frac1M\sum_{j=1}^{M}
        \operatorname{clip}_{r_\epsilon}(Z_{M+j}-a_k).
    $

    \If{$|\widehat g_k|\leq2r_\epsilon$}
        \State \Return $c_k$.
    \ElsIf{$\widehat g_k>2r_\epsilon$}
        \State $(\ell_{k+1},u_{k+1})\gets(\ell_k,c_k)$.
    \Else
        \State $(\ell_{k+1},u_{k+1})\gets(c_k,u_k)$.
    \EndIf
\EndFor

\State \Return $(\ell_{K_\epsilon}+u_{K_\epsilon})/2$.
\end{algorithmic}
\end{algorithm}

\begin{algorithm}[H]
\caption{Stagewise Gradient-Flow Trapping}
\label{alg:gradient-flow-trapping}
\small
\begin{algorithmic}[1]
\Require A fixed dimension $d\geq2$; a globally $L$-smooth,
    $\mu$-PL function $f:\mathbb R^d\to\mathbb R$;
    a conditionally unbiased stochastic-gradient oracle $G$
    with centered $\alpha$-moment bounded by $\sigma^\alpha$.
\Require $x_0$ with $f(x_0)-f^\star\leq\Delta_0$;
    $\alpha\in(1,2]$, $\epsilon\in(0,\Delta_0)$,
    and $\delta\in(0,1/2)$.
\Ensure $x_{\mathrm{out}}$ such that
    $\mathbb P(f(x_{\mathrm{out}})-f^\star\leq\epsilon)\geq1-\delta$.

\State \textbf{if} $\mu>L$ \textbf{then return} $x_0$.
\State $\displaystyle
    \kappa\gets\frac L\mu,\qquad
    S\gets\left\lceil\log_2\frac{\Delta_0}{\epsilon}\right\rceil,
    \qquad
    J\gets \mathfrak c_d\left\lceil\log(2+\kappa)\right\rceil.
$
\State $x\gets x_0$, \quad $\rho\gets\delta/(2SJ)$.

\For{$s=0,\ldots,S-1$}
    \State $\displaystyle
        \Delta_s\gets\frac{\Delta_0}{2^s},\qquad
        \eta\gets\sqrt{\mu\Delta_s},\qquad
        c\gets\frac{\eta}{8},\qquad
        a\gets\frac{\eta}{\mathfrak c_dJ}.
    $
    \State $\displaystyle
        H_0\gets\frac{2\Delta_s}{c},
        \qquad
        R\gets\prod_{j=1}^{d}[x_j-H_0,x_j+H_0].
    $

    \For{$t=0,\ldots,J-1$}
        \State Write $R=\prod_{j=1}^{d}[\ell_j,u_j]$ and choose
            $i\in\operatorname*{arg\,max}_{1\leq j\leq d}(u_j-\ell_j)$.
        \State $s_t\gets u_i-\ell_i$.
        \If{$s_t\leq\eta/(4L\sqrt d)$}
            \State \textbf{break}.
        \EndIf

        \State $\displaystyle
            \tau_t\gets\frac{as_t}{\mathfrak c_d},\qquad
            h_t\gets\sqrt{\frac{as_t}{\mathfrak c_dL}},\qquad
            n_t\gets
            2^{\left\lceil\log_2(\mathfrak c_ds_t/h_t)\right\rceil}.
        $

        \State Define the oriented coordinate
            $\zeta(z)=z_i-\ell_i$ if $x_i\geq(\ell_i+u_i)/2$,
            and $\zeta(z)=u_i-z_i$ otherwise.
        \State $\displaystyle
            E\gets\{z\in R:\zeta(z)=s_t/6\},
            \qquad
            F\gets\{z\in R:\zeta(z)=s_t/3\}.
        $

        \ForAll{$H\in\{E,F\}$}
            \State $(\mathcal N_H,\widehat D_H)\gets$
                \Call{CutValues}{$H,x,n_t,\tau_t,\rho$}.
        \EndFor
        \State Let $\widehat D(z)=\widehat D_H(z)$ for
            $z\in\mathcal N_H$, $H\in\{E,F\}$.
        \State $\displaystyle
            A\gets
            \left\{
                z\in\mathcal N_E\cup\mathcal N_F:
                \widehat D(z)\leq-c\|z-x\|-\tau_t
            \right\}.
        $

        \If{$A=\varnothing$}
            \State $R\gets R\cap\{z:\zeta(z)\geq s_t/3\}$.
            \Comment{Keep the current pivot}
        \Else
            \State $\displaystyle
                z_\star\in\operatorname*{arg\,min}_{z\in A}
                \widehat D(z).
            $
            \If{$z_\star\in\mathcal N_F$}
                \State $R\gets R\cap\{z:\zeta(z)\geq s_t/6\}$.
            \Else
                \State $R\gets R\cap\{z:\zeta(z)\leq s_t/3\}$.
            \EndIf
            \State $x\gets z_\star$.
        \EndIf

        \State $c\gets c+a$.
    \EndFor
\EndFor

\State \Return $x$.
\end{algorithmic}
\end{algorithm}

\begin{algorithm}[t]
\caption{Gradient-Only Potential Reconstruction on a Cutting Surface}
\label{alg:cut-values}
\small
\begin{algorithmic}[1]
\Require A $k$-dimensional cutting hyperrectangle $H$;
    a reference pivot $x$ within distance $\sqrt{k+1}r$ of $H$,
    where all side lengths of $H$ lie in $[r/3,r]$; a dyadic resolution
    $n=2^M$, $M\in\mathbb N_0$; accuracy $\tau>0$ and failure probability $\rho\in(0,1/2)$.

\Function{CutValues}{$H,x,n,\tau,\rho$}
    \State $M\gets\log_2 n$.
    \State Construct nested Cartesian grids
        $\mathcal N_0,\ldots,\mathcal N_M$ on $H$,
        using $2^\ell$ equal subintervals per free coordinate
        at level $\ell$.
    \State $\mathcal E_0\gets\{(x,z):z\in\mathcal N_0\}$.
        \Comment{Connect the coarsest grid to the pivot}

    \For{$\ell=1,\ldots,M$}
        \ForAll{$z\in\mathcal N_\ell\setminus\mathcal N_{\ell-1}$}
            \State $\displaystyle
                \pi(z)\in
                \operatorname*{arg\,min}_{y\in\mathcal N_{\ell-1}}
                \|z-y\|.
            $
        \EndFor
        \State $\displaystyle
            \mathcal E_\ell\gets
            \{(\pi(z),z):
              z\in\mathcal N_\ell\setminus\mathcal N_{\ell-1}\}.
        $
    \EndFor

    \State $\displaystyle
        \mathcal E\gets\bigcup_{\ell=0}^{M}\mathcal E_\ell,
        \qquad
        \rho_{\mathrm{edge}}\gets\frac{\rho}{|\mathcal E|}.
    $
    \State $\displaystyle
        A_{\mathrm{det}}\gets
        \sum_{j=0}^{M}2^{(3k-4)j/5},
        \qquad
        A_{\mathrm{noise}}\gets
        \sum_{j=0}^{M}2^{(k-p_\alpha)j/(p_\alpha+1)}.
    $

    \For{$\ell=0,\ldots,M$}
        \State $\displaystyle
            u_\ell\gets
            \frac{\tau\,2^{(3k-4)\ell/5}}{2A_{\mathrm{det}}},
            \qquad
            v_\ell\gets
            \frac{\tau\,2^{(k-p_\alpha)\ell/(p_\alpha+1)}}{2A_{\mathrm{noise}}}.
        $
        \ForAll{$e=(a_e,b_e)\in\mathcal E_\ell$}
            \State $\widehat I_e\gets$
                \Call{RobustLineIntegral}
                {$a_e,b_e,u_\ell,v_\ell,\rho_{\mathrm{edge}}$}.
        \EndFor
    \EndFor

    \ForAll{$z\in\mathcal N_M$}
        \State Let $P(x,z)$ be the directed tree path from $x$ to $z$.
        \State $\displaystyle
            \widehat D(z)\gets\sum_{e\in P(x,z)}\widehat I_e.
        $
    \EndFor

    \State \Return $(\mathcal N_M,\widehat D)$.
\EndFunction
\end{algorithmic}
\end{algorithm}

\begin{algorithm}[t]
\caption{Robust Stochastic Line Integral}
\label{alg:robust-stochastic-line-integral}
\small
\begin{algorithmic}[1]
\Require Endpoints $a,b\in\mathbb R^d$;
    quadrature and noise tolerances $u,v>0$;
    failure probability $\rho\in(0,1/2)$.

\Function{RobustLineIntegral}{$a,b,u,v,\rho$}
    \State $h\gets\|b-a\|$.
    \If{$h=0$}
        \State \Return $0$.
    \EndIf

    \State $\displaystyle
        m\gets
        \max\left\{
            1,\left\lceil
                C_{{\rm rep},\alpha}\left[
                    \left(\frac{Lh^2}{u}\right)^{2/3}
                    +\left(\frac{\sigma h}{v}\right)^{p_\alpha}
                \right]
            \right\rceil
        \right\}.
    $
    \State $\displaystyle
        B\gets\left\lceil 4\log\frac4\rho\right\rceil.
    $

    \For{$r=1,\ldots,B$}
        \For{$j=1,\ldots,m$}
            \State Draw a fresh
                $U_{r,j}\sim
                \operatorname{Unif}([(j-1)/m,j/m])$.
            \State Query the oracle at $a+U_{r,j}(b-a)$ with fresh randomness.
            \State $\displaystyle
                W_{r,j}\gets
                \left\langle G(a+U_{r,j}(b-a),\xi_{r,j}),\,b-a\right\rangle.$
        \EndFor
        \State $\displaystyle
            \widehat I^{(r)}
            \gets\frac1m\sum_{j=1}^{m}W_{r,j}.
        $
    \EndFor

    \State \Return
        $\operatorname{median}
        \{\widehat I^{(1)},\ldots,\widehat I^{(B)}\}$.
\EndFunction
\end{algorithmic}
\end{algorithm}

\clearpage
With the procedures specified, we state their guarantees explicitly before
proving the lower and upper bounds separately.

\begin{theorem}
\label[theorem]{thm:fd-upper-main}
Let $d\ge1$ be fixed, $1<\alpha\le2$, $\sigma\ge0$,
$0<\epsilon<\Delta_0$, and $0<\delta<1/2$.
Assume $f\in\mathcal F_d(L,\mu,\Delta_0)$, $\kappa\ge1$, and the oracle
convention stated above. The algorithms may use a translated initial point
with the same gap bound.

\begin{enumerate}[label=\textup{(\roman*)}]
\item In dimension $d=1$, Algorithm~\ref{alg:robust-stochastic-bisection} returns
$f(x_{\mathrm{out}})-f^\star\le\epsilon$ with probability at least $1-\delta$ using
\begin{equation}\label{eq:fd-d1-upper-explicit}
 N\le C_\alpha K_\epsilon
 \bigl[1+Q_\epsilon\bigr]
 \log\frac{4K_\epsilon}{\delta},
 \qquad
 K_\epsilon:=1+\left\lceil
 \frac12\log_2^+\frac{\kappa\Delta_0}{\epsilon}
 \right\rceil.
\end{equation}

\item Let $d\ge2$, put $k=d-1$, and let
\begin{equation}\label{eq:fd-J-Lambda}
 J:=\mathfrak c_d\left\lceil\log(2+\kappa)\right\rceil,
 \qquad
 S:=\left\lceil\log_2\frac{\Delta_0}{\epsilon}\right\rceil,
\end{equation}
where $\mathfrak c_d$ is fixed in \eqref{eq:app-algorithm-constants}. Define
\begin{equation}\label{eq:fd-Dk}
 \mathcal D_k(z):=
 \begin{cases}
   z^{2/3},&k=1,\\
   z^{k/2},&k\ge2,
 \end{cases}
\end{equation}
and
\begin{equation}\label{eq:fd-Hkp}
 \mathcal H_{k,p}(\kappa,J):=
 \begin{cases}
   J^{p+1},&k<p,\\
   J^{p+1}\bigl[\log(2+\kappa J)\bigr]^{p+1},&k=p,\\
   J^p(\kappa J)^{(k-p)/2},&k>p.
 \end{cases}
\end{equation}
Then Algorithm~\ref{alg:gradient-flow-trapping} uses
\begin{equation}\label{eq:fd-general-upper-explicit}
 N\le C_{d,\alpha}\Lambda_{d,\alpha}
 \left[
 S\,\mathcal D_k(\kappa J)
 +\mathcal H_{k,p_\alpha}(\kappa,J)
 Q_\epsilon
 \right]
\end{equation}
stochastic-gradient calls, where
\begin{equation}\label{eq:fd-Lambda}
 \Lambda_{d,\alpha}:=
 \log\left(
 \frac{C_{d,\alpha}(S+1)(J+1)(2+\kappa J)^{k/2}}{\delta}
 \right).
\end{equation}
With probability at least $1-\delta$, its output satisfies
$f(x_{\mathrm{out}})-f^\star\le\epsilon$.
\end{enumerate}
\end{theorem}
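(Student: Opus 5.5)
Part (i) and part (ii) have separate proofs, but both follow the same pattern. First we work on a single good event, built from conditional guarantees and a union bound. On that event we run a deterministic geometric argument. Last, we count oracle calls.

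\emph{Part (i), bisection.} Take $\mu\le L$. PL gives quadratic growth, $f(x)-f^\star\ge\frac\mu2\operatorname{dist}(x,\mathcal X^\star)^2$, so the interval $[x_0-D_0,x_0+D_0]$ contains $\mathcal X^\star$. In one dimension PL also makes $\mathcal X^\star$ an interval, with $f'<0$ to its left and $f'>0$ to its right; this is the one-dimensional geometry lemma. Each midpoint estimate is the scalar version of the centered-clipping estimator. By Lemma~\ref{lem:centered-clipping} with $r=r_\epsilon$ and $\rho=\delta/K_\epsilon$, a union bound gives $|\widehat g_k-f'(c_k)|\le r_\epsilon$ at every one of the at most $K_\epsilon$ rounds, with probability at least $1-\delta$. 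On this event, $\widehat g_k>2r_\epsilon$ forces $f'(c_k)>r_\epsilon>0$, so the minimizer interval lies to the left and the bracket invariant survives. The case $\widehat g_k<-2r_\epsilon$ is symmetric. If instead $|\widehat g_k|\le2r_\epsilon$, then $|f'(c_k)|\le3r_\epsilon$, and PL gives $f(c_k)-f^\star\le 9r_\epsilon^2/(2\mu)=\tfrac{9}{16}\epsilon$. If the loop never stops early, the bracket has half-width $D_02^{-K_\epsilon}$ and still contains a minimizer $x^\star$. Smoothness then gives $f(\text{mid})-f^\star\le\frac L2 D_0^24^{-K_\epsilon}=\kappa\Delta_04^{-K_\epsilon}\le\epsilon$ by the choice of $K_\epsilon$. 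The cost is $K_\epsilon$ estimator calls, each costing $(1+(\sigma/r_\epsilon)^{p_\alpha})\log(4K_\epsilon/\delta)$, and $(\sigma/r_\epsilon)^{p_\alpha}\asymp Q_\epsilon$. This gives \eqref{eq:fd-d1-upper-explicit}.

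\emph{Part (ii), trapping.} We argue stage by stage, keeping the invariant $f(x)-f^\star\le\Delta_s$ at the start of stage $s$. The key steps are as follows.

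(a) The initial cube has half-side $H_0=2\Delta_s/c$, and $c=\eta/8$. Any boundary point $y$ has $\|y-x\|\ge H_0$, so $f(y)\ge f^\star\ge f(x)-\Delta_s>f(x)-c\|y-x\|$. This certifies the initial box without any queries.

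(b) We use Lemma~\ref{lem:fd-box-refinement}, together with Lemma~\ref{lem:fd-hyperplane-profile}, on the event that every robust line integral is accurate to its allotted tolerance $u_\ell+v_\ell$. Summed along any tree path, these tolerances give a total reconstruction error at most $\tau_t$ on the cuts. Each refinement then keeps the boundary certificate with slope $c+a$ and shrinks the volume by a constant factor. The edge accuracy comes from a median of $B$ means applied to the unbiased stratified quadrature. The quadrature bias is $O(Lh^2/m^{3/2})$, which fixes the term $(Lh^2/u)^{2/3}$. The noise deviation follows from Lemma~\ref{lem:block-alpha-moment} with $\sigma h$ scaling, which fixes $(\sigma h/v)^{p_\alpha}$. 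A union bound with $\rho=\delta/(2SJ)$ over cuts and $\rho_{\rm edge}$ over edges gives total failure probability at most $\delta$.

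(c) After $J=\mathfrak c_d\lceil\log(2+\kappa)\rceil$ refinements, the longest side has dropped below $\eta/(4L\sqrt d)$. The accumulated slope satisfies $Ja=\eta/\mathfrak c_d$, so $c\le\eta/4$. By Lemma~\ref{lem:fd-trap} the box contains a point with gradient norm at most $\eta/4$. Its diameter is at most $\eta/(4L)$, so smoothness gives $\|\nabla f(x)\|\le\eta/2$ at the pivot. PL then yields $f(x)-f^\star\le\eta^2/(8\mu)=\Delta_s/8\le\Delta_{s+1}$.

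(d) Oracle count. On each cut we sum over dyadic levels $\ell$. The number of edges at level $\ell$ is about $2^{k\ell}$ and their length is about $s_t2^{-\ell}$. The deterministic cost per level is about $2^{k\ell}(Ls_t^22^{-2\ell}/u_\ell)^{2/3}$. With $u_\ell\propto2^{(3k-4)\ell/5}$, this totals $\mathcal D_k(\kappa J)$ after we use $s_t/\tau_t\asymp\mathfrak c_d/a$, $a\asymp\eta/J$, and $s_t\lesssim\Delta_s/\eta$, with the volume ratio absorbed into $\kappa$. The noise cost per level is about $2^{k\ell}(\sigma s_t2^{-\ell}/v_\ell)^{p_\alpha}$. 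Its sum behaves like a geometric series whose ratio depends on the sign of $k-p_\alpha$, which gives the three regimes of $\mathcal H_{k,p}$. The finest level $2^M\asymp s_t/h_t\asymp(\kappa J)^{1/2}$ controls the regime $k>p$. The $\sigma$-dependence $(\sigma/\eta)^{p_\alpha}=(\sigma^2/\mu\Delta_s)^{\theta_\alpha}$ sums geometrically over stages to $O(Q_\epsilon)$, exactly as in \eqref{eq:geometric-heavy-sum}. Deterministic costs simply add over the $S$ stages.

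The main obstacle is step (d), together with the error allocation in (b). We must check that the level-dependent tolerances $u_\ell,v_\ell$ keep every tree-path error below $\tau_t$ while producing exactly the exponents claimed in \eqref{eq:fd-Dk}--\eqref{eq:fd-Hkp}. I would first verify the per-level sums in the borderline case $k=p_\alpha$, where the logarithmic factor appears, and only then collect constants.
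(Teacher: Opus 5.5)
Your outline follows the paper's route. Part (i) is exactly the paper's argument, and part (ii) has the same skeleton: the query-free initial cube, Lemma~\ref{lem:fd-box-refinement} fed by Lemma~\ref{lem:fd-hyperplane-profile}, then trap lemma, smoothness and PL, then dyadic cost accounting and the geometric sum over stages. The step that fails as written is the edge-level noise bound in (b). You get the $(\sigma h/v)^{p_\alpha}$ cost from Lemma~\ref{lem:block-alpha-moment}, but that lemma needs (conditionally) independent summands, and its symmetrization proof uses an independent copy. In the line-integral estimator the $m$ queries sit at distinct random points $a+U_j(b-a)$. The oracle model explicitly makes no independence assumption across distinct query locations, so the terms $\langle\zeta_j,b-a\rangle/m$ are only martingale differences. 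What you need is a martingale von Bahr--Esseen bound $\mathbb E|\sum_jX_j|^\alpha\le C_\alpha\sum_j\mathbb E|X_j|^\alpha$; the paper proves this as Lemma~\ref{lem:fd-martingale-moment} for exactly this purpose. For the same reason, the median over the $B$ replicates cannot be analyzed with independent Bernoulli counts. Use the adapted-indicator bound of Lemma~\ref{lem:bad-block-concentration} with $p=1/8$ instead. In part (i) your use of Lemma~\ref{lem:centered-clipping} is fine, because there all samples are requested together at one point.

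Two further claims are asserted rather than shown, and each needs an ingredient you do not mention.
\begin{itemize}
\item \emph{Slope increment at most $a$ per refinement.} Substituting $\tau_t$ and $h_t^2$ into \eqref{eq:fd-box-c-update} gives an increment $400d\,(2+c_t/(Ls_t))\,a/\mathfrak c_d$. Bounding $c_t/(Ls_t)\le\sqrt d$ requires the stopping rule $Ls_t>\eta/(4\sqrt d)$ before termination. The same rule also gives the lemma's hypothesis $h_t\le s_t$.
\item \emph{Per-stage totals.} The totals $\mathcal D_k(\kappa J)$, and $J^{p_\alpha}(\kappa J)^{(k-p_\alpha)/2}$ when $k>p_\alpha$, need the geometric decay $s_t\le 3(5/6)^{t/d}s_0$ across refinements, which follows from the volume reduction and the aspect-ratio bound. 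The only size bound you invoke is $s_t\lesssim\Delta_s/\eta$, which controls each refinement separately. Summing those bounds over up to $J$ refinements costs an extra factor $J$ per stage, which the explicit bound \eqref{eq:fd-general-upper-explicit} does not allow.
\end{itemize}
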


\subsection{Main ideas and proof sketches}
The proof has four independent ingredients.

\begin{enumerate}
\item \emph{A universal one-coordinate heavy-tail obstruction.}
A pair of shifted quadratics is coupled so that the two oracle laws are
identical unless a rare event of probability $q$ occurs. At a fixed mean
shift, the centered $\alpha$-moment scales as $q^{1-\alpha}$.
Calibrating the shift to the moment budget makes the minimizer separation
of order $\sigma q^{(\alpha-1)/\alpha}/\mu$.  Taking
$q\asymp(\mu\epsilon/\sigma^2)^{\thetaalpha}$ produces the sharp mean-estimation
scale $Q_\epsilon$.  Embedding the construction in the first coordinate
works in every fixed dimension.

\item \emph{One-dimensional localization.}
For a one-dimensional PL function, the minimizer set is an interval, the
derivative is negative to its left and positive to its right, and PL implies a
quadratic-growth inequality.  A robust estimate of the derivative sign
therefore supports bisection.  This removes every polynomial dependence on
$\kappa$ from the stochastic term.

\item \emph{Gradient-flow traps in dimensions at least two.}
A compact box whose boundary cannot be reached from its pivot while decreasing
$f$ faster than slope $c$ must contain a point with gradient norm at most $c$.
Two parallel cuts of the longest side shrink such a box by a constant volume
factor.  Approximate relative values on a fine net of the new cut preserve the
trap after paying only
$O_d(\tau/s+Lh^2/s+ch^2/s^2)$ in the slope parameter.

\item \emph{Potential reconstruction from stochastic gradients.}
Relative function values are line integrals of the gradient.  Stratified randomized quadrature on a segment of length $h$ has
quadrature and oracle-noise errors of order $Lh^2/m^{3/2}$ and
$\sigma h/m^{(\alpha-1)/\alpha}$, respectively, at constant confidence.  A dyadic tree shares these line
integrals among all points of a $(d-1)$-dimensional cut.  The accumulated noise
is bounded up to logarithms exactly when $p_\alpha\ge d-1$; this is the source of the phase
transition in \eqref{eq:fd-matching-range}.
\end{enumerate}

Appendix~\ref{subsec:fd-lower-bisection} proves the two lower-bound
mechanisms and the bisection guarantee.
Appendices~\ref{subsec:fd-reconstruction} and \ref{subsec:fd-trapping}
analyze reconstruction and box refinement; Appendix~\ref{subsec:fd-phase}
then assembles the phase diagram.

\subsection{Universal lower bounds and the complete one-dimensional theory}\label{subsec:fd-lower-bisection}

The lower bound uses two distinct information-hiding mechanisms. Rare-event
observations conceal the sign of a shifted quadratic, whereas exact
gradient queries reveal only a few bits of a nested minimizer location.
We state the expected-risk and confidence consequences separately, then
prove the bisection guarantee from one-dimensional PL geometry.

\begin{theorem}
\label[theorem]{thm:app-fixed-lower}
Let $d\ge1$ be fixed, $\kappa\ge6$, $1<\alpha\le2$, and $\sigma\ge0$.
There are positive constants $c_\alpha,C,c_0$ such that, for every integer
$T\ge0$,
\begin{equation}\label{eq:fd-budget-lower}
 \mathsf R_T^{(d)}\ge c_\alpha\left[
 \Delta_0e^{-CT}+\min\left\{\Delta_0,
       \frac{\sigma^2}{\mu(T+1)^{\beta_\alpha}}\right\}\right].
\end{equation}
For every algorithm there is a fixed legal instance on which the gap is at
least the same bracket times a positive constant with probability strictly
larger than $1/4$. For $0<\epsilon\le c_0\Delta_0$ and
$0<\delta\le1/4$,
\begin{equation}\label{eq:app-fixed-det-confidence}
 \mathsf T_{\epsilon,\delta}^{(d)}\ge c\log(\Delta_0/\epsilon).
\end{equation}
If also $\sigma^2\ge C_\alpha\mu\epsilon$, then
\begin{equation}\label{eq:fd-lower-stochastic-confidence}
 \mathsf T_{\epsilon,\delta}^{(d)}
 \ge c_\alpha Q_\epsilon\log\frac1{2\delta}.
\end{equation}
At constant confidence $1/4$, combining the two bounds gives
$c_\alpha[\log(\Delta_0/\epsilon)+Q_\epsilon]$ for all $\sigma\ge0$.
\end{theorem}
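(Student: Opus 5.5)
The plan is to prove the two brackets in \eqref{eq:fd-budget-lower} with two separate one-coordinate hard families, then combine them. Because the exact-gradient family is legal for every announced $\sigma\ge0$, the minimax risk is at least the maximum of the two obstructions, and the maximum is at least half the sum. Both families live in the first coordinate. To embed in $\mathbb R^d$ we add $\frac{L}{2}\|z\|^2$ (or $\frac\mu2\|z\|^2$) in the remaining coordinates and answer them exactly. This preserves $L$-smoothness, $\mu$-PL, the initial gap and the moment bound, and it reveals nothing, so the construction is dimension-free. The confidence statements \eqref{eq:app-fixed-det-confidence} and \eqref{eq:fd-lower-stochastic-confidence} then follow by inverting the budget bounds, as detailed below.

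\emph{Stochastic term.} Take the pair $f_\pm(x)=\frac\mu2(x\mp\Delta)^2$ and build a coupled oracle. With probability $1-q$ it returns the common value $\mu x$, which is the same under both signs. With probability $q$ it returns $\mu x\mp\mu\Delta/q$, so the conditional mean is exactly $\nabla f_\pm$. This is Lemma~\ref{lem:fd-rare-event}. As in Lemma~\ref{lem:thinned-oracle}, the centered $\alpha$-moment is at most $2(\mu\Delta)^\alpha q^{1-\alpha}$. Set it equal to $\sigma^\alpha$, which gives $\mu\Delta\asymp\sigma q^{(\alpha-1)/\alpha}$. With $T$ queries, no informative sample appears with probability $(1-q)^T\ge1/2$ once $q\asymp1/(T+1)$. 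On that event the transcript law does not depend on the sign, so by Le Cam's two-point argument some sign incurs error of order $\mu\Delta^2\asymp\sigma^2q^{\beta_\alpha}/\mu=\sigma^2/(\mu(T+1)^{\beta_\alpha})$. We cap $\Delta$ so that the initial gap is at most $\Delta_0$, which produces the $\min\{\Delta_0,\cdot\}$.

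For the high-confidence version, take $q\asymp\log(1/(2\delta))/T$ instead. Requiring the no-revelation probability to exceed $2\delta$ then forces $T\gtrsim q^{-1}\log(1/2\delta)$, and at accuracy $\epsilon$ this yields $Q_\epsilon\log(1/2\delta)$. The condition $\sigma^2\ge C_\alpha\mu\epsilon$ guarantees $q\le1$.

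\emph{Deterministic term.} This family uses a nested-interval construction, Lemma~\ref{lem:fd-tree-info}. It consists of $2^n$ one-dimensional functions, each $L$-smooth and $\mu$-PL, with minimizers at scale-separated locations in $[0,\sqrt{\Delta_0/\mu}]$. At each level the function is a quadratic of curvature between $\mu$ and $L$, glued so that all the functions agree outside a subinterval. An exact-gradient query then distinguishes only $O(1)$ branches, which is where $\kappa\ge6$ leaves room for the gluing. A Fano or entropy argument shows that $T$ queries reveal $O(T)$ bits. The output therefore misses the correct depth-$\Theta(T)$ cell with constant probability, giving gap at least $\Delta_0e^{-CT}$.

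I expect the main obstacle to be constructing this tree family so that it is genuinely globally PL with constant $\mu$, while each gradient still leaks only $O(1)$ bits at every scale simultaneously. The first thing I would try is a piecewise-quadratic profile with geometrically shrinking nested wells whose depths decay by a fixed factor per level, and then check PL region by region as in Lemma~\ref{lem:weighted-chain}.
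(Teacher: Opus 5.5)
Your stochastic half is the paper's argument. The coupled pair of shifted quadratics with a probability-$q$ informative sample is exactly Lemma~\ref{lem:fd-rare-event}. The moment calibration $\mu\Delta\asymp\sigma q^{(\alpha-1)/\alpha}$ and the budget choice $q\asymp1/(T+1)$, capped to respect $\Delta_0$, are the same. Your confidence inversion is equivalent to the paper's, which fixes $q_\epsilon\asymp Q_\epsilon^{-1}$ from the accuracy and uses $\tfrac12(1-q_\epsilon)^T>\delta$ for $T<(2q_\epsilon)^{-1}\log(1/(2\delta))$. The max-$\ge$-half-sum combination and the flat embedding also match. Two small points. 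The strict probability $>1/4$ needs $(1-q)^T>1/2$ strictly, for example by Bernoulli's inequality with $q\le1/(2(T+1))$. The combined claim for all $\sigma$ also needs the remark that $Q_\epsilon=O_\alpha(1)$ when $\sigma^2<C_\alpha\mu\epsilon$, so it is absorbed by the logarithm.

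The genuine gap is the deterministic half, which carries the entire $\Delta_0e^{-CT}$ term and \eqref{eq:app-fixed-det-confidence}. You describe what the tree family should do but never construct it, and you flag the construction itself as the main obstacle. The route you suggest, nonconvex nested wells with PL checked region by region as in Lemma~\ref{lem:weighted-chain}, is unverified and harder than necessary.

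The missing idea is to make every member $\mu$-strongly convex, so PL is automatic, by prescribing the derivative directly:
\begin{itemize}
\item On a node interval $[a,a+s]$, let $f_\omega'$ take the values $-A(s)$ and $A(s)$ at the endpoints, with $A(s)=\tfrac67\mu s$.
\item Place disjoint children $I_0=[a+s/8,a+3s/8]$ and $I_1=[a+5s/8,a+7s/8]$.
\item Recurse on the child selected by the next bit, and interpolate affinely on the two connectors.
\end{itemize}
Every slope then lies in $\{\mu,\tfrac{36}{35}\mu,\tfrac{12}{7}\mu,\tfrac{36}{7}\mu\}\subset[\mu,6\mu]$, which is exactly where $\kappa\ge6$ enters.

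Node boundary values do not depend on deeper bits. Hence $f_\omega'(x)$ is determined by the bits down to the first level at which $x$ leaves the selected child. With uniform bits and disjoint children, each query reveals one free bit plus a geometric tail, so at most $2$ new bits in expectation. This is a direct strong-oracle bit count, so no Fano bound on real-valued responses is needed.

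Take $K=8T+2$ levels. Markov's inequality gives fewer than $K/2$ revealed bits with probability $>1/2$, and on that event the next bit is still uniform. Every output is then at distance at least $s/8$ from one of the two children, where $s\ge\ell\,4^{-K/2}$. Strong convexity with $\ell^{2}=4\Delta_0/(3\mu)$ gives gap at least $\mu\ell^{2}\,4^{-K}/128\ge c\,\Delta_0e^{-CT}$ with probability $>1/4$.
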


The following two-point family supplies the stochastic part of this theorem.

\begin{lemma}
\label[lemma]{lem:fd-rare-event}
Fix $d\ge1$ and $q\in(0,1/2]$.  Let $\vartheta\in\{-1,+1\}$ and set
\begin{equation}\label{eq:fd-rare-defs}
 a:=\frac{\sigma}{(2q)^{1/\alpha}},
 \qquad
 v_\vartheta:=-\frac{\vartheta qa}{\mu},
 \qquad
 f_\vartheta(x):=\frac\mu2
 \|x-v_\vartheta e_1\|^2.
\end{equation}
At every query $x$, let the oracle return
\begin{equation}\label{eq:fd-rare-oracle}
 G_\vartheta(x):=\mu x+Z_\vartheta e_1,
 \qquad
 Z_\vartheta:=
 \begin{cases}
   \vartheta a,&\text{with probability }q,\\
   0,&\text{with probability }1-q,
 \end{cases}
\end{equation}
using fresh conditionally independent draws.  Then $f_\vartheta$ is
$\mu$-smooth and $\mu$-PL,
\begin{equation}\label{eq:fd-rare-valid}
 \mathbb E[G_\vartheta(x)]=\nabla f_\vartheta(x),
 \qquad
 \mathbb E\|G_\vartheta(x)-\nabla f_\vartheta(x)\|^\alpha
 \le\sigma^\alpha.
\end{equation}
If an algorithm makes $T$ calls and returns $X$, then under the uniform prior
on $\vartheta$,
\begin{equation}\label{eq:fd-rare-failure}
 \mathbb P\left(
 f_\vartheta(X)-f_\vartheta^\star
 \ge \frac{\sigma^2}{2^{1+2/\alpha}\mu}q^{\betaalpha}
 \right)
 \ge\frac12(1-q)^T.
\end{equation}
Writing $b_\alpha=2^{-1-2/\alpha}$, the same experiment satisfies
\begin{equation}\label{eq:app-rare-expected}
 \mathbb E[f_\vartheta(X)-f_\vartheta^\star]
 \ge b_\alpha\frac{\sigma^2}{\mu}q^{\beta_\alpha}(1-q)^T.
\end{equation}
\end{lemma}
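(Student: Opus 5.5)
The proof has three steps: validity of the instance, the indistinguishability coupling, and the conversion to objective gap.

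\emph{Validity.} Since $f_\vartheta$ is an isotropic quadratic with Hessian $\mu I$, it is $\mu$-smooth and satisfies $\|\nabla f_\vartheta(x)\|^2=\mu^2\|x-v_\vartheta e_1\|^2=2\mu(f_\vartheta(x)-f_\vartheta^\star)$, with $f_\vartheta^\star=0$. For unbiasedness, $\nabla f_\vartheta(x)=\mu x-\mu v_\vartheta e_1=\mu x+\vartheta qa\,e_1$ and $\mathbb E[Z_\vartheta]=\vartheta qa$. For the moment bound, the centered error is $(Z_\vartheta-\vartheta qa)e_1$, so
\[
\mathbb E|Z_\vartheta-\vartheta qa|^\alpha=q\,a^\alpha(1-q)^\alpha+(1-q)(qa)^\alpha\le a^\alpha\bigl(q+q^\alpha\bigr)\le 2q\,a^\alpha=\sigma^\alpha,
\]
using $q^\alpha\le q$ and $a^\alpha=\sigma^\alpha/(2q)$.

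\emph{Coupling.} Realize both oracles on a common probability space. Draw i.i.d. $\chi_t\sim\mathrm{Bernoulli}(q)$ independently of $\vartheta$ and of the algorithm's randomness, and set $Z_\vartheta^{(t)}=\vartheta a\chi_t$. On the event $E=\{\chi_1=\dots=\chi_T=0\}$, every response equals $\mu x_t$ for both values of $\vartheta$. By induction on $t$, the whole transcript and the output $X$ on $E$ are therefore a function of the algorithm's seed alone, independent of $\vartheta$. Moreover $\mathbb P(E)=(1-q)^T$, and $E$ is independent of $\vartheta$ and of the seed.

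\emph{Conversion to the gap.} The two minimizers are separated by $|v_{+1}-v_{-1}|=2qa/\mu$. For any $X$,
\[
\max_\vartheta\|X-v_\vartheta e_1\|\ge qa/\mu, \qquad\text{so}\qquad \max_\vartheta f_\vartheta(X)\ge \frac{\mu}{2}\Bigl(\frac{qa}{\mu}\Bigr)^2=\frac{q^2a^2}{2\mu}.
\]
Since $a^2=\sigma^2(2q)^{-2/\alpha}$, this equals $\frac{\sigma^2}{2^{1+2/\alpha}\mu}\,q^{2-2/\alpha}$, and $2-2/\alpha=\beta_\alpha$, which is exactly the threshold in \eqref{eq:fd-rare-failure}. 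On $E$, $X$ does not depend on $\vartheta$, so conditionally on $E$ and the seed, the uniform prior puts mass at least $1/2$ on a $\vartheta$ whose gap exceeds the threshold. Integrating over the seed and multiplying by $\mathbb P(E)$ gives \eqref{eq:fd-rare-failure}.

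The expectation bound \eqref{eq:app-rare-expected} then follows from nonnegativity of the gap: the probability $\tfrac12(1-q)^T$ times the threshold yields $\frac{\sigma^2}{2^{2+2/\alpha}\mu}q^{\beta_\alpha}(1-q)^T$. To reach the stated constant $b_\alpha=2^{-1-2/\alpha}$, I will instead average the two gaps directly:
\[
\tfrac12\bigl(\|X-v_+e_1\|^2+\|X-v_-e_1\|^2\bigr)\ge (qa/\mu)^2,
\]
which makes the average gap at least $\frac{\mu}{2}(qa/\mu)^2$ on $E$ and gives \eqref{eq:app-rare-expected} with $b_\alpha$ exactly.

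The main point requiring care is the coupling with an adaptive algorithm. The responses must coincide as functions of the past on $E$, and they do, since the oracle's noise does not depend on the query. The rest is direct computation.
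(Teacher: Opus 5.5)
Your proposal is correct and follows essentially the same route as the paper: the same Bernoulli realization $Z_\vartheta=\vartheta a\chi_t$, the same no-revelation event $E$ of probability $(1-q)^T$ on which the transcript does not depend on $\vartheta$, and the same two-point argument with separation $2qa/\mu$. For the expectation bound you also use the paper's direct averaging $\tfrac12\bigl((X_1-r)^2+(X_1+r)^2\bigr)\ge r^2$ to obtain the exact constant $b_\alpha$.
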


\begin{proof}
The mean of $Z_\vartheta$ is $\vartheta qa$, so
\[
 \mathbb E G_\vartheta(x)
 =\mu x+\vartheta qa e_1
 =\mu(x-v_\vartheta e_1)=\nabla f_\vartheta(x).
\]
The centered noise has only its first coordinate nonzero, and
\begin{align*}
 \mathbb E|Z_\vartheta-\mathbb EZ_\vartheta|^\alpha
 &=a^\alpha\bigl[q(1-q)^\alpha+(1-q)q^\alpha\bigr]\\
 &\le2qa^\alpha=\sigma^\alpha.
\end{align*}
The quadratic identity
$\|\nabla f_\vartheta(x)\|^2/2
 =\mu(f_\vartheta(x)-f_\vartheta^\star)$ proves smoothness and PL.

Realize each draw as $Z_\vartheta=\vartheta a B_i$, where
$B_i\sim\operatorname{Bernoulli}(q)$ are independent of the sign and the
algorithm's random seed. Let $E_T=\{B_1=\cdots=B_T=0\}$, so that
$\mathbb P(E_T)=(1-q)^T$ even when $\sigma=0$. On
$E_T$, every response equals $\mu x$, independently of $\vartheta$; hence the
complete adaptive transcript, including the algorithm's internal randomness,
has the same conditional law for the two signs.  The two minimizers have first
coordinates $\pm r$, where
\[
 r=\frac{qa}{\mu}
 =\frac{\sigma}{2^{1/\alpha}\mu}
 q^{(\alpha-1)/\alpha}.
\]
For every realized output $X$,
$\max\{|X_1-r|,|X_1+r|\}\ge r$.  Conditional on $E_T$ and averaged over the
uniform sign, the probability of squared distance at least $r^2$ is therefore
at least $1/2$.  Multiplying this conditional probability by
$\mathbb P(E_T)=(1-q)^T$ proves \eqref{eq:fd-rare-failure}.
For the expected gap, average over the two signs on $E_T$:
$\bigl((X_1-r)^2+(X_1+r)^2\bigr)/2=X_1^2+r^2\ge r^2$.
The other coordinates contribute nonnegative terms. Multiplying by
$\mu/2$ and by $\mathbb P(E_T)$ proves \eqref{eq:app-rare-expected}.
Averaging over the two signs selects a fixed sign for either criterion;
no adversarial randomness remains in the resulting instance.
\end{proof}

\begin{proof}[Stochastic part of Theorem~\ref{thm:app-fixed-lower}]
Assume first $\sigma>0$ and put $b_\alpha=2^{-1-2/\alpha}$.
For the confidence bound, choose
\begin{equation}\label{eq:fd-q-epsilon}
 q_\epsilon=\left(\frac{4\mu\epsilon}{b_\alpha\sigma^2}
                  \right)^{\theta_\alpha}.
\end{equation}
Then the gap threshold in \eqref{eq:fd-rare-failure} is exactly $4\epsilon$.
The condition $\sigma^2\ge(4\cdot2^{\beta_\alpha}/b_\alpha)\mu\epsilon$
ensures $q_\epsilon\le1/2$. Its initial gap is $4\epsilon\le\Delta_0$
provided $\epsilon\le\Delta_0/4$. For
$T<(2q_\epsilon)^{-1}\log(1/(2\delta))$, the inequality
$\log(1-q)\ge-2q$ on $[0,1/2]$ gives
$\tfrac12(1-q_\epsilon)^T>\delta$.
Lemma~\ref{lem:fd-rare-event} therefore proves
\eqref{eq:fd-lower-stochastic-confidence} on a fixed sign instance.

For a fixed budget instead choose
\begin{equation}\label{eq:app-rare-budget-q}
 q_T=\min\left\{\frac1{2(T+1)},
        \left(\frac{\mu\Delta_0}{b_\alpha\sigma^2}
        \right)^{\theta_\alpha}\right\}.
\end{equation}
This choice has $0<q_T\le1/2$ and the initial gap is
\[
 g_T=\min\left\{\Delta_0,
  \frac{b_\alpha\sigma^2}{\mu[2(T+1)]^{\beta_\alpha}}\right\}.
\]
Bernoulli's inequality yields
$(1-q_T)^T\ge1-Tq_T>1/2$. Consequently
\eqref{eq:app-rare-expected} gives expected gap greater than $g_T/2$, and
\eqref{eq:fd-rare-failure} gives gap at least $g_T$ with probability greater
than $1/4$, each on a fixed legal instance. The initial gap is at most
$\Delta_0$ by construction. This proves the stochastic term of
\eqref{eq:fd-budget-lower}. When $\sigma=0$ that term is zero and no
rare-event construction is required.
\end{proof}

We turn from statistical indistinguishability to one-dimensional geometry. The following structural lemma explains why derivative signs suffice to localize a minimizer.

\begin{lemma}
\label[lemma]{lem:fd-1d-geometry}
Let $f:\mathbb R\to\mathbb R$ be globally $L$-smooth, bounded below, and
$\mu$-PL.  Then its minimizer set $\mathcal S=\arg\min f$ is a nonempty closed interval
(possibly a singleton, ray, or all of $\mathbb R$).  Moreover,
\begin{align}
 f'(x)&<0 &&(x<\inf \mathcal S),
 &f'(x)&>0 &&(x>\sup \mathcal S),
 \label{eq:fd-sign-separation}\\
 f(x)-f^\star&\ge\frac\mu2\operatorname{dist}(x,\mathcal S)^2,
 \label{eq:fd-1d-QG}\\
 |f'(x)|&\ge\mu\operatorname{dist}(x,\mathcal S),
 \label{eq:fd-1d-RSI}\\
 f(x)-f^\star&\le\frac L2\operatorname{dist}(x,\mathcal S)^2.
 \label{eq:fd-1d-smooth-upper}
\end{align}
\end{lemma}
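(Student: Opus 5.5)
The plan is to first derive everything from the one-dimensional PL inequality $f'(x)^2\ge2\mu(f(x)-f^\star)$ together with a gradient-flow or level-set argument. The quadratic-growth inequality \eqref{eq:fd-1d-QG} is the crux, and the other claims follow from it.

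\emph{Step 1: quadratic growth and existence of minimizers.} Fix $x$ with $f(x)>f^\star$ and consider the gradient flow $\dot y=-f'(y)$, $y(0)=x$. Since $f'$ is Lipschitz, the flow exists globally. Along the flow, the function $g(t)=\sqrt{f(y(t))-f^\star}$ satisfies
\[
 \dot g=-\frac{f'(y)^2}{2g}\le-\sqrt{\mu/2}\,|f'(y)|=-\sqrt{\mu/2}\,|\dot y|
\]
while $g>0$. Integrating gives an arc length at most $\sqrt{2/\mu}\,g(0)$. Hence $y(t)$ has bounded variation and converges to some $\bar x$ with $f(\bar x)=f^\star$; the PL decay $f(y(t))-f^\star\le e^{-2\mu t}(f(x)-f^\star)$ and continuity give this. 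In particular $\mathcal S\neq\varnothing$, and $|x-\bar x|\le\sqrt{2(f(x)-f^\star)/\mu}$, which is \eqref{eq:fd-1d-QG}. This is the standard Karimi et al.\ argument. In one dimension one could instead integrate $d\sqrt{f-f^\star}$ directly along the segment toward the nearest minimizer, but the flow version avoids needing to know a minimizer in advance.

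\emph{Step 2: interval structure and signs.} $\mathcal S=f^{-1}(f^\star)$ is closed. For convexity, suppose $a<b$ lie in $\mathcal S$ and some $c\in(a,b)$ has $f(c)>f^\star$. Take a maximizer $c'$ of $f$ on $[a,b]$. Then $f'(c')=0$, but PL forces $f(c')=f^\star$, a contradiction. Every critical point is a global minimizer by PL, so $f'\ne0$ off $\mathcal S$. On the connected set $(-\infty,\inf\mathcal S)$, $f'$ therefore has constant sign. Since $f(x)\downarrow f^\star$ as $x\uparrow\inf\mathcal S$ while $f>f^\star$ there, $f$ is decreasing on that set, so $f'<0$. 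The case to the right is symmetric, giving \eqref{eq:fd-sign-separation}.

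\emph{Step 3: the remaining inequalities.} For \eqref{eq:fd-1d-smooth-upper}, let $s\in\mathcal S$ be the nearest minimizer. Then $f'(s)=0$, and the descent lemma gives $f(x)-f^\star\le\frac L2|x-s|^2$. For \eqref{eq:fd-1d-RSI}, combine PL with quadratic growth:
\[
 f'(x)^2\ge2\mu(f(x)-f^\star)\ge\mu^2\operatorname{dist}(x,\mathcal S)^2.
\]

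The main obstacle is Step 1. One must ensure the flow does not escape to infinity while $f$ stays above $f^\star$, and that the limit is a genuine minimizer. I will handle this with the arc-length bound, which is uniform in time, plus continuity of $f$ at the limit point.
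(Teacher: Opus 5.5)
Your proof is correct, and Steps 2 and 3 are essentially the paper's argument: an interior maximizer contradicts PL, $f'$ has constant sign on each side of $\mathcal S$, the descent lemma is applied at the nearest minimizer, and PL is combined with quadratic growth.

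The genuine difference is Step 1. The paper never uses the flow.
\begin{itemize}
\item It first proves $\mathcal S\neq\varnothing$ by contradiction. If $f'$ never vanished it would have constant sign, say $f'>0$. Then $\frac{d}{dx}\sqrt{f-f^\star}\ge\sqrt{\mu/2}$, and integrating toward $-\infty$ would force $\sqrt{f-f^\star}$ to become negative.
\item It then establishes the interval structure and the sign pattern.
\item Only afterwards does it obtain \eqref{eq:fd-1d-QG}, by integrating the same scalar inequality from $\sup\mathcal S$ to $x$ (and symmetrically from $\inf\mathcal S$). This is exactly the ``direct'' alternative you mention.
\end{itemize}

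The two routes trade off differently. The paper's is more elementary: it needs only a one-variable differential inequality, not global existence of the flow or convergence from finite arc length. But it leans on the ordering of $\mathbb R$ and must settle the sign structure before quadratic growth. Your Karimi-style argument gives existence and quadratic growth in one stroke, independent of dimension. The one-dimensional structure is then needed only for the interval and sign claims in Step 2.

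One small wording point in Step 2. The phrase ``$f(x)\downarrow f^\star$ as $x\uparrow\inf\mathcal S$'' presupposes the monotonicity you are trying to prove. The clean reason is that $f'>0$ on $(-\infty,\inf\mathcal S)$ would give $f(x)<f(\inf\mathcal S)=f^\star$ there, which is impossible.
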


\begin{proof}
By PL, every critical point is a global minimizer.  We first prove that a
minimizer exists.  Otherwise $f'$ never vanishes and, by continuity, has a
constant sign.  Suppose $f'>0$; the other case is symmetric.  Put
$h=f-f^\star>0$.  PL yields
$f'(x)\ge\sqrt{2\mu h(x)}$, hence
\[
 \frac{d}{dx}\sqrt{h(x)}
 =\frac{f'(x)}{2\sqrt{h(x)}}\ge\sqrt{\frac\mu2}.
\]
Integrating from $x<x_0$ to $x_0$ gives
$\sqrt{h(x_0)}-\sqrt{h(x)}
 \ge\sqrt{\mu/2}(x_0-x)$, impossible as $x\to-\infty$.

The zero set of $f'$ therefore equals the nonempty minimizer set $\mathcal S$.  If
$a,b\in \mathcal S$ and some $c\in(a,b)$ had $f(c)>f^\star$, then the maximum of $f$
on $[a,b]$ would be attained at an interior nonoptimal critical point, a
contradiction.  Thus $\mathcal S$ is an interval.  On each component of
$\mathbb R\setminus \mathcal S$, $f'$ has constant sign.  To the right it cannot be
negative, because then $f$ would fall below $f^\star$; to the left it cannot
be positive.  This proves \eqref{eq:fd-sign-separation}.

For $x>\sup \mathcal S=:s$, the same differential inequality now gives
\[
 \sqrt{f(x)-f^\star}\ge\sqrt{\frac\mu2}(x-s).
\]
The argument to the left of $\mathcal S$ is symmetric. This proves
\eqref{eq:fd-1d-QG}.  Combining
it with PL gives \eqref{eq:fd-1d-RSI}.  Finally, for
$s=\Pi_\mathcal S(x)$, smoothness and $f'(s)=0$ imply
$f(x)-f^\star\le L|x-s|^2/2$, proving
\eqref{eq:fd-1d-smooth-upper}.
\end{proof}

The geometry lemma supplies both an initial bracket and a stopping certificate. We use it to analyze Algorithm~\ref{alg:robust-stochastic-bisection}.

From \eqref{eq:fd-1d-QG} and the initial gap,
\begin{equation}\label{eq:fd-D0}
 \operatorname{dist}(x_0,\mathcal S)
 \le D_0:=\sqrt{\frac{2\Delta_0}{\mu}}.
\end{equation}
Start with $I_0=[x_0-D_0,x_0+D_0]$ and set
\begin{equation}\label{eq:fd-1d-r-eps}
 r_\epsilon:=\frac14\sqrt{2\mu\epsilon}.
\end{equation}
At the midpoint $c_k$ of the current interval, invoke the centered-clipping
estimator of Lemma~\ref{lem:centered-clipping} with accuracy $r_\epsilon$ and
failure probability $\delta/K_\epsilon$.  If
$|\widehat g_k|\le2r_\epsilon$, return $c_k$.  If
$\widehat g_k>2r_\epsilon$, retain the left half; if
$\widehat g_k<-2r_\epsilon$, retain the right half.  If no early termination
occurs, return the midpoint after $K_\epsilon$ halvings.

\begin{proof}[Proof of Theorem~\ref{thm:fd-upper-main}\textup{(i)}]
By Lemma~\ref{lem:centered-clipping} and a union bound, with probability at
least $1-\delta$ all estimates satisfy
$|\widehat g_k-f'(c_k)|\le r_\epsilon$.  Work on this event.  If the method
stops early, then $|f'(c_k)|\le3r_\epsilon$, so PL gives
\[
 f(c_k)-f^\star
 \le\frac{9r_\epsilon^2}{2\mu}
 =\frac9{16}\epsilon<\epsilon.
\]
If $\widehat g_k>2r_\epsilon$, then $f'(c_k)>0$, and
Lemma~\ref{lem:fd-1d-geometry} places every minimizer to the left of $c_k$.
The negative case is symmetric.  Hence every retained interval meets $\mathcal S$.
After $K_\epsilon$ halvings, the final midpoint $c$ satisfies
\[
 \operatorname{dist}(c,\mathcal S)\le D_0 2^{-K_\epsilon},
\]
and \eqref{eq:fd-1d-smooth-upper} yields
\[
 f(c)-f^\star
 \le\frac L2D_0^2 4^{-K_\epsilon}
 =\kappa\Delta_0 4^{-K_\epsilon}\le\epsilon.
\]
One estimator call costs, by Lemma~\ref{lem:centered-clipping},
\[
 C_\alpha\left[1+
 \left(\frac{\sigma}{r_\epsilon}\right)^{p_\alpha}\right]
 \log\frac{4K_\epsilon}{\delta}
 \le C_\alpha[1+Q_\epsilon]
 \log\frac{4K_\epsilon}{\delta}.
\]
Multiplication by $K_\epsilon$ proves
\eqref{eq:fd-d1-upper-explicit}.
\end{proof}

To complete the one-dimensional characterization, we still need the
logarithmic noiseless transient. The rare-event family alone cannot provide
it. We now give a fixed one-dimensional distribution of
strongly convex functions for which each exact-gradient query reveals only a
constant expected number of nested location bits.

For an interval $I=[a,a+s]$, define
\begin{equation}\label{eq:fd-children}
 I_0:=\left[a+\frac s8,a+\frac{3s}{8}\right],
 \qquad
 I_1:=\left[a+\frac{5s}{8},a+\frac{7s}{8}\right],
 \qquad
 A(s):=\frac67\mu s.
\end{equation}
Fix a root interval $I_\varnothing=[-\ell/2,\ell/2]$.  Given a word
$\omega\in\{0,1\}^K$, define a continuous piecewise-affine function
$h_\omega$ recursively.  On a node interval $I=[a,a+s]$, its boundary
values are $-A(s)$ and $A(s)$.  If the next bit is $b$, use the recursively
defined profile on $I_b$; on each component of $I\setminus I_b$, interpolate
affinely between the adjacent parent and child boundary values.  At depth
$K$, interpolate affinely across the final interval.  Outside the root
interval, extend with slope $\mu$.

Every affine slope can be computed explicitly.  On a terminal interval of
length $s$, the change from $-A(s)$ to $A(s)$ is
$2A(s)=12\mu s/7$, hence the slope is $12\mu/7$.  If the left child is
selected, the left connector has length $s/8$ and changes by
$A(s)-A(s/4)=9\mu s/14$, hence its slope is $36\mu/7$; the right connector
has length $5s/8$ and the same change, hence slope $36\mu/35$.  The two
values are interchanged when the right child is selected.  Outside the root
the prescribed slope is $\mu$.  Therefore every affine slope belongs to
\begin{equation}\label{eq:fd-tree-slopes}
 \left\{\mu,\frac{36}{35}\mu,
 \frac{12}{7}\mu,\frac{36}{7}\mu\right\}.
\end{equation}
Let $x_\omega^\star$ be the midpoint of the final interval and define
\begin{equation}\label{eq:fd-tree-function}
 f_\omega(x):=\int_{x_\omega^\star}^{x}h_\omega(t)\,dt.
\end{equation}
Then $f_\omega$ is $\mu$-strongly convex and $(36/7)\mu$-smooth, hence
$\mu$-PL.

\begin{lemma}
\label[lemma]{lem:fd-tree-info}
Let $\omega$ be uniform on $\{0,1\}^K$.  For every adaptive randomized
exact-gradient algorithm using $T$ queries, if $K=8T+2$, then with probability
at least $1/4$,
\begin{equation}\label{eq:fd-tree-gap}
 f_\omega(\widehat x_T)-f_\omega^\star
 \ge \frac{\mu\ell^2}{128}4^{-K},
\end{equation}
where $\ell$ is the length of the root interval.
\end{lemma}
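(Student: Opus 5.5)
The plan is a revealed-prefix argument: each exact-gradient query exposes only a short prefix of $\omega$ beyond what is already known, so after $T$ queries a constant fraction of the bits remains uniformly random, and the output is then far from $x_\omega^\star$ with constant probability. The first step is a locality property of $h_\omega$. For a query point $x$, let $D(x,\omega)$ be the first depth $j$ at which $x$ does not lie in the selected child $I_{\omega_1\cdots\omega_j}$ of the current node interval (with $D=K$ if $x$ stays in every selected child). By the recursive definition, $h_\omega(x)$ is then a deterministic function of $(x,\omega_1,\dots,\omega_{D})$:
\begin{itemize}
\item at a node where $x$ leaves the selected child, $h_\omega(x)$ is given by the affine connector, which depends only on the bits chosen so far;
\item outside the root interval the slope is the fixed value $\mu$.
\end{itemize}
Since $f'_\omega=h_\omega$, an exact-gradient response reveals at most the bits $\omega_1,\dots,\omega_{D}$.

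Next, let $P_t$ be the largest $j$ such that $\omega_1,\dots,\omega_j$ are determined by the first $t$ responses and the algorithm's randomness. Conditional on the past, $\omega_{P_t+1},\dots,\omega_K$ are i.i.d.\ uniform, since the transcript is a function of the revealed prefix only. For the next query $x$, note that $I_0$ and $I_1$ are disjoint. Hence at any node $x$ lies in at most one child, so each fresh uniform bit keeps $x$ inside the selected child with probability at most $1/2$. This gives
\[
\Pr\bigl(D(x,\omega)\ge P_t+1+j \mid \text{past}\bigr)\le 2^{-j},
\]
so $\mathbb E[P_{t+1}-P_t\mid\text{past}]\le 1+\sum_{j\ge1}2^{-j}=2$. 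Summing gives $\mathbb E P_T\le 2T$, and Markov's inequality yields $\Pr(P_T\ge 8T)\le 1/4$. On the complementary event, which has probability at least $3/4$, at least the two bits $\omega_{P_T+1},\omega_{P_T+2}$ are still uniform given the transcript, because $K=8T+2$.

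Finally comes the geometry of the output. Fix the transcript on this event, and let $I$ be the node interval of length $s=\ell4^{-P_T}$ determined by the revealed prefix. The output $\widehat x_T$, which may be chosen after seeing the transcript, lies in at most one of the children $I_0,I_1$ of $I$. The child containing $x_\omega^\star$ is selected by a fresh uniform bit, so with conditional probability at least $1/2$ the output lies outside the child $I_b$ that contains the minimizer. Moreover, $x_\omega^\star$ lies in a grandchild of $I_b$, which sits at distance at least $(s/4)/8$ from $\partial I_b$. Hence $|\widehat x_T-x_\omega^\star|\ge s/32$ on this event. Because $f_\omega$ is $\mu$-strongly convex with minimizer $x_\omega^\star$, we get $f_\omega(\widehat x_T)-f^\star_\omega\ge \tfrac\mu2(s/32)^2$. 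Combining the two probabilities gives failure probability at least $\tfrac34\cdot\tfrac12\ge\tfrac14$.

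The main obstacle is the constant bookkeeping in this last step. With $P_T\le K-2$ the bound above scales like $16^{-K}$ rather than the stated $4^{-K}$. I would therefore recheck the precise revealed depth and interval scaling, for instance by measuring the gap through $P_T\le 8T$ rather than through $K$, or by adjusting the indexing of interval lengths, to reach the stated constant $\mu\ell^2 4^{-K}/128$. The expected-increment bound for $P_t$ also needs a careful treatment of queries that fall in already-determined regions; those queries simply have increment $0$.
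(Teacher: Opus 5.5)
Your revealed-prefix mechanism is the same as the paper's: locality of $h_\omega$, a geometric tail for the number of new bits per query, and $\mathbb E P_T\le 2T$. As written, however, you only prove a bound of order $\mu\ell^2 16^{-K}$, and neither remedy you suggest closes the gap. Since $8T=K-2$, measuring through $P_T\le 8T$ is the same bound, and the node lengths $\ell 4^{-j}$ are fixed by the construction.

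The missing step is the choice of Markov threshold. Squared distances shrink by a factor $16$ per level, so the surviving depth must be at most about $K/2$. The slack in $K=8T+2$ should be spent on depth, not on pushing the prefix event to probability $3/4$. Apply Markov at level $K/2=4T+1$:
$\mathbb P(P_T\ge 4T+1)\le 2T/(4T+1)<1/2$.
On the complement, $P_T\le 4T$, so $s=\ell4^{-P_T}\ge \ell 2^{-8T}=4\ell 2^{-K}$ and $s^2\ge 16\ell^2 4^{-K}$. Your grandchild estimate $|\widehat x_T-x_\omega^\star|\ge s/32$ is still legitimate, since $P_T+2\le K$. It then gives
$f_\omega(\widehat x_T)-f_\omega^\star\ge \frac\mu2\cdot\frac{16\ell^2 4^{-K}}{1024}=\frac{\mu\ell^2}{128}4^{-K}$
with probability greater than $\frac12\cdot\frac12=\frac14$. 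The paper uses this same threshold. It pairs it with the slightly sharper fact that $\max\{\operatorname{dist}(x,I_0),\operatorname{dist}(x,I_1)\}\ge s/8$ for every $x$, so one fresh bit already gives distance $s/8$ and the cruder bound $s\ge \ell 2^{-K}$ suffices.

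A secondary point concerns conditional uniformity. You define $P_t$ as the prefix \emph{determined} by the exact responses, and assert that the remaining bits are conditionally i.i.d.\ uniform because ``the transcript is a function of the revealed prefix only''. For exact responses this requires checking that $h_\omega(x)$ never carries partial information about a bit it does not determine. That is true here: for $x$ in a node, the possible values of $h_\omega(x)$ under the two choices of the next bit are disjoint, except at the node's endpoints, where the value is bit-independent. You do not verify it, though. The paper sidesteps the issue by giving the algorithm a stronger oracle that explicitly reveals $\omega_{P_t+1},\dots,\omega_{D(x,\omega)}$. Since the event $\{D(x,\omega)=j\}$ depends only on $\omega_1,\dots,\omega_j$, the augmented transcript is a function of the revealed bits, and conditional uniformity of the rest is immediate. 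A stronger oracle can only help the algorithm, so the lower bound transfers.
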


\begin{proof}
Condition on the algorithm's internal random seed, independently of
$\omega$. This is the distributional argument underlying Yao's principle
\citep{Yao1977}. Keep $\omega$ uniform.  Give the algorithm a stronger oracle: in addition to
$h_\omega(x)$, a query reveals the next unknown bit and continues revealing
bits for as long as the query point lies in the selected child interval.  This
stronger response determines the exact derivative value, because once the
query lies outside the selected child, all deeper modifications are confined
to that child.

Conditional on the currently revealed prefix, the query is fixed and every
unrevealed bit is independent and uniform.  The first new bit is revealed
for free.  To reveal at least $j\ge2$ new bits, the query must lie in the
selected child at each of the preceding $j-1$ levels.  The two children are
disjoint, so each continuation has conditional probability at most $1/2$.
Thus, if $J_t$ is the number of new bits revealed at query $t$,
\[
 \mathbb P(J_t\ge j\mid\text{past})\le2^{-(j-1)},
 \qquad
 \mathbb E[J_t\mid\text{past}]\le2.
\]
If $R_T$ is the total revealed prefix length, then
$\mathbb ER_T\le2T$, and Markov's inequality gives
$\mathbb P(R_T\ge K/2)<1/2$.

On the event $R_T<K/2$, the next bit is still uniform.  If the current parent
interval has length $s=\ell4^{-R_T}$, then for every output $x$,
\[
 \max\{\operatorname{dist}(x,I_0),
       \operatorname{dist}(x,I_1)\}\ge\frac s8.
\]
Hence, with conditional probability at least $1/2$, the actual minimizer lies
at distance at least $s/8\ge\ell4^{-K/2}/8$ from the output.  Strong convexity
then gives \eqref{eq:fd-tree-gap}.  Multiplying the two conditional
probabilities gives a probability strictly larger than $1/4$.
The argument holds after conditioning on every internal random seed;
averaging over that seed proves the assertion for randomized algorithms.
\end{proof}

Choose
\begin{equation}\label{eq:fd-tree-root-length}
 \ell^2:=\frac{4\Delta_0}{3\mu}.
\end{equation}
Since every $f_\omega$ is $6\mu$-smooth and
$|x_\omega^\star|\le\ell/2$, its initial gap at $x_0=0$ is at most
$3\mu\ell^2/4=\Delta_0$.  Lemma~\ref{lem:fd-tree-info} therefore gives
\begin{equation}\label{eq:fd-1d-noiseless-budget}
 \inf_{\mathsf A}\sup_{f\in\mathcal F_1(L,\mu,\Delta_0)}
 \mathbb P\left(
 f(\widehat x_T)-f^\star\ge c\Delta_0e^{-CT}
 \right)\ge\frac14
\end{equation}
whenever $L/\mu\ge6$.  Equivalently,
\begin{equation}\label{eq:fd-1d-noiseless-accuracy}
 \mathsf T^{(1)}_{\epsilon,1/4}
 \ge c\log\frac{\Delta_0}{\epsilon}
\end{equation}
for $\epsilon\le c_0\Delta_0$.  Adding
$\frac\mu2\sum_{j=2}^dx_j^2$ embeds this family into every fixed dimension
without changing smoothness, PL, or oracle information.
The strict probability bound in the proof of Lemma~\ref{lem:fd-tree-info}
justifies \eqref{eq:fd-1d-noiseless-accuracy} even at confidence $1/4$:
choose the budget so that its gap threshold exceeds $\epsilon$.

\begin{proof}[Completion of Theorem~\ref{thm:app-fixed-lower} and proof of
Theorem~\ref{thm:fd-lower-main}]
With $K=8T+2$, \eqref{eq:fd-tree-gap} and
\eqref{eq:fd-tree-root-length} give a gap of at least
$\Delta_0\exp[-8(\log4)T]/1536$ with probability greater than $1/4$.
They therefore also give an expected gap greater than one quarter of that
quantity. Together with the stochastic construction above and
$\max\{u,v\}\ge(u+v)/2$, this proves \eqref{eq:fd-budget-lower}.
For the probability claim, choose whichever hard family has the larger
threshold; this chooses one instance whose threshold controls half the sum
and whose failure probability is strictly greater than $1/4$.
The same deterministic construction proves
\eqref{eq:app-fixed-det-confidence}. In the regime where
\eqref{eq:fd-lower-stochastic-confidence} applies, combine the two
complexity lower bounds by taking their maximum. In the complementary
regime $Q_\epsilon$ is bounded by a constant depending only on
$\alpha$, and the deterministic term absorbs it. This proves the last
claim of Theorem~\ref{thm:app-fixed-lower}.

For clarity, the expected-error statement of
Theorem~\ref{thm:fd-lower-main} follows directly, not by converting an
upper confidence guarantee into an expectation. The exact family forces
$T\ge c\log(\Delta_0/\epsilon)$ when $\epsilon/\Delta_0$ is a sufficiently
small universal constant. If $Q_\epsilon$ is above the fixed
threshold required for \eqref{eq:fd-q-epsilon}, use that choice of
$q_\epsilon$. For $T\le1/(4q_\epsilon)$,
\eqref{eq:app-rare-expected} and $(1-q_\epsilon)^T\ge1-Tq_\epsilon\ge3/4$
give expected gap at least $3\epsilon>\epsilon$.
Thus $T\ge c_\alpha Q_\epsilon$ in this regime. Below this threshold,
$Q_\epsilon$ is bounded and is absorbed by the deterministic
logarithm. Taking the maximum of these two necessary budgets proves
\eqref{eq:fd-lower-sum}, the expected-error bound in the main text.
\end{proof}

\subsection{Gradient-only potential reconstruction on cutting surfaces}\label{subsec:fd-reconstruction}
The algorithms for $d\ge2$ use no function-value oracle.  Every required
relative value is reconstructed from the identity
\[
 f(b)-f(a)=\int_0^1
 \left\langle\nabla f(a+t(b-a)),b-a\right\rangle\,dt.
\]

The moment estimate below is a martingale-difference analogue of the
classical von Bahr--Esseen inequality in
\citet{vonBahrEsseen1965}. Sharp extensions of von Bahr--Esseen-type
bounds to martingales and more general moment functions were developed
by \citet{Pinelis2015VBE}. We include a short self-contained
proof because the precise conditional formulation needed for the
subsequent adaptive line-integral estimator is particularly simple.

\begin{lemma}
\label[lemma]{lem:fd-martingale-moment}
Let $1<\alpha\le2$ and let $(X_i,\mathscr F_i)$ be real-valued martingale
differences.  Then
\begin{equation}\label{eq:fd-martingale-vbe}
 \mathbb E\left|\sum_{i=1}^mX_i\right|^\alpha
 \le C_\alpha\sum_{i=1}^m\mathbb E|X_i|^\alpha.
\end{equation}
Moreover,
\begin{equation}\label{eq:fd-martingale-max}
 \mathbb E\max_{j\le m}
 \left|\sum_{i=1}^jX_i\right|^\alpha
 \le C_\alpha\sum_{i=1}^m\mathbb E|X_i|^\alpha.
\end{equation}
\end{lemma}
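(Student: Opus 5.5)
The plan is to prove the maximal bound \eqref{eq:fd-martingale-max} first and then read off \eqref{eq:fd-martingale-vbe} from it, since the left side of \eqref{eq:fd-martingale-vbe} is dominated by the maximum. Write $S_j=\sum_{i\le j}X_i$. If $\sum_i\mathbb E|X_i|^\alpha=\infty$ there is nothing to prove, so assume it is finite; then each $S_j\in L^\alpha$.

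The core is the case $\alpha=2$, which is elementary, combined with a decomposition adapted to $1<\alpha<2$. I would use Burkholder's square-function inequality in its Davis/Burkholder form for $p=\alpha>1$: $\mathbb E\max_{j\le m}|S_j|^\alpha\le C_\alpha\,\mathbb E\bigl(\sum_{i}X_i^2\bigr)^{\alpha/2}$. The classical Burkholder inequality covers exactly this range with a constant depending only on $\alpha$. Then, because $\alpha/2\le1$, subadditivity of $t\mapsto t^{\alpha/2}$ gives $\bigl(\sum_iX_i^2\bigr)^{\alpha/2}\le\sum_i|X_i|^\alpha$. This is the same step used in Lemma~\ref{lem:block-alpha-moment}, and taking expectations finishes the argument.

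If a fully self-contained proof is preferred over citing Burkholder, I would instead proceed in two stages.

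\textbf{First stage.} Prove the non-maximal inequality \eqref{eq:fd-martingale-vbe} via the pointwise inequality $|a+b|^\alpha\le|a|^\alpha+\alpha\,\mathrm{sgn}(a)|a|^{\alpha-1}b+2^{2-\alpha}|b|^\alpha$. This holds for real $a,b$ and $1<\alpha\le2$, with some constant in place of $2^{2-\alpha}$, by homogeneity and a one-variable calculus check. Apply it with $a=S_{i-1}$ and $b=X_i$. The middle term has zero conditional expectation, since $S_{i-1}$ is $\mathscr F_{i-1}$-measurable and $\mathbb E[X_i\mid\mathscr F_{i-1}]=0$; integrability holds by Hölder, because $|S_{i-1}|^{\alpha-1}\in L^{\alpha/(\alpha-1)}$ and $X_i\in L^\alpha$. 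Summing over $i$ telescopes to $\mathbb E|S_m|^\alpha\le C\sum_i\mathbb E|X_i|^\alpha$.

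\textbf{Second stage.} Upgrade to the maximal form using Doob's $L^\alpha$ maximal inequality for the martingale $(S_j)$, namely $\mathbb E\max_j|S_j|^\alpha\le(\alpha/(\alpha-1))^\alpha\,\mathbb E|S_m|^\alpha$. This is legitimate because $\alpha>1$ and $|S_j|$ is a nonnegative submartingale.

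The main obstacle is verifying the scalar inequality with an explicit constant. Normalize $a=1$ (the case $a=0$ is trivial) and set $\phi(t)=|1+t|^\alpha-1-\alpha t$; the task is to show $\phi(t)\le C|t|^\alpha$. For $|t|\ge1/2$, bound $\phi(t)$ crudely by a multiple of $|t|^\alpha$. For $|t|<1/2$, Taylor's theorem gives $\phi(t)\le\tfrac{\alpha(\alpha-1)}2\max|1+s|^{\alpha-2}t^2\le C t^2\le C|t|^\alpha$, using $|t|<1$ and $\alpha\le2$. Any constant depending only on $\alpha$ suffices, since the statement allows $C_\alpha$.
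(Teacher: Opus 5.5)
Your proposal is correct, and your self-contained two-stage argument is essentially the paper's proof. The paper also proves $|u+v|^\alpha\le|u|^\alpha+\alpha|u|^{\alpha-2}uv+2^{2-\alpha}|v|^\alpha$ and applies it with $u=S_{i-1}$, $v=X_i$, so that the linear term vanishes under conditional expectation. It then iterates and passes to the maximum with Doob's $L^\alpha$ inequality and constant $(\alpha/(\alpha-1))^\alpha$, which it rederives from the first-crossing-time bound and H\"older.

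The only difference on that route is how the scalar inequality is obtained. The paper writes $|u+v|^\alpha-|u|^\alpha=\alpha\int_0^1 H(u+tv)\,v\,dt$ with $H(u)=\operatorname{sgn}(u)|u|^{\alpha-1}$. It then uses that $H$ is $(\alpha-1)$-H\"older with constant $2^{2-\alpha}$, which gives the explicit constant $2^{2-\alpha}$ in one line. Your normalization $a=1$, with a Taylor bound on $|t|<1/2$ and a crude bound on $|t|\ge1/2$, is also valid. It only produces an unspecified $C_\alpha$, which the statement permits. Your H\"older check that $\mathbb E[\operatorname{sgn}(S_{i-1})|S_{i-1}|^{\alpha-1}X_i]$ is well defined is a small point the paper leaves implicit.

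Your primary route through the Burkholder (or Burkholder--Davis--Gundy) square-function inequality is genuinely different and also valid. For $p=\alpha>1$ it bounds $\mathbb E\max_j|S_j|^\alpha$ by $C_\alpha\,\mathbb E(\sum_iX_i^2)^{\alpha/2}$, and subadditivity of $t\mapsto t^{\alpha/2}$ finishes. This gives the maximal form directly in one step. The cost is importing a much deeper theorem than the lemma needs; the paper includes its own proof precisely because the elementary von Bahr--Esseen argument suffices under the adaptive conditioning used later.

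Your sentence about ``the case $\alpha=2$ combined with a decomposition adapted to $1<\alpha<2$'' is never carried out and plays no role, so you should drop it.
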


\begin{proof}
Let $r=\alpha-1\in(0,1]$ and $H(u)=\operatorname{sgn}(u)|u|^r$.
For equal signs, concavity gives
$|H(u)-H(v)|\le|u-v|^r$; for opposite signs,
$|u|^r+|v|^r\le2^{1-r}(|u|+|v|)^r$.
Thus $H$ is $r$-H\"older with constant $2^{2-\alpha}$.
Integrating the derivative of $|u+tv|^\alpha$ from $t=0$ to $1$ yields
\[
 |u+v|^\alpha\le |u|^\alpha
 +\alpha |u|^{\alpha-2}uv+2^{2-\alpha}|v|^\alpha,
\]
with the linear term zero at $u=0$. Applying this inequality to the
successive partial sums and taking conditional expectations eliminates the
linear term. Iteration proves \eqref{eq:fd-martingale-vbe} with constant
$2^{2-\alpha}$.

For completeness, write $M_j=\sum_{i\le j}X_i$ and
$M^*=\max_{j\le m}|M_j|$. The first crossing time of $\lambda>0$ and the
submartingale property of $|M_j|$ imply
$\lambda\mathbb P(M^*\ge\lambda)
\le\mathbb E[|M_m|\mathbf1_{\{M^*\ge\lambda\}}]$.
Integrating this inequality against $\alpha\lambda^{\alpha-2}$ gives
\[
 \mathbb E(M^*)^\alpha
 \le\frac{\alpha}{\alpha-1}
 \mathbb E\bigl[|M_m|(M^*)^{\alpha-1}\bigr].
\]
H\"older's inequality yields
$\mathbb E(M^*)^\alpha\le
(\alpha/(\alpha-1))^\alpha\mathbb E|M_m|^\alpha$.
All moments are finite if the right-hand side of
\eqref{eq:fd-martingale-vbe} is finite; otherwise the assertion is trivial.
This proves \eqref{eq:fd-martingale-max}, including a possible larger
constant. Every argument remains valid after conditioning on an initial
history. This is the usual maximal-inequality argument; see also
\citet{HallHeyde1980}.
\end{proof}

The next estimator has two error sources with different scalings.
Stratification controls the randomized quadrature error through a
second moment, while the oracle error uses only the martingale
$\alpha$-moment estimate just proved.

\begin{lemma}
\label[lemma]{lem:fd-line-integral}
Let $a,b\in\mathbb R^d$ be measurable before this request,
$h=\|b-a\|$, and let $u,v>0$ and
$\rho\in(0,1/2)$.  There is a gradient-only estimator
$\widehat I_{a,b}$ such that
\begin{equation}\label{eq:fd-line-integral-guarantee}
 \mathbb P\left(
 \left|\widehat I_{a,b}-(f(b)-f(a))\right|>u+v
 \ \middle|\ \text{past}\right)\le\rho,
\end{equation}
and the number of oracle calls is at most
\begin{equation}\label{eq:fd-line-integral-cost}
 C_\alpha\left[
 1+\left(\frac{Lh^2}{u}\right)^{2/3}
 +\left(\frac{\sigma h}{v}\right)^{p_\alpha}
 \right]\log\frac4\rho.
\end{equation}
\end{lemma}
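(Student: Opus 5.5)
The estimator is Algorithm~\ref{alg:robust-stochastic-line-integral}: $B=\lceil4\log(4/\rho)\rceil$ independent replicates of a stratified randomized quadrature with $m$ strata, followed by a median. The plan is to show that each replicate $\widehat I^{(r)}$ satisfies
\[
\mathbb P\bigl(|\widehat I^{(r)}-(f(b)-f(a))|>u+v\mid\text{past}\bigr)\le1/4,
\]
conditionally on the earlier replicates. A conditional Hoeffding/Chernoff bound, as in Lemma~\ref{lem:bad-block-concentration}, then shows that more than half the replicates are good except with probability $\le\rho$. On that event the median is good.

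For one replicate, write $\gamma(t)=a+t(b-a)$ and $g(t)=\langle\nabla f(\gamma(t)),b-a\rangle$, so that $f(b)-f(a)=\int_0^1g$. Decompose
\[
\widehat I^{(r)}-(f(b)-f(a))=\underbrace{\frac1m\sum_j\Bigl(g(U_j)-m\!\int_{(j-1)/m}^{j/m}\!g\Bigr)}_{E_{\rm q}}+\underbrace{\frac1m\sum_j\langle G_j-\nabla f(\gamma(U_j)),b-a\rangle}_{E_{\rm n}}.
\]

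\emph{Quadrature term.} Since $g$ is $Lh^2$-Lipschitz on $[0,1]$, each stratum term is centered and bounded by $Lh^2/m$, and the terms are independent. Hence $\mathbb E E_{\rm q}^2\le m\cdot(Lh^2/m)^2/m^2=L^2h^4/m^3$. Chebyshev gives $\mathbb P(|E_{\rm q}|>u)\le L^2h^4/(m^3u^2)\le1/8$ once $m\ge C(Lh^2/u)^{2/3}$.

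\emph{Noise term.} Order the queries sequentially. Each $X_j=\langle G_j-\nabla f(\gamma(U_j)),b-a\rangle$ is a martingale difference with respect to the history that includes $U_j$, because $U_j$ is drawn before the oracle noise. Also $\mathbb E|X_j|^\alpha\le\sigma^\alpha h^\alpha$ by Cauchy--Schwarz and \eqref{eq:lower-oracle-assumption}. Lemma~\ref{lem:fd-martingale-moment} then gives $\mathbb E|E_{\rm n}|^\alpha\le C_\alpha m^{1-\alpha}\sigma^\alpha h^\alpha$, and Markov gives $\mathbb P(|E_{\rm n}|>v)\le C_\alpha(\sigma h/v)^\alpha m^{1-\alpha}\le1/8$ once $m\ge C(\sigma h/v)^{p_\alpha}$. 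The constant $C_{{\rm rep},\alpha}$ in the algorithm must be large enough for both requirements. A union bound yields $1/4$ failure per replicate.

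The cost is $Bm\le C_\alpha[1+(Lh^2/u)^{2/3}+(\sigma h/v)^{p_\alpha}]\log(4/\rho)$, which is \eqref{eq:fd-line-integral-cost}. The case $h=0$ is trivial.

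The main obstacle is the adaptivity and conditioning bookkeeping. The oracle guarantee is only conditional on the full history at a fixed query point, so the endpoints, the strata points, and earlier replicates must all be shown measurable before each call. I would handle this by fixing the filtration generated by $(U_{r,j},G_{r,j})$ in query order, verifying the martingale-difference property there, and using conditional Chebyshev and Markov bounds. With $p=1/4$, the median step requires at least $B/2$ bad replicates. This has probability $\le\exp(-2B/16)\le\rho/4$ for $B\ge8\log(4/\rho)$, so if the constant $4$ in $B$ proves insufficient, I would instead tighten the per-replicate failure probability to $1/16$ by enlarging $C_{{\rm rep},\alpha}$.
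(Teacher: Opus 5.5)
Your proposal is correct and is essentially the paper's proof. It uses the same stratified randomized quadrature and the same split into a quadrature error, controlled by its second moment $L^2h^4/m^3$, and an oracle error, controlled by the martingale von Bahr--Esseen bound of Lemma~\ref{lem:fd-martingale-moment}. It then amplifies by a median over adapted replicates via Lemma~\ref{lem:bad-block-concentration}.

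The one constant you were unsure about is resolved as you anticipated. The paper targets failure probability $1/16$ for each error term, which the algorithm's $C_{\mathrm{rep},\alpha}=\max\{3,32^{1/(\alpha-1)}\}$ already delivers. Each replicate therefore fails with conditional probability at most $1/8$, and $\exp(-9B/32)\le\rho$ for $B=\lceil4\log(4/\rho)\rceil$. Your initial per-replicate target of $1/4$ would only give $(\rho/4)^{1/2}$.
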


\begin{proof}
Condition on the history preceding the request. If $a=b$, return zero
without a query. Otherwise parameterize the segment by
$\gamma(t)=a+t(b-a)$ and put
$\varphi(t)=\langle\nabla f(\gamma(t)),b-a\rangle$.  Then
$|\varphi(t)-\varphi(s)|\le Lh^2|t-s|$.

Partition $[0,1]$ into $m$ equal intervals.  In interval $j$, draw $U_j$
uniformly, independently of the past, query the oracle at $\gamma(U_j)$,
and form
\[
 \widehat I^{(1)}=\frac1m\sum_{j=1}^m
 \langle G(\gamma(U_j),\xi_j),b-a\rangle.
\]
Write its error as $Q+N$.  If
$\overline\varphi_j=m\int_{(j-1)/m}^{j/m}\varphi(t)\,dt$, then the quadrature
increments
$m^{-1}(\varphi(U_j)-\overline\varphi_j)$ are centered and bounded by
$Lh^2/m^2$.  Therefore
\begin{equation}\label{eq:fd-Q-second-moment}
 \mathbb EQ^2\le\frac{L^2h^4}{m^3}.
\end{equation}
For the oracle noise
$\zeta_j=G(\gamma(U_j),\xi_j)-\nabla f(\gamma(U_j))$, the increments
$m^{-1}\langle\zeta_j,b-a\rangle$ are martingale differences with conditional
$\alpha$-moment at most $\sigma^\alpha h^\alpha/m^\alpha$.  Hence
Lemma~\ref{lem:fd-martingale-moment} gives
\begin{equation}\label{eq:fd-N-alpha-moment}
 \mathbb E|N|^\alpha
 \le C_\alpha\frac{\sigma^\alpha h^\alpha}{m^{\alpha-1}}.
\end{equation}
Choose
\[
 m=\max\left\{1,\left\lceil C_{{\rm rep},\alpha}\left[
 \left(\frac{Lh^2}{u}\right)^{2/3}
 +\left(\frac{\sigma h}{v}\right)^{p_\alpha}\right]\right\rceil\right\},
\]
with $C_{{\rm rep},\alpha}$ from \eqref{eq:app-algorithm-constants}.
In \eqref{eq:fd-N-alpha-moment} one may use $2^{2-\alpha}\le2$.
Consequently Markov's inequality bounds
$\mathbb P(|Q|>u\mid\text{past})$ and
$\mathbb P(|N|>v\mid\text{past})$ by $1/16$ each.
One replicate is accurate to $u+v$ with conditional probability at least
$7/8$. Generate $B=\lceil4\log(4/\rho)\rceil$ replicates sequentially,
using fresh quadrature points and oracle samples, and return any median.
Independence between replicate errors is unnecessary: each bad-replicate
indicator has conditional mean at most $1/8$.
Lemma~\ref{lem:bad-block-concentration}, with $p=1/8$ and threshold $1/2$,
bounds the probability of at least $B/2$ bad replicates by
$\exp(-9B/32)\le\rho$. On the complementary event more than half of the
replicates are accurate, so their median is accurate. Counting $Bm$ oracle
calls proves \eqref{eq:fd-line-integral-cost} and justifies
Algorithm~\ref{alg:robust-stochastic-line-integral} under adaptive noise.
\end{proof}

A single line integral is not enough to reconstruct a cutting surface efficiently. We therefore share edge estimates along a nested dyadic tree.

Let $H$ be a $k$-dimensional axis-aligned hyperrectangle with all side lengths
between $r/3$ and $r$, and let $n=2^M$.  Its nested dyadic grids have at most
$C_k2^{k\ell}$ new points at level $\ell$, and each new point can be joined to
a parent at level $\ell-1$ by a segment of length at most
$C_kr2^{-\ell}$.  The resulting tree has $O_k(n^k)$ edges and a root-to-leaf
path uses at most one edge per level.  The finest grid is a
$C_kr/n$-net of $H$.

We use the following elementary allocation identity.

\begin{lemma}
\label[lemma]{lem:fd-allocation}
For $q>0$, weights $w_0,\dots,w_M>0$, and $\tau>0$,
\begin{equation}\label{eq:fd-allocation-identity}
 \inf_{x_\ell>0:\,\sum x_\ell\le\tau}
 \sum_{\ell=0}^Mw_\ell x_\ell^{-q}
 =\tau^{-q}\left(
 \sum_{\ell=0}^Mw_\ell^{1/(q+1)}
 \right)^{q+1}.
\end{equation}
\end{lemma}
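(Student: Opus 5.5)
The claim is a weighted Hölder (or Lagrange-multiplier) identity. I would prove it by exhibiting the lower bound through Hölder's inequality and then checking that an explicit allocation attains it.

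\emph{Lower bound.} Fix any feasible $x_\ell>0$ with $\sum_\ell x_\ell\le\tau$. Write
\[
w_\ell^{1/(q+1)}=\bigl(w_\ell x_\ell^{-q}\bigr)^{1/(q+1)}\,x_\ell^{q/(q+1)}.
\]
Apply Hölder's inequality with conjugate exponents $q+1$ and $(q+1)/q$. This gives
\[
\sum_\ell w_\ell^{1/(q+1)}\le\Bigl(\sum_\ell w_\ell x_\ell^{-q}\Bigr)^{1/(q+1)}\Bigl(\sum_\ell x_\ell\Bigr)^{q/(q+1)}\le\Bigl(\sum_\ell w_\ell x_\ell^{-q}\Bigr)^{1/(q+1)}\tau^{q/(q+1)}.
\]
Raising both sides to the power $q+1$ and rearranging yields
\[
\sum_\ell w_\ell x_\ell^{-q}\ge\tau^{-q}\Bigl(\sum_\ell w_\ell^{1/(q+1)}\Bigr)^{q+1}.
\]
Since this holds for every feasible allocation, the infimum is at least the right-hand side of \eqref{eq:fd-allocation-identity}.

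\emph{Attainment.} Take
\[
x_\ell:=\tau\,\frac{w_\ell^{1/(q+1)}}{W},\qquad W:=\sum_j w_j^{1/(q+1)}.
\]
These are positive and sum exactly to $\tau$. Each term then equals
\[
w_\ell x_\ell^{-q}=\tau^{-q}W^{q}\,w_\ell^{1-q/(q+1)}=\tau^{-q}W^{q}\,w_\ell^{1/(q+1)}.
\]
Summing over $\ell$ gives $\tau^{-q}W^{q+1}$, so the infimum is achieved and is in fact a minimum.

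The only step needing care is the exponent bookkeeping in the Hölder split, which is routine. This is also the allocation rule used in Algorithm~\ref{alg:cut-values}: the tolerances $u_\ell$ and $v_\ell$ are proportional to $w_\ell^{1/(q+1)}$ for the relevant weights. In the noise part, for example, $q=1/(\alpha-1)=p_\alpha-1$ with weights $w_\ell\propto 2^{(k-p_\alpha)\ell}$, which gives $w_\ell^{1/(q+1)}=2^{(k-p_\alpha)\ell/p_\alpha}$; the normalization by $A_{\rm noise}$ matches up to the exponent convention used in the algorithm.
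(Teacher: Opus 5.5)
Your proof is correct, and it reaches the identity by a somewhat different route from the paper.

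\textbf{The paper's route.} The paper derives the candidate $x_\ell=c\,w_\ell^{1/(q+1)}$ from the Lagrange equations and fixes $c$ by making the budget constraint active. It then substitutes, and appeals to convexity of $x\mapsto x^{-q}$ to conclude that this stationary point is the global minimizer. Implicitly, this is KKT sufficiency for a convex objective with a linear constraint and a positive multiplier on the open orthant.

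\textbf{Your route.} You prove the lower bound directly for every feasible allocation, via H\"older with exponents $q+1$ and $(q+1)/q$. You need only monotonicity of $t\mapsto t^{q/(q+1)}$ to pass from $\sum_\ell x_\ell$ to $\tau$. You then verify attainment with the same proportional allocation.

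\textbf{Comparison.} The Lagrange route derives the optimizer rather than guessing it. Your route is fully self-contained and needs no KKT-sufficiency argument. It also handles the inequality constraint $\sum_\ell x_\ell\le\tau$ without first arguing that the constraint is active, and the equality case of H\"older explains why the proportional allocation is optimal.

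\textbf{A correction to your closing aside.} This does not affect the proof. In the noise allocation of Lemma~\ref{lem:fd-hyperplane-profile}, the per-edge cost scales as $v_\ell^{-p_\alpha}$, coming from the $(\sigma h/v)^{p_\alpha}$ term of Lemma~\ref{lem:fd-line-integral}. So the relevant exponent is $q=p_\alpha$, not $p_\alpha-1=1/(\alpha-1)$. With $q=p_\alpha$ and weights $2^{(k-p_\alpha)\ell}$, one gets $w_\ell^{1/(q+1)}=2^{(k-p_\alpha)\ell/(p_\alpha+1)}$. This is exactly the summand in $A_{\mathrm{noise}}$, so no convention mismatch needs to be invoked.
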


\begin{proof}
The Lagrange equations give
$x_\ell=cw_\ell^{1/(q+1)}$; the active constraint determines
$c=\tau/(\sum w_j^{1/(q+1)})$.  Substitution proves the identity.  Convexity
of $x\mapsto x^{-q}$ makes this stationary point the global minimizer.
\end{proof}

The allocation identity now balances accuracy across levels. Applying it
separately to quadrature and oracle noise exposes the different dimension
thresholds of the two costs.

\begin{lemma}
\label[lemma]{lem:fd-hyperplane-profile}
Let $H$ be as above, let $z_0\in H$, and let
$\tau>0$, $\rho\in(0,1/2)$. All values
$f(z)-f(z_0)$ on a $C_kr/n$-net of $H$ can be estimated simultaneously to
absolute error at most $\tau$, with conditional failure probability at most
$\rho$, using
\begin{equation}\label{eq:fd-hyperplane-cost}
 C_{k,\alpha}\log\left(
 \frac{C_kn^k(M+1)}{\rho}
 \right)
 \left[
 n^k+
 \left(\frac{Lr^2}{\tau}\right)^{2/3}\mathcal A_k(n)
 +\left(\frac{\sigma r}{\tau}\right)^{p_\alpha}
 \mathcal C_{k,p_\alpha}(n)
 \right]
\end{equation}
oracle calls, where
\begin{equation}\label{eq:fd-Ak}
 \mathcal A_k(n):=
 \begin{cases}
  1,&k=1,\\
  n^{k-4/3},&k\ge2,
 \end{cases}
\end{equation}
and
\begin{equation}\label{eq:fd-Ckp}
 \mathcal C_{k,p}(n):=
 \begin{cases}
  1,&k<p,\\
  (1+\log n)^{p+1},&k=p,\\
  n^{k-p},&k>p.
 \end{cases}
\end{equation}
An external reference $z_0$ is also allowed provided
$\operatorname{dist}(z_0,H)\le\sqrt{k+1}r$. The necessary root edges then
have length $O_k(r)$ and do not change the order of the bound. Without a
bound on this distance, the root-edge cost must be counted separately.
\end{lemma}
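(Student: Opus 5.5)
The plan is to run \textsc{CutValues} on the dyadic tree described above and split the error budget across levels using Lemma~\ref{lem:fd-allocation}. First, every net point $z\in\mathcal N_M$ is joined to $z_0$ by the tree path $P(z_0,z)$, with at most one edge from each level $\ell=0,\dots,M$. Telescoping therefore gives
\[
f(z)-f(z_0)=\sum_{e\in P(z_0,z)}\bigl(f(b_e)-f(a_e)\bigr).
\]
If each level-$\ell$ edge is estimated to within $u_\ell+v_\ell$, the path error is at most $\sum_\ell(u_\ell+v_\ell)$. It therefore suffices to impose $\sum_\ell u_\ell\le\tau/2$ and $\sum_\ell v_\ell\le\tau/2$.

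We apply Lemma~\ref{lem:fd-line-integral} to every edge with failure probability $\rho/|\mathcal E|$, where $|\mathcal E|\le C_kn^k$. A conditional union bound, which is valid since the endpoints are fixed before each request, makes all edge estimates good simultaneously with probability at least $1-\rho$. The logarithmic factor is then $\log(4C_kn^k/\rho)$, which is absorbed by the stated logarithm. Level $\ell$ has $N_\ell\le C_k2^{k\ell}$ edges of length $h_\ell\le C_kr2^{-\ell}$. For level $0$ with an external reference, the bound $h_0\le C_k r$ follows from $\operatorname{dist}(z_0,H)\le\sqrt{k+1}r$. Summing the per-edge cost from \eqref{eq:fd-line-integral-cost} over all edges splits the total cost into three sums. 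The first is a baseline $\sum_\ell N_\ell\lesssim n^k$. The second is the quadrature sum $\sum_\ell 2^{k\ell}(Lr^2 4^{-\ell}/u_\ell)^{2/3}$. The third is the noise sum $\sum_\ell 2^{k\ell}(\sigma r2^{-\ell}/v_\ell)^{p_\alpha}$.

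Next we optimize each sum with Lemma~\ref{lem:fd-allocation}. For quadrature, $q=2/3$ and $w_\ell=2^{(k-4/3)\ell}$, so the exponent $1/(q+1)$ equals $3/5$ and $w_\ell^{3/5}=2^{(3k-4)\ell/5}$. This matches the weights $u_\ell$ in \textsc{CutValues}. The cost is $(Lr^2/\tau)^{2/3}\bigl(\sum_\ell 2^{(3k-4)\ell/5}\bigr)^{5/3}$. For $k=1$ the series converges to $O(1)$. For $k\ge2$ it is dominated by its top term and gives $n^{k-4/3}$, which is $\mathcal A_k(n)$. For noise, $q=p=p_\alpha$ and $w_\ell=2^{(k-p)\ell}$, so the cost is $(\sigma r/\tau)^{p}\bigl(\sum_\ell 2^{(k-p)\ell/(p+1)}\bigr)^{p+1}$. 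This sum is $O_{k,p}(1)$ if $k<p$, equals $(M+1)^{p+1}\le C(1+\log n)^{p+1}$ if $k=p$, and is $O(n^{(k-p)/(p+1)})$ if $k>p$. Raising to the power $p+1$ gives $\mathcal C_{k,p}(n)$. Two constant losses remain: replacing $\tau$ by $\tau/2$ and the ceilings. Both are absorbed into $C_{k,\alpha}$, and the ceilings are exactly why the baseline term $n^k$ appears.

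The main obstacle is bookkeeping rather than anything conceptual. One point to check is that the allocation actually used by the algorithm (the $u_\ell,v_\ell$ in \textsc{CutValues}) attains the Lagrange optimum; it does, up to the factor $1/2$. The other is keeping the constant in the near-geometric sum uniform in $p_\alpha$ in the borderline case $k>p$ with $k-p$ small. There the ratio $2^{(k-p)/(p+1)}$ is close to one, so the sum should be bounded by $(M+1)$ times its top term. I would verify that this stays within the stated form up to the $C_{k,\alpha}$ dependence, and if necessary accept a constant depending on $(k,\alpha)$, which the statement permits.
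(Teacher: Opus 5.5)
Your proposal is correct and follows essentially the same route as the paper. That route is tree-path telescoping with at most one edge per level, a conditional union bound over edges via Lemma~\ref{lem:fd-line-integral}, and Lemma~\ref{lem:fd-allocation} applied separately with $q=2/3$ and $q=p_\alpha$, with the $n^k$ term coming from the one-sample-per-edge floor; the differences are cosmetic (your per-edge failure probability $\rho/|\mathcal E|$ versus the paper's $\rho/(C_kn^k(M+1))$), and your observation that the geometric-sum constants depend on $|p_\alpha-k|$ is a fair point the paper leaves implicit in $C_{k,\alpha}$.
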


\begin{proof}
Estimate every tree edge using Lemma~\ref{lem:fd-line-integral}, assign each
edge failure probability at most $\rho/(C_kn^k(M+1))$, and sum edge estimates
along root-to-leaf paths.  If every edge estimate succeeds and if the
level-$\ell$ deterministic and stochastic error allowances are
$u_\ell,v_\ell$, respectively, the uniform path error is at most
$\sum_\ell(u_\ell+v_\ell)$.  Allocate each sum at most $\tau/2$.

There are $O_k(2^{k\ell})$ level-$\ell$ edges of length
$O_k(r2^{-\ell})$.  The deterministic quadrature cost, before the common
logarithmic factor, is therefore
\[
 C_k(Lr^2)^{2/3}
 \sum_{\ell=0}^M
 2^{(k-4/3)\ell}u_\ell^{-2/3}.
\]
Apply Lemma~\ref{lem:fd-allocation} with $q=2/3$.  If $k=1$, the resulting
geometric sum is bounded; if $k\ge2$, it is dominated by its last term and
equals $O_k(n^{k-4/3})$ after taking the required power.  This yields the
second term in \eqref{eq:fd-hyperplane-cost}.

The stochastic cost is
\[
 C_{k,\alpha}(\sigma r)^{p_\alpha}
 \sum_{\ell=0}^M
 2^{(k-p_\alpha)\ell}v_\ell^{-p_\alpha}.
\]
Applying Lemma~\ref{lem:fd-allocation} with $q=p_\alpha$ gives
\[
 C_{k,\alpha}
 \left(\frac{\sigma r}{\tau}\right)^{p_\alpha}
 \left[
 \sum_{\ell=0}^M
 2^{(k-p_\alpha)\ell/(p_\alpha+1)}
 \right]^{p_\alpha+1},
\]
which is exactly \eqref{eq:fd-Ckp}.  Finally, every edge requires at least one
sample, producing the $n^k$ term.  The union bound over edges and the common
median-amplification factor give the logarithm in
\eqref{eq:fd-hyperplane-cost}.  A constant number of additional root edges is
absorbed into the same estimates.
\end{proof}

\subsection{Gradient-flow trapping and the upper bound for \texorpdfstring{$d\ge2$}{d at least 2}}\label{subsec:fd-trapping}
The gradient-flow viewpoint originates in \citet{bubeck2020trap}.
Here we need a version that tolerates approximate relative values and uses
no function-value oracle. We first prove that a boundary certificate traps a
small gradient. We then show how two cuts preserve that certificate with a
controlled increase in its slope, including a strict interpolation margin.
The preceding reconstruction lemma will supply all values used in these cuts.

\begin{definition}[Gradient-flow trap]\label[definition]{def:fd-trap}
Let $R\subset\mathbb R^d$ be a compact axis-aligned box with positive side
lengths, $x\in\operatorname{int}(R)$, and $c\ge0$. The pair $(R,x)$ is a
$c$-trap if
\begin{equation}\label{eq:fd-trap-def}
 f(y)>f(x)-c\|y-x\|\qquad(y\in\partial R).
\end{equation}
\end{definition}

\begin{lemma}[Trap lemma]\label[lemma]{lem:fd-trap}
Every $c$-trap contains a point $z$ with $\|\nabla f(z)\|\le c$.
\end{lemma}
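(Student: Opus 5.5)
Argue by contradiction: suppose $\|\nabla f(z)\|>c$ for every $z\in R$, and follow a normalized descent flow from the pivot $x$. By compactness of $R$ and continuity of $\nabla f$, there is then $c'>c$ with $\|\nabla f\|\ge c'$ on $R$; this positive margin does the real work. If $c=0$ the statement is simpler but uses the same argument, since $\nabla f$ is nonvanishing on $R$ in that case.

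Define the normalized flow
\[
\dot\gamma(t)=-\nabla f(\gamma(t))/\|\nabla f(\gamma(t))\|,\qquad\gamma(0)=x .
\]
On a neighborhood of $R$ where $\nabla f\neq0$, the vector field is locally Lipschitz ($\nabla f$ is Lipschitz and the norm is bounded away from zero), so a unique solution exists while $\gamma$ stays in that neighborhood. Along the flow the path has unit speed and
\[
\frac{d}{dt}f(\gamma(t))=-\|\nabla f(\gamma(t))\|\le-c'.
\]
Hence $f(\gamma(t))\le f(x)-c't$ for as long as $\gamma$ remains in $R$. Since $f$ is bounded below on the compact set $R$, the trajectory cannot stay in $R$ for all $t\le t_{\max}:=(f(x)-\min_R f)/c'+1$. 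Because the trajectory is continuous and starts at the interior point $x$, there is therefore a first exit time $t^\star$ with $y:=\gamma(t^\star)\in\partial R$.

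Compare with the trap certificate. Since $\gamma$ has unit speed, $\|y-x\|\le t^\star$, and therefore
\[
f(y)\le f(x)-c't^\star\le f(x)-c\,t^\star\le f(x)-c\|y-x\|,
\]
which contradicts \eqref{eq:fd-trap-def}. Strictness in the definition makes even the case $c'=c$ suffice, so the compactness margin is only needed to guarantee exit in finite time. Consequently some $z\in R$ has $\|\nabla f(z)\|\le c$.

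The step needing most care is the existence and exit argument for the flow: the ODE must be well-posed up to the exit time, and the trajectory must actually reach $\partial R$ rather than blow up or stall. I would handle both by working on the compact set $R$, where the vector field is uniformly Lipschitz and solutions extend until they leave $R$. Alternatively, one can avoid ODEs entirely with a discrete version: take small steps of length $h$ in the direction $-\nabla f/\|\nabla f\|$, use $L$-smoothness to show each step decreases $f$ by at least $c'h-Lh^2/2>ch$ for small $h$, stop at the first step that crosses $\partial R$, interpolate on the segment to land on the boundary, and use the triangle inequality for the path length.
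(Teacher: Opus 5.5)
Your proposal is correct and follows the paper's proof essentially step for step: contradiction, the compactness margin $\min_R\|\nabla f\|>c$, and the unit-speed normalized descent flow from the pivot, which is well posed because the normalized field is Lipschitz near $R$. The flow must then exit in finite time since $f$ is bounded below on $R$, and the comparison $f(y)\le f(x)-c\,t^\star\le f(x)-c\|y-x\|$ at the first boundary hit contradicts the trap inequality, exactly as in the paper (which uses the strict margin in this last chain where you rely on the strictness of the trap definition; either works).
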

\begin{proof}
Suppose instead that $\|\nabla f\|>c$ on $R$. Compactness gives
$m_R:=\min_R\|\nabla f\|>c\ge0$. The vector field
$-\nabla f/\|\nabla f\|$ is Lipschitz on a neighborhood of $R$, because the
gradient is Lipschitz and bounded away from zero there. Starting at $x$, its
unit-speed trajectory is therefore uniquely defined up to exit and can be
continued as long as it remains in $R$. While inside,
\[
 \frac{d}{dt}f(z(t))=-\|\nabla f(z(t))\|\le-m_R<-c.
\]
The trajectory cannot remain in $R$ for all time: otherwise its objective
value tends to $-\infty$, contrary to the minimum of the continuous function
$f$ on the compact box. At the finite first boundary time $\tau>0$, let
$y=z(\tau)\in\partial R$. Its length is $\tau$, so
\[
 f(y)-f(x)\le-m_R\tau<-c\tau\le-c\|y-x\|,
\]
contradicting \eqref{eq:fd-trap-def}.
\end{proof}

The trap lemma is existential, but it becomes algorithmic once the box is
small. The next refinement rule shrinks volume by a fixed factor using only
a finite set of approximate values; it also accounts explicitly for the
error between grid vertices.

\begin{lemma}
\label[lemma]{lem:fd-box-refinement}
Let $d\ge2$ and let $(R,x)$ be a $c$-trap whose aspect ratio is at most $3$.
Write $s>0$ for its longest side. There are two parallel
$(d-1)$-dimensional cross-sections $E,F$ with the following property.
Equip each cut with a Cartesian grid of coordinate mesh at most $h$, where
$0<h\le s$, and use its standard simplicial triangulation. Suppose
$\tau>0$ and every grid vertex has an estimate satisfying
\begin{equation}\label{eq:fd-cut-value-errors}
 |\widehat D(z)-(f(z)-f(x))|\le\tau.
\end{equation}
One can select $R_+\subset R$ and $x_+\in\operatorname{int}(R_+)$ such that
\begin{equation}\label{eq:fd-box-volume-aspect}
 \operatorname{vol}(R_+)\le\tfrac56\operatorname{vol}(R),
 \qquad \operatorname{aspect}(R_+)\le3,
\end{equation}
and $(R_+,x_+)$ is a $c_+$-trap with
\begin{equation}\label{eq:fd-box-c-update}
 c_+\le c+400d\left(\frac\tau s+\frac{Lh^2}s+
\frac{ch^2}{s^2}\right).
\end{equation}
Any larger nonnegative slope is also a valid certificate.
\end{lemma}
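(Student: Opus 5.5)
The plan is to take the cuts used in Algorithm~\ref{alg:gradient-flow-trapping}. Let $i$ be the coordinate of a longest side. Orient $\zeta$ so that the pivot satisfies $\zeta(x)\ge s/2$, and set $E=\{\zeta=s/6\}$ and $F=\{\zeta=s/3\}$. The certificate inequality on grid vertices will be extended to the whole cut by a piecewise-linear interpolation estimate. I would then split into the three branches of the algorithm:
\begin{itemize}
\item[(a)] $A=\varnothing$: keep $x$ and $R_+=R\cap\{\zeta\ge s/3\}$;
\item[(b)] the $\widehat D$-minimizer $z_\star$ of $A$ lies on $F$: take $x_+=z_\star$ and $R_+=R\cap\{\zeta\ge s/6\}$;
\item[(c)] $z_\star$ lies on $E$: take $x_+=z_\star$ and $R_+=R\cap\{\zeta\le s/3\}$.
\end{itemize}
Here $A$ is defined with the threshold $-c\|z-x\|-\tau$.

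\emph{Volume, aspect ratio and interiority.} The retained fraction of $R$ is $2/3$, $5/6$ or $1/3$, so \eqref{eq:fd-box-volume-aspect} holds for the volume. Every other side of $R$ lies in $[s/3,s]$ and the new $i$-th side lies in $[s/3,5s/6]$, so the aspect ratio stays at most $3$. In case (a), $x$ remains interior because $\zeta(x)\ge s/2>s/3$. In cases (b) and (c), I claim $z_\star\notin\partial R$. By the definition of $A$ and \eqref{eq:fd-cut-value-errors}, $f(z_\star)-f(x)\le\widehat D(z_\star)+\tau\le-c\|z_\star-x\|$. This contradicts the old trap inequality \eqref{eq:fd-trap-def} if $z_\star\in\partial R$. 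Since $z_\star$ also lies strictly between the retained faces in direction $i$, it is interior to $R_+$.

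\emph{Old boundary pieces.} In case (a) the old boundary needs nothing new, since the pivot is unchanged. In cases (b) and (c), take $y\in\partial R\cap R_+$. Combining $f(y)>f(x)-c\|y-x\|$ with $f(z_\star)\le f(x)-c\|z_\star-x\|$ and the triangle inequality gives $f(y)>f(z_\star)-c\|y-z_\star\|$ with no loss. This is exactly why $A$ carries the extra $\tau$ margin.

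\emph{The new face: vertex inequalities.} The new face is $F$ in case (a) and $E$ or $F$ otherwise, and it lies at distance at least $s/6$ from the new pivot. At grid vertices $w$ I first obtain an additive inequality $f(w)-f(x_+)\ge -c\|w-x_+\|-2\tau$.
\begin{itemize}
\item If $w\notin A$, then $\widehat D(w)>-c\|w-x\|-\tau$. In case (a) this gives the bound directly. In cases (b) and (c), also using $\widehat D(z_\star)\le -c\|z_\star-x\|-\tau$, I get $f(w)-f(z_\star)\ge c(\|z_\star-x\|-\|w-x\|)-2\tau\ge -c\|w-z_\star\|-2\tau$.
\item If $w\in A$, minimality of $z_\star$ gives $f(w)-f(z_\star)\ge-2\tau$.
\end{itemize}

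\emph{The new face: interpolation.} This is the step I expect to be the delicate one. On each simplex of mesh $h$ in the cut, $L$-smoothness bounds the gap between $f$ and its linear interpolant by $O(dLh^2)$. The map $y\mapsto\|y-x_+\|$ is convex with Hessian norm at most $1/\|y-x_+\|\le 6/s$ on the cut. Hence its linear interpolant overestimates it by at most $O(dh^2/s)$, so interpolating $-c\|\cdot-x_+\|$ loses at most $O(cdh^2/s)$. Together these give, for every $y$ on the new face, $f(y)-f(x_+)\ge -c\|y-x_+\|-2\tau-O(dLh^2)-O(cdh^2/s)$. Dividing the additive loss by $\|y-x_+\|\ge s/6$ converts it into the slope increment $c_+-c=O(d)(\tau/s+Lh^2/s+ch^2/s^2)$. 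Choosing the constant generously (below $400d$) leaves a strict margin, so the strict inequality in \eqref{eq:fd-trap-def} holds. Finally, any larger slope $c'\ge c_+$ only weakens the certificate, which gives the last sentence of the lemma.
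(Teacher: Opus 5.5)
Your proposal is correct and follows the paper's proof essentially step for step: the same cuts at $\zeta=s/6$ and $\zeta=s/3$, the same three branches, the same use of the extra $\tau$ margin in $A$ to exclude $z_\star\in\partial R$ and to make the old-boundary certificate lossless, and the same $-c\|w-x_+\|-2\tau$ vertex bound on the new face. The only difference is cosmetic and lies in the interpolation step. The paper first absorbs the $2\tau$ loss into an enlarged slope $\gamma_{\rm net}=c+18\tau/s$ and then interpolates the combined function $f+\gamma_{\rm net}\|\cdot-x_+\|$. You instead interpolate $f$ and the distance term separately and convert the total additive loss into slope at the end, which also avoids the harmless $\tau h^2/s^3$ cross term in the paper's bound.
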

\begin{proof}
Permute and reflect coordinates so that
$R=[0,s]\times R'$ and $x_1\ge s/2$; every side of $R'$ lies in $[s/3,s]$.
Set $E=\{s/6\}\times R'$ and $F=\{s/3\}\times R'$.
Call a grid vertex certified if
$\widehat D(z)\le-c\|z-x\|-\tau$, and let $A$ contain all certified
vertices of both cuts. If $A=\varnothing$, keep
$R_+=[s/3,s]\times R'$ and $x_+=x$. Otherwise choose
$z_\star\in\arg\min_{z\in A}\widehat D(z)$ and put $x_+=z_\star$.
If $z_\star\in F$, keep $[s/6,s]\times R'$; if $z_\star\in E$, keep
$[0,s/3]\times R'$. This is the rule in
Algorithm~\ref{alg:gradient-flow-trapping}.

No certified vertex can lie on $\partial R$: the old trap inequality and
\eqref{eq:fd-cut-value-errors} give
$\widehat D(z)>-c\|z-x\|-\tau$ on that boundary. Hence the new pivot is
interior, including in the unchanged-pivot case. The new side length is
$2s/3$, $5s/6$, or $s/3$, and all other side lengths lie in $[s/3,s]$.
This proves \eqref{eq:fd-box-volume-aspect}.
If a new pivot is selected, certification implies
\begin{equation}\label{eq:fd-certified-descent}
 f(z_\star)-f(x)\le-c\|z_\star-x\|.
\end{equation}
For every retained point $y$ of the old boundary, the triangle inequality
therefore yields
\[
 f(y)-f(z_\star)>c(\|z_\star-x\|-\|y-x\|)
                 \ge-c\|y-z_\star\|.
\]
For an unchanged pivot the old-boundary certificate is immediate.

Let $H$ be the newly introduced boundary cut. Its distance from $x_+$ is
at least $s/6$. At a certified vertex $z\in H$, minimality of
$\widehat D(z_\star)$ gives $f(z)-f(x_+)\ge-2\tau$.
At a noncertified vertex,
$f(z)-f(x)>-c\|z-x\|-2\tau$; combining this with
\eqref{eq:fd-certified-descent} and the triangle inequality gives the same
uniform bound
\[
 f(z)-f(x_+)\ge-c\|z-x_+\|-2\tau.
\]
This bound also holds when $A=\varnothing$. Define
\begin{equation}\label{eq:fd-gamma-net}
 \gamma_{\rm net}=c+18\tau/s,
 \qquad \Phi(y)=f(y)+\gamma_{\rm net}\|y-x_+\|\quad(y\in H).
\end{equation}
Every grid vertex satisfies $\Phi(z)\ge f(x_+)+\tau$.
The distance term has tangential Hessian norm at most $6/s$, so
$\Phi|_H$ has Lipschitz gradient at most $L+6\gamma_{\rm net}/s$.

For $y$ in a simplex with vertices $z_i$ and barycentric weights
$\lambda_i$, smoothness applied at $y$ and
$\sum_i\lambda_i(z_i-y)=0$ imply
\[
 \Phi(y)\ge\sum_i\lambda_i\Phi(z_i)-E_h,
 \qquad E_h=\frac{d-1}{2}(L+6\gamma_{\rm net}/s)h^2.
\]
Set $c_+=\gamma_{\rm net}+6E_h/s$. Since $\|y-x_+\|\ge s/6$,
\[
 f(y)+c_+\|y-x_+\|\ge\Phi(y)+E_h\ge f(x_+)+\tau>f(x_+).
\]
Thus the full new face has the required strict trap inequality. Increasing
$c$ also preserves the old-boundary inequalities. Finally,
\begin{align*}
 c_+-c
 &=18\tau/s+3(d-1)Lh^2/s
           +18(d-1)ch^2/s^2+324(d-1)\tau h^2/s^3\\
 &\le400d\left(\tau/s+Lh^2/s+ch^2/s^2\right),
\end{align*}
where $h\le s$. This proves \eqref{eq:fd-box-c-update} with a fixed
constant and preserves strictness even when some vertex estimates are at
their error limits.
\end{proof}

We now allocate this slope increase across a finite sequence of refinements.
The construction uses the explicit constants in
\eqref{eq:app-algorithm-constants}; the resulting inequalities establish both
termination and the claimed query count on one simultaneous success event.

\begin{proof}[Proof of Theorem \ref{thm:fd-upper-main} (ii)]
Assume a current pivot $x$ satisfies
\begin{equation}\label{eq:fd-stage-gap}
 f(x)-f^\star\le\Delta,
\end{equation}
and put
\begin{equation}\label{eq:fd-stage-eta}
 \eta:=\sqrt{\mu\Delta}.
\end{equation}
If a new point has gradient norm at most $\eta$, PL gives gap at most
$\Delta/2$.

Set $c_0=\eta/8$ and let $R_0$ be the cube centered at $x$ whose distance from
$x$ to every face is $H=2\Delta/c_0$.  For every boundary point $y$,
$f(y)\ge f^\star\ge f(x)-\Delta>f(x)-c_0\|y-x\|$, so $(R_0,x)$ is a
$c_0$-trap.  Its longest side is
\begin{equation}\label{eq:fd-initial-side}
 s_0=2H=\frac{4\Delta}{c_0}=\frac{32\Delta}{\eta}.
\end{equation}

Run Lemma~\ref{lem:fd-box-refinement} repeatedly.  Because each refinement
shrinks volume by at least $5/6$ and keeps aspect ratio at most $3$, after
\begin{equation}\label{eq:fd-refinement-number}
 J=\mathfrak c_d\lceil\log(2+\kappa)\rceil
\end{equation}
steps the longest side is at most $\eta/(4L\sqrt d)$, with the choice \eqref{eq:app-algorithm-constants}: indeed, the initial-to-final ratio is $O_d(L\Delta/\eta^2)=O_d(\kappa)$.
More explicitly, $s_t\le3(5/6)^{t/d}s_0$, so it suffices that
$t\ge d\log(384\kappa\sqrt d)/\log(6/5)$.
For $d\ge2$ and $\kappa\ge1$, this is smaller than
$\mathfrak c_d\lceil\log(2+\kappa)\rceil$:
use $\log(384\kappa\sqrt d)\le
(8+d)\log(2+\kappa)$ and $1/\log(6/5)<6$.
If the stopping length is reached earlier, we stop and use its last box
and pivot as $R_J,x_J$; the same bound $c_J\le\eta/4$ applies. In the
cost sums below, the index $t$ runs only over refinements actually performed,
whose number is at most $J$.

Allocate
\begin{equation}\label{eq:fd-refinement-allocation}
 a:=\frac{\eta}{\mathfrak c_dJ},
 \qquad
 \tau_t:=\frac{as_t}{\mathfrak c_d},
 \qquad
 h_t^2:=\frac{as_t}{\mathfrak c_d L},
\end{equation}
where $s_t$ is the current longest side. We maintain
$c_t=\eta/8+ta$. For $t\le J$ this is at most
$\eta/8+\eta/\mathfrak c_d\le\eta/4$.
Before termination $Ls_t>\eta/(4\sqrt d)$, so
$c_t/(Ls_t)\le\sqrt d$ and
$h_t^2/s_t^2<4\sqrt d/(\mathfrak c_d^2J)<1$.
Using the explicit constant in Lemma~\ref{lem:fd-box-refinement}, the
required slope increase is at most
\[
 400d\left(2+\frac{c_t}{Ls_t}\right)\frac a{\mathfrak c_d}
 \le\frac{400d(2+\sqrt d)}{\mathfrak c_d}a\le a.
\]
Thus the algorithm's update $c_{t+1}=c_t+a$ preserves the trap by induction;
no assumption about a later slope is used to prove an earlier step.
Consequently
\begin{equation}\label{eq:fd-final-c}
 c_J\le\frac\eta4.
\end{equation}
By Lemma~\ref{lem:fd-trap}, the final box contains $z$ with
$\|\nabla f(z)\|\le\eta/4$.  If $x_J$ is the final pivot, then
\[
 \|\nabla f(x_J)\|
 \le\|\nabla f(z)\|+L\|x_J-z\|
 \le\frac\eta4+L\operatorname{diam}(R_J)
 \le\frac\eta2<\eta.
\]
Thus one stage halves the objective gap.

At refinement $t$, each of the two cross-sections has dimension
$k=d-1$, diameter $O_d(s_t)$, and target mesh $h_t$.  Put
\begin{equation}\label{eq:fd-nt}
 n_t:=2^{\left\lceil\log_2(\mathfrak c_ds_t/h_t)\right\rceil},
 \qquad
 \chi_t:=\frac{Ls_t}{a}.
\end{equation}
Then $n_t^2\asymp_d\chi_t$, while
$L s_t^2/\tau_t\asymp_d\chi_t$ and
$\sigma s_t/\tau_t\asymp_d\sigma/a$.  The root of each cut tree is connected
to the pivot by a constant number of segments of length $O_d(s_t)$.
Lemma~\ref{lem:fd-hyperplane-profile} therefore bounds one refinement by
\begin{equation}\label{eq:fd-one-refinement-cost}
 C_{d,\alpha}\Lambda_{d,\alpha}
 \left[
 \mathcal D_k(\chi_t)
 +\left(\frac{\sigma}{a}\right)^{p_\alpha}
 \mathcal C_{k,p_\alpha}(n_t)
 \right].
\end{equation}
For $k=1$, the exponent $2/3$ comes from stratified line integration; for
$k\ge2$, the $n_t^k$ tree size and deterministic quadrature have the same
order $\chi_t^{k/2}$.

The longest sides decrease geometrically up to dimension-dependent constants:
from the volume reduction and bounded aspect ratio,
\begin{equation}\label{eq:fd-side-decay}
 s_t\le C_d(5/6)^{t/d}s_0.
\end{equation}
Using $a=\eta/(\mathfrak c_dJ)$,
$s_0=32\Delta/\eta$, and $\eta^2=\mu\Delta$, we obtain
\begin{equation}\label{eq:fd-chi0}
 \chi_0\le C_d\kappa J.
\end{equation}
Consequently,
\begin{equation}\label{eq:fd-det-stage-sum}
 \sum_{\text{performed }t}\mathcal D_k(\chi_t)
 \le C_d\mathcal D_k(\kappa J).
\end{equation}

For the stochastic part, first note
\begin{equation}\label{eq:fd-sigma-over-a}
 \left(\frac{\sigma}{a}\right)^{p_\alpha}
 \le C_{d,\alpha}J^{p_\alpha}
 \left(\frac{\sigma^2}{\mu\Delta}\right)^{\thetaalpha}.
\end{equation}
If $k<p_\alpha$, the surface factor in
\eqref{eq:fd-Ckp} is uniformly bounded and summing over the $J$ refinements
adds one factor $J$.  If $k=p_\alpha$, it adds at most
$J[\log(2+\kappa J)]^{p_\alpha+1}$.  If $k>p_\alpha$, the geometric decay in
\eqref{eq:fd-side-decay} gives
\[
 \sum_t n_t^{k-p_\alpha}
 \le C_{d,\alpha}(\kappa J)^{(k-p_\alpha)/2}.
\]
Combining these estimates proves that a gap-$\Delta$ stage costs at most
\begin{equation}\label{eq:fd-stage-cost}
 C_{d,\alpha}\Lambda_{d,\alpha}
 \left[
 \mathcal D_k(\kappa J)
 +\mathcal H_{k,p_\alpha}(\kappa,J)
 \left(\frac{\sigma^2}{\mu\Delta}\right)^{\thetaalpha}
 \right].
\end{equation}

Every cutting-surface reference is the current pivot, lying inside the
parent box. Its distance from a cut is at most $s_t$, so the external-reference
condition in Lemma~\ref{lem:fd-hyperplane-profile} is satisfied.
Assign each of the at most $2JS$ cutting-surface requests conditional
failure probability $\delta/(2JS)$. Each reconstruction allocates this
budget among its tree edges; the line-integral guarantee is conditional on
the full history at each edge request. Iterated conditioning and a union
bound therefore cover all adaptively selected cuts, without assuming that
their errors are independent. The number of edges per cut is
$O_d((\kappa J)^{k/2})$ and its depth is $O_d(\log(2+\kappa J))$.
Since $\log(2+\kappa J)=O_d(J+1)$, the logarithm in
\eqref{eq:fd-Lambda}, enlarged by a dimension-dependent constant, bounds all
median-amplification factors.

Now set $\Delta_s=\Delta_0/2^s$ and run
$S=\lceil\log_2(\Delta_0/\epsilon)\rceil$ stages, assigning failure
probability $\delta/S$ to each.  The deterministic cost repeats $S$ times,
whereas
\begin{equation}\label{eq:fd-stage-noise-geometric}
 \sum_{s=0}^{S-1}
 \left(\frac{\sigma^2}{\mu\Delta_s}\right)^{\thetaalpha}
 \le C_\alpha Q_\epsilon.
\end{equation}
A union bound makes all line-integral and trap certificates simultaneous with
probability at least $1-\delta$.  Equations
\eqref{eq:fd-stage-cost}--\eqref{eq:fd-stage-noise-geometric} prove
Theorem~\ref{thm:fd-upper-main}\textup{(ii)}.
\end{proof}

\subsection{Assembly of the fixed-dimensional phase diagram}\label{subsec:fd-phase}
The remaining step is to compare the two lower-bound mechanisms with the
explicit upper bounds. This requires keeping the confidence criterion
separate from expected risk and retaining the condition-number restriction
of the exact-gradient lower family. The following corollary records the
precise scope of the main-text comparison, including its last regime.

\begin{corollary}
\label[corollary]{cor:app-fixed-phase}
Let $d\ge1$ be fixed, $\kappa\ge6$, $1<\alpha\le2$, $\sigma\ge0$,
$0<\epsilon\le c_0\Delta_0$, and $0<\delta\le1/4$, with $c_0>0$ small
enough. Then
\begin{equation}\label{eq:app-d1-phase}
 \mathsf T^{(1)}_{\epsilon,\delta}
 =\widetilde\Theta_\alpha\left[
      \log(\Delta_0/\epsilon)+Q_\epsilon\right],
\end{equation}
where logarithmic factors in $\kappa,\Delta_0/\epsilon,1/\delta$ are
suppressed. For $d\ge2$,
\begin{equation}\label{eq:app-general-phase}
 c_\alpha\left[\log(\Delta_0/\epsilon)+Q_\epsilon\right]
 \le \mathsf T^{(d)}_{\epsilon,1/4}
 \le\widetilde O_{d,\alpha}\left[
     K_d(\kappa)\log(\Delta_0/\epsilon)
     +\kappa^{(d-1-p_\alpha)_+/2}Q_\epsilon\right],
\end{equation}
where $(r)_+=\max\{r,0\}$,
$K_2(\kappa)=\kappa^{2/3}$, and
$K_d(\kappa)=\kappa^{(d-1)/2}$ for $d\ge3$.
In particular, for $d>3$,
\begin{equation}\label{eq:fd-matching-range}
 p_\alpha\ge d-1\quad\Longleftrightarrow\quad
 \alpha\le(d-1)/(d-2),
\end{equation}
and the upper and lower stochastic terms established here match up to
logarithmic factors in this range. No necessity claim is made
for the extra factor when $p_\alpha<d-1$.
\end{corollary}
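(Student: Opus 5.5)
The corollary is an assembly statement, so the plan is to combine Theorem~\ref{thm:app-fixed-lower} for every lower bound with Theorem~\ref{thm:fd-upper-main} for every upper bound. Along the way I will check that the criteria are compatible. The lower bounds are stated for $\mathsf T_{\epsilon,\delta}^{(d)}$, and $\mathsf T_{\epsilon,\delta}^{(d)}$ is nonincreasing in $\delta$. The upper bounds run at confidence $1-\delta$, and taking $\delta=1/4$ gives the $\mathsf T^{(d)}_{\epsilon,1/4}$ statements.

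\emph{Lower bounds.} For every $d\ge1$, the last claim of Theorem~\ref{thm:app-fixed-lower} already gives $\mathsf T^{(d)}_{\epsilon,1/4}\ge c_\alpha[\log(\Delta_0/\epsilon)+Q_\epsilon]$ whenever $\kappa\ge6$ and $\epsilon\le c_0\Delta_0$. For $\delta\le1/4$, monotonicity transfers this to $\mathsf T^{(d)}_{\epsilon,\delta}$, which handles the left side of \eqref{eq:app-d1-phase} and of \eqref{eq:app-general-phase}. I would shrink $c_0$ if needed so that it is below both the deterministic threshold and $1/4$.

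\emph{Upper bound, $d=1$.} Theorem~\ref{thm:fd-upper-main}(i) gives $N\le C_\alpha K_\epsilon[1+Q_\epsilon]\log(4K_\epsilon/\delta)$. Here $K_\epsilon=O(\log(\kappa\Delta_0/\epsilon))$ is logarithmic in $\kappa$ and $\Delta_0/\epsilon$. Suppressing these logarithms gives $\widetilde O(1+Q_\epsilon)$. Since $\epsilon\le c_0\Delta_0$, the $1$ is dominated by $\log(\Delta_0/\epsilon)$, which yields $\widetilde O[\log(\Delta_0/\epsilon)+Q_\epsilon]$ and matches the lower bound.

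\emph{Upper bound, $d\ge2$.} Put $k=d-1$ and note $J=\Theta_d(\log(2+\kappa))$, so $J$ and $\Lambda_{d,\alpha}$ are logarithmic in $\kappa$, $S$, and $1/\delta$. From \eqref{eq:fd-Dk}, $\mathcal D_k(\kappa J)$ is $\widetilde O(\kappa^{2/3})$ for $k=1$ and $\widetilde O(\kappa^{k/2})$ for $k\ge2$; this is $K_d(\kappa)$. The factor $S=\lceil\log_2(\Delta_0/\epsilon)\rceil$ is $O(\log(\Delta_0/\epsilon))$ in the stated range. From \eqref{eq:fd-Hkp}, $\mathcal H_{k,p_\alpha}$ is polylogarithmic when $k\le p_\alpha$. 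When $k>p_\alpha$ it is $\widetilde O(\kappa^{(k-p_\alpha)/2})$, since $J^{p}$ is a power of a logarithm with $p\le$ a constant depending on $\alpha$. Together these give the exponent $(d-1-p_\alpha)_+/2$ in \eqref{eq:app-general-phase}.

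Finally, \eqref{eq:fd-matching-range} is algebra: $\alpha/(\alpha-1)\ge d-1$ is equivalent to $\alpha\ge(d-1)(\alpha-1)$, i.e.\ $\alpha(d-2)\le d-1$. For $d>3$ the stochastic exponent then vanishes, so the stochastic terms match.

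The only delicate point is bookkeeping. The $\widetilde O$ must hide only factors polylogarithmic in $\kappa$, $\Delta_0/\epsilon$, and $1/\delta$, with constants allowed to depend on $d$ and $\alpha$, and $J^{p_\alpha+1}$ has a bounded exponent. The additive constants, such as the $1$ in $1+Q_\epsilon$ and the case $\sigma=0$, are absorbed by $\log(\Delta_0/\epsilon)\ge\log(1/c_0)>0$.
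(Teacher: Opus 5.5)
Your proposal is correct and follows essentially the same route as the paper. The lower sides come from the constant-confidence claim of Theorem~\ref{thm:app-fixed-lower} together with monotonicity of $\mathsf T^{(d)}_{\epsilon,\delta}$ in $\delta$. The upper sides are read off from Theorem~\ref{thm:fd-upper-main} by treating $K_\epsilon$, $J$, $\Lambda_{d,\alpha}$ and the $J$-powers in $\mathcal H_{k,p_\alpha}$ as logarithmic factors, and \eqref{eq:fd-matching-range} follows from the same multiplication by $\alpha-1$.
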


\begin{proof}[Proof of Corollary~\ref{cor:app-fixed-phase} and Theorem~\ref{cor:fd-123}]
For $d=1$, combine Theorem~\ref{thm:fd-upper-main}(i) with the
constant-confidence consequence of Theorem~\ref{thm:app-fixed-lower}.
The latter also applies to every $\delta\le1/4$, because the minimax
complexity is nonincreasing in the allowed failure probability. The
bisection bound introduces only the logarithmic factors displayed in
\eqref{eq:fd-d1-upper-explicit}. This proves \eqref{eq:app-d1-phase}.

For $d\ge2$ put $k=d-1$. Both $J$ and $\Lambda_{d,\alpha}$ in
Theorem~\ref{thm:fd-upper-main}(ii) are logarithmic factors for fixed $d$.
If $k<p_\alpha$, $\mathcal H_{k,p_\alpha}$ contains only such factors. If
$k=p_\alpha$, the dyadic level sum adds
$[\log(2+\kappa J)]^{p_\alpha+1}$, still logarithmic. If $k>p_\alpha$,
its polynomial contribution is $\kappa^{(k-p_\alpha)/2}$.
Meanwhile $\mathcal D_1(\kappa J)$ contributes $\kappa^{2/3}$ and
$\mathcal D_k(\kappa J)$ contributes $\kappa^{k/2}$ for $k\ge2$.
These observations give the upper side of \eqref{eq:app-general-phase};
Theorem~\ref{thm:app-fixed-lower} gives its lower side for every $\sigma$.

For $d=2$, $k=1<p_\alpha$ throughout the allowed moment range.
For $d=3$, $k=2\le p_\alpha$, with equality only at $\alpha=2$.
These are the rates in \eqref{eq:fd-d2-summary} and
\eqref{eq:fd-d3-summary}. For $d>3$, multiplication by the positive
quantity $\alpha-1$ gives
$p_\alpha\ge d-1$ if and only if
$(d-2)\alpha\le d-1$, proving \eqref{eq:fd-matching-range}.
This also explains the rate in \eqref{eq:fd-d-summary} and supplies the
last row of Theorem~\ref{cor:fd-123}, namely \eqref{eq:fd-dgreater3-summary}.
\end{proof}

% \begin{thebibliography}{9}

% \bibitem{ArjevaniEtAl2023}
% Y.~Arjevani, Y.~Carmon, J.~C.~Duchi, D.~J.~Foster, N.~Srebro, and
% B.~Woodworth.
% \newblock Lower bounds for non-convex stochastic optimization.
% \newblock \emph{Mathematical Programming}, 199:165--214, 2023.
% \newblock Preliminary version: arXiv:1912.02365.

% \end{thebibliography}

\end{document}